\DeclareMathOperator\arcsinh{arcsinh}
\newtheorem*{rep@theorem}{\rep@title}
\newcommand{\newreptheorem}[2]{%
\newenvironment{rep#1}[1]{%
 \def\rep@title{#2 \ref{##1}}%
 \begin{rep@theorem}}%
 {\end{rep@theorem}}}
\newtheorem*{rep@cor}{\rep@title}
\newcommand{\newrepcor}[2]{%
\newenvironment{rep#1}[1]{%
 \def\rep@title{#2 \ref{##1}}%
 \begin{rep@cor}}%
 {\end{rep@cor}}}
\newtheorem*{rep@prop}{\rep@title}
\newcommand{\newrepprop}[2]{%
\newenvironment{rep#1}[1]{%
 \def\rep@title{#2 \ref{##1}}%
 \begin{rep@prop}}%
 {\end{rep@prop}}}
\newtheorem{cor}{Corollary}[section]
\newtheorem*{cor*}{Corollary}
\newtheorem{corx}{Corollary}
\newtheorem{theorem}[cor]{Theorem}
\newtheorem{thmx}[corx]{Theorem}
\newtheorem{prop}[cor]{Proposition}
\newtheorem{propx}[corx]{Proposition}
\newtheorem{lemma}[cor]{Lemma}
\newtheorem{sublemma}[cor]{Sublemma}
\theoremstyle{definition}
\newtheorem{defi}[cor]{Definition}
\theoremstyle{remark}
\newtheorem{remark}[cor]{Remark}
\newtheorem{example}[cor]{Example}
\newtheoremstyle{TheoremNum}
        {\topsep}{\topsep}              %%% space between body and thm
        {\itshape}                      %%% Thm body font
        {}                              %%% Indent amount (empty = no indent)
        {\bfseries}                     %%% Thm head font
        {.}                             %%% Punctuation after thm head
        { }                             %%% Space after thm head
        {\thmname{#1}\thmnote{ \bfseries #3}}%%% Thm head spec
    \theoremstyle{TheoremNum}
\xpretocmd{\subsection}{\setcounter{corx}{0}}{}{}
\newcommand*{\bullets}{\raisebox{-0.25ex}{\scalebox{1.5}{$\cdot$}}}
\newcommand{\C}{{\mathbb C}}
\newcommand{\R}{{\mathbb R}}
\newcommand{\hess}{\mbox{Hess}}
\newcommand{\Hyp}{\mathbb{H}}
\newcommand{\AdS}{\mathbb{A}\mathrm{d}\mathbb{S}}
\newcommand{\PSL}{\mathrm{PSL}}
\newcommand{\ph}{\varphi}
\newcommand{\RP}{\mathbb{R}\mathrm{P}}
\newcommand{\Pml}{\Phi_{M\!L}}
\newcommand{\grad}{\operatorname{grad}}%{\mbox{grad}}
\newcommand{\isom}{\mathrm{Isom}}
\newcommand{\Ip}{\mathrm{I}^+}
\newcommand{\II}{I\hspace{-0.1cm}I}
\newcommand{\rar}{\rightarrow}
\newcommand{\id}{\mathrm{I}}
\newcommand{\SO}{\mathrm{SO}}
\newcommand{\ddt}{\left.\frac{d}{dt}\right|_{t=0}}
\newcommand{\D}{\mathbb{D}}
\newcommand{\wAdS}{\widehat{\AdS^3}}
\def\Hess{\mathrm{Hess}}
\begin{document}

\setcounter{secnumdepth}{3}
\setcounter{tocdepth}{2}

\title[Maximal surfaces in $\AdS^3$, width and quasiconformal extensions]{Maximal surfaces in Anti-de Sitter space, width of convex hulls and quasiconformal extensions of quasisymmetric homeomorphisms}

\author{Andrea Seppi} 
\address{A. Seppi: Dipartimento di Matematica ``Felice Casorati", Universit\`{a} degli Studi di Pavia, Via Ferrata 5, 27100, Pavia, Italy.} \email{andrea.seppi01@ateneopv.it}

%\date{\today}
\maketitle

%\tableofcontents

%\bibstyle{plain}

\begin{abstract}
We give upper bounds on the principal curvatures of a maximal surface of nonpositive curvature in three-dimensional Anti-de Sitter space, which only depend on the width of the convex hull of the surface. Moreover, given a quasisymmetric homeomorphism $\phi$, we study the relation between the width of the convex hull of the graph of $\phi$, as a curve in the boundary of infinity of Anti-de Sitter space, and the cross-ratio norm of $\phi$. 

As an application, we prove that if $\phi$ is a quasisymmetric homeomorphism of $\mathbb{R}\mathrm{P}^1$ with cross-ratio norm $||\phi||$, then $\ln K\leq C||\phi||$, where $K$ is the maximal dilatation of the minimal Lagrangian extension of $\phi$ to the hyperbolic plane.
\end{abstract}

%---------------------------

%\listoffigures

\section*{Introduction}

The study of three-dimensional Anti-de Sitter space $\AdS^3$ was initiated by the pioneering work of Mess (\cite{mess}) of 1990, and has been widely developed since then, with emphasis on its relation with Teichm\"uller theory, for instance in \cite{aiyama, notes, bbzads, bon_schl, bsk_multiblack, bonschlfixed, Schlenker-Krasnov, bonseppitamb}.

In particular, in \cite{bon_schl} Bonsante and Schlenker studied zero mean curvature spacelike surfaces - hereafter \emph{maximal} surfaces - with boundary contained in the boundary at infinity $\partial_\infty\AdS^3$. The latter is identified in a natural way to $\RP^1\times\RP^1$, where isometries of $\AdS^3$ extend to projective transformations which act as elements of $\PSL(2,\R)\times\PSL(2,\R)$. Therefore the asymptotic boundary of a maximal surface is represented by the graph of an orientation-preserving homeomorphism $\phi:\RP^1\to\RP^1$. Bonsante and Schlenker proved that every curve in $\partial_\infty\AdS^3$ corresponding to the graph of an orientation-preserving homeomorphism $\phi:\RP^1\rar \RP^1$ bounds a maximal disc $S$ with nonpositive curvature. This result might be thought of as an asymptotic Plateau problem in Anti-de Sitter geometry. See also \cite{ksurfaces} for an alternative proof.
By the Gauss equation in $\AdS^3$, nonpositivity of curvature is equivalent to the condition that the principal curvatures of $S$ are in $[-1,1]$. 

Bonsante and Schlenker also provided a more precise description of maximal discs under the assumption that $\phi$ is a quasisymmetric homeomorphism of $\RP^1$ - namely, if the \emph{cross-ratio norm}
$$||\phi||_{cr}=\sup_{cr(Q)=-1}\left|\ln\left|cr(\phi(Q))\right|\right|$$
is finite. In this case, the maximal disc is unique and the principal curvatures are in $[-1+\epsilon,1-\epsilon]$ for some $\epsilon$. By means of a construction which associates to a maximal surface with nonpositive curvature a minimal Lagrangian diffeomorphism from $\Hyp^2$ to $\Hyp^2$ (a diffeomorphism of $\Hyp^2$ is minimal Lagrangian if it is area-preserving and its graph is a minimal surface in $\Hyp^2\times \Hyp^2$), the existence and uniqueness theorems on maximal surfaces led to the proof of the fact that every quasisymmetric homeomorphism of $\RP^1$ admits a unique quasiconformal minimal Lagrangian extension to $\Hyp^2$.

Another important ingredient introduced in \cite{bon_schl} is the \emph{width} of the convex hull. This is defined as the supremum of the length of timelike paths contained in the convex hull of the curve $gr(\phi)$. By a simple application of the maximum principle, the maximal surface $S$ with $\partial_\infty S=gr(\phi)$ is itself contained in the convex hull. Bonsante and Schlenker proved that for every orientation-preserving homeomorphism $\phi$, the width is at most $\pi/2$, and it is strictly less than $\pi/2$ precisely when $\phi$ is quasisymmetric.

The purpose of this paper is to study the quantitative relations between the cross-ratio norm of $\phi$, the width $w$ of its convex hull, and the supremum $||\lambda||_\infty$ of the principal curvatures of the maximal surface $S$ of nonpositive curvature such that $\partial_\infty S=gr(\phi)$. By the above discussion, $||\phi||_{cr}<+\infty$ if and only if $w<\pi/2$ if and only if $||\lambda||_\infty<1$, but it is not clear whether there is a direct relation between these quantities. Using a formula proved in \cite{Schlenker-Krasnov} which relates the differential of the minimal Lagrangian extension to the shape operator of $S$, our results will provide estimates on the maximal dilatation of the quasiconformal minimal Lagrangian extension, only depending on the cross-ratio norm of $\phi$.

%\begin{prop}
%If a simple closed curve $\Gamma$ in $\partial_\infty \Hyp^3$ spans a minimal disc $S$ with principal curvatures in $[-1+\epsilon,1-\epsilon]$, then $\Gamma$ is a quasicircle.
%\end{prop}

\subsection*{Principal curvatures of maximal surfaces} \addtocounter{subsection}{1}

The study of the relation between the principal curvatures of a maximal surface and the width of the convex hull is split into two parts. Observe that the principal curvatures of $S$ vanish identically when $S$ is a totally geodesic plane, in which case the width is zero since the convex hull consists of $S$ itself. Our first theorem describes the behavior of maximal surfaces which are close to being a totally geodesic plane:

\begin{thmx} \label{estimate principal curvatures and width ads}
There exists a constant $C_1$ such that, for every maximal surface $S$ with $||\lambda||_\infty<1$ and width $w$, 
$$||\lambda||_\infty\leq C_1\tan w\,.$$
\end{thmx}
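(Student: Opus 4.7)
The plan is to argue by contradiction using a compactness argument. Suppose no such constant $C_1$ exists; then there is a sequence of maximal surfaces $S_n$ of width $w_n$ with $\|\lambda_n\|_\infty<1$ satisfying $\|\lambda_n\|_\infty/\tan w_n\to\infty$. Since $\|\lambda_n\|_\infty\leq 1$, this forces $w_n\to 0$. Choose $p_n\in S_n$ with $\lambda_n(p_n)\geq \tfrac{1}{2}\|\lambda_n\|_\infty$, so that it suffices to bound $\lambda_n(p_n)$ by $O(w_n)$ for $n$ large.

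The key geometric input is that $S_n$ is contained in the convex hull of $\partial_\infty S_n$, which in $\AdS^3$ is sandwiched between two spacelike (pleated) surfaces at mutual timelike distance at most $w_n$. Consequently there is a totally geodesic spacelike plane $P_n$ through $p_n$ such that the convex hull is contained in a timelike slab of thickness $O(w_n)$ around $P_n$. Applying an isometry of $\AdS^3$, I normalize $p_n=p_0$ and $P_n=P_0$; then $T_{p_0}S_n$ makes angle $O(w_n)$ with $P_0$. On a uniform neighborhood $U\subset P_0\cong \Hyp^2$ of $p_0$, the surface $S_n$ is therefore a normal graph
\begin{equation*}
S_n\cap V \;=\; \{\exp_q(u_n(q)\,\nu_q)\,:\,q\in U\}
\end{equation*}
over $P_0$, with $u_n(p_0)=0$, $|\nabla u_n(p_0)|=O(w_n)$, and $\|u_n\|_{C^0(U)}=O(w_n)$.

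In these coordinates the maximal surface equation is a quasilinear elliptic PDE on $U$, whose linearization at the zero solution (i.e.\ at the totally geodesic plane $P_0$) is a well-behaved second-order elliptic operator on $\Hyp^2$. Standard interior Schauder estimates then yield
\begin{equation*}
\|u_n\|_{C^{2,\alpha}(U')} \;\leq\; C\,\|u_n\|_{C^0(U)} \;=\; O(w_n)
\end{equation*}
on a smaller neighborhood $U'\subset U$ of $p_0$. Since the shape operator of a normal graph at a point with small gradient equals $\mathrm{Hess}\,u_n$ up to quadratic correction terms, I conclude $\lambda_n(p_n)\leq C'\,w_n$. Combined with $\tan w_n\sim w_n$ as $w_n\to 0$, this gives $\|\lambda_n\|_\infty/\tan w_n=O(1)$, contradicting the hypothesis.

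The main obstacle is the geometric extraction step: passing from the scalar global width bound to a uniform local thin-slab approximation of the convex hull around $p_0$. This relies on the structure of convex hulls in $\AdS^3$ and on the fact that, as $w_n\to 0$, both bounding surfaces of the convex hull collapse onto a common totally geodesic spacelike plane through $p_0$, tangent to $S_n$ to leading order. Once the graph description with $\|u_n\|_{C^0}=O(w_n)$ is established on a neighborhood of uniform size, the rest is a routine elliptic bootstrap.
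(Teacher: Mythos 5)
Your overall strategy — bound the maximal surface as a graph over a plane, then use Schauder estimates to turn a $C^0$ bound into a $C^2$ bound and hence a bound on the shape operator — is the same family of argument as the paper's (which works with the sine-distance function $u(x)=\sin d_{\AdS^3}(x,P_-)$ satisfying the exact linear equation $\Delta_S u-2u=0$, rather than with the quasilinear maximal surface equation for a normal graph). However, there is a genuine gap, and you have in fact located it yourself in your last paragraph but have not closed it: the passage from the width bound to the claim $\|u_n\|_{C^0(U)}=O(w_n)$ on a neighborhood $U$ of \emph{uniform size}. This is not a routine consequence of "the structure of convex hulls in $\AdS^3$." The width controls the sum $d_{\AdS^3}(x,P_-)+d_{\AdS^3}(x,P_+)$ at the one point $x=p_n$, not the height of the graph over a whole neighborhood; and, crucially, in Anti-de Sitter space the orthogonal projection from a spacelike surface to a spacelike plane is \emph{not} distance-contracting, so points at bounded intrinsic distance from $p_n$ on $S_n$ do not automatically project to a bounded region of $P_n$, nor do they automatically have small distance from $P_n$. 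Closing this requires quantitative control of how tilted $S_n$ can be relative to $P_n$: in the paper this occupies Lemma \ref{lemma gradient bounds} (a uniform bound on $\|\grad u\|$ obtained from the equation itself together with positivity of $u$), Lemma \ref{boundedness angle} (a uniform bound on the hyperbolic angle between the surface normal and the normal to $P_-$), Proposition \ref{boundedness projection} (the orthogonal projection of a fixed-radius cylinder around $p_n$ lands in a bounded ball of $P_-$), and Lemma \ref{distance and width ads} (the distance from $P_-$ over such a bounded ball is $\leq (1+\sqrt 2)\cosh R'\,\tan w$). None of this is "routine"; it is the analytic heart of the theorem.

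A secondary, smaller issue: your Schauder step is stated for the quasilinear maximal surface equation near the zero solution, but interior Schauder estimates for a quasilinear equation of the form $F(x,u,Du,D^2u)=0$ with an estimate $\|u\|_{C^{2,\alpha}}\leq C\|u\|_{C^0}$ presuppose a priori control on the coefficients (hence on $\nabla u$), which again loops back to the gradient/angle control above. The paper sidesteps this by working with the linear equation $\Delta_S u-2u=0$, whose coefficients are the metric of $S$, uniformly controlled in normal coordinates by the compactness lemma for maximal surfaces (Lemma \ref{lemma bon schl}); the resulting Schauder estimate is Proposition \ref{schauder estimate ads}. Your compactness wrapper (argue by contradiction along a sequence) is not wrong, but it adds nothing: once the $C^0$ and Schauder estimates are in hand the proof is direct, as in the paper.
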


This theorem provides interesting information only when $w$ is in some neighborhood of zero, since for large $w$ the already know bound $||\lambda||_\infty<1$ is not improved. On the other hand, Bonsante and Schlenker showed that if a maximal surface of nonpositive curvature has a point where the principal curvatures are $-1$ and $1$, then the principal curvatures are $-1$ and $1$ everywhere, and therefore the induced metric is flat. Moreover, the surface is a so-called \emph{horospherical surface}, which is described explicitly and has width $\pi/2$. Our second theorem concerns surfaces which are close to this situation:

\begin{thmx} \label{theorem width lambda big}
There exist universal constants $M>0$ and $\delta\in(0,1)$ such that, if $S$ is a maximal surface in $\AdS^3$ with $\delta\leq ||\lambda||_\infty<1$ and width $w$, then
$$\tan w\geq \left(\frac{1}{1-||\lambda||_\infty}\right)^{1/M}\,.$$
\end{thmx}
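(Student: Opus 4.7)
The plan is to treat Theorem~\ref{theorem width lambda big} as a quantitative rigidity statement: the identity $||\lambda||_\infty=1$ forces $S$ to be a horospherical surface, whose convex hull has width exactly $\pi/2$; the theorem says these two statements must degenerate at comparable polynomial rates. For $||\lambda||_\infty$ bounded away from $1$ the lower bound on $\tan w$ follows easily from Theorem~\ref{estimate principal curvatures and width ads}, so I would focus on the regime $||\lambda||_\infty\to 1$ and choose $\delta$ below the threshold at which this regime becomes relevant.

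For the qualitative step, pick $p\in S$ with $|\lambda(p)|$ close to $||\lambda||_\infty$ and normalize by an isometry of $\AdS^3$ so that $p$ sits at a fixed basepoint with prescribed tangent plane and principal frame. In a graph parametrization, the maximal surface equation is a quasilinear elliptic PDE; the a priori bound $||\lambda||_\infty<1$ keeps it uniformly elliptic, and standard interior Schauder estimates yield $C^{k,\alpha}$-compactness. A sequence $S_n$ with $||\lambda_n||_\infty\to 1$ therefore subconverges smoothly on compact sets to a maximal surface $S_\infty$ with $|\lambda_\infty|(0)=1$, which by the Bonsante--Schlenker rigidity recalled in the introduction is horospherical. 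Using the explicit form of the horospherical convex hull, I would exhibit a timelike segment of length arbitrarily close to $\pi/2$ whose interior lies in $CH(\partial_\infty S_\infty)$ and passes through a compact region where $S_n\to S_\infty$ in $C^2$; by convexity of $CH(\partial_\infty S_n)$ and a continuity argument, a slightly shortened copy of this segment lies in $CH(\partial_\infty S_n)$, proving $w_n\to\pi/2$.

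To upgrade to the stated power law I would track quantitative rates via two ingredients. First, the Codazzi equation on the shape operator forces the sublevel set $\{1-|\lambda|\le 2(1-||\lambda||_\infty)\}$ to contain a ball of radius at least a definite power of $1-||\lambda||_\infty$ around $p$. Second, on that ball, quantitative interior Schauder estimates control the $C^2$-deviation of $S$ from a model horospherical surface by a power of $1-||\lambda||_\infty$. Finally one computes explicitly that a $C^2$-small perturbation of a horospherical piece has convex hull of width at least $\pi/2$ minus a power of the perturbation. Composing these power estimates, and translating ``width close to $\pi/2$'' into ``$\tan w$ large'' via $\pi/2-\arctan x\sim 1/x$, yields $\tan w\ge (1-||\lambda||_\infty)^{-1/M}$ with a universal $M$. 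The main obstacle is this last transfer: the width is a \emph{global} invariant of $gr(\phi)\subset\partial_\infty\AdS^3$, while $||\lambda||_\infty$ is a \emph{pointwise} quantity, so one must trap a long timelike segment in $CH(\partial_\infty S)$ using only the near-horospherical structure of $S$ near $p$, and it is here that the value of $M$ is pinned down.
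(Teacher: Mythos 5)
Your high-level strategy (quantitative rigidity toward the horospherical surface) matches the paper's, and you correctly identify the final transfer as the main obstacle, but the proposal has genuine gaps precisely where the paper does the real work.

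First, your quantitative step~2 is off target. The paper's key analytic input is a uniform \emph{gradient bound} for the function $v=-\ln(1-\lambda)$: one shows $\|\grad v\|\leq M$ for a universal constant $M$ (Proposition~\ref{prop gradient estimate}), by applying the maximum principle to the quasi-linear equation \eqref{quasi} and to an auxiliary function of the form $e^{g(\chi)}\|\grad\chi\|^2$. This gives $v\geq v(x_0)/2$ on a geodesic ball of radius $v(x_0)/2M$, i.e.\ a radius that grows \emph{logarithmically} in $1/(1-\lambda(x_0))$, with $1-\lambda$ controlled on that ball by $\sqrt{1-\lambda(x_0)}$. Your proposal instead asserts that ``the Codazzi equation'' forces the sublevel set $\{1-|\lambda|\le 2(1-||\lambda||_\infty)\}$ to contain a ball of radius a \emph{power} of $1-||\lambda||_\infty$. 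Both the claimed radius rate and the threshold level are not what the mechanism yields, and it is unclear how Codazzi alone would produce any such estimate without passing through a gradient bound on a logarithmic quantity. The specific choice $v=-\ln(1-\lambda)$, and the a priori estimate on $\|\grad v\|$, is the heart of the proof and is missing from your outline.

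Second, and more seriously, your compactness/continuity argument for trapping a timelike segment of length close to $\pi/2$ inside $\mathcal{CH}(\partial_\infty S_n)$ does not go through as described. In the horospherical limit such segments have their endpoints deep near $\partial_\infty\AdS^3$, outside any fixed compact set on which $S_n\to S_\infty$ in $C^2$, so $C^2$ proximity of $S_n$ to $S_\infty$ on a compact region does not by itself place the segment in $\mathcal{CH}(\partial_\infty S_n)$. The paper's resolution is to follow the two lines of curvature of $S$ through $x_0$ for intrinsic time $\sim v(x_0)/2M$ (using the acceleration bound, Corollary~\ref{cor acceleration}, an umbilical barrier, Lemma~\ref{lemma calotta}, and ODE comparison, Lemmas~\ref{lemma estimate product}--\ref{lemma estimate product 2}), obtaining four points $p_1,p_2,q_1,q_2$ \emph{on $S$ itself}, hence in the convex hull; the geodesic chords $\overline{p_1p_2}$ and $\overline{q_1q_2}$ then lie in $\mathcal{CH}(\partial_\infty S)$ by convexity, and one bounds the timelike distance between them from below. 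This use of chords of the surface is what makes the width argument global without needing to control $\partial_\infty S$ directly; your outline does not identify it.

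A minor point: the claim that Theorem~\ref{estimate principal curvatures and width ads} handles the regime $||\lambda||_\infty$ bounded away from $1$ does not work as stated, since it gives only $\tan w\geq ||\lambda||_\infty/C_1$, which for moderate $||\lambda||_\infty$ can be less than the required lower bound $(1/(1-||\lambda||_\infty))^{1/M}>1$. Fortunately this is irrelevant: the theorem only claims the inequality for $||\lambda||_\infty\geq\delta$, so one just takes $\delta$ large enough for the near-horospherical argument to apply.
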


It is worth remarking here that an inequality going in the opposite direction can be obtained more easily, and all the necessary tools were already proved in \cite{Schlenker-Krasnov} and \cite{bon_schl}. Nevertheless, for the sake of completeness we will provide a proof of the following:

\begin{propx} \label{estimate principal curvatures and width ads reverse}
Let $S$ be a maximal surface in $\AdS^3$ with $||\lambda||_\infty\leq 1$ and width $w$. Then 
$$\tan w\leq \frac{2||\lambda||_\infty}{1-||\lambda||_\infty^2}\,.$$
\end{propx}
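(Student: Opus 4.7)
The inequality is trivial when $||\lambda||_\infty=1$, so assume $\lambda_0 := ||\lambda||_\infty < 1$ and set $\theta_0 := \arctan(\lambda_0)\in[0,\pi/4)$. The plan is to construct two smooth spacelike surfaces $S_{\pm\theta_0}$ equidistant from $S$ in $\AdS^3$, whose asymptotic boundary coincides with $gr(\phi)$ and each having shape operator of a definite sign. The convex hull of $gr(\phi)$ will then be trapped between $S_{-\theta_0}$ and $S_{\theta_0}$, forcing $w\leq 2\theta_0$ and hence $\tan w\leq \tan(2\theta_0)=2\lambda_0/(1-\lambda_0^2)$.

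\textbf{Equidistant normal flow.} Realizing $\AdS^3$ as the quadric $\{\langle x,x\rangle=-1\}$ in $\R^{2,2}$ and choosing a timelike unit normal $n$ to $S$, the map $\Phi_\theta(p)=\cos(\theta)\,p+\sin(\theta)\,n(p)$ parametrizes the equidistant surface $S_\theta$ at signed timelike distance $\theta$ from $S$. A Riccati-type computation along the normal flow shows that, when $S_\theta$ is immersed, its principal curvature in the $\lambda(p)$-eigendirection is $(\lambda(p)+\tan\theta)/(1-\lambda(p)\tan\theta)$, and the analogous formula with $-\lambda(p)$ holds in the other eigendirection. Equivalently, writing $\lambda(p)=\tan(\alpha(p))$, the principal curvatures of $S_\theta$ at $\Phi_\theta(p)$ are $\tan(\alpha(p)+\theta)$ and $\tan(-\alpha(p)+\theta)$.

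\textbf{Convexity and barrier step.} Since $\lambda(p)\cdot|\tan\theta_0|\leq\lambda_0^2<1$ pointwise, both $S_{\theta_0}$ and $S_{-\theta_0}$ are smoothly immersed spacelike surfaces. At $\theta=\theta_0$ the principal curvatures are $\tan(\alpha(p)+\theta_0)\geq 0$ and $\tan(\theta_0-\alpha(p))\geq 0$ (both finite because $\alpha(p)+\theta_0\leq 2\theta_0<\pi/2$), so $S_{\theta_0}$ has shape operator that is positive semidefinite everywhere; symmetrically, $S_{-\theta_0}$ has negative semidefinite shape operator. Each $S_{\pm\theta_0}$ is therefore a convex spacelike hypersurface of $\AdS^3$. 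Moreover, the normal flow extends continuously to $\partial_\infty\AdS^3$, so $\partial_\infty S_{\pm\theta_0}=gr(\phi)$. A maximum-principle/barrier argument, in the spirit of those in \cite{bon_schl,Schlenker-Krasnov}, then forces the convex hull of $gr(\phi)$ to lie in the closed slab bounded by $S_{-\theta_0}$ and $S_{\theta_0}$.

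\textbf{Conclusion and main obstacle.} This slab is foliated by the intermediate equidistant surfaces $\{S_\theta:\theta\in[-\theta_0,\theta_0]\}$, and the parameter $\theta$ gives a global time function on it. Consequently every timelike curve inside the slab has length at most $2\theta_0$, yielding $w\leq 2\theta_0=2\arctan\lambda_0$; the tangent addition formula then gives the claim. The main obstacle is the barrier step: one must verify that the asymptotic boundaries of $S_{\pm\theta_0}$ really coincide with $gr(\phi)$ (so that they bound the convex hull at infinity), and then exploit their definite extrinsic sign together with a maximum principle to pin down the convex hull in the timelike direction. Both are standard for convex spacelike surfaces in $\AdS^3$, but require careful control near $\partial_\infty\AdS^3$.
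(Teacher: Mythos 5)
Your proposal is correct and follows essentially the same route as the paper's own proof: both apply the normal-flow formula for the shape operator of the equidistant surfaces $S_\theta$ (Lemma \ref{lemma formule prima seconda shape distanza costante}) to see that $S_{\pm\arctan||\lambda||_\infty}$ are respectively concave and convex barriers bounding the convex hull of $gr(\phi)$, giving $w\leq 2\arctan||\lambda||_\infty$ and then the stated inequality by the tangent addition formula. The barrier/maximum-principle step that you flag as the ``main obstacle'' is stated just as briefly in the paper, so this is not a gap particular to your argument but the same amount of detail the paper itself supplies.
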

Since  $2||\lambda||_\infty/(1-||\lambda||_\infty^2)$ behaves like $2||\lambda||_\infty$ as $||\lambda||_\infty\to 0$, one sees that Theorem \ref{estimate principal curvatures and width ads} is optimal for small $||\lambda||_\infty$, up to determining the best possible value of the constant $C_1$. On the other hand, from Proposition \ref{estimate principal curvatures and width ads reverse} one obtains that $\tan w\leq 2/(1-||\lambda||_\infty)$, and it remains an open question whether Theorem \ref{theorem width lambda big} can be improved to an inequality of the form $\tan w\geq C_2^{-1}(1-||\lambda||_\infty)^{-1}$.

\subsection*{Minimal Lagrangian extensions} \addtocounter{subsection}{1}

A classical problem in Teichm\"uller theory concerns quasiconformal extensions to the disc of quasisymmetic homeomorphisms of the circle. Classical quasiconformal extensions include, for instance, the Beurling-Ahlfors extension and the Douady-Earle extension. More recently, Markovic \cite{mar_schoenconj} proved the existence of quasiconformal harmonic extensions, where the harmonicity is referred to the complete hyperbolic metric of $\Hyp^2$.

Moreover, the maximal dilatation of the classical extensions has been widely studied. %(\cite{ahl_beur,lehtinen,douadyearle,hu_muzician_douadyearle}).
For instance, Beurling and Ahlfors in \cite{ahl_beur} proved that, if $\Phi_{B\!A}$ is the Beurling-Ahlfors extension of a quasisymmetric homeomorphism $\phi$, then the maximal dilatation $K(\Phi_{B\!A})$ satisfies:
$$\ln K(\Phi_{B\!A})\leq 2||\phi||_{cr}\,.$$
The asymptotic behaviour was later improved in \cite{lehtinen} by
$$\ln K(\Phi_{B\!A})\leq ||\phi||_{cr}+\ln 2\,.$$
For the Douady-Earle extension, \cite{douadyearle} proved that there exist constants $\delta$ and $C$ such that, for every quasisymmetric homeomorphism of the circle $\phi$ with $||\phi||_{cr}<\delta$, the Douady-Earle extension $\Phi_{D\!E}$ satisfies:
%\begin{equation} \label{ineq DE}
$$\ln K(\Phi_{D\!E})\leq C||\phi||_{cr}\,.$$
%\end{equation}
More recently, Hu and Muzician proved in \cite{hu_muzician_douadyearle} that the following always holds:
$$\ln K(\Phi_{D\!E})\leq C_1||\phi||_{cr}+C_2\,.$$
%and therefore this shows that an inequality of the form \eqref{ineq DE} holds globally. 

In this paper we will prove analogous results for the minimal Lagrangian extension, whose existence was proved in \cite{bon_schl} as already remarked. As an application of Theorem \ref{estimate principal curvatures and width ads}, we will prove the following inequality:

\begin{thmx}  \label{minimal lag extension}
There exist universal constants $\delta$ and $C_1$ such that, for any quasisymmetric homeomorphism $\phi$ of $\RP^1$ with cross ratio norm $||\phi||_{cr}<\delta$, the minimal Lagrangian extension $\Phi_{M\!L}:\Hyp^2\rar\Hyp^2$ has maximal dilatation bounded by: $$\ln K(\Phi_{M\!L})\leq C_1||\phi||_{cr}\,.$$
\end{thmx}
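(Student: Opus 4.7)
The strategy is to chain together three estimates so that, under the smallness hypothesis $\|\phi\|_{cr}<\delta$, the inequality $\ln K(\Pml)\leq C_1\|\phi\|_{cr}$ follows. Given a quasisymmetric $\phi$, let $S$ be the unique maximal disc in $\AdS^3$ with $\partial_\infty S=gr(\phi)$ (from \cite{bon_schl}), let $w$ denote the width of the convex hull of $gr(\phi)$, and let $\|\lambda\|_\infty$ be the supremum of the absolute values of the principal curvatures of $S$. Recall that $\|\lambda\|_\infty<1$ precisely because $\phi$ is quasisymmetric.

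The first link uses the formula from \cite{Schlenker-Krasnov} expressing the differential of $\Pml$ pointwise in terms of the shape operator $B$ of $S$. At any point of $S$, if $\pm\lambda$ are the principal curvatures, then the pointwise dilatation of $\Pml$ at the corresponding point of $\Hyp^2$ is a smooth function $f(|\lambda|)$ with $f(0)=1$ that blows up as $|\lambda|\to 1^-$. In particular, there is a universal $c>0$ such that $\ln f(t)\leq c\,t$ for $t\in[0,1/2]$. Taking supremum over $S$ gives
\[
\ln K(\Pml)\;\leq\; c\,\|\lambda\|_\infty
\]
as long as $\|\lambda\|_\infty\leq 1/2$.

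The second link is Theorem \ref{estimate principal curvatures and width ads} itself, giving $\|\lambda\|_\infty\leq C_1\tan w$. The third link, and the one I expect to be the main obstacle, is a comparison of the form $\tan w\leq c'\,\|\phi\|_{cr}$ valid for small cross-ratio norm; this is precisely the width-vs-cross-ratio-norm estimate announced in the abstract. The geometric heuristic is that any pair of points realizing the width of the convex hull is a pair of dual points corresponding to two support planes of the convex hull; these two planes meet $\partial_\infty\AdS^3$ in two configurations of four points whose cross-ratios on $\RP^1\times\RP^1$ are $-1$, and which map under $\phi$ to a quadruple whose cross-ratio is controlled by $\tan w$. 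Care is required to show that this lower bound on $|\ln|cr(\phi(Q))||$ is in fact linear in $\tan w$ for small $w$, rather than merely monotone.

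Combining the three steps, for $\|\phi\|_{cr}<\delta$ sufficiently small that the composition keeps $\|\lambda\|_\infty\leq 1/2$, we conclude
\[
\ln K(\Pml)\;\leq\; c\,\|\lambda\|_\infty\;\leq\; c\,C_1\,\tan w\;\leq\; c\,C_1\,c'\,\|\phi\|_{cr}\,,
\]
which is the desired inequality with the universal constant $c\,C_1\,c'$ playing the role of the $C_1$ in the statement. The bulk of the work lies in establishing the width-vs-cross-ratio-norm comparison; once it is in place, Theorem \ref{minimal lag extension} becomes a straightforward corollary of Theorem \ref{estimate principal curvatures and width ads} combined with the Krasnov--Schlenker formula.
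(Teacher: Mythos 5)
Your proposal follows essentially the same route as the paper: chain the Krasnov--Schlenker relation between the shape operator and the dilatation of $\Pml$ (the paper uses the exact identity $K(\Pml)=\left(\frac{1+\|\lambda\|_\infty}{1-\|\lambda\|_\infty}\right)^2$ from Proposition \ref{estimate principal curvatures and qc coefficient}), Theorem \ref{estimate principal curvatures and width ads}, and Proposition \ref{estimate width cross ratio norm} (which gives $\tan w\leq\sinh(\|\phi\|_{cr}/2)$, linear in $\|\phi\|_{cr}$ near zero), then observe that the resulting composite function of $\|\phi\|_{cr}$ has bounded derivative at the origin. Your heuristic sketch of the width-versus-cross-ratio-norm comparison is somewhat off (the width-realizing endpoints are not dual points, and the quadruple is extracted from ideal triangles in the pleated boundary components of the convex hull, not from the support planes meeting $\partial_\infty\AdS^3$), but since you correctly identify this as a separate proposition and use it as a black box, exactly as the paper does in the proof of this theorem, your argument is sound.
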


On the other hand, by an application of Theorem \ref{theorem width lambda big} we will derive an asymptotic estimate of the maximal dilatation of $\Phi_{M\!L}$:
 
 \begin{thmx} \label{minimal lag extension large}
There exist universal constants $\Delta$ and $C_2$ such that, for any quasisymmetric homeomorphism $\phi$ of $\RP^1$ with cross ratio norm $||\phi||_{cr}>\Delta$, the minimal Lagrangian extension $\Phi_{M\!L}:\Hyp^2\rar\Hyp^2$ has maximal dilatation bounded by: $$\ln K(\Phi_{M\!L})\leq C_2||\phi||_{cr}\,.$$
\end{thmx}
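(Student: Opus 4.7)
The plan is to reduce Theorem \ref{minimal lag extension large} to Theorem \ref{theorem width lambda big} by means of the Krasnov--Schlenker formula relating the differential of the minimal Lagrangian extension to the shape operator of the associated maximal disc. Let $S$ be the unique maximal disc in $\AdS^3$ with $\partial_\infty S = gr(\phi)$; its principal curvatures are $\pm\lambda$ with $||\lambda||_\infty < 1$. The Krasnov--Schlenker formula expresses $D\Phi_{M\!L}$ pointwise in terms of the shape operator of $S$, and a direct computation yields a pointwise dilatation bound of the form $F(\lambda)$, where $F\colon[0,1)\to\R$ is an increasing function with $F(t) = O(-\ln(1-t))$ as $t\to 1$. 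Hence $\ln K(\Phi_{M\!L}) \leq F(||\lambda||_\infty)$, and for $||\lambda||_\infty$ close to $1$ this is comparable to $-\ln(1-||\lambda||_\infty)$.

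It then suffices to bound $-\ln(1-||\lambda||_\infty)$ linearly in $||\phi||_{cr}$. If $||\lambda||_\infty < \delta$, with $\delta$ the threshold from Theorem \ref{theorem width lambda big}, then $\ln K(\Phi_{M\!L}) \leq F(\delta)$ is a universal constant, and this is trivially $\leq C_2 ||\phi||_{cr}$ as soon as $\Delta$ is taken large enough to absorb it. In the complementary case $||\lambda||_\infty \geq \delta$, Theorem \ref{theorem width lambda big} applies and gives
\[
-\ln(1-||\lambda||_\infty) \leq M \ln \tan w,
\]
where $w$ is the width of the convex hull of $gr(\phi)$, so that $\ln K(\Phi_{M\!L}) \leq M\ln\tan w + O(1)$.

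The remaining and most substantial step is a quantitative comparison between the width $w$ and the cross-ratio norm $||\phi||_{cr}$, specifically of the form
\[
\ln\tan w \leq C\,||\phi||_{cr} + O(1),
\]
or equivalently $\pi/2 - w \geq c\,e^{-C||\phi||_{cr}}$ for universal constants $c,C>0$. To prove it I would take a timelike segment in the convex hull of $gr(\phi)$ whose length nearly realizes $w$, together with the two spacelike supporting planes at its endpoints, and read off from their boundary traces on $\partial_\infty\AdS^3 = \RP^1\times\RP^1$ a quadruple $Q\subset\RP^1$ for which $\bigl|\ln|cr(\phi(Q))|\bigr|$ grows like $\ln\tan w$; a preliminary normalization using the $\PSL(2,\R)\times\PSL(2,\R)$ action reduces the claim to an explicit computation. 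This width-versus-cross-ratio estimate is the main obstacle: while \cite{bon_schl} gives the qualitative statement that $w<\pi/2$ if and only if $\phi$ is quasisymmetric, producing the asymptotic rate with the correct exponential dependence requires a genuinely new geometric argument at infinity. Granted this bound, chaining the inequalities from the previous two paragraphs yields $\ln K(\Phi_{M\!L}) \leq C_2\,||\phi||_{cr}$ for all $||\phi||_{cr}>\Delta$, completing the proof.
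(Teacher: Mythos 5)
Your proposal follows the paper's proof essentially step by step: Proposition \ref{estimate principal curvatures and qc coefficient} gives $\ln K(\Phi_{M\!L}) = 2\ln\frac{1+||\lambda||_\infty}{1-||\lambda||_\infty}$, Theorem \ref{theorem width lambda big} converts this to $M\ln\tan w + O(1)$ for $||\lambda||_\infty\geq\delta$, and the final width-to-cross-ratio estimate you isolate as the "main obstacle" is exactly the paper's Proposition \ref{estimate width cross ratio norm} ($\tan w\leq\sinh(||\phi||_{cr}/2)$), which the paper proves by the very argument you sketch (normalize by $\PSL(2,\R)\times\PSL(2,\R)$, take two near-optimal support planes of the convex hull, and extract from their boundary traces a symmetric quadruple witnessing large cross-ratio distortion). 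Your handling of the small-$||\lambda||_\infty$ case by enlarging $\Delta$ also matches the paper's remark, so the argument is correct and structurally identical to the original.
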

 
Using Proposition \ref{estimate principal curvatures and width ads reverse} we also obtain an inequality in the converse direction, which holds for quasisymmetric homeomorphisms with small cross-ratio norm and shows that Theorem \ref{minimal lag extension} is not improvable from a qualitative point of view.
 
\begin{thmx} \label{minimal lag extension below}
There exist universal constants $\delta$ and $C_0$ such that, for any quasisymmetric homeomorphism $\phi$ of $\RP^1$ with cross ratio norm $||\phi||_{cr}<\delta$, the minimal Lagrangian extension $\Phi:\Hyp^2\rar\Hyp^2$ has maximal dilatation  bounded by: $$C_0||\phi||_{cr}\leq \ln K(\Phi_{M\!L})\,.$$
The constant $C_0$ can be taken arbitrarily close to $1/2$.
\end{thmx}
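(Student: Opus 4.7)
The plan is to chain three estimates: from $||\phi||_{cr}$ to $w$, from $w$ to $||\lambda||_\infty$, and from $||\lambda||_\infty$ to $\ln K(\Phi_{M\!L})$. Two of the three links are already at hand: Proposition \ref{estimate principal curvatures and width ads reverse} supplies the middle one, and the Krasnov--Schlenker formula \cite{Schlenker-Krasnov} relating the shape operator $B$ of the maximal surface to the differential of $\Phi_{M\!L}$ supplies the last one.

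Concretely, by \cite{Schlenker-Krasnov} the differential of $\Phi_{M\!L}$ at each point is expressed in terms of $B$ and the complex structure $J$ of $S$, so that the pointwise dilatation is a monotonically increasing function of $|\lambda(x)|$; a direct computation in an orthonormal frame gives $\ln K(\Phi_{M\!L})=2\arctanh(||\lambda||_\infty)$ (or an equivalent closed-form expression). On the other hand, solving Proposition \ref{estimate principal curvatures and width ads reverse} for $||\lambda||_\infty$ by means of the half-angle identity $2\tan(w/2)/(1-\tan^2(w/2))=\tan w$ yields $||\lambda||_\infty\geq\tan(w/2)$. Combining,
\[
\ln K(\Phi_{M\!L}) \;\geq\; 2\arctanh(\tan(w/2)) \;=\; w + O(w^3) \quad \text{as } w\to 0.
\]

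The missing third link is an asymptotic lower bound of the form $w\geq \tfrac{1}{2}||\phi||_{cr}(1-o(1))$. The natural strategy is to pick a quadruple $Q=(p_1,p_2,p_3,p_4)\subset\RP^1$ with $cr(Q)=-1$ and $|\ln|cr(\phi(Q))||$ arbitrarily close to $||\phi||_{cr}$, and to exhibit two points in the convex hull of $gr(\phi)\subset\partial_\infty\AdS^3$ separated by a timelike arc whose length is asymptotically $\tfrac{1}{2}||\phi||_{cr}$. Concrete candidates are points on the two spacelike geodesics $\ell_{13}$, $\ell_{24}$ of $\AdS^3$ joining the opposite pairs of boundary points $(p_1,\phi(p_1)),(p_3,\phi(p_3))$ and $(p_2,\phi(p_2)),(p_4,\phi(p_4))$: when $\phi$ is a Möbius transformation these two geodesics are dual and cross orthogonally, whereas a small quasisymmetric perturbation pulls them apart in the timelike direction by a quantity computable from $\ln|cr(\phi(Q))|$, with leading coefficient $1/2$ in a convenient affine model of $\AdS^3$.

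Concatenating the three bounds,
\[
\ln K(\Phi_{M\!L}) \;\geq\; (1-o(1))\,w \;\geq\; \left(\tfrac{1}{2}-o(1)\right)||\phi||_{cr} \quad \text{as } ||\phi||_{cr}\to 0,
\]
so for every $C_0<1/2$ there is a threshold $\delta>0$ below which the inequality holds. The main obstacle is the third step: one must show that the correct leading constant is $1/2$ and not something smaller, since any slack at this stage propagates directly to $C_0$. This requires both identifying an optimal timelike arc inside the convex hull and sharpening, to leading order, the dictionary between cross-ratios of boundary quadruples and timelike distances in $\AdS^3$.
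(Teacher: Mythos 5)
Your three-step chain is exactly the paper's strategy, and the bound you describe as ``missing'' is in fact Proposition \ref{estimate width cross ratio norm below}, already proved in Section~\ref{sec cross ratio width} by essentially the construction you sketch: renormalize a symmetric quadruple so that the four boundary points have an order-two rotational symmetry about the timelike axis, form the two spacelike geodesics through opposite pairs, and measure their common orthogonal timelike segment. However, you have two compensating factor-of-two errors. First, Proposition \ref{estimate principal curvatures and qc coefficient} gives
\[
K(\Pml)=\left(\frac{1+||\lambda||_\infty}{1-||\lambda||_\infty}\right)^2,
\qquad\text{hence}\qquad
\ln K(\Pml)=4\,\arctanh\bigl(||\lambda||_\infty\bigr),
\]
not $2\arctanh(||\lambda||_\infty)$. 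Second, Proposition \ref{estimate width cross ratio norm below} yields $\tanh(||\phi||_{cr}/4)\leq\tan w$, so $w\geq\tfrac14||\phi||_{cr}(1-o(1))$ as $||\phi||_{cr}\to 0$, not $w\geq\tfrac12||\phi||_{cr}(1-o(1))$: the leading coefficient in the width-to-cross-ratio dictionary is $1/4$, not $1/2$. With the correct constants the chain reads
\[
\ln K(\Pml)\;\geq\; 4\,\arctanh\bigl(\tan(w/2)\bigr)\;=\;2w+O(w^3)\;\geq\;\tfrac12\,||\phi||_{cr}\,(1-o(1)),
\]
so your conclusion $C_0\to 1/2$ is right, but only because the two slips cancel: keeping $2\arctanh$ with the true $1/4$ coefficient would give only $C_0\to 1/4$, and correcting to $4\arctanh$ while keeping your hoped-for $1/2$ coefficient would overshoot to $C_0\to 1$. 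Fix both numerical steps; once you do, the argument coincides with the paper's, which simply concatenates Propositions \ref{estimate width cross ratio norm below}, \ref{estimate principal curvatures and width ads reverse}, and \ref{estimate principal curvatures and qc coefficient} and verifies that the composite bound on $||\phi||_{cr}$ as a function of $K$ has derivative $2$ at $K=1$.
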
 

Finally, from Theorem \ref{minimal lag extension} and Theorem \ref{minimal lag extension large} we will derive the following corollary.
 
\begin{corx} \label{corollary extension}
There exists a universal constant $C$ such that, for any quasisymmetric homeomorphism $\phi$ of $\RP^1$, the minimal Lagrangian extension $\Phi_{M\!L}:\Hyp^2\rar\Hyp^2$ has maximal dilatation $K(\Phi_{M\!L})$ bounded by: $$\ln K(\Phi_{M\!L})\leq C||\phi||_{cr}\,.$$
\end{corx}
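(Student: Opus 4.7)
The plan is to split the argument according to the size of $||\phi||_{cr}$, treating the extremes by the two preceding theorems and the intermediate regime by a uniform bound. Let $\delta, C_1$ be the constants from Theorem \ref{minimal lag extension} and $\Delta, C_2$ those from Theorem \ref{minimal lag extension large}; we may assume $\delta<\Delta$, otherwise the two theorems already cover every value of $||\phi||_{cr}$. For $||\phi||_{cr}<\delta$ the desired bound is given by Theorem \ref{minimal lag extension}, and for $||\phi||_{cr}>\Delta$ it is given by Theorem \ref{minimal lag extension large}. It remains to handle the intermediate range $\delta\leq ||\phi||_{cr}\leq \Delta$: once a universal constant $M_0$ with $\ln K(\Phi_{M\!L})\leq M_0$ in this range is exhibited, the inequality $\ln K(\Phi_{M\!L})\leq M_0\leq (M_0/\delta)||\phi||_{cr}$ closes the argument with $C=\max\{C_1,C_2,M_0/\delta\}$.

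To produce such an $M_0$, I would chain together the quantitative correspondences assembled earlier in the paper. First, $||\phi||_{cr}\leq \Delta$ should force the width $w$ of the convex hull of $gr(\phi)$ to satisfy $w\leq w_0 < \pi/2$ for some $w_0=w_0(\Delta)$, via the quantitative relationship between cross-ratio norm and width developed in the paper. Second, the contrapositive of Theorem \ref{theorem width lambda big} then gives $||\lambda||_\infty \leq \lambda_0<1$ on the maximal disc $S$ with $\partial_\infty S=gr(\phi)$, with $\lambda_0$ depending only on $w_0$. Finally, the formula from \cite{Schlenker-Krasnov} relating the differential of $\Phi_{M\!L}$ to the shape operator of $S$ translates $||\lambda||_\infty\leq \lambda_0$ into a uniform upper bound on the maximal dilatation of $\Phi_{M\!L}$, which is the required $M_0$.

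The main obstacle is precisely the first implication, namely the quantitative control of the width of the convex hull by the cross-ratio norm. Without such control it is conceivable in principle that $K(\Phi_{M\!L})$ blows up while $||\phi||_{cr}$ remains bounded, and the whole strategy collapses. If this inequality is not explicitly available at this point of the paper, the intermediate range can alternatively be covered by a compactness argument: modulo the $\PSL(2,\R)\times \PSL(2,\R)$ action on $\partial_\infty \AdS^3$, the set of $\phi$ with $||\phi||_{cr}\in[\delta,\Delta]$ is compact, and the continuous dependence of the unique maximal disc, and hence of $\Phi_{M\!L}$, on $\phi$ would force $\ln K(\Phi_{M\!L})$ to attain a maximum $M_0$ on this range.
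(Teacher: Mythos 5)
Your primary route is correct and genuinely different from the paper's: chaining Proposition \ref{estimate width cross ratio norm} ($\tan w \leq \sinh(||\phi||_{cr}/2)\leq\sinh(\Delta/2)=:T_0$), the contrapositive of Theorem \ref{theorem width lambda big} (giving $||\lambda||_\infty\leq\max\{\delta,1-T_0^{-M}\}<1$), and Proposition \ref{estimate principal curvatures and qc coefficient} yields the uniform bound $M_0$ on the middle range, with fully explicit constants. The "main obstacle" you worry about is not one: the quantitative control of width by cross-ratio norm you need \emph{is} explicitly available as Proposition \ref{estimate width cross ratio norm}, so the hedging in your final paragraph can be removed. What the paper actually does is closer to your fallback: a compactness argument by contradiction. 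If $\delta\leq||\phi_n||_{cr}\leq\Delta$ but $K_n\to\infty$, it picks points $x_n$ where the principal curvature tends to $1$, normalizes by isometries so the tangent data are fixed, extracts a limiting maximal surface $S_\infty$ via Lemma \ref{lemma bon schl} and a limiting quasisymmetric $\phi_\infty$ via Theorem \ref{Compactness property of quasisymm homeo}, and concludes $S_\infty$ is horospherical by Lemma \ref{lemma flat horo}, contradicting $||\phi_\infty||_{cr}<\infty$. Your version of the fallback is a bit loose: the set of $\phi$ with $||\phi||_{cr}\in[\delta,\Delta]$ modulo the $\PSL(2,\R)\times\PSL(2,\R)$ action is not simply compact (one must rule out degeneration to a constant map, which the paper handles via the non-degenerate convergence of the maximal surfaces and Remark \ref{remark compactness}), and "continuous dependence of $\Phi_{M\!L}$ on $\phi$" is not a statement the paper establishes directly, so the clean form of that argument is the contradiction-plus-rigidity one the paper gives. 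In short: your explicit chaining is arguably cleaner and more quantitative; the paper's compactness route avoids tracking constants but requires the rigidity lemma and careful extraction of limits.
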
 
 
Corollary \ref{corollary extension} is therefore a result for minimal Lagrangian extensions comparable to what has been proved for Beurling-Ahlfors and Douady-Earle extensions.

\subsection*{From the width to the cross-ratio norm} \addtocounter{subsection}{1}

The bridge from Theorem \ref{estimate principal curvatures and width ads} to Theorem \ref{minimal lag extension}, and from Theorem \ref{theorem width lambda big} to Theorem \ref{minimal lag extension large}, is twofold. The first aspect is a direct relation between the principal curvatures of a maximal surface $S$ and the quasiconformal dilatation of the minimal Lagrangian extension at the corresponding point. This is proved in Proposition \ref{estimate principal curvatures and qc coefficient} by using a formula of \cite{Schlenker-Krasnov}, and has as a consequence that: 
$$
K(\Pml)=\left(\frac{1+||\lambda||_\infty}{1- ||\lambda||_\infty}\right)^2\,.
$$

On the other hand, the step from the width to the cross-ratio norm is more subtle. This is the content of the following proposition:

\begin{propx} \label{estimate width cross ratio norm}
Given any quasisymmetric homeomorphism $\phi$ of $\RP^1$, let $w$ be the width of the convex hull of the graph of $\phi$ in $\partial_\infty\AdS^3$. Then
$$\tan w\leq \sinh{\left(\frac{||\phi||_{cr}}{2}\right)}\,.$$
\end{propx}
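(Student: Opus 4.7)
The approach is to extract, from a near-extremal timelike geodesic arc in the convex hull of $gr(\phi)$, a pair of support spacelike planes whose configuration forces substantial cross-ratio distortion of $\phi$ on a suitable quadruple of points in $\RP^1$.

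Fix $\epsilon>0$. By the definition of $w$ as a supremum, pick a timelike geodesic arc $\gamma\subset\mathrm{Conv}(gr(\phi))$ of length greater than $w-\epsilon$, with past and future endpoints $p_\pm$ on the boundary of the convex hull. Let $P_\pm$ be spacelike totally geodesic support planes at $p_\pm$, orthogonal to $\gamma$, with $\partial_\infty P_\pm=gr(\alpha_\pm)$ for Möbius maps $\alpha_\pm\in\PSL(2,\R)$. Using the $\PSL(2,\R)\times\PSL(2,\R)$-action of isometries of $\AdS^3$, normalize so that $\alpha_-=\id$; then, in the identification of $\AdS^3$ with $\PSL(2,\R)$ endowed with its bi-invariant Killing metric, the timelike distance $w'\geq w-\epsilon$ between the normalized planes forces $\alpha_+$ to be an elliptic element, which (up to further conjugation by the stabilizer of $P_{\id}$) can be written as the Möbius map $x\mapsto(x\cos w'+\sin w')/(\cos w'-x\sin w')$ fixing $i\in\Hyp^2$. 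In the angular coordinate $\theta$ on $\RP^1$ defined by $x=\tan\theta$, this is the translation $\theta\mapsto\theta+w'$.

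Since $P_\pm$ support $\mathrm{Conv}(gr(\phi))$, the graph $gr(\phi)\subset\RP^1\times\RP^1$ is trapped between $gr(\alpha_-)$ and $gr(\alpha_+)$; equivalently, writing $\tilde\phi(\theta)=\theta+f(\theta)$ in angular coordinates, one has $0\leq f\leq w'$. The support condition moreover forces $gr(\phi)$ to exhibit cyclically alternating tangencies with $gr(\alpha_\pm)$, i.e.\ points where $f=0$ interleaved with points where $f=w'$. Selecting four alternating tangency angles $\theta_1<\theta_3<\theta_2<\theta_4$ with $f(\theta_1)=f(\theta_2)=0$ and $f(\theta_3)=f(\theta_4)=w'$, and combining the residual rotational symmetry $\theta\mapsto\theta+c$ with a continuity/intermediate-value argument on the choice of $\gamma$ among timelike arcs of length at least $w-\epsilon$, one arranges the quadruple $Q=(\tan\theta_1,\tan\theta_2,\tan\theta_3,\tan\theta_4)$ to satisfy $cr(Q)=-1$.

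Finally, substituting $\tilde\phi(\theta_j)=\theta_j$ for $j=1,2$ and $\tilde\phi(\theta_j)=\theta_j+w'$ for $j=3,4$ into
\[
cr(\tan\theta_1,\tan\theta_2,\tan\theta_3,\tan\theta_4)=\frac{\sin(\theta_1-\theta_3)\sin(\theta_2-\theta_4)}{\sin(\theta_1-\theta_4)\sin(\theta_2-\theta_3)},
\]
together with the constraint $cr(Q)=-1$, reduces the proof to an elementary trigonometric identity yielding $|\ln|cr(\phi(Q))||\geq 2\arcsinh(\tan w')$. Hence $\|\phi\|_{cr}\geq 2\arcsinh(\tan(w-\epsilon))$; letting $\epsilon\to 0$ gives $\tan w\leq\sinh(\|\phi\|_{cr}/2)$. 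The principal obstacle is the cross-ratio normalization $cr(Q)=-1$ in the selection of the tangency 4-tuple: because the residual symmetries of the normalized setup preserve cross-ratios, this step genuinely requires the perturbation of $\gamma$ within the family of near-extremal timelike arcs, and care must be taken so that the perturbation preserves the tangency alternation.
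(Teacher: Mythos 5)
The overall strategy here is close in spirit to the paper's: extract a near-extremal timelike arc, take support planes at its endpoints, read off four points at infinity, and estimate the cross-ratio distortion of $\phi$ on the resulting quadruple. The final computation would give precisely $\tan w\leq\sinh(\|\phi\|_{cr}/2)$ (note that $2\arcsinh(\tan w)=\ln\bigl((1+\sin w)/(1-\sin w)\bigr)$, which matches the paper's estimate). However, there are two genuine gaps that the paper is structured to avoid, and you should not treat them as mere technical inconveniences.

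First, the claim that the support condition ``forces $gr(\phi)$ to exhibit cyclically alternating tangencies with $gr(\alpha_\pm)$'' is neither justified nor, in general, true as stated. If the foot $p$ of the extremal segment happens to lie on a geodesic leaf of $\partial_-\mathcal{C}$ close to one of its ideal endpoints, the two tangency directions $\eta_1,\eta_2$ in $\partial_\infty(P_-)$ can be close together, and nothing then forces the tangency points of $P_+$ to interleave them. The paper obtains a usable relative position not from alternation but from the sector constraint (Sublemma~\ref{lemma sector}, a consequence of $P_-$ and $P_+$ being disjoint) \emph{combined with} the fact that $p$ lies in the convex envelope of at least three tangency directions $\eta_1,\eta_2,\eta_3$ (since $\partial_-\mathcal{C}$ is pleated and the extremal segment is orthogonal to a support plane at $p$); Sublemma~\ref{sublemma eucl} then produces two tangency points of $P_+$ in \emph{different} components of the complement of the three sectors. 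This is weaker than cyclic alternation but sufficient. Note also that the orthogonality of $\gamma$ to the pleated surfaces at $p_\pm$ --- needed to guarantee that the planes orthogonal to $\gamma$ at $p_\pm$ are actually support planes --- must be argued (as in Remark~\ref{discussion width}), not assumed.

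Second, the normalization $cr(Q)=-1$ by a continuity argument in $\gamma$ is, as you yourself flag, the soft spot: the tangency structure can change discontinuously as $\gamma$ moves, and the residual symmetries of the normalized picture preserve cross-ratios, so no intermediate-value argument along those lines is available. The paper sidesteps this entirely: it only uses two tangency points of $P_-$ and \emph{one} tangency point $\xi$ of $P_+$, then chooses the fourth point $\xi''$ freely on $gr(\phi)$ (not required to be a tangency point) so that the quadruple is symmetric, and uses only the weaker trapping inequality $\theta''_r\leq\theta''_l$ for $\xi''$. The inequalities \eqref{stimare cross ratio norm 2} and \eqref{stimare cross ratio norm 3} then replace what you hoped to obtain from a fourth tangency. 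A further difference, minor by comparison, is that the paper avoids the $w-\epsilon$ bookkeeping by applying the compactness result for $k$-quasisymmetric maps (Theorem~\ref{Compactness property of quasisymm homeo}) to pass to a limit $\phi_\infty$ realizing the width exactly.
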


By means of these two relations and some computations, Theorem \ref{minimal lag extension} and Theorem \ref{minimal lag extension large} are proved on the base of Theorem \ref{estimate principal curvatures and width ads} and Theorem \ref{theorem width lambda big}.

To prove Proposition \ref{estimate width cross ratio norm}, assuming that the width is $w$, we will essentially find two support planes $P_-$ and $P_+$ for the convex hull of $gr(\phi)$, on the two different sides of the convex hull, such that  $P_-$ and $P_+$ are connected by a timelike geodesic segment of length $w$. We will use the fact that the boundaries of the convex hull are pleated surfaces in order to pick four points in $\partial_\infty \AdS^3$ - two in the boundary at infinity $\partial_\infty P_-$ and the other two in $\partial_\infty P_+$ - and use such four points to show that the cross-ratio norm of $\phi$ is large. Turning this qualitative picture into quantitative estimates, leading to the proof of Proposition \ref{estimate width cross ratio norm}, involves careful and somehow technical constructions in Anti-de Sitter space.
%The details of the proof of Proposition \ref{estimate width cross ratio norm} are long, with several technicalities and computations. 
%Moreover, since the width is only defined as a supremum, to apply the above argument we first renormalize in a convenient position, by post-composing with isometries, and apply a compactness theorem for quasisymmetric homeomorphisms.

By using similar techniques, we will also prove an inequality in the converse direction, which is the content of the following proposition.
\begin{propx}  \label{estimate width cross ratio norm below}
Given any quasisymmetric homeomorphism $\phi$ of $\RP^1$, let $w$ the width of the convex hull of the graph of $\phi$ in $\partial_\infty\AdS^3$. Then
$$\tanh\left(\frac{||\phi||_{cr}}{4}\right)\leq\tan w\,.$$
\end{propx}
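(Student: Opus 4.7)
The plan is to reverse the strategy of Proposition~\ref{estimate width cross ratio norm}: rather than extracting a large cross-ratio from two support planes realising the width, I would start from a quadruple nearly realising the cross-ratio norm and construct a long timelike geodesic segment of the convex hull from it.

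Fix $\epsilon>0$ and pick cyclically ordered points $p_1,p_2,p_3,p_4\in\RP^1$ with $cr(p_1,p_2,p_3,p_4)=-1$ such that, setting $q_i:=\phi(p_i)$ and $L:=\bigl|\ln|cr(q_1,q_2,q_3,q_4)|\bigr|$, one has $L\geq||\phi||_{cr}-\epsilon$. Let $P_i:=(p_i,q_i)\in\partial_\infty\AdS^3=\RP^1\times\RP^1$; these four points lie on $gr(\phi)$. Since $\phi$ is orientation-preserving, the $p_i$ and the $q_i$ are both cyclically ordered and pairwise distinct, so each pair $(P_i,P_j)$ bounds a unique spacelike geodesic of $\AdS^3$. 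Let $\gamma_{13}$, $\gamma_{24}$ denote the two spacelike geodesics joining the diagonal pairs $(P_1,P_3)$, $(P_2,P_4)$.

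Both $\gamma_{13}$ and $\gamma_{24}$ lie in the convex hull $\mathcal{C}(gr(\phi))$ by projective convexity. Moreover $(P_1,P_3)$ and $(P_2,P_4)$ separate each other cyclically on the achronal curve $gr(\phi)$, which forces the two geodesics to be linked; a linear-algebra argument in the quadric model $\AdS^3\subset\R^{2,2}$ then produces a unique timelike geodesic $\sigma$ meeting each of $\gamma_{13}$, $\gamma_{24}$ orthogonally at a single point. The short timelike arc of $\sigma$ between its intersections with $\gamma_{13}$ and $\gamma_{24}$ has its two endpoints in $\mathcal{C}(gr(\phi))$, and again by projective convexity lies in the hull; writing $\ell$ for its Lorentzian length, one therefore has $\ell\leq w$.

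It remains to identify $\tan\ell$ as an explicit function of $L$, which I expect to equal $\tanh(L/4)$. I would normalise by $(A,B)\in\PSL(2,\R)\times\PSL(2,\R)$ so that $(p_1,p_2,p_3,p_4)$ is a fixed harmonic quadruple on the left factor and $(q_1,q_2,q_3,q_4)$ an explicit one-parameter family on the right with $|cr|=e^{L}$. In the quadric model, the $P_i$ then have explicit null representatives in $\R^{2,2}$, $\gamma_{13}$ and $\gamma_{24}$ are defined by explicit Lorentzian $2$-planes, and the two foot points $x_1\in\sigma\cap\gamma_{13}$, $x_2\in\sigma\cap\gamma_{24}$ are unit timelike vectors of $\R^{2,2}$. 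The identity $\cos\ell=-\langle x_1,x_2\rangle$ reduces the problem to an inner-product calculation as a function of $e^{L/2}$, which should simplify to $\tan\ell=\tanh(L/4)$. Then $\tan w\geq\tan\ell=\tanh(L/4)\geq\tanh((||\phi||_{cr}-\epsilon)/4)$, and $\epsilon\to 0$ gives the claim.

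The main obstacle is the final bilinear-form computation; pinning down the precise factor in the argument of the hyperbolic tangent requires a careful choice of null representatives respecting the product identification $\partial_\infty\AdS^3=\RP^1\times\RP^1$ with the chosen cross-ratio convention. A secondary technical point is verifying existence, uniqueness, and causal type of the common perpendicular, which follows from the fact that the Lorentzian $2$-planes defining $\gamma_{13}$ and $\gamma_{24}$ are in generic position in $\R^{2,2}$ precisely when the boundary pairs are linked.
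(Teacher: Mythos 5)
Your proposal is essentially the paper's own proof: the paper also picks a symmetric quadruple $Q$ realising $|cr(\phi(Q))|=e^{k}$, lifts it to four points on $gr(\phi)\subset\partial_\infty\AdS^3$, takes the two diagonal spacelike geodesics (both in the convex hull), normalises so that their common orthogonal is the $z$-axis, and bounds the width from below by its timelike length. The only difference is cosmetic: the paper evaluates that length via the left/right-projection formula $\theta_r=\theta_l-2w'$ from Sublemma~\ref{lemma left right projection} rather than a direct $\langle x_1,x_2\rangle$ computation, arriving at the same $\tan w'=\tanh(k/4)$.
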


This inequality, however, is clearly not optimal, as the hyperbolic tangent tends to $1$ as $||\phi||_{cr}$ tends to infinity. Hence the inequality is interesting only for $w<\pi/4$. Nevertheless, this inequality  is used to obtain Theorem \ref{minimal lag extension below} from Proposition \ref{estimate principal curvatures and width ads reverse}. 
To prove Proposition \ref{estimate width cross ratio norm below}, we will assume the cross-ratio norm is $||\phi||_{cr}$ and - composing with M\"obius transformations in an appropriate way - construct a quadruple points in $\partial_\infty\AdS^3$. Then we consider two spacelike lines connecting two pairs of points at infinity chosen in the above quadruple. By construction, those two lines are contained in the convex hull of $gr(\phi)$, hence the maximal length of a timelike geodesic segment between them provides a bound from below on the width.

\subsection*{Outline of the main proofs} \addtocounter{subsection}{1}

Let us now give an outline of some technicalities involved in the proofs of Theorem \ref{estimate principal curvatures and width ads} and  Theorem \ref{theorem width lambda big}.

The starting point behind the proof of Theorem \ref{estimate principal curvatures and width ads} is the fact that a maximal surface $S$ with $\partial_\infty S=gr(\phi)$ in $\partial_\infty\AdS^3$ is contained in the convex hull of $gr(\phi)$. Using this fact, for every point $x\in S$ we find two timelike geodesic segments starting from $x$ and orthogonal to two planes $P_-$,$P_+$ which do not intersect the convex hull of $gr(\phi)$. The sum of the lengths of the two segments is less than the width $w$. Moreover $S$ is contained in the region bounded by $P_-$ and $P_+$.

Now the key step is to show that, heuristically, if $S$ is  contained in the region between two disjoint planes which are close to $x$, then the principal curvatures of $S$ in a neighborhood of $x$ cannot be too large. To make this statement precise, we will apply 
Schauder-type estimates to the linear equation
\begin{equation}
\Delta_S u-2u=0\,, \tag{\ref{lap 2u ads}}
\end{equation}
where $u:S\to\R$ is the function which measures the sine of the (signed, timelike) distance from the plane $P_-$, and $\Delta_S$ is the Laplace-Beltrami operator of $S$ (negative definite as an operator on $L^2$). 
Observe that an easy application of the maximum principle to Equation \eqref{lap 2u ads} proves that a maximal surface is necessarily contained in the convex hull. A more subtle study of \emph{a priori} bounds for this equation provides the key step for Theorem \ref{estimate principal curvatures and width ads}.

A technical point is that the operator $\Delta_S-2\mathrm{id}$ depends on the maximal surface, which will be overcame by using the uniform boundedness of the coefficients, written in normal coordinates, for a class of surfaces we are interested in. The precise statement we will use is the following:

\begin{propx} \label{schauder estimate ads}
There exists a radius $R>0$ and a constant $C>0$ such that for every choice of:
\begin{itemize}
\item A maximal surface $S$ of nonpositive curvature in $\AdS^3$ with $\partial_\infty S$ the graph of an orientation-preserving homeomorphism; 
\item A point $x\in S$;
\item A plane $P_-$ disjoint from $S$ with $d_{\AdS^3}(x,P_-)\leq \pi/4$,
\end{itemize}
the function $u(z)=\sin d_{\AdS^3}(\exp_x(z),P_-)$
 satisfies the Schauder-type inequality
\begin{equation*}
||u||_{C^2(B(0,\frac{R}{2}))}\leq C ||u||_{C^0(B(0,R))}\,.
\end{equation*} 
\end{propx}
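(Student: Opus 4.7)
The plan is to identify $u$ as the solution of an intrinsic linear elliptic PDE on $S$ with uniformly controlled coefficients, so that standard interior Schauder estimates apply.

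The first step is to derive the equation satisfied by $u$. Working in Fermi coordinates normal to the spacelike plane $P_-$, the $\AdS^3$ metric takes the form $g_{\AdS^3} = \cos^2\rho\, g_{\Hyp^2} - d\rho^2$, where $\rho$ is the signed timelike distance from $P_-$. A direct computation shows that $\mathrm{Hess}^{\AdS^3}(\sin\rho) = \sin\rho \cdot g_{\AdS^3}$ in the tubular region $|\rho|<\pi/2$. Tracing this identity with respect to the induced Riemannian metric of the spacelike surface $S$ and using the maximality of $S$ (so that the contribution coming from the mean-curvature vector vanishes) gives $\Delta_S u = 2 u$, which is equation (\ref{lap 2u ads}).

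The second step is the choice of a universal radius $R$. The Gauss equation combined with the bound $||\lambda||_\infty\leq 1$ shows that $S$ has sectional curvature in $[-1,0]$. Since $\partial_\infty S$ is a Jordan curve, $S$ is a topological disc, and its intrinsic metric is complete; hence Cartan--Hadamard makes $\exp_x$ a global diffeomorphism of $T_xS$ onto $S$, and Rauch comparison with $\Hyp^2$ ensures that on a ball $B(0,R)\subset T_xS$ of some fixed radius $R$ (say $R\leq 1/4$), the pulled-back metric $g_{ij}$ is uniformly comparable to the Euclidean metric, so that $\Delta_S$ is uniformly elliptic in normal coordinates. The smallness of $R$, combined with the hypothesis $d_{\AdS^3}(x,P_-)\leq \pi/4$, also guarantees that $\exp_x(B(0,R))$ remains in the region where $\sin d_{\AdS^3}(\cdot,P_-)$ is smooth, so that $u\in C^\infty(B(0,R))$.

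Finally, to apply Schauder one must bound the coefficients of $\Delta_S - 2\,\mathrm{id}$ uniformly in $C^{0,\alpha}$. The ellipticity constants and $C^0$ bounds on the Christoffel terms follow from the curvature bound above. For the $C^{0,\alpha}$ upgrade one uses the structure of maximal surfaces: together with the bounded second fundamental form, the Codazzi equations form a quasilinear elliptic system, so bootstrapping yields interior $C^k$ estimates on the immersion, hence on the induced metric $g_{ij}$ in normal coordinates, uniformly in $S$. Equivalently, one may pass to harmonic coordinates (\emph{à la} Jost--Karcher) where a curvature bound alone gives $C^{1,\alpha}$ control of $g_{ij}$. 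With these bounds, standard interior Schauder for the linear equation $\Delta_S u - 2u = 0$ yields
\[
||u||_{C^2(B(0,R/2))}\leq C\,||u||_{C^0(B(0,R))}
\]
with $R$ and $C$ universal. The main obstacle is precisely this uniform-regularity step: the a priori hypothesis is only a $C^0$ curvature bound on $S$, and one must genuinely use the structure of the maximal surface equation to upgrade it to the coefficient regularity required by Schauder; the rest is a routine application of classical linear theory.
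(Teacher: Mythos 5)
Your proposal takes a genuinely different route from the paper's. The paper argues by compactness: assuming the estimate fails, one finds a sequence $S_n$ of maximal surfaces along which the ratio of the $C^2$ to the $C^0$ norm of $u_n$ blows up; after normalizing by isometries of $\AdS^3$ (whose boundedness is guaranteed by the gradient/angle bound of Lemma \ref{boundedness angle}), Lemma \ref{lemma bon schl} gives $C^\infty$ subconvergence to a limiting maximal surface, so the coefficients of $\Delta_{S_n}$ in normal coordinates converge; classical Schauder for the limit operator then contradicts the blow-up. Your proposal is constructive: you propose to derive uniform $C^{0,\alpha}$ control of the coefficients of $\Delta_S-2\,\mathrm{id}$ directly, via the intrinsic curvature bound $K_S\in[-1,0]$ from the Gauss equation, Cartan--Hadamard, and either a bootstrap on the Codazzi/maximality system or harmonic coordinates. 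Conceptually this is sharper (it would in principle produce explicit constants), but it shifts all the weight onto the uniform coefficient-regularity step, which in your write-up is only gestured at.

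That step is precisely where the gap lies. A two-sided curvature bound gives $C^0$ control of $g_{ij}$ in geodesic normal coordinates, but interior Schauder for $\Delta_S u - 2u = 0$ written in non-divergence form needs $g_{ij}\in C^{1,\alpha}$ uniformly, and a curvature bound alone does not provide this in the $\exp_x$ chart. Your fix (ii), harmonic coordinates, does produce uniform $C^{1,\alpha}$ bounds on $g_{ij}$ (Cartan--Hadamard gives infinite injectivity radius, so Jost--Karcher applies), but then the Schauder estimate lives in the harmonic chart, and transporting a $C^2$ bound for $u$ back to the chart $\exp_x$ requires $C^2$ control of the transition map, which loops back to the regularity of $g_{ij}$ in normal coordinates. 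The clean way to salvage this is to note that what the paper ultimately extracts from this proposition is a bound on $\|\Hess\,u\|$ (hence on $B$ through Equation \eqref{hessian ads}), which is coordinate-free, so harmonic coordinates do suffice if the statement is reformulated invariantly. Your fix (i), the Codazzi bootstrap, is in spirit a proof of the compactness underlying Lemma \ref{lemma bon schl} and would need its own careful induction on regularity starting from a $C^{1,\alpha}$ bound on the immersion, which itself must be justified. So: the plan is sound in outline and correctly identifies the obstacle, but the uniform-regularity step is the crux and must actually be carried out; the paper's compactness argument is exactly engineered to avoid doing that work explicitly.
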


The techniques involved here are similar to those used, in the case of minimal surfaces in three-dimensional hyperbolic geometry, in \cite{seppiminimal}.

To conclude, we then use an explicit expression for the shape operator of the maximal surface $S$ in terms of the value of $u$, the first derivatives of $u$, and the second derivatives of $u$. Hence, using Proposition \ref{schauder estimate ads}, the principal curvatures are bounded in terms of the supremum of $u$ on a geodesic ball $B_S(x,R)$. The latter can be estimated in terms of the width $w$. However, in this last step it is necessary to control the size of the image of $B_S(x,R)$ under the projection to the plane $P_-$. To achieve this, a uniform gradient lemma is proved, to show that the maximal surface $S$ is not too ``tilted'' with respect to $P_-$.

Similarly, the key analytical point for the proof of Theorem \ref{theorem width lambda big} comes from an \emph{a priori} estimate. Consider the function 
$\chi:S\to [0,+\infty)$ defined by 
$\chi=-\ln\lambda$, where $\lambda$ is the positive principal curvature of $S$. It turns out that $\chi$ satisfies the equation 
\begin{equation}
\Delta_S\chi=2(1-e^{-2\chi})\,. \tag{\ref{quasi}}
\end{equation}
By an application of the maximum principle, one can prove that if a maximal surface of nonpositive curvature has principal curvatures $-1$ and $1$ at some point, than the principal curvatures are identically $-1$ and $1$. By a careful analysis of Equation \eqref{quasi}, we will prove a more quantitative result, which roughly speaking shows that if the principal curvatures are ``large'' (i.e. close to $1$) at some point, then they remain ``large'' on a ``large'' ball.

\begin{propx} \label{cor estimate francesco}
There exists a universal constant $M$ such that, for every maximal surface $S$ of nonpositive curvature in $\AdS^3$ and every pair of points $p,q\in S$,
$$1-\lambda(q)\leq e^{Md_S(p,q)}(1-\lambda(p))\,.$$
\end{propx}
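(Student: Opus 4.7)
My plan is to deduce Proposition \ref{cor estimate francesco} from an elliptic Harnack inequality for $\chi := -\ln\lambda$, viewed as a positive solution of $\Delta_S \chi = 2(1-e^{-2\chi})$. If $\chi \equiv 0$ (the horospherical case) the claim is trivial; otherwise, rewriting the equation as the linear equation $\Delta_S \chi = V \chi$ with $V := 2(1-e^{-2\chi})/\chi \in [0,4]$ (bounded because $1-e^{-2t} \le 2t$ for $t \ge 0$), the strong maximum principle forces $\chi > 0$ everywhere on $S$, so $\chi$ is a strictly positive solution of a linear second-order elliptic equation with a uniformly bounded zeroth-order coefficient.

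The core analytic step is a ball Harnack inequality: there exist universal constants $r_0, C > 0$ such that $\sup_{B_S(x, r_0)} \chi \le C \inf_{B_S(x, r_0)} \chi$ for every $x \in S$. This follows from the classical Harnack inequality for positive solutions of uniformly elliptic linear equations with bounded coefficients (Gilbarg--Trudinger). The universality of $r_0, C$ relies on uniform control of the intrinsic geometry of $S$: by the Gauss equation in $\AdS^3$, $K_S = -1 + \lambda^2 \in [-1,0]$; moreover $S$ is a maximal disc, hence simply connected and complete, and therefore Cartan--Hadamard with infinite injectivity radius. Rauch comparison then bounds the metric coefficients in a geodesic normal chart of fixed small radius uniformly, so the pulled-back equation on a Euclidean ball is uniformly elliptic with uniformly bounded coefficients, yielding a Harnack constant depending only on these universal bounds.

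From the ball Harnack the standard chain argument---covering a minimizing geodesic from $p$ to $q$ by $\lceil 2 d_S(p,q)/r_0\rceil + 1$ overlapping balls of radius $r_0$ and iterating the ball estimate---gives
$$\chi(q) \le e^{M d_S(p,q) + M_0}\chi(p)$$
for universal $M, M_0$. To translate from $\chi$ to $1-\lambda = 1 - e^{-\chi}$ I use the elementary inequality $1 - e^{-Ct} \le C(1-e^{-t})$ for $t \ge 0$ and $C \ge 1$, which follows from the fact that $C(1-e^{-t}) - (1-e^{-Ct})$ vanishes at $t = 0$ and has derivative $C(e^{-t} - e^{-Ct}) \ge 0$. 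Applying this with $t = \chi(p)$ and $C = e^{M d_S(p,q) + M_0}$ yields $1-\lambda(q) \le e^{M d_S(p,q) + M_0}\,(1-\lambda(p))$, and the additive constant $M_0$ is absorbed into a slightly larger exponent $M$.

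The main obstacle is guaranteeing that the Harnack constants are genuinely universal, independent of the particular maximal surface $S$. This reduces to the uniform geometric bounds above (namely $K_S \in [-1, 0]$ plus the Cartan--Hadamard property, which together with Rauch comparison give a uniform atlas of normal charts with controlled metric) together with the uniform bound $0 \le V \le 4$. All of these are automatic from the hypotheses: nonpositive curvature forces $|\lambda| \le 1$, hence $\chi \ge 0$ and $V \in [0,4]$; the Gauss equation in $\AdS^3$ gives $K_S \in [-1, 0]$; and simple-connectedness plus completeness of the maximal disc provide the Cartan--Hadamard structure.
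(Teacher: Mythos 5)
Your Harnack approach has a genuine gap at the umbilical points of $S$. The function $\chi=-\ln\lambda$ is only defined away from the zeros of $\lambda$, and at those zeros (umbilical points, which are isolated zeros of the holomorphic quadratic differential $\II$ and certainly occur in general) $\chi\to+\infty$. Your strong maximum principle argument rules out $\chi$ touching $0$ (i.e. $\lambda=1$, the horospherical degeneration), but does nothing about $\chi=+\infty$. A ball Harnack inequality $\sup_B\chi\leq C\inf_B\chi$ simply cannot hold on any ball whose closure meets an umbilical point, because the supremum is infinite while the infimum is not; more basically, $\chi$ is not even a weak solution of $\Delta_S\chi=V\chi$ across such a point (the distributional Laplacian picks up a negative Dirac mass there). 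So the chain of balls from $p$ to $q$ breaks down whenever the connecting geodesic comes within the fixed Harnack radius $r_0$ of an umbilical point, and the claimed conclusion $\chi(q)\leq e^{Md_S(p,q)+M_0}\chi(p)$ would even be visibly false if $q$ itself is umbilical. Notice also that the statement to be proved is perfectly meaningful at umbilical points (there it says $1\leq e^{Md_S(p,q)}(1-\lambda(p))$), so one cannot just ignore them.

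There is a second, quantitative defect: the chain argument produces $\chi(q)\leq C_0\,e^{Md_S(p,q)}\chi(p)$ with a fixed constant $C_0>1$ coming from the single-ball Harnack, and this constant cannot be absorbed into the exponent when $d_S(p,q)$ is small, since $e^{Md_S(p,q)}\to1$ while $C_0$ stays bounded away from $1$. The paper avoids both issues by working directly with $v=-\ln(1-\lambda)$, which equals $0$ (rather than $+\infty$) at umbilical points and is therefore globally defined and nonnegative, and by proving a \emph{pointwise} gradient bound $\|\grad v\|\leq M$ via a Bochner-type computation for $\|\grad v\|^2$ combined with a maximum-principle and compactness argument (Proposition \ref{prop gradient estimate}, with Lemma \ref{lm:grl} handling umbilical points separately). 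Integrating the gradient bound along a minimizing geodesic then gives the inequality exactly, with no additive or multiplicative slack. If you want to keep an elliptic-estimates flavor, the natural fix is a Cheng--Yau-type gradient estimate rather than a chain Harnack, but applied to a quantity (like $v$, or $\|\grad v\|^2$) that stays finite at umbilical points; applying it to $\chi$ does not work.
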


\noindent The proof of Proposition \ref{cor estimate francesco} %is obtained from a gradient estimate for the function $v(x)=-\ln(1-\lambda(x))$
is based on some estimates already proved jointly by Francesco Bonsante, Jean-Marc Schlenker and Mike Wolf (\cite{personalcomm}) in an unpublished work. The proof presented in this paper closely follows their arguments, which I was kindly transmitted and authorized to adapt to the purpose of this paper.

The strategy to prove Theorem \ref{theorem width lambda big} is then the following. 
Assume there exists a point $x_0$ with $\lambda(x_0)=1-e^{-v(x_0)}$ very close to $1$. We want to show that the width $w$ is very close to $\pi/2$. We will first show that the line of curvature of $S$ corresponding to the positive eigenvalue of $S$ remains, for a certain amount of time, in the concave side of an umbilical surface $U_{\lambda_1}$ tangent to $S$ at $x_0$, whose principal curvatures are both equal to $\lambda_1=1-e^{-v(x_0)/4}$. Such amount of time is finite (in a unit-speed parameterization of the line of curvature), but it can be arranged to tend to infinity as $\lambda(x_0)$ tends to $1$. This step is basically a maximum principle argument, but requires a technical point to show that the intrinsic acceleration of the line of curvature is also small, i.e. comparable to $e^{-v(x_0)}$.

The surface $U_{\lambda_1}$ is obtained as the surface at constant timelike distance $d_1$ from a totally geodesic plane. As $\lambda_1$ tends to $1$, $d_1$ tends to $\pi/4$. By following the line of curvature corresponding to the positive eigenvalue, in the two opposite directions from $x_0$, for a time as indicated in the above paragraph, we obtain two points $p_1$ and $p_2$. Moreover $p_1$ and $p_2$ converge to $\partial_\infty\AdS^3$ as $\lambda(x_0)\to 1$. Of course analogous statements hold for the two points $q_1,q_2$ obtained by following the line of curvature from $x_0$ corresponding to the negative eigenvalue.

After proving quantitative versions of the above statements, we can give a lower bound for the length of the timelike geodesic segment $\overline{r_1r_2}$ which maximizes the distance between the geodesic segments $\overline{p_1p_2}$ and $\overline{q_1q_2}$. Since $\overline{p_1p_2}$ and $\overline{q_1q_2}$ are contained in the convex hull of $S$, also $\overline{r_1r_2}$ is contained in the convex hull, and therefore the lower bound on the length of $\overline{r_1r_2}$ provides a lower bound for the width $w$, which only depends on $v(x_0)=-\ln(1-\lambda(x_0))$. The reason why the obtained estimate is efficient when $\lambda(x_0)$ approaches $1$ is that, in the limit configuration, the lines $\overline{p_1p_2}$ and $\overline{q_1q_2}$ tend to be in dual position: equivalently, in the limit every point of the first line is connected to every point of the second line by a geodesic timelike segment of length $\pi/2$. Thus as $\lambda(x_0)\to 1$, the surface $S$ is approaching a horospherical surface in a well-quantified fashion.

\subsection*{Organization of the paper}
In Section \ref{subsection Hyp AdS}, we introduce the necessary notions on Anti-de Sitter space, maximal surfaces and the width, we collect several results proved in \cite{bon_schl}, and finally we give some generalities on quasisymmetric homeomorphisms. 
In Section \ref{sec cross ratio width} we discuss the relation between the cross-ratio norm and the width in Anti-de Sitter space. The main result is Proposition \ref{estimate width cross ratio norm}. Section \ref{sec estimate small}
 proves Theorem \ref{estimate principal curvatures and width ads}, while Section \ref{sec estimate large} is devoted to the proof of Theorem \ref{theorem width lambda big}. Finally 
in Section \ref{section universal} we introduce quasiconformal mappings and minimal Lagrangian extensions, and we prove Theorem \ref{minimal lag extension}, Theorem \ref{minimal lag extension large}, Proposition \ref{minimal lag extension below} and Corollary \ref{corollary extension}.

\subsection*{Acknowledgements}
I am very grateful to Francesco Bonsante, Jean-Marc Schlenker and Mike Wolf for their interest in this work since my first stay at the University of Luxembourg in Spring 2014, for many discussions and advices, and particularly for suggesting (and permitting) that I use in the proof of Proposition \ref{cor estimate francesco} some crucial estimates arisen from a former collaboration of theirs.

Moreover, I would like to thank Dragomir \v{S}ari\'c and Jun Hu for replying to ad-hoc questions about Teichm\"uller theory in several occasions. Finally, I thank an anonymous referee for several useful comments.

%---------------------------

\section{Anti-de Sitter space and maximal surfaces} \label{subsection Hyp AdS}

Anti-de Sitter space $\AdS^3$ is a pseudo-Riemannian manifold of signature $(2,1)$ of constant curvature -1. Consider $\R^{2,2}$, the vector space $\R^4$ endowed with the bilinear form of signature (2,2):$$\langle x,y\rangle=x^1 y^1+x^2 y^2-x^3 y^3-x^4 y^4$$
and define $$\widehat{\AdS^3}=\left\{x\in\R^{2,2}:\langle x,x\rangle=-1\right\}\,.$$
It turns out that $\wAdS$ is connected, time-orientable and has the topology of a solid torus. We define Anti-de Sitter space to be the projective domain $$\AdS^3=\mathrm{P}(\left\{\langle x,x\rangle<0\right\})\subset\RP^3\,,$$
of which $\wAdS$ is a double cover. The pseudo-Riemannian metric induced on $\wAdS$ descends to a metric on $\AdS^3$ of constant curvature -1, again time-orientable, which will be denoted again by the product $\langle\cdot,\cdot\rangle$.

The tangent space of $\wAdS$ at a point $x$ is $T_x \wAdS\cong x^\perp$. 
Vectors in tangent spaces are classified according to their causal properties. In particular: 
$$v\in T_x \R^{2,1}\text{ is }\begin{cases}
\emph{timelike} &  \text{if }\langle v,v\rangle<0 \\
\emph{lightlike} & \text{if }\langle v,v\rangle=0 \\
\emph{spacelike} & \text{if }\langle v,v\rangle>0 
\end{cases}\,.$$
Hence, given a spacelike curve $\gamma:I\to\AdS^3$ (i.e. a differentiable curve whose tangent vector at every point is spacelike), we define the \emph{length} of $\gamma$
as $$\mathrm{length}(\gamma)=\int_I\sqrt{\langle\dot\gamma(t),\dot\gamma(t)\rangle}dt\,.$$
On the other hand, if $\gamma$ is a timelike curve, we still define its \emph{length}, as
$$\mathrm{length}(\gamma)=\int_I\sqrt{-\langle\dot\gamma(t),\dot\gamma(t)\rangle}dt\,.$$

We will fix once and forever a time orientation, so as to talk about \emph{future-directed} vectors and curves. Our convention is that the vector $(0,0,0,1)$ (based at the point $(0,0,1,0)$) is future-directed.

The group of isometries of $\wAdS$ which preserve the orientation and the time-orientation is  $\SO_+(2,2)$, namely the connected component of the identity in the group of linear isometries of $\R^{2,2}$. It follows that the group of orientation-preserving, time-preserving isometries of $\AdS^3$ is $\SO_+(2,2)/\left\{\pm\id\right\}$, and will be denoted simply by $\isom(\AdS^3)$.

Geodesics of $\wAdS$ are the intersection of $\wAdS$ with linear planes of $\R^{2,2}$. Therefore, geodesics of $\AdS^3$ are projective lines which intersect the projective domain $\AdS^3\subset\RP^3$. It is easy to see that a unit speed parameterization of a spacelike geodesic with initial point $p$ and initial tangent (spacelike) vector $v$ is:
\begin{equation} \label{geodetica1}
\gamma(t)=[\cosh(t)p+\sinh(t)v]\,,
\end{equation}
where the square brackets denote the class in $\AdS^3$ of a point of $\wAdS$. 
On the other hand, when $v$ is a timelike vector, the parameterization of the timelike geodesic is
\begin{equation} \label{geodetica2}
\gamma(t)=[\cos(t)p+\sin(t)v]\,.
\end{equation}
Hence timelike geodesics are closed and have length $\pi$. Equations \eqref{geodetica1} and \eqref{geodetica2} enable to derive immediately the formulae for the length of a spacelike geodesic segment:
\begin{equation} \label{lunghezza1}
\cosh(\mathrm{length}(\overline{pq}))=|\langle p,q\rangle|\,,
\end{equation}
while for a timelike geodesic segment one gets:
\begin{equation} \label{lunghezza2}
\cos(\mathrm{length}(\overline{pq}))=|\langle p,q\rangle|\,.
\end{equation}
Analogously, totally geodesic planes of $\AdS^3$ are projective planes and are the projection of the intersection of $\wAdS$ with three-dimensional linear subspaces of $\R^{2,2}$. There is a duality between totally geodesic planes and points of $\AdS^3$, which is given by associating to a point $x\in\AdS^3$ the \emph{dual plane} $P=x^\perp$. One defines
$$P=x^\perp\text{ is }\begin{cases}
\emph{timelike} &  \text{if }\langle x,x\rangle>0 \quad (\Leftrightarrow \text{the induced metric is Lorentzian}) \\
\emph{lightlike} & \text{if }\langle x,x\rangle=0  \quad ( \Leftrightarrow \text{the induced metric is degenerate})\\
\emph{spacelike} & \text{if }\langle x,x\rangle<0    \quad (\Leftrightarrow \text{the induced metric is Riemannian})
\end{cases}\,.$$
Spacelike totally geodesic planes are isometric to the hyperbolic plane $\Hyp^2$. Using Equation \eqref{lunghezza2}, it is easy to check that the dual plane of a point $x$ coincides with
$$x^\perp=\{\gamma(\pi/2)|\gamma:[0,\pi]\to\AdS^3\text{ is a timelike geodesic },\gamma(0)=\gamma(\pi)=x\}.$$

In the affine chart $\left\{x_4\neq 0\right\}$, $\AdS^3$ fills the domain $\left\{x^2+y^2<1+z^2\right\}$, interior of a one-sheeted hyperboloid; however $\AdS^3$ is not contained in a single affine chart, hence in this description we are missing a totally geodesic plane at infinity $P_\infty$, which is the dual plane of the origin. Since geodesics in $\AdS^3$ are intersections of $\AdS^3$ with linear planes in $\R^{2,2}$, in the affine chart geodesics are represented by straight lines. See Figure \ref{fig:lightcone} for a picture in the affine chart $\left\{x_4\neq 0\right\}$.

\begin{figure}[htbp]
\centering
\begin{minipage}[c]{.45\textwidth}
\centering
\includegraphics[height=5.5cm]{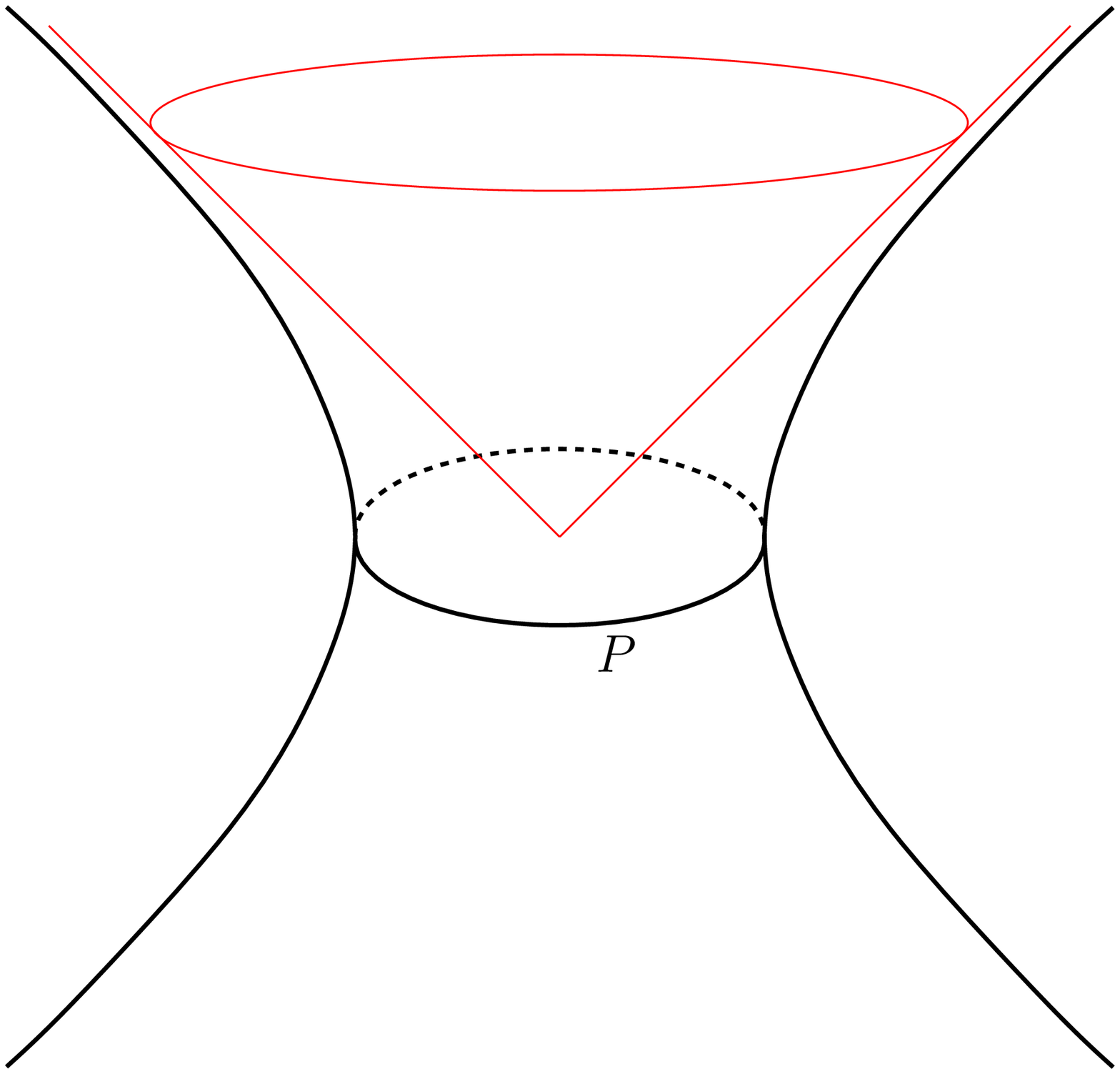} 
%\captionsetup{labelformat=empty}
\caption{The lightcone of future null geodesic rays from a point and a totally geodesic spacelike plane $P$.} \label{fig:lightcone}
\end{minipage}%
\hspace{5mm}
\begin{minipage}[c]{.45\textwidth}
\centering
\includegraphics[height=5.5cm]{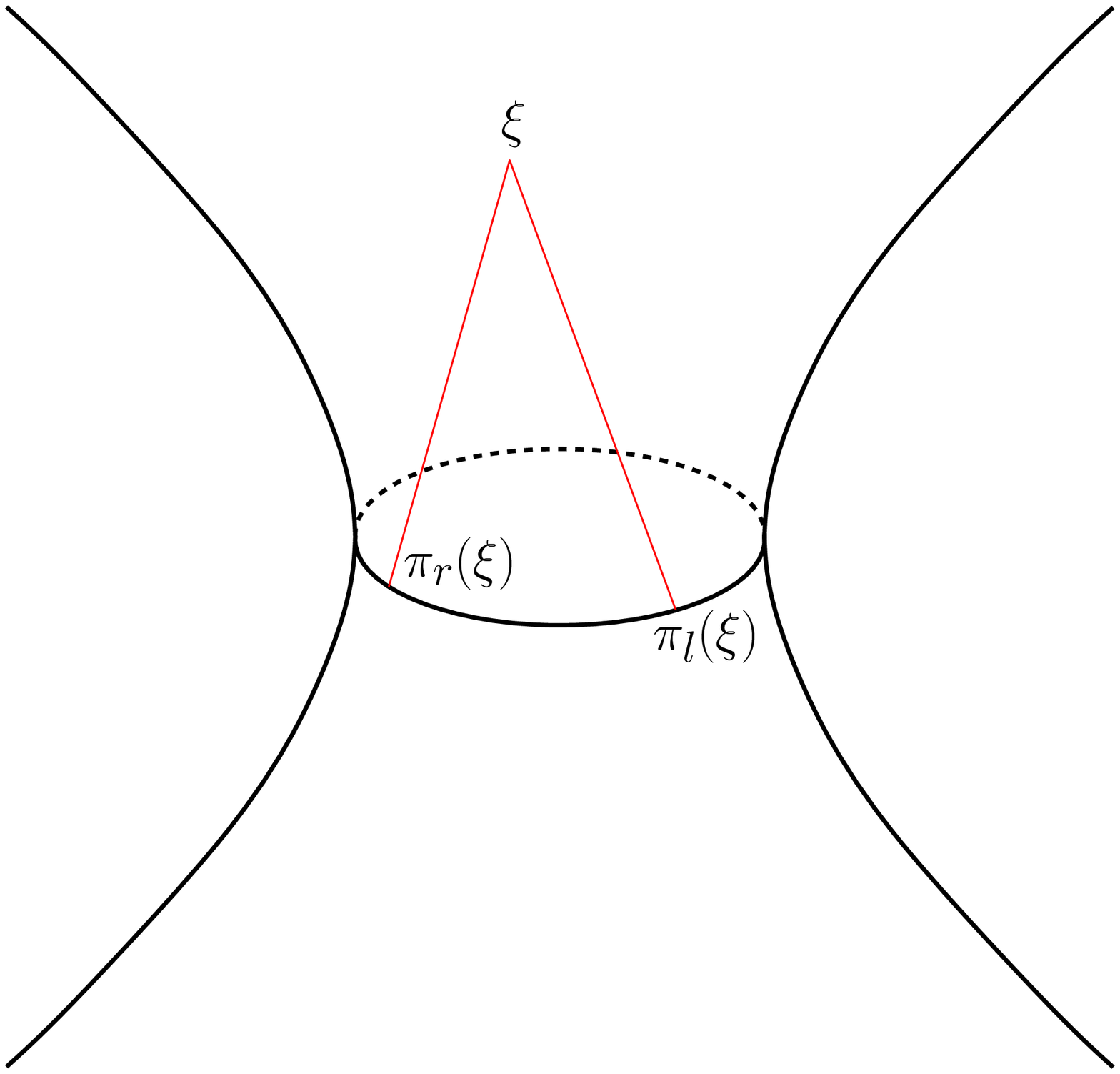}
%\captionsetup{labelformat=empty}
\caption{Left and right projection from a point $\xi\in\partial_\infty\AdS^3$ to the plane $P=\left\{x_3=0\right\}$} \label{fig:projections}
\end{minipage}
\end{figure}

The boundary at infinity of $\AdS^3$ is defined as the topological frontier of $\AdS^3$ in $\RP^3$, namely the doubly ruled quadric  $$\partial_\infty\AdS^3=\mathrm{P}(\left\{\langle x,x\rangle=0\right\})\,.$$  It is naturally endowed with a conformal Lorentzian structure, for which the null lines are precisely the left and right ruling. Given a spacelike plane $P$, which we recall is obtained as intersection of $\AdS^3$ with a linear hyperplane of $\RP^3$ and is a copy of $\Hyp^2$, $P$ has a natural boundary at infinity $\partial_\infty(P)$ which coincides with the usual boundary at infinity of $\Hyp^2$. Moreover, $P$ intersects each line in the left or right ruling in exactly one point. If a spacelike plane $P$ is chosen, $\partial_\infty\AdS^3$ can be identified with $\partial_\infty \Hyp^2\times \partial_\infty \Hyp^2$ by means of the following description: $\xi\in\partial_\infty\AdS^3$ corresponds to $(\pi_l(\xi),\pi_r(\xi))$, where $\pi_l$ and $\pi_r$ are the projection to $\partial_\infty(P)$ following the left and right ruling respectively (compare Figure \ref{fig:projections}). Under this identification, the isometry group of $\AdS^3$ acts on $\partial_\infty \AdS^3$ by projective transformations, and it turns out that
  $$\isom(\AdS^3)\cong\PSL(2,\R)\times\PSL(2,\R)\,.$$
  
Given an orientation-preserving homeomorphism $\phi:\partial_\infty \Hyp^2\rar \partial_\infty \Hyp^2$, by means of this identification, the graph of $\phi$ can be thought of as a curve in $\partial_\infty\AdS^3$, denoted simply by $gr(\phi)$.

%We choose a suitable coordinate system for $\PSL(2,\R)$: write a matrix $M$ as
%$$M=\begin{pmatrix} x^1+x^3 & x^2-x^4 \\ x^2+x^4 & -x^1+x^3 \end{pmatrix}.$$
%The quadratic form $$\det M=-(x^1)^2-(x^2)^2+(x^3)^2+(x^4)^2$$
%identifies $\PSL(2,\R)=P(\left\{M:\det M=1\right\})$ with $\AdS^3$. The boundary at infinity $\partial_\infty\AdS^3$ consists of matrices of rank 0, hence given two elements $V=[a:b],W=[c:d]\in \R P^1$, the corresponding element of $\partial_\infty\AdS^3$ is
%$$V^T W=\begin{bmatrix} a \\ b \end{bmatrix}\begin{bmatrix} c & d \end{bmatrix}=\begin{bmatrix} ac & ad \\ bc & bd \end{bmatrix}.$$

\subsection{Maximal surfaces}

This paper is concerned with spacelike embedded surfaces in Anti-de Sitter space. A smooth embedded surface $\sigma:S\rar\AdS^3$ is called spacelike if the first fundamental form $I(v,w)=\langle d\sigma(v),d\sigma(w)\rangle$ is a Riemannian metric on $S$. Equivalently, the tangent plane is a spacelike plane at every point.
Let $N$ be a unit normal vector field to the embedded surface $S$. We denote by $\nabla$ and $\nabla^S$ the ambient connection and the Levi-Civita connection of the surface $S$, respectively. The \emph{second fundamental form} of $S$ is defined as
$$\nabla_{\tilde v}\tilde w=\nabla^S_{\tilde v}\tilde w+\II(v,w)N$$
if $\tilde v$ and $\tilde w$ are vector fields extending $v$ and $w$. The \emph{shape operator} is the $(1,1)$-tensor defined as $B(v)=\nabla_v N$. It satisfies the property
$$\II(v,w)=\langle B(v),w\rangle\,.$$
\begin{defi}
A smooth embedded spacelike surface $S$ in  $\AdS^3$ is \emph{maximal} if $\mathrm{tr}B=0$.
\end{defi}

The shape operator is symmetric with respect to the first fundamental form of the surface $S$; hence the condition of  maximality amounts to the fact that the principal curvatures (namely, the eigenvalues of $B$) are opposite at every point.

\begin{defi}
We say that an embedded spacelike surface $S$ in  $\AdS^3$ is \emph{entire} if it is a compression disc, i.e. it is a topological disc and its frontier is contained in $\partial_\infty\AdS^3$.
\end{defi}

The condition that $S$ is entire is equivalent to the fact that $S$ can be expressed as a graph over $\Hyp^2$ in a suitable coordinate system, see \cite{bon_schl}.

An existence result for maximal surfaces in $\AdS^3$ was given by Bonsante and Schlenker. 
%Differently from the hyperbolic case, there are no non-uniqueness phenomena in the Anti-de Sitter case.

\begin{theorem}[\cite{bon_schl}] \label{thm bon schl existence}
Given any orientation-preserving homeomorphism $\phi:\RP^1\to \RP^1$, there exists an entire maximal surface with nonpositive curvature $S$ in $\AdS^3$ such that $\partial_\infty S=gr(\phi)$.
\end{theorem}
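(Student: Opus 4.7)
The plan is to combine a smooth-approximation argument with a priori estimates coming from the convex-hull / width discussion in the introduction. First I would approximate $\phi$ by a sequence $\phi_n$ of smooth, quasisymmetric orientation-preserving homeomorphisms of $\RP^1$ converging uniformly to $\phi$; such a sequence can be produced by mollifying $\phi$ and then adjusting to preserve the quasisymmetric property. For each $n$, the curve $\Gamma_n = gr(\phi_n) \subset \partial_\infty \AdS^3$ has a convex hull of width strictly less than $\pi/2$, and any maximal surface with asymptotic boundary $\Gamma_n$ is forced to lie in this convex hull by the maximum principle applied to the equation $\Delta_S u - 2u = 0$ satisfied by $u = \sin d_{\AdS^3}(\cdot,P)$ for a spacelike support plane $P$ of the hull.

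Second, for each smooth quasisymmetric $\phi_n$ I would produce an entire maximal disc $S_n$ with $\partial_\infty S_n = \Gamma_n$ by a compact exhaustion: choose a nested family of smooth spacelike Jordan curves $C_{n,k} \subset \AdS^3$ approaching $\Gamma_n$ from the interior, and solve the compact Plateau problem for a maximal disc bounded by $C_{n,k}$. Existence in the compact case follows from direct methods, since maximal surfaces locally \emph{maximise} area among spacelike competitors in Lorentzian geometry and the convex-hull containment supplies the area bound required for compactness. Letting $k \to \infty$ yields $S_n$; the bound $|\lambda| \leq 1$, which by the Gauss equation is equivalent to nonpositivity of the induced curvature, is propagated by a maximum principle applied to the function $\chi = -\ln|\lambda|$, which satisfies a quasilinear equation of the form $\Delta_S \chi = 2(1-e^{-2\chi})$.

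Third, I would pass to the limit in $n$. Since the convex hulls of $\Gamma_n$ converge in the Hausdorff sense on compact subsets of $\AdS^3$ to the convex hull of $gr(\phi)$, and each $S_n$ sits inside its own convex hull, the family $\{S_n\}$ is locally relatively compact; combined with the uniform shape-operator bound $|\lambda|\leq 1$, standard compactness for spacelike immersions yields a subsequence converging smoothly on compact subsets to a maximal spacelike surface $S$ of nonpositive curvature. A short argument using that a spacelike maximal surface trapped between two disjoint spacelike support planes is a graph over any intermediate spacelike plane then ensures that $S$ is entire, and the convex-hull containment forces $\partial_\infty S = gr(\phi)$.

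The main obstacle I expect is the last step when $\phi$ is only an orientation-preserving homeomorphism, so that the width of the convex hull of $gr(\phi)$ may equal $\pi/2$. In this generality the surfaces $S_n$ could a priori drift toward the timelike degeneration of the limiting convex hull, so one must quantitatively control the projection of each $S_n$ to a fixed spacelike plane to rule out collapse of the limit and to recover the correct asymptotic boundary. This is the delicate part of the argument: in the quasisymmetric case the uniform gap $w_n \leq w < \pi/2$ makes compactness essentially automatic, whereas in the general homeomorphism case one has to argue via a diagonal procedure that the limit remains an entire graph over $\Hyp^2$ whose asymptotic boundary is exactly the graph of $\phi$.
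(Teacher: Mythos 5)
This statement is cited from \cite{bon_schl}; the present paper does not reprove it, so there is no internal argument to compare against, and I will assess your outline on its own merits. Your overall scheme --- smooth approximation, compact Plateau problems, then a diagonal limit --- is aligned in spirit with the strategy of \cite{bon_schl}, but two of the steps conceal genuine gaps.

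First, existence for the compact Plateau problem ``by direct methods'' is not routine in Lorentzian signature: the class of spacelike competitors is not closed under the natural weak limits (a maximizing sequence can degenerate to a surface with lightlike pieces), so one needs a graphical formulation over a fixed spacelike plane, barriers from the convex hull for $C^0$ bounds, a gradient estimate to preserve spacelikeness, and elliptic regularity; none of this is supplied, and the phrase ``area bound required for compactness'' does not address the actual difficulty, which is compactness \emph{within} the spacelike class.

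Second, and more seriously, the bound $|\lambda|\leq 1$ is not obtained by ``propagating'' a maximum principle for $\chi=-\ln|\lambda|$ with $\Delta_S\chi=2(1-e^{-2\chi})$. That argument shows only that an \emph{attained interior minimum} of $\chi$ must be nonnegative. For the compact Plateau solutions the minimum of $\chi$ can sit on the boundary curve $C_{n,k}$, where you have no control; for the entire limiting surface the infimum of $\chi$ need not be attained at all, and trying to extract a limit surface along a minimizing sequence for $\chi$ requires a compactness result (in this paper, Lemma~\ref{lemma bon schl}) whose statement already assumes nonpositive curvature, making that route circular as written. The curvature bound is therefore a nontrivial property that must be built into the construction rather than recovered afterwards, and it is the central missing ingredient --- quite apart from the width-degeneration issue in the limit in $n$, which your final paragraph correctly identifies but likewise does not resolve.
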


Observe that, by the Gauss equation in $\AdS^3$, the curvature of the induced metric on the maximal surface $S$ is $-1+\lambda^2$, where $\lambda$ and $-\lambda$ are the principal curvatures. Hence the condition of nonpositive curvature corresponds to the fact that the principal curvatures of $S$ are in $[-1,1]$.

\subsection{Width of convex hulls}

Since $\AdS^3$ is a projective geometry, we have a well-defined notion of convexity. In particular, we can give the definition of \emph{convex hull} of a curve in $\partial_\infty\AdS^3$.

\begin{defi}
Given a curve $\Gamma=gr(\phi)$ in $\partial_\infty \AdS^3$, the convex hull of $\Gamma$, which we denote by $\mathcal{CH}(\Gamma)$, is the intersection of half-spaces bounded by planes $P$ such that $\partial_\infty P$ does not intersect $\Gamma$, and the half-space is taken on the side of $P$ containing $\Gamma$.
\end{defi}
It can be proved that the convex hull of $\Gamma$, which is well-defined in $\RP^3$, is contained in $\AdS^3\cup\partial_\infty\AdS^3$, and is actually contained in an affine chart.

Let us fix  a totally geodesic spacelike plane $Q$ (for instance, the plane $\left\{x_4=0\right\}$, which is the plane at infinity in Figure \ref{fig:lightcone}). We will denote by $d_{\AdS^3}(\cdot,\cdot)$ the \emph{timelike distance} in $\AdS^3\setminus Q$, which is defined as follows.

\begin{defi}
Given points $p$ and $q$ which are connected by a timelike curve, the timelike distance between $p$ and $q$ is
 $$d_{\AdS^3}(p,q)=\sup_{\gamma}\{\mathrm{length(\gamma)}|\gamma:[0,1]\to\AdS^3\setminus Q \text{ is a timelike curve, }\gamma(0)=p,\gamma(1)=q\}\,.$$
\end{defi}
 The distance between two such points $p,q$ is achieved along the timelike geodesic connecting $p$ and $q$. The timelike distance satisfies the reverse triangle inequality, meaning that, $$d_{\AdS^3}(p,r)\geq d_{\AdS^3}(p,q)+d_{\AdS^3}(q,r)\,,$$
 provided both pairs $(p,q)$ and $(q,r)$ are connected by a timelike curve. In a completely analogous way, we define the distance of a point $x$ from a totally geodesic spacelike plane $P$ as the supremum of the length of a timelike curve connecting $x$ to $P$.

We are now ready to introduce the notion of \emph{width} of the convex hull, as defined in \cite{bon_schl}.
\begin{defi} \label{defi width}
Given a curve $\Gamma=gr(\phi)$ in $\partial_\infty \AdS^3$, the width $w$ of the convex hull $\mathcal{CH}(\Gamma)$ is the supremum of the length of a timelike geodesic contained in $\mathcal{CH}(\Gamma)$.
\end{defi}

\begin{remark} \label{discussion width}
Note that the distance $d_{\AdS^3}(p,q)$ is achieved along the geodesic timelike segment connecting $p$ and $q$. Hence Definition \ref{defi width}
is equivalent to 
\begin{equation} \label{width defi}
w=\sup_{p\in\partial_-\mathcal{C}, q\in\partial_+\mathcal{C}}  d_{\AdS^3}(p,q)\,,
\end{equation}
where $\mathcal{C}=\mathcal{CH}(\Gamma)$ and $\partial_{\pm}\mathcal{C}$ denote the two components (one future and one past component) of the boundary of the convex hull of $\Gamma$ in $\AdS^3$.

In particular, we note that
\begin{equation} \label{width sup sum}
w=\sup_{x\in\mathcal{C}}\left(d_{\AdS^3}(x,\partial_-\mathcal{C})+d_{\AdS^3}(x,\partial_+\mathcal{C})\right).
\end{equation}
To stress once more the meaning of this equality, note that the supremum in (\ref{width sup sum}) cannot be achieved on a point $x$ such that the two segments realizing the distance from $x$ to $\partial_-\mathcal{C}$ and $\partial_+\mathcal{C}$ are not part of a unique geodesic line. Indeed, if at $x$ the two segments form an angle, the piecewise geodesic can be made longer by avoiding the point $x$, as in Figure \ref{fig:shorten}.
We also remark that if the distance between a point $x$ and $\partial_\pm\mathcal{C}$ is achieved along a geodesic segment $l$, then the maximality condition imposes that $l$ must be orthogonal to a support plane to $\partial_\pm\mathcal{C}$ at $\partial_\pm\mathcal{C}\cap l$.
\end{remark}

%\begin{floatingfigure}[r]{.30\textwidth}
\begin{figure}[htbp]
\centering
\includegraphics[height=3cm]{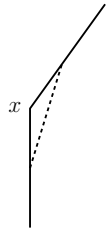}
\caption{A path through $x$ which is not geodesic does not achieve the maximum distance. \label{fig:shorten}}
\end{figure}

\subsection{An application of the maximum principle}

A key property used in this paper is that maximal surfaces with boundary at infinity a curve $\Gamma=gr(\phi)$ are contained in the convex hull of $\Gamma$. Although this fact is known, we prove it here by applying maximum principle to a simple linear PDE describing maximal surfaces.

Hereafter $\Hess \,u$ denotes the Hessian of a smooth function $u$ on the surface $S$, i.e. the (1,1) tensor 
$$\Hess \,u(v)=\nabla^S_v \grad u\,.$$
Finally, $\Delta_S$ denotes the Laplace-Beltrami operator of $S$, which can be defined as $$\Delta_S u=\mathrm{tr}(\Hess \,u)\,.$$

Proposition \ref{formule hess lap ads} was proved in \cite{bon_schl}. We give a proof here for the sake of completeness. We first observe that, given a point $x$ and a totally geodesic spacelike plane $P$, it is easy to check (as for Equations \eqref{lunghezza1} and \eqref{lunghezza2}) that the timelike distance $d_{\AdS^3}(x,P)$ of $x$ from $P=p^\perp$ satisfies
$$\sin d_{\AdS^3}(x,P)=|\langle x,p\rangle|\,.$$

\begin{prop} \label{formule hess lap ads}
Given a maximal surface $S\subset\AdS^3$ and a plane $P$, let $u:S\rar\R$ be the function $u(x)=\sin d_{\AdS^3}(x,P)$, where  $ d_{\AdS^3}(x,P)$ is considered as a signed distance. Let $N$ be the future unit normal to $S$ and $B=\nabla N$ the shape operator. Then
\begin{equation}\label{hessian ads}
\Hess\, u-u\,E=\sqrt{1-u^2+||\grad u||^2} B~,
\end{equation} 
where $E$ denotes the identity operator. As a consequence, $u$ satisfies the linear equation
\begin{equation}\label{lap 2u ads}
\Delta_S u-2u=0\,. \tag{L}
\end{equation} 
\end{prop}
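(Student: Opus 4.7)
The plan is to work in the hyperboloid model $\wAdS \subset \R^{2,2}$ and write the function $u$ as the restriction of a linear function on $\R^{2,2}$, then unwind the relationship between the three connections involved: the flat connection $D$ of $\R^{2,2}$, the induced connection on $\wAdS$, and the induced connection $\nabla^S$. After normalising a dual vector $p\in\R^{2,2}$ with $\langle p,p\rangle=-1$ so that $P=p^\perp$, I would use the formula $\sin d_{\AdS^3}(x,P)=|\langle x,p\rangle|$ mentioned just before the statement to write (up to a choice of sign of the signed distance)
\begin{equation*}
u(x)=\langle x,p\rangle\,.
\end{equation*}

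Next I would compute the intrinsic gradient of $u$ on $S$ in two steps: first project the ambient ``Euclidean'' gradient $p$ onto $T_x\wAdS=x^\perp$, which gives $p+ux$; then project onto $T_xS$ by subtracting the $N$-component, obtaining
\begin{equation*}
\grad_S u=p+ux+aN\,,\qquad a:=\langle p,N\rangle\,.
\end{equation*}
Taking the squared norm using $\langle p,p\rangle=-1$, $\langle x,x\rangle=-1$, $\langle N,N\rangle=-1$, $\langle x,p\rangle=u$, $\langle x,N\rangle=0$, I would read off $\|\grad u\|^2=-1+u^2+a^2$, so that $a=\pm\sqrt{1-u^2+\|\grad u\|^2}$, matching the factor appearing in the statement.

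For the Hessian, I would differentiate $\grad_S u=p+ux+aN$ along a tangent vector $v\in T_xS$ using $D$, then peel off the two normal pieces. Since $D_v x=v$, $D_v p=0$, and (as $\langle v,N\rangle=0$) $D_vN$ agrees with $B(v)$, the computation gives
\begin{equation*}
D_v\grad_Su=(du(v))\,x+uv+(da(v))N+aB(v)\,.
\end{equation*}
The Gauss decomposition of $\wAdS\subset\R^{2,2}$ reads $D_vW=\nabla^{\wAdS}_vW+\langle v,W\rangle x$ and the Gauss decomposition of $S\subset\wAdS$ reads $\nabla^{\wAdS}_vW=\nabla^S_vW+\II(v,W)N$. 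Subtracting the $x$-component $\langle v,\grad_S u\rangle x=(du(v))x$ and the $N$-component $(da(v))N$ leaves
\begin{equation*}
\nabla^S_v\grad_Su=uv+aB(v)\,,
\end{equation*}
which is precisely $\Hess u-uE=aB$, i.e.\ formula \eqref{hessian ads}.

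Finally, taking the trace on the surface and using $\tr E=2$ together with the maximality assumption $\tr B=0$ gives $\Delta_S u-2u=0$, proving \eqref{lap 2u ads}. I don't expect any serious obstacle: the whole argument is a direct connection-bookkeeping calculation, and the only point to be slightly careful about is the sign of $a$, which is harmless because both $u$ and $B$ change in a compatible way when the side of $P$ or the time orientation is reversed, so the identity \eqref{hessian ads} holds regardless.
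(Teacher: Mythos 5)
Your proposal is correct and follows essentially the same route as the paper: pass to the double cover, realize $u$ as the restriction of the linear function $\langle \cdot, p\rangle$, project the ambient gradient twice (off $x$, then off $N$) to get $\grad u = p + ux + \langle p,N\rangle N$, take the flat derivative of this expression along a tangent vector, and project onto $T_xS$ to read off the Hessian and hence, by tracing with $\tr B = 0$, the linear equation. Your version is just a bit more explicit about the two Gauss decompositions and the sign ambiguity of $a = \langle p,N\rangle$, which the paper leaves implicit.
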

\begin{proof}
Let us assume that $P$ is the plane dual to the point $p\in\AdS^3$. We will perform the computation in the double cover $\wAdS$. Then $u$ is the restriction to $S$ of the function $U$ defined by:
\begin{equation} \label{definition linear distance function}
U(x)=\sin d_{\AdS^3}(x,P)=\langle x,p\rangle\,.
\end{equation}
 Let $N$ be the unit normal vector field to $S$; we compute $\grad u$ by projecting the gradient $\nabla U$ of $U$ to the tangent plane to $S$:
\begin{gather}
\nabla U=p+\langle p,x \rangle x \label{bombo} \\
\grad u(x)=p+\langle p,x \rangle x + \langle p,N\rangle N
\end{gather}

\noindent Now $\Hess \,u(v)=\nabla^{S}_v \grad u$, where $\nabla^{S}$ is the Levi-Civita connection of $S$, namely the projection of the flat connection of $\R^{2,2}$, and so for any $v\in T_x\wAdS$ one gets:
$$\Hess\, u(v)=\langle p,x\rangle v + \langle p,N\rangle \nabla^S_v N=u(x)v+\langle \nabla U,N\rangle B(v)\,.$$
Moreover, $\nabla U=\grad u+\langle \nabla U,N\rangle N$ and thus
\begin{equation} \label{ciccio}
\langle \nabla U,N\rangle^2=||\grad u||^2-\langle\nabla U,\nabla U\rangle=1-u^2+||\grad u||^2\,,
\end{equation}
which proves (\ref{hessian ads}). By taking the trace, (\ref{lap 2u ads}) follows.
\end{proof}

\begin{cor} \label{maximal surface contained convex hull}
Let $S$ be an entire maximal surface in $\AdS^3$. Then $S$ is contained in the convex hull of $\partial_\infty S$.
\end{cor}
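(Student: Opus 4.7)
The plan is to apply the maximum principle to the linear equation \eqref{lap 2u ads} satisfied by the sine-of-signed-distance function on $S$. Let $\Gamma=\partial_\infty S=gr(\phi)$. By definition of the convex hull, it suffices to show that for every spacelike plane $P$ with $\partial_\infty P\cap\Gamma=\emptyset$, the surface $S$ lies in the closed half-space bounded by $P$ that contains $\Gamma$ at infinity. Note that one can restrict attention to spacelike planes because $\Gamma$, being the graph of a homeomorphism $\phi:\RP^1\to\RP^1$, meets every left and right null ruling of $\partial_\infty\AdS^3$ in exactly one point, so $\partial_\infty P\cap\Gamma=\emptyset$ forces $P$ to be spacelike. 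Fix such $P=p^\perp$ with $p\in\wAdS$, lift $S$ (a topological disc, hence simply connected) to the double cover $\wAdS$, and set $u(x)=\langle x,p\rangle$, choosing the sign of $p$ so that $u>0$ on the side of $P$ whose ideal closure contains $\Gamma$. By Proposition~\ref{formule hess lap ads}, $\Delta_S u-2u=0$ on $S$.

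The crucial preliminary point is that $u(x)\to+\infty$ uniformly as $x\to\Gamma$ along $S$. If $[x_n]\to\xi\in\Gamma$ inside $\AdS^3\cup\partial_\infty\AdS^3$, then appropriate lifts in $\wAdS$ have the form $x_n=\lambda_n\bigl(q_\xi+o(1)\bigr)$ with $\lambda_n\to+\infty$ and $q_\xi\in\R^{2,2}$ a null representative of $\xi$, so $u(x_n)=\lambda_n\bigl(\langle q_\xi,p\rangle+o(1)\bigr)$. The function $\xi\mapsto\langle q_\xi,p\rangle$ is continuous on $\Gamma$ (for a coherent choice of representatives, and well defined up to a positive scalar) and vanishes exactly on $\partial_\infty P$, hence is nowhere zero on the compact set $\Gamma$ and, by our choice of sign for $p$, admits a uniform positive lower bound there. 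Since $S$ is properly embedded in $\AdS^3$ with $\overline S\cap\partial_\infty\AdS^3=\Gamma$, this uniform blow-up implies that for every $M\in\R$ the sublevel set $\{x\in S:u(x)\le M\}$ is relatively compact in $S$.

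Finally, suppose for contradiction that $u(x_0)<0$ at some $x_0\in S$. By the preceding paragraph the infimum of $u$ on $S$ is attained at an interior point $y_0$, with $u(y_0)\le u(x_0)<0$. At such an interior minimum $\Delta_S u(y_0)\ge 0$, yet equation \eqref{lap 2u ads} gives $\Delta_S u(y_0)=2u(y_0)<0$, a contradiction. Hence $u\ge 0$ everywhere on $S$, i.e.\ $S$ lies in the desired closed half-space. Since $P$ was an arbitrary spacelike plane with $\partial_\infty P\cap\Gamma=\emptyset$, we conclude $S\subseteq\mathcal{CH}(\Gamma)$. The only non-routine ingredient is the uniform asymptotic behaviour of $u$ near $\Gamma$; this is what allows one to replace the boundary control needed by the classical maximum principle on a bounded domain by the ideal boundary behaviour on the non-compact surface $S$.
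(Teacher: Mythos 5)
Your proof is correct and takes the same route as the paper: apply the maximum principle to the linear equation $\Delta_S u - 2u = 0$ satisfied by the signed-distance function $u(x)=\langle x,p\rangle$, then argue that a negative interior minimum would contradict the equation. The paper states the boundary control somewhat tersely (``$u\geq 0$ in the region of $S$ close to the boundary at infinity''), after first disposing of the trivial case where $\Gamma$ is a circle; you instead supply the underlying reason, namely the uniform blow-up $u\to+\infty$ near $\Gamma$ via the projective scaling $x_n=\lambda_n(q_\xi+o(1))$, which makes the relative compactness of sublevel sets explicit and therefore closes the argument that the infimum is attained at an interior point. This is more detail on the same mechanism rather than a different method.
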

\begin{proof}
If $\Gamma=\partial_\infty S$ is a circle, then $S$ is a totally geodesic plane which coincides with the convex hull of $\Gamma$. Hence we can suppose $\Gamma$ is not a circle. Consider a plane $P_-$ which does not intersect $\Gamma$ and the function $u$ defined as in Equation \eqref{definition linear distance function} in Proposition \ref{formule hess lap ads}, with respect to $P_-$. Suppose their mutual position is such that $u\geq 0$ in the region of $S$ close to the boundary at infinity (i.e. in the complement of a large compact set). If there exists some point where $u<0$, then by Equation \eqref{lap 2u ads} at a minimum point $\Delta_S u=2u<0$, which gives a contradiction. The proof is analogous for a plane $P_+$ on the other side of $\Gamma$, by switching the signs. This shows that every convex set containing $\Gamma$ contains also $S$.
\end{proof}

\subsection{Two opposite examples}

We report here the two examples to have in mind for our study of maximal surface. The first is a very simple example, namely a totally geodesic plane, for which the principal curvatures vanish at every point. To some extent, the second example can be considered as the opposite of a totally geodesic plane, since in the second case the principal curvatures are $1$ and $-1$ at every point.

\begin{example}(Totally geodesic planes)
By definition, a totally geodesic plane $P$ has shape operator $B\equiv 0$. Hence a totally geodesic plane is a maximal surface, and $\partial_\infty P=gr(A)$, where $A\in\PSL(2,\R)$ is the trace on $\RP^1=\partial_\infty \Hyp^2$ of an isometry of $\Hyp^2$. Hence the width of the convex hull vanishes. It is also easy to see that, given a curve in $\partial_\infty\AdS^3$, if $w=0$ then the curve is necessarily the boundary of a totally geodesic plane. Therefore the unique maximal surface with zero width (up to isometries of $\AdS^3$) is a totally geodesic plane.
\end{example}

\begin{example}(Horospherical surfaces)
Consider a spacelike line $l$ in $\AdS^3$. The dual line $l^\perp$ is obtained as the intersection of all totally geodesic planes dual to points of $l$. Recall that timelike geodesics in $\AdS^3$ are closed and have length $\pi$. Hence one equivalently has:
$$l^\perp=\{\gamma(\pi/2)|\gamma:[0,\pi]\to\AdS^3\text{ is a timelike geodesic },\gamma(0)=\gamma(\pi)\in l\}\,.$$
Let us define the smooth surface
$$H=\{\gamma(\pi/4)|\gamma:[0,\pi/2]\to\AdS^3\text{ is a future-directed timelike geodesic},\gamma(0)\in l,\gamma(\pi/2)\in l^\perp\}\,.$$

See Figure \ref{fig:horosphere} for a schematic picture.

The group of isometries which preserve $l$ (and thus preserves also $l^\perp$) is isomorphic to $\R\times\R$, where $\R\times\{0\}$ fixes $l$ pointwise and acts on $l^\perp$ by translation (it actually acts as a rotation around the spacelike line $l$), while  $\{0\}\times\R$ does the opposite. The induced metric on $H$ is flat, and thus $H$ is isometric to the Euclidean plane. Moreover, for every point $x\in H$, the surface $H$ has an orientation-reversing, time-reversing isometry obtained by reflection in a plane $P$ tangent to a point $x\in H$, followed by rotation of angle $\pi/2$ around the timelike geodesic orthogonal to $P$ at $x$. This basically shows that the principal curvatures $\lambda_1,\lambda_2$ of $H$ are necessarily opposite to one another, hence $H$ is a maximal surface. Moreover, since by the Gauss equation
$$0=K_H=-1-\lambda_1\lambda_2=-1+\lambda^2\,,$$
it follows that the principal curvatures are necessarily $\pm 1$ at every point. Finally, by construction, the width of the convex hull of $H$ is precisely $\pi/2$.
\end{example}

\begin{figure}[htbp]
\centering
\begin{minipage}[c]{.45\textwidth}
\centering
\includegraphics[height=7cm]{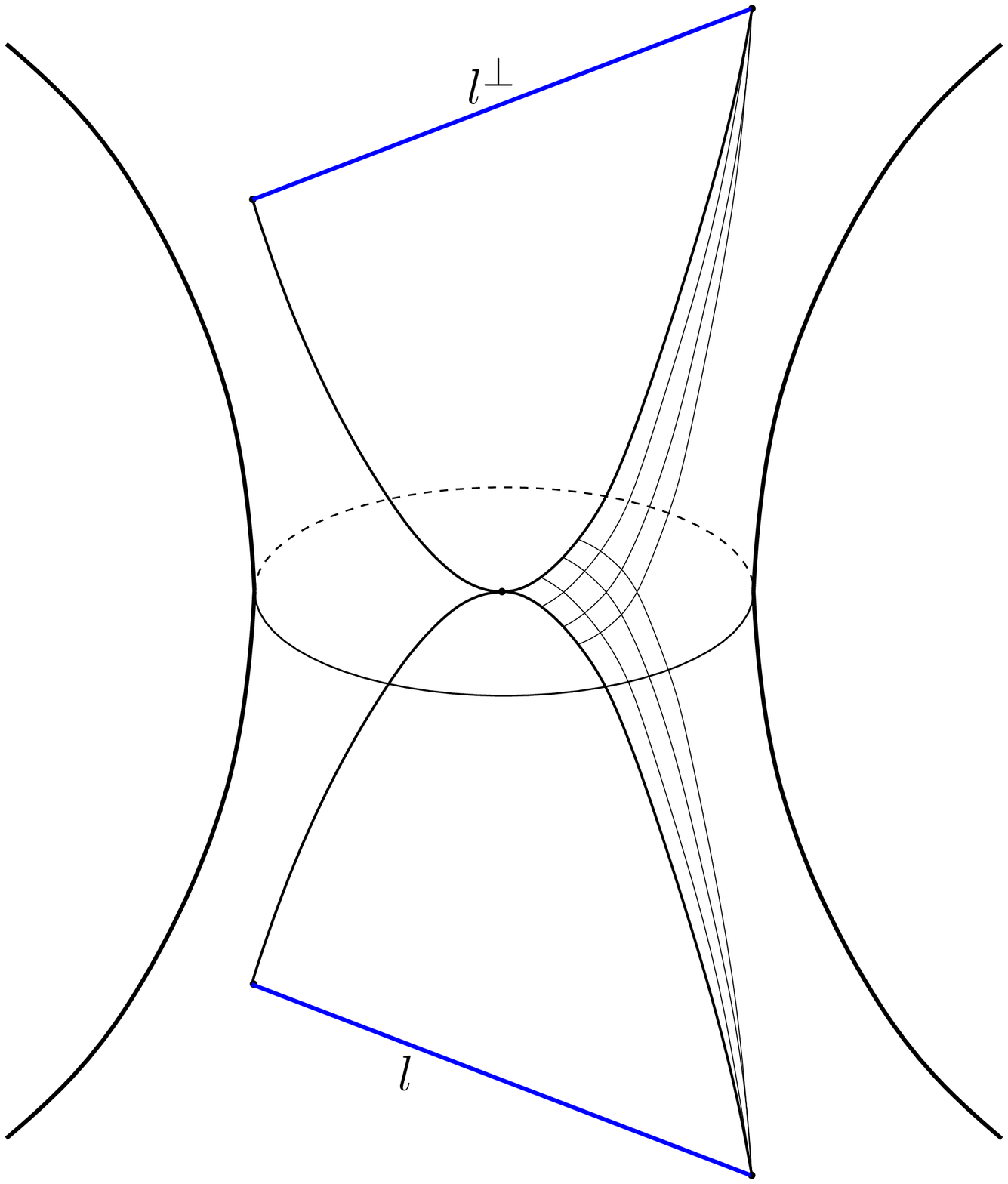} 
%\captionsetup{labelformat=empty}
%\caption{The lightcone of future null geodesic rays from a point and a totally geodesic spacelike plane $P$.} \label{fig:lightcone}
\end{minipage}%
\hspace{5mm}
\begin{minipage}[c]{.45\textwidth}
\centering
\includegraphics[height=7cm]{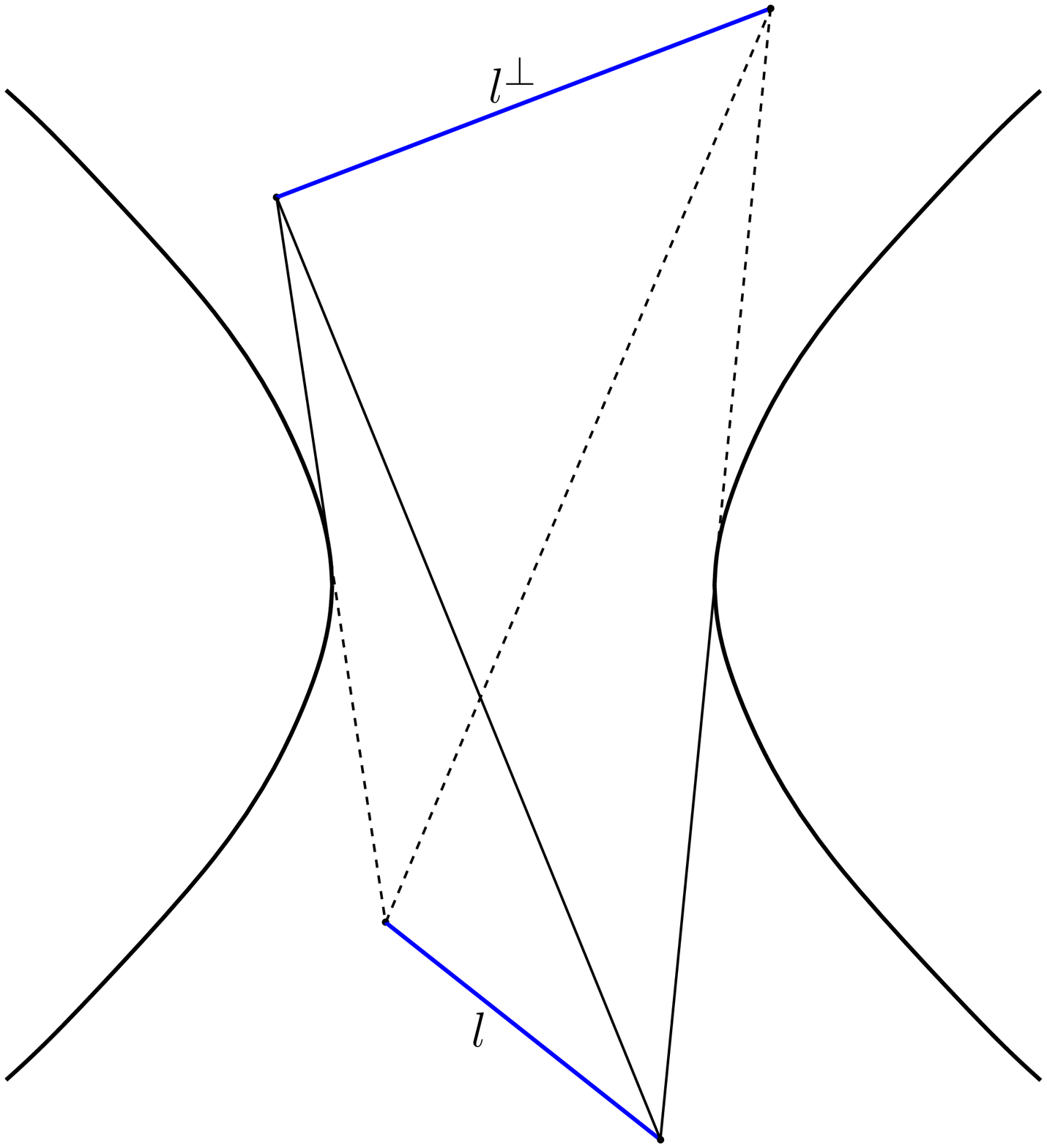}
%\captionsetup{labelformat=empty}
%\caption{Left and right projection from a point $\xi\in\partial_\infty\AdS^3$ to the plane $P=\left\{x_3=0\right\}$} \label{fig:projections}
\end{minipage}
\caption{A subset of a horospherical surface. The lines of curvature provide Euclidean coordinates for the induced metric, and are planar curves parallel to $l$ or $l^\perp$. The boundary at infinity is composed of four lightlike segments. \label{fig:horosphere}}
\end{figure}

Bonsante and Schlenker proved an important property of rigidity of maximal surfaces with large principal curvatures.

\begin{lemma}[{\cite[Lemma 5.5]{bon_schl}}] \label{lemma flat horo}
Given a maximal surface $S$ in $\AdS^3$ with nonpositive curvature, if the curvature is $0$ at some point, then $S$ is a subset of a horospherical surface.
\end{lemma}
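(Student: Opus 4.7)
The plan is to reduce the statement to proving that the positive principal curvature $\lambda$ is identically equal to $1$, and then to invoke a Bonnet-type rigidity argument to identify $S$ with an open subset of a horospherical surface. By the Gauss equation in $\AdS^3$, the intrinsic curvature of $S$ is $K=-1+\lambda^2$. Nonpositive curvature translates to $\lambda\in[0,1]$, while the hypothesis $K(p_0)=0$ at some point $p_0\in S$ is equivalent to $\lambda(p_0)=1$. In particular $\lambda>0$ on a neighborhood of $p_0$, so the function $\chi=-\ln\lambda\geq 0$ is smooth and well-defined there, satisfies $\chi(p_0)=0$, and fulfils the equation
\begin{equation*}
\Delta_S\chi=2(1-e^{-2\chi})
\end{equation*}
stated in the introduction.

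The analytic heart of the argument is a strong maximum principle applied to $\chi$. Rewrite the equation as $\Delta_S\chi-h(\chi)\,\chi=0$, where $h(\chi):=2(1-e^{-2\chi})/\chi$ is extended smoothly by $h(0)=4$; the extension is strictly positive. Hence $\chi$ satisfies a linear elliptic equation $L\chi=0$ where $L=\Delta_S-h(\chi)\,\mathrm{Id}$ has nonpositive zeroth-order coefficient $-h(\chi)$. By Hopf's strong minimum principle, since $L\chi\leq 0$ and $\chi$ attains the nonpositive value $0$ at the interior point $p_0$, $\chi$ is constant equal to $0$ on a neighborhood of $p_0$, i.e.\ $\lambda\equiv 1$ there.

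It follows that $A:=\{p\in S\mid\lambda(p)=1\}$ is nonempty, open (the preceding argument applies at every $q\in A$, where $\lambda>0$ in a neighborhood), and closed by continuity of $\lambda$. As $S$ is a topological disc, hence connected, $A=S$, so $\lambda\equiv 1$ on $S$ and in particular the induced metric is flat.

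It remains to show that this local-to-global rigidity identifies $S$ with part of a horospherical surface. The Codazzi equation $(\nabla^S_XB)(Y)=(\nabla^S_YB)(X)$, combined with the constancy of the principal curvatures $\pm 1$, forces the orthonormal frame $\{e_1,e_2\}$ of principal directions to be parallel for $\nabla^S$; consequently $B$ itself is parallel. The pair of fundamental forms $(I,\II)$ on $S$ thus coincides with the corresponding pair on the horospherical surface $H$ described in the example above. Since $S$ is simply-connected, the Bonnet-type theorem for spacelike surfaces in $\AdS^3$ (unique determination of an embedding, up to ambient isometry, by $(I,\II)$) provides an isometry of $\AdS^3$ taking $S$ into a horospherical surface, as claimed. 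I expect the final rigidity step to be the main obstacle to making everything fully precise; by contrast, the strong minimum principle application and the connectedness argument that propagates $\lambda=1$ from $p_0$ to all of $S$ are rather direct.
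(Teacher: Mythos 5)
Your proof is correct and follows the route the paper itself indicates for this cited result: apply the strong maximum principle to Equation $(Q)$ for $\chi=-\ln|\lambda|$ (after freezing the coefficient $h(\chi)$, which extends continuously and positively through $\chi=0$), propagate $\lambda\equiv 1$ over all of $S$ by the open-closed connectedness argument, and finish with Codazzi plus a Bonnet-type rigidity. The last step you flagged as potentially delicate is in fact routine: with constant distinct principal curvatures $\pm1$, the Codazzi equation forces the connection form $\omega$ of the principal frame to vanish (from $(\nabla_{e_1}B)e_2=(\nabla_{e_2}B)e_1$ one gets $2\omega(e_1)e_1=2\omega(e_2)e_2$, hence $\omega\equiv 0$), so $B$ is $\nabla^S$-parallel, $(I,\II)$ coincides with the data of the horospherical surface, and the fundamental theorem for spacelike surfaces in $\AdS^3$ gives the ambient congruence.
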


The proof of Lemma \ref{lemma flat horo} follows from applying the maximum principle to Equation \eqref{quasi}, which is stated in the following lemma. Recall that an umbilical point on a surface is a point where the two principal curvatures are equal. Hence for a maximal surface, umbilical points are the points where the principal curvatures vanish.

\begin{lemma}[{\cite[Lemma 3.11]{Schlenker-Krasnov}}] \label{lemma eq quasilin}
Given a maximal surface $S$ in $\AdS^3$, with principal curvatures $\pm\lambda$, let $\chi:S\to\R$ be the function defined (in the complement of umbilical points) by
$\chi=-\ln |\lambda|$. Then $\chi$ satisfies the quasi-linear equation
\begin{equation}\label{quasi}
\Delta_S\chi=2(1-e^{-2\chi})\,. \tag{Q}
\end{equation}
\end{lemma}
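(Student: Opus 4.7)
\medskip

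\noindent\textbf{Proof plan for Lemma \ref{lemma eq quasilin}.} The plan is to reduce the statement to a conformal-coordinates computation that leverages the standard fact that on a trace-free spacelike surface the second fundamental form is encoded by a holomorphic quadratic differential. Away from umbilical points one has $\lambda>0$ and $\chi=-\ln\lambda$ is smooth, so it suffices to prove the identity pointwise in such a neighbourhood.

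\smallskip

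First I would pick isothermal coordinates $z=x+iy$ for the induced metric $I$ on $S$, writing $I=e^{2u}|dz|^{2}$. Let $\phi\,dz^{2}$ denote the $(2,0)$-part of the second fundamental form $\II$; since $B$ is symmetric with respect to $I$ and $\tr B=0$ (because $S$ is maximal), the whole of $\II$ is $\Re(\phi\,dz^{2})$. The Codazzi equation in $\AdS^{3}$ (which for a $(1,1)$-tensor of trace zero on a surface of constant ambient sectional curvature reduces to the standard surface Codazzi identity) then gives $\partial_{\bar z}\phi=0$, so $\phi$ is holomorphic on the open set where $\lambda>0$ (which is precisely where $\phi\neq 0$). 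A direct diagonalisation of $B$ in this representation yields the principal curvatures $\pm\lambda$ with $\lambda=|\phi|e^{-2u}$.

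\smallskip

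Next I would feed this information into the Gauss equation. In $\AdS^{3}$ the Gauss equation for a spacelike surface reads $K_{S}=-1-\det B=-1+\lambda^{2}$ (the sign convention already used by the authors in the horospherical example). In isothermal coordinates $K_{S}=-e^{-2u}\Delta_{0}u$, where $\Delta_{0}=4\partial_{z}\partial_{\bar z}$ is the flat Laplacian. Combining these two expressions and using $\Delta_{S}=e^{-2u}\Delta_{0}$ gives the key intermediate identity
\begin{equation*}
\Delta_{S}u \;=\; 1-\lambda^{2}.
\end{equation*}

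\smallskip

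Finally I would rewrite $\chi$ in these coordinates: from $\lambda=|\phi|e^{-2u}$ one gets $\chi=-\ln|\phi|+2u$. Since $\phi$ is holomorphic and non-vanishing on our neighbourhood, $\ln|\phi|$ is harmonic for the flat Laplacian, hence harmonic for $\Delta_{S}=e^{-2u}\Delta_{0}$ as well. Therefore
\begin{equation*}
\Delta_{S}\chi \;=\; 2\,\Delta_{S}u \;=\; 2(1-\lambda^{2}) \;=\; 2(1-e^{-2\chi}),
\end{equation*}
which is exactly \eqref{quasi}. The only technical point requiring care is the Codazzi identity in the Lorentzian ambient: because $\AdS^{3}$ has constant sectional curvature, the ambient curvature tensor applied to a tangent-plane $(1,1)$-tensor vanishes, so Codazzi collapses to the usual Riemannian form and the $\bar\partial$-closedness of $\phi\,dz^{2}$ goes through verbatim; this is the step I would write most carefully, while the remaining conformal computations are essentially routine.
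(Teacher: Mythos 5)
Your proof is correct. The paper itself cites Schlenker--Krasnov for this lemma and does not supply its own proof, so there is nothing to compare against directly; nevertheless your argument uses exactly the machinery the paper invokes later (proof of Corollary \ref{cor acceleration}: $\II=\Re(q)$ for a holomorphic quadratic differential $q$, isothermal coordinates with $q$ normalised to $dz^2$, $\lambda=e^{-\ph}$, and the Gauss equation $K_S=-1+\lambda^2$), so the route is consistent with the paper's toolbox. The one cosmetic difference is that the paper normalises $q=dz^2$ and writes $I=e^{\ph}|dz|^2$ so that $\ph=\chi$ directly, whereas you keep the unnormalised $\phi$ and write $I=e^{2u}|dz|^2$, forcing you to carry the harmonic term $-\ln|\phi|$ and the extra factor of $2$; both normalisations of course lead to the same identity. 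The steps you flag as requiring care (Codazzi collapsing to $\partial_{\bar z}\phi=0$ because $\AdS^3$ has constant sectional curvature, and the Lorentzian sign $K_S=-1-\det B$) are indeed the two sign-sensitive points, and you have handled both correctly.
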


\subsection{Uniformly negative curvature}

As a warm-up for what will come next, we give here a proof of Proposition \ref{estimate principal curvatures and width ads reverse}. Our proof was basically already implicit in \cite[Claim 3.21]{bon_schl}, and will be a consequence of the following easy lemma, which we prove here by completeness. See also \cite{Schlenker-Krasnov}.

\begin{lemma} \label{lemma formule prima seconda shape distanza costante}
Given a smooth spacelike surface $S$ in $\AdS^3$, let $S_\rho$ be the surface at timelike distance $\rho$ from $S$, obtained by following the normal flow. Then the pull-back to $S$ of the induced metric on the surface $S_\rho$ is given by
\begin{equation}
I_\rho=I((\cos(\rho) E+\sin(\rho) B)\cdot,(\cos(\rho) E+\sin(\rho) B)\cdot)\,.
\end{equation}
\noindent The second fundamental form and the shape operator of $S_\rho$ are given by
\begin{gather}
\II_\rho=I((-\sin(\rho) E+\cos(\rho) B)\cdot,(\cos(\rho) E-\sin(\rho) B)\cdot)\,, \\
B_\rho=(\cos(\rho) E+\sin(\rho) B)^{-1}(-\sin(\rho) E+\cos(\rho) B)\,. \label{shape operator constant distance}
\end{gather}
\end{lemma}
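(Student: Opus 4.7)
The plan is to work in the linear model $\wAdS \subset \R^{2,2}$, lifting the normal flow to the standard double cover. Starting from a point $x\in S$ with future unit timelike normal $N(x)$ (so $\langle N,N\rangle = -1$ and $\langle x,N\rangle = 0$), the timelike geodesic $\gamma(t)=\cos(t)x+\sin(t)N(x)$ from equation \eqref{geodetica2} parametrizes the normal flow. Thus $S_\rho$ is the image of $S$ under the map
\[
f_\rho(x)=\cos(\rho)x+\sin(\rho)N(x).
\]
Differentiating, for $v\in T_xS$,
\[
df_\rho(v)=\cos(\rho)v+\sin(\rho)\nabla_v N=(\cos(\rho)E+\sin(\rho)B)(v),
\]
using $\nabla_v N = B(v)$ and the fact that $B(v)\in T_xS$. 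Since $df_\rho(v)$ is a linear combination of vectors in $T_xS$, taking the ambient inner product immediately yields the claimed formula for $I_\rho$.

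Next I would identify the future unit normal $N_\rho$ of $S_\rho$. The tangent vector $\gamma'(\rho) = -\sin(\rho)x + \cos(\rho)N(x)$ is unit timelike (a direct check using $\langle x,x\rangle=-1$, $\langle N,N\rangle=-1$, $\langle x,N\rangle=0$), and it is orthogonal to $df_\rho(v)$ for every $v\in T_xS$ (using in addition that $\langle v,N\rangle=0=\langle B(v),N\rangle$, the latter because $\langle N,N\rangle$ is constant). Hence
\[
\widetilde{N}_\rho(x):=N_\rho(f_\rho(x))=-\sin(\rho)x+\cos(\rho)N(x),
\]
and its differential along $v\in T_xS$ is
\[
d\widetilde{N}_\rho(v)=-\sin(\rho)v+\cos(\rho)B(v)=(-\sin(\rho)E+\cos(\rho)B)(v).
\]
Since the shape operator of $S_\rho$ satisfies $B_\rho(df_\rho(v))=\nabla_{df_\rho(v)}N_\rho=d\widetilde{N}_\rho(v)$, one reads off
\[
B_\rho\circ(\cos(\rho)E+\sin(\rho)B)=-\sin(\rho)E+\cos(\rho)B,
\]
which, upon inverting and using that $E$ and $B$ commute (so left- and right-inverses agree), produces the stated formula for $B_\rho$.

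Finally, the second fundamental form follows from $\II_\rho(w_1,w_2)=\langle B_\rho(w_1),w_2\rangle$ applied to $w_i=df_\rho(v_i)$: substituting the expressions for $df_\rho$ and for $B_\rho\circ df_\rho$ and noting that both $(-\sin(\rho)E+\cos(\rho)B)v$ and $(\cos(\rho)E+\sin(\rho)B)w$ lie in $T_xS$, we can replace the ambient pairing by $I$, giving the formula for $\II_\rho$.

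The calculation is essentially routine; the only delicate points are (i) the sign conventions, in particular making sure $\gamma'(\rho)$ is chosen so that $N_\rho$ is future-directed, and (ii) the verification that $\cos(\rho)E+\sin(\rho)B$ is invertible at all points where $S_\rho$ is a smooth immersion, which is where the formula applies; since $B$ has eigenvalues $\pm\lambda$ with $|\lambda|\le 1$, invertibility holds whenever $\rho\notin\{\pi/2-\arctan\lambda,\,\pi/2+\arctan\lambda\}\pmod{\pi}$, and this is the only place one must be cautious about the range of $\rho$.
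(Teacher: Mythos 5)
Your argument is correct, and it takes a slightly different route than the paper. You compute the future normal field of $S_\rho$ explicitly, namely $\widetilde{N}_\rho=-\sin(\rho)x+\cos(\rho)N(x)$, differentiate it to read off $B_\rho\circ df_\rho$, and then recover $\II_\rho$ from the identity $\II_\rho=\langle B_\rho\,\cdot,\cdot\rangle$. The paper never writes down $N_\rho$: after computing $I_\rho$ it invokes the normal-variation identity $\II_\rho=\tfrac{1}{2}\,\frac{dI_\rho}{d\rho}$ (valid because the flow is by unit-speed normal geodesics) and then sets $B_\rho=I_\rho^{-1}\II_\rho$. Both derivations are elementary and of the same length; yours is more transparent about where $B_\rho$ comes from geometrically, the paper's avoids introducing the normal field of $S_\rho$ at all.

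One point worth flagging: your computation gives
$$\II_\rho=I\bigl((-\sin(\rho)E+\cos(\rho)B)\,\cdot\,,(\cos(\rho)E+\sin(\rho)B)\,\cdot\,\bigr)\,,$$
with a $+$ sign in the second factor, whereas the lemma as stated has $\cos(\rho)E-\sin(\rho)B$ there. Your version is the correct one: taking $\tfrac{1}{2}\,\frac{d}{d\rho}$ of $I_\rho=I(A_\rho\,\cdot\,,A_\rho\,\cdot\,)$ with $A_\rho=\cos(\rho)E+\sin(\rho)B$ gives $I(\dot A_\rho\,\cdot\,,A_\rho\,\cdot\,)$, and one also needs $\II_\rho=I_\rho(B_\rho\,\cdot\,,\cdot\,)=I(A_\rho B_\rho\,\cdot\,,A_\rho\,\cdot\,)=I(\dot A_\rho\,\cdot\,,A_\rho\,\cdot\,)$ to be consistent with the stated $B_\rho=A_\rho^{-1}\dot A_\rho$. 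So the sign in the paper's display for $\II_\rho$ is a typo; it is harmless for the rest of the paper, since only the formula for $B_\rho$ is used afterwards (to compute $\lambda_\rho=\tan(\rho_0-\rho)$), and that formula you reproduce correctly.
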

\begin{proof}
Let $\sigma$ be a smooth embedding of the maximal surface $S$, with oriented unit normal $N$. The geodesics orthogonal to $S$ at a point $x=\sigma(y)$ can be written as
$$\gamma_x(\rho)=\cos(\rho)\sigma(y)+\sin(\rho)N(x)\,.$$
Then we compute
\begin{align*}
I_\rho(v,w)=&\langle d\gamma_x(\rho)(v),d\gamma_x(\rho)(w) \rangle \\
=&\langle \cos(\rho)d\sigma_x(v)+\sin(\rho)d(N\circ\sigma)_x(v),\cos(r)d\sigma_x(w)+\sin(\rho)d(N\circ\sigma)_x(w)\rangle \\
=&I(\cos(\rho)v+\sin(\rho)B(v),\cos(\rho)w+\sin(\rho)B(w))\,.
\end{align*}
We have used the equation $B=\nabla N$. The formula for the second fundamental form follows from the fact that $\II_\rho=\frac{1}{2}\frac{dI_\rho}{d\rho}$, and the formula for $B_\rho$ from equating $B_\rho=I_\rho^{-1}\II_\rho$.
\end{proof}

It follows that, if the principal curvatures of a maximal surface $S$ are $\lambda_1=\lambda\in[0,1)$ and $\lambda_2=-\lambda$, then the principal curvatures of $S_\rho$ are $$\lambda_\rho=\frac{\lambda-\tan(\rho)}{1+\lambda\tan(\rho) }=\tan(\rho_0-\rho)\,,$$ where $\tan\rho_0=\lambda$, and $$\lambda'_\rho=\frac{-\lambda-\tan(-\rho)}{1-\lambda\tan(\rho) }=\tan(-\rho_0-\rho)\,.$$ In particular $\lambda_\rho$ and $\lambda'_\rho$ are non-singular for every $\rho$ between $-\pi/4$ and $\pi/4$.
It then turns out that $S_\rho$ is convex at every point for $\rho<-||\rho_0||_\infty$, and concave for $\rho>||\rho_0||_\infty$. This proves that the width $w$ is less than $2||\rho_0||_\infty=2\arctan||\lambda||_\infty$. Therefore 
$$\tan\left(\frac{\omega}{2}\right)\leq ||\lambda||_\infty\,,$$
and the statement of Proposition \ref{estimate principal curvatures and width ads reverse} follows.
 %Observe that the surfaces $S_\rho$ all have the same boundary at infinity, say $\Gamma=gr(\phi)$, and foliate the domain of dependence of $\Gamma$. The following is then proved:
%it follows than the width of the convex hull of $\partial_\infty(S)$ is less than $2\arctan\lambda$. 

\begin{repprop}{estimate principal curvatures and width ads reverse} 
Let $S$ be an entire maximal surface in $\AdS^3$ with $||\lambda||_\infty<1$ and let $w$ be the width of the convex hull.  Then 
$$\tan w\leq \frac{2||\lambda||_\infty}{1-||\lambda||_\infty^2}\,.$$
\end{repprop}

A direct consequence is that, if $S$ is an entire maximal surface with uniformly negative curvature (equivalently, with $||\lambda||_\infty<1$), then $w<\pi/2$, see also \cite[Corollary 3.22]{bon_schl}. Also the converse holds:

\begin{prop}[\cite{bon_schl}] \label{prop bs width neg curv}
Given any entire maximal surface $S$ of nonpositive curvature in $\AdS^3$, the width $w$ of the convex hull of $S$ is at most $\pi/2$. Moreover, $w<\pi/2$ if and only if $S$ has uniformly negative curvature.
\end{prop}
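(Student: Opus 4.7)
The plan is to derive the proposition by combining three observations: a trivial upper bound from the geometry of $\AdS^3$, Proposition \ref{estimate principal curvatures and width ads reverse} for the easy direction of the equivalence, and Lemma \ref{lemma flat horo} together with a limit argument for the other direction.

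First, the upper bound $w\leq\pi/2$ is immediate from Equation \eqref{lunghezza2}: any timelike geodesic segment $\overline{pq}\subset\AdS^3$ satisfies $\cos(\mathrm{length}(\overline{pq}))=|\langle p,q\rangle|\geq 0$, so its length is at most $\pi/2$. Since by Definition \ref{defi width} the width $w$ is a supremum of such lengths, the bound follows. For the implication $||\lambda||_\infty<1\Rightarrow w<\pi/2$, I would simply invoke Proposition \ref{estimate principal curvatures and width ads reverse}, which gives $\tan w\leq 2||\lambda||_\infty/(1-||\lambda||_\infty^2)<\infty$.

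For the contrapositive, assume $||\lambda||_\infty=1$ and aim to show $w=\pi/2$. If there exists $x_0\in S$ with $\lambda(x_0)=1$, then by Lemma \ref{lemma flat horo}, $S$ is contained in a horospherical surface $H$. Since $S$ and $H$ are both two-dimensional smooth surfaces, $S$ is open in $H$ by invariance of domain; being an entire compression disc with frontier in $\partial_\infty\AdS^3$, $S$ is also closed in $H$, hence $S=H$ by connectedness. The horospherical-surface example then gives $w(S)=w(H)=\pi/2$. If instead the supremum $||\lambda||_\infty=1$ is not attained, pick a sequence $x_n\in S$ with $\lambda(x_n)\to 1$ and apply isometries $g_n\in\isom(\AdS^3)$ sending $x_n$ to a fixed basepoint $x_*$, with the tangent plane of $g_n(S)$ at $x_*$ standardized to a reference plane. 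By standard $C^{2,\alpha}$-type estimates for maximal surfaces of nonpositive curvature (see \cite{bon_schl}), a subsequence of the $g_n(S)$ converges smoothly on compact sets to an entire maximal surface $S_\infty$ of nonpositive curvature with $\lambda_{S_\infty}(x_*)=1$. The previous case applied to $S_\infty$ gives $w(S_\infty)=\pi/2$. Isometry invariance gives $w(g_n(S))=w(S)$ for every $n$, and lower semicontinuity of the width under Hausdorff convergence of convex hulls (any long timelike segment in $\mathcal{CH}(\partial_\infty S_\infty)$ can be approximated by nearby timelike segments in $\mathcal{CH}(g_n(\partial_\infty S))$) yields $w(S)\geq w(S_\infty)=\pi/2$. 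Combined with the upper bound, $w(S)=\pi/2$.

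The main obstacle is justifying the compactness and the semicontinuity in the non-attained subcase. Both depend on the interior $C^{2,\alpha}$ estimates for maximal surfaces with $||\lambda||_\infty\leq 1$ developed in \cite{bon_schl}, together with the fact that Hausdorff limits of convex sets respect the supremum of lengths of timelike segments; once the approximation scheme is in place, the remainder of the argument is formal.
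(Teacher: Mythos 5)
Your decomposition of the proposition into three parts is natural, and the middle part (uniformly negative curvature $\Rightarrow w<\pi/2$, via Proposition \ref{estimate principal curvatures and width ads reverse}) matches exactly what the paper does. However, the opening step is not as immediate as you claim. The formula $\cos(\mathrm{length}(\overline{pq}))=|\langle p,q\rangle|$ from \eqref{lunghezza2} is derived from the parametrization $\gamma(t)=[\cos(t)p+\sin(t)v]$, which gives $\langle p,\gamma(t_0)\rangle=-\cos(t_0)$; the absolute value on the right forces $\cos(\mathrm{length})\geq 0$, so the formula identifies the length of the \emph{shorter} of the two arcs from $p$ to $q$ on the closed timelike geodesic. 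A timelike geodesic segment contained in $\mathcal{CH}(\Gamma)$ could a priori be the longer arc (of length in $(\pi/2,\pi)$), in which case the formula simply does not compute its length. The bound $w\leq\pi/2$ genuinely uses that $\Gamma=gr(\phi)$ is acausal, so that $\mathcal{CH}(\Gamma)$ lies in the invisible domain of $\Gamma$ and no point of $\mathcal{CH}(\Gamma)$ lies on the dual plane of another — this is a theorem of Mess/Bonsante--Schlenker, not a one-line consequence of \eqref{lunghezza2}, and your appeal to that formula is essentially circular.

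On the converse direction the paper does not give a proof at all: it cites \cite{bon_schl}, and remarks explicitly that the cited proof goes through quasisymmetric homeomorphisms and minimal Lagrangian extensions; the paper's own contribution is the quantitative Theorem~\ref{theorem width lambda big}, proved via gradient estimates on $\chi=-\ln|\lambda|$, barriers for lines of curvature, and explicit spacelike lines in the convex hull. Your compactness/semicontinuity argument is a genuinely different and softer route: Lemma~\ref{lemma flat horo} for the attained case (where the open--closed argument that $S=H$ is correct, once one notes $S$ entire implies $S$ closed in $\AdS^3$ hence in $H$), and Lemma~\ref{lemma bon schl} together with Remark~\ref{remark compactness} to blow up at a near-maximal point and pass to a horospherical limit. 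This works and is arguably more elementary than the cited machinery, at the cost of two technical steps you would need to flesh out: that Hausdorff convergence of the boundaries $\partial_\infty g_n(S)\to\partial_\infty S_\infty$ (which is a generalized curve with lightlike segments) forces Hausdorff convergence of the convex hulls, and that the width $\pi/2$ of the horospherical convex hull is actually attained by a specific segment joining $l$ to $l^\perp$, so that the approximation of that segment inside $\mathcal{CH}(g_n(\partial_\infty S))$ gives the needed lower semicontinuity. What the paper's quantitative route buys instead is an explicit rate — it is the tool needed for the extension estimates — whereas your argument yields only the qualitative dichotomy but with far less machinery.
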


The proof of the converse implication in \cite{bon_schl} used quasisymmetric homemorphisms (see below) and minimal Lagrangian extensions (which will be discussed in Section \ref{section universal}). Observe that our Theorem \ref{theorem width lambda big} gives a quantitative version of the converse implication of Proposition \ref{prop bs width neg curv}.
%while the quantitative version of the other direction is provided by Proposition \ref{estimate principal curvatures and width ads reverse}.

\subsection{Quasisymmetric homeomorphisms of the circle} \label{subsec: quasisymmetric model}

Given an orientation-preserving homeomorphism $\phi:\RP^1\to \RP^1$, we define the \emph{cross-ratio norm} of $\phi$ as
$$||\phi||_{cr}=\sup_{cr(Q)=-1}\left|\ln\left|cr(\phi(Q))\right|\right|\,,$$
where $Q=(z_1,z_2,z_3,z_4)$ is any quadruple of points on $\RP^1$ and we use the following definition of cross-ratio:
$$cr(z_1,z_2,z_3,z_4)=\frac{(z_4-z_1)(z_3-z_2)}{(z_2-z_1)(z_3-z_4)}.$$
According to this definition, a quadruple $Q=(z_1,z_2,z_3,z_4)$ is symmetric (i.e. the hyperbolic geodesics connecting $z_1$ to $z_3$ and $z_2$ to $z_4$ intersect orthogonally) if and only if $cr(Q)=-1$. 

Observe that $||\phi||_{cr}\in[0,\infty]$ and $||\phi||_{cr}=0$ if and only if $\phi$ is a projective transformation, i.e. $\phi\in\PSL(2,\R)$. Indeed, by post-composing with a projective transformation, one can assume that $\phi$ fixes three points of $\RP^1$, and then the conclusion is straightforward. A homeomorphism is \emph{quasisymmetric} when it has finite cross-ratio norm:

\begin{defi} \label{defi quasisymmetric}
An orientation-preserving homeomorphism $\phi:\RP^1\to \RP^1$ is quasisymmetric if and only if $||\phi||_{cr}<+\infty$.
\end{defi}

Quasisymmetric homeomorphisms arise naturally in the context of quasiconformal mappings and universal Teichm\"uller space, which is actually one of the main themes of this paper. However, we defer the introduction of this point of view to Section \ref{section universal}, where some applications are given.

The first intimate correlation between the cross-ratio norm of a quasisymmetric homeomorphism $\phi$ and the width of the convex hull of $gr(\phi)$ was proved in \cite{bon_schl}:

\begin{theorem}[{\cite[Theorem 1.12]{bon_schl}}] \label{thmBS quasi width}
Given any orientation-preserving homeomorphism $\phi:\RP^1\to \RP^1$, let $w$ be the width of the convex hull of $gr(\phi)$. Then $w<\pi/2$ if and only if $\phi$ is quasisymmetric.
\end{theorem}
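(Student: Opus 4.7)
My plan treats the two implications of the biconditional separately, leveraging the maximal-surface and convex-hull machinery set up earlier in the section.

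For the forward implication (quasisymmetric $\phi$ implies $w < \pi/2$), the cleanest route is to invoke Proposition \ref{estimate width cross ratio norm}, proved later in the paper: $\tan w \leq \sinh(\|\phi\|_{cr}/2)$. Finiteness of $\|\phi\|_{cr}$ then forces $w < \pi/2$ directly. Without appealing to that later proposition, an alternative argument uses the maximal surface $S$ from Theorem \ref{thm bon schl existence} with $\partial_\infty S = gr(\phi)$, which lies in $\mathcal{CH}(gr(\phi))$ by Corollary \ref{maximal surface contained convex hull}. A normal-families argument, rescaling $S$ by isometries of $\AdS^3$ at putative points where $\lambda$ approaches $1$, would produce in the limit a maximal surface with $\lambda \equiv 1$ at some point, hence a horospherical one by Lemma \ref{lemma flat horo}; but the boundary of a horospherical surface is a polygon of four null segments, incompatible with the uniformly bounded cross-ratio norm of the rescaled homeomorphisms (the cross-ratio norm is $\PSL(2,\R)\times\PSL(2,\R)$-invariant). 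Hence $\|\lambda\|_\infty < 1$, and Proposition \ref{estimate principal curvatures and width ads reverse} gives $\tan w \leq 2\|\lambda\|_\infty/(1-\|\lambda\|_\infty^2) < \infty$.

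For the converse ($w < \pi/2$ implies quasisymmetric), I would argue by contrapositive. Assume $\|\phi\|_{cr} = \infty$ and pick symmetric quadruples $Q_n = (z_1^n,z_2^n,z_3^n,z_4^n) \subset \RP^1$ with $|\ln|cr(\phi(Q_n))|| \to \infty$. The graph points $\xi_i^n = (z_i^n,\phi(z_i^n))$ lie on $gr(\phi) \subset \partial_\infty\AdS^3$, and the two spacelike geodesics $\alpha_n = \overline{\xi_1^n \xi_3^n}$ and $\beta_n = \overline{\xi_2^n \xi_4^n}$ both sit inside the convex hull. Consequently
$$w \geq \sup_{p\in\alpha_n,\, q\in\beta_n} d_{\AdS^3}(p,q).$$
After normalizing by $\PSL(2,\R)\times\PSL(2,\R)$ so that the left-factor quadruple is $(-1,0,1,\infty)$ and the image triple $(\phi(z_1^n),\phi(z_2^n),\phi(z_3^n))$ is $(-1,0,1)$, the remaining value $\phi(z_4^n)$ must converge to $\pm 1$ because $|\ln|cr(\phi(Q_n))||\to\infty$, and $\beta_n$ then converges to a spacelike line dual to $\alpha_n$. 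For a dual pair of spacelike lines in $\AdS^3$, every pair of points is joined by a timelike segment of length exactly $\pi/2$, so the supremum above tends to $\pi/2$, contradicting $w < \pi/2$.

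The main obstacle is the quantitative control of the maximum timelike distance between two disjoint spacelike lines in $\AdS^3$ as a function of their four $\partial_\infty\AdS^3$-endpoints in the $\RP^1\times\RP^1$ model, and in particular the verification that the condition ``one cross-ratio fixed at $-1$ while the other degenerates'' forces the two lines to approach a dual pair. This computation is essentially the common core of Proposition \ref{estimate width cross ratio norm} and Proposition \ref{estimate width cross ratio norm below}. Note that the latter in isolation only gives $\tanh(\|\phi\|_{cr}/4) \leq \tan w$, which controls $\|\phi\|_{cr}$ from $w$ only in the regime $w < \pi/4$; for the full qualitative converse one therefore either carries out the dual-pair limiting argument above, or upgrades the choice of quadruple (iterating the construction along subdivisions of $\RP^1$) so that the lower bound on $\tan w$ genuinely diverges as $\|\phi\|_{cr}\to\infty$.
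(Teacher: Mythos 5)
Note first that the paper does not prove this theorem: it is quoted from \cite{bon_schl}. What the paper does establish is a quantitative refinement of it; Proposition~\ref{estimate width cross ratio norm} implies the forward direction, while Proposition~\ref{estimate width cross ratio norm below} is only a \emph{partial} converse whose lower bound saturates (as the paper observes, it is informative only for $w<\pi/4$). Your forward direction is fine, either by quoting Proposition~\ref{estimate width cross ratio norm} directly or by the compactness/normal-families route, which is essentially the argument used to prove Corollary~\ref{corollary extension}.

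The substantive gap is in your converse. Your central claim --- that after normalizing $\xi_1=(-1,-1)$, $\xi_2=(0,0)$, $\xi_3=(1,1)$, $\xi_4=(\infty,\phi_\infty)$ with $\phi_\infty\to\pm 1$, the line $\beta_n=\overline{\xi_2\xi_4}$ converges to a line dual to $\alpha=\overline{\xi_1\xi_3}$ --- is false. Two disjoint spacelike lines with ideal endpoints $(a,b),(a',b')$ and $(c,d),(c',d')$ respectively are dual precisely when $\{(c,d),(c',d')\}=\{(a,b'),(a',b)\}$: each endpoint of one line must be joined to an endpoint of the other by a single ruling of $\partial_\infty\AdS^3$ (two null vectors in $\R^{2,2}$ are orthogonal iff their projective classes lie on a common ruling line). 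Hence the dual of $\alpha$ has endpoints $(-1,1)$ and $(1,-1)$, whereas your $\beta_n$ degenerates to the line with endpoints $(0,0)$ and $(\infty,\pm 1)$, which is a genuinely different line. A direct computation (or a glance at the proof of Proposition~\ref{estimate width cross ratio norm below}, which analyses exactly this configuration in an isometric normalization) shows that the supremum of timelike distances between $\alpha$ and $\beta_n$ tends to $\pi/4$, not $\pi/2$, as $|cr(\phi(Q_n))|\to\infty$: this is precisely why that Proposition only yields $\tan w\geq\tanh(||\phi||_{cr}/4)$, whose right-hand side caps at $1=\tan(\pi/4)$. A single symmetric quadruple and its two chords cannot detect a width above $\pi/4$, so the contrapositive does not close. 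Your secondary remark about iterating the construction over nested subdivisions of $\RP^1$ does point toward a viable strategy, but that iteration is where the real work lies and it is not carried out; the other honest route is the compactness argument of \cite{bon_schl} itself, using both pleated boundary components of the convex hull rather than two chords.
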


Again, Proposition \ref{estimate width cross ratio norm} will provide a more precise version of Theorem \ref{thmBS quasi width}, giving a quantitative inequality between the width and the cross-ratio norm.

\begin{remark}
Actually, Theorem  \ref{thmBS quasi width} holds under a more general hypothesis, namely for more general curves in $\partial_\infty\AdS^3$, which, roughly speaking, may also contain lightlike segments. We will not be interested in these objects in this paper.
\end{remark}

A refinement of Theorem \ref{thm bon schl existence} was given in \cite{bon_schl} under the assumption that the curve at infinity is the graph of a quasisymmetric homeomorphism. 

\begin{theorem}[\cite{bon_schl}] \label{teorema superfici massime quasisimmetrico}
Given any \textbf{quasisymmetric} homeomorphism $\phi:\RP^1\rar \RP^1$, there exists a \textbf{unique} entire maximal surface $S$ in $\AdS^3$ with \textbf{uniformly} negative curvature such that $\partial_\infty S=gr(\phi)$. 
\end{theorem}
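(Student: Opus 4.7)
The plan is to split the theorem into existence with uniform negative curvature, and uniqueness.

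For existence, I would apply Theorem \ref{thm bon schl existence} to $\phi$ to obtain an entire maximal surface $S$ with nonpositive curvature and $\partial_\infty S = gr(\phi)$. By Corollary \ref{maximal surface contained convex hull}, $S \subset \mathcal{CH}(gr(\phi))$, so the width of the convex hull of $S$ is bounded above by the width $w$ of $\mathcal{CH}(gr(\phi))$. Theorem \ref{thmBS quasi width} gives $w < \pi/2$ since $\phi$ is quasisymmetric, and Proposition \ref{prop bs width neg curv} then yields $\sup_S |\lambda| < 1$, which is exactly the condition of uniform negative curvature.

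For uniqueness, suppose for contradiction that $S_1 \neq S_2$ are two such surfaces. I would set up a geometric maximum principle via the normal flow from Lemma \ref{lemma formule prima seconda shape distanza costante}: the parallel translates $S_1^\rho$ at signed timelike distance $\rho$ from $S_1$ remain smooth and spacelike for $|\rho|$ below a threshold depending on $\sup_{S_1}|\lambda|<1$, with shape operator given by the formula \eqref{shape operator constant distance}. Up to swapping, some point of $S_2$ lies strictly in the future of $S_1$. Let $\rho_0 > 0$ be the smallest $\rho$ for which $S_1^\rho$ meets $S_2$. Assuming the contact is realized at an interior tangency $y$, $S_1^{\rho_0}$ is tangent to $S_2$ at $y$ and locally in its past, so the shape operators satisfy $B_{S_1^{\rho_0}} \leq B_{S_2}$ in the eigenvalue order; combined with $\tr B = 0$ on both sides, they must coincide at $y$. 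Unique continuation for the (quasilinear elliptic) maximal surface equation then propagates this equality to $S_1^{\rho_0} = S_2$. But for $\rho_0 > 0$ one has $\partial_\infty S_1^{\rho_0} \neq gr(\phi) = \partial_\infty S_2$, a contradiction.

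The main obstacle will be ensuring that $\rho_0$ is attained at an \emph{interior} tangency rather than escaping to $\partial_\infty \AdS^3$. Since $S_1^\rho$ (for $\rho > 0$) and $S_2$ have distinct asymptotic boundaries, the uniform curvature bound should give a quantitative separation in a neighborhood of infinity; to make this rigorous I would apply the maximum principle of Proposition \ref{formule hess lap ads} to the sine-distance $\sin d_{\AdS^3}(\cdot, P)$ from suitable support planes $P$ of $\mathcal{CH}(gr(\phi))$, so as to confine the comparison to a compact region of $\AdS^3$. Once this compactness is in place, the tangency argument closes and uniqueness follows.
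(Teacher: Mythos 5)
The paper does not actually prove this statement: it is recorded as background (attributed to \cite{bon_schl}) and used elsewhere without proof, so there is no in-paper argument to compare against. I will therefore assess your reconstruction on its own terms.

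Your existence step is fine as a chain of the cited facts, though it deserves a footnote: Proposition \ref{prop bs width neg curv} and Theorem \ref{thmBS quasi width} are themselves results from \cite{bon_schl}, and in that reference the implication $w<\pi/2\Rightarrow$ uniformly negative curvature is established \emph{before}, and independently of, the uniqueness statement; one must check that ordering to avoid circularity, but your reduction is in the end legitimate.

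The uniqueness step contains a concrete error. For $\rho_0>0$ the parallel surface $S_1^{\rho_0}$ is \emph{not} maximal: by Lemma \ref{lemma formule prima seconda shape distanza costante} its principal curvatures are $\tan(\pm\arctan\lambda-\rho_0)$, whose sum is
\[
\tan(\arctan\lambda-\rho_0)+\tan(-\arctan\lambda-\rho_0)=\frac{-\sin(2\rho_0)}{\cos(\arctan\lambda-\rho_0)\cos(\arctan\lambda+\rho_0)}\neq 0\,,
\]
so the phrase ``combined with $\tr B=0$ on both sides, they must coincide at $y$'' is false and the appeal to unique continuation does not close the argument. The fix is in fact simpler than what you wrote: the strict sign of $\tr B_{S_1^{\rho_0}}$ makes the one-sided tangency with the maximal surface $S_2$ (where $\tr B_{S_2}=0$) a direct contradiction via the Hessian comparison at the contact point, with no need for unique continuation. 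You should also be careful about which side of the tangency $S_2$ lies on: if $\rho_0=\sup_{S_2}(\text{signed distance to }S_1)$ and the supremum is attained, then $S_2$ lies locally in the \emph{past} of $S_1^{\rho_0}$, not the other way around; with the orientation as stated the sign of the mean curvature goes the wrong way and the contradiction evaporates.

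Finally, the compactness issue you flag is a genuine gap, not a detail. Applying Proposition \ref{formule hess lap ads} to $\sin d_{\AdS^3}(\cdot,P)$ for support planes $P$ of the convex hull shows that $S_2$ lies in $\mathcal{CH}(gr(\phi))$, but that set is noncompact, so it does not by itself confine the first-contact analysis to a compact region. One needs an actual decay or blow-up argument near $\partial_\infty\AdS^3$ — e.g.\ taking a sequence $x_n\in S_2$ along which the signed distance to $S_1$ approaches its (alleged) positive supremum, normalizing by isometries so that $x_n\mapsto x_0$, extracting convergent limits of both surfaces via Lemma \ref{lemma bon schl} and Theorem \ref{Compactness property of quasisymm homeo}, and ruling out that the distance escapes to infinity — and this is really where the content of the uniqueness proof lies. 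As written, the step is asserted rather than proved.
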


%Universal Teichm\"uller space is then equivalently defined as the space of quasisymmetric homeomorphisms of the circle up to post-composition with M\"obius transformations:
%$$\Teich(\D)=\left\{\phi:S^1\to S^1\text{ quasisymmetric}\right\}/\mbox{M\"ob}(S^1)\,.$$
%Again, $\Teich(\D)$ can be identified to the space of quasisymmetric homeomorphisms of $S^1$ fixing $1$, $i$ and $-1$.

%The topology of $\Teich(\D)$ coincides with the topology induced by the distance induced on the space of quasisymmetric homeomorphisms by the cross-ratio norm. Namely, one can define the distance between two quasisymmetric homeomorphisms $\phi,\phi'$ as $||\phi^{-1}\circ \phi'||_{cr}$.

\subsection{Compactness properties} \label{subsection compactness}

One of the main tools used in \cite{bon_schl} is a result of compactness for maximal surfaces in $\AdS^3$. To conclude the preliminaries of this paper, we briefly discuss some properties of compactness for maximal surfaces and quasisymmetric homeomorphisms.

Given a spacelike plane $P_0$ in $\AdS^3$ and a point $x_0\in P_0$, let $l$ be the timelike geodesic through $x_0$ orthogonal to $P_0$. We define the solid cylinder $Cl(x_0,P_0,R_0)$ of radius $R_0$ above $P_0$ centered at $x_0$ as the set of points $x\in\AdS^3$ which lie on a spacelike plane $P_x$ orthogonal to $l$ such that $d_{P_x}(x,l\cap P_x)\leq R_0$. See also Figure \ref{fig:cylinder}.

\begin{figure}[htbp]
\centering
\begin{minipage}[c]{.45\textwidth}
\centering
\includegraphics[height=6cm]{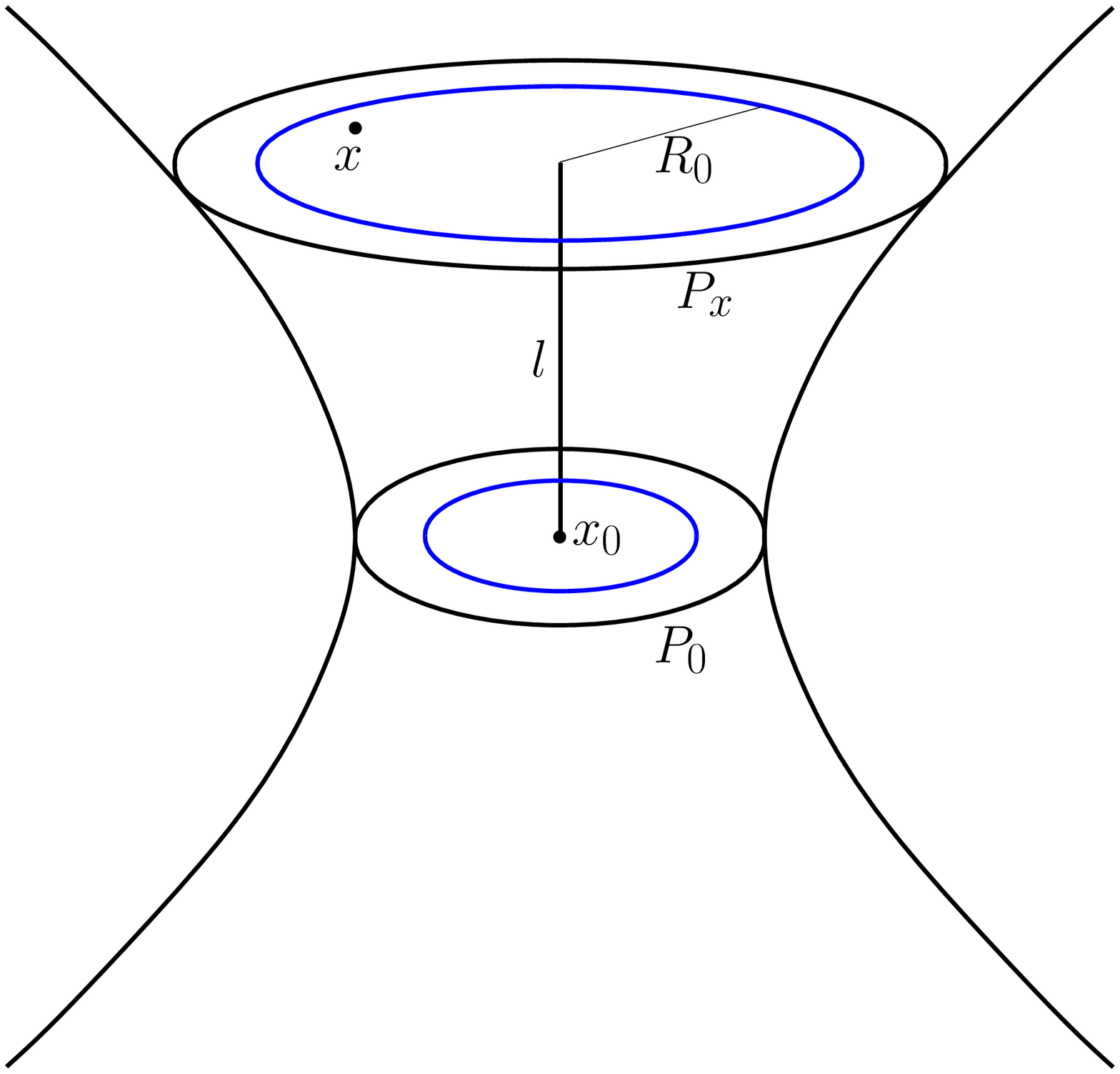} 
%\captionsetup{labelformat=empty}
\end{minipage}%
\hspace{5mm}
\begin{minipage}[c]{.50\textwidth}
\centering
\includegraphics[height=6cm]{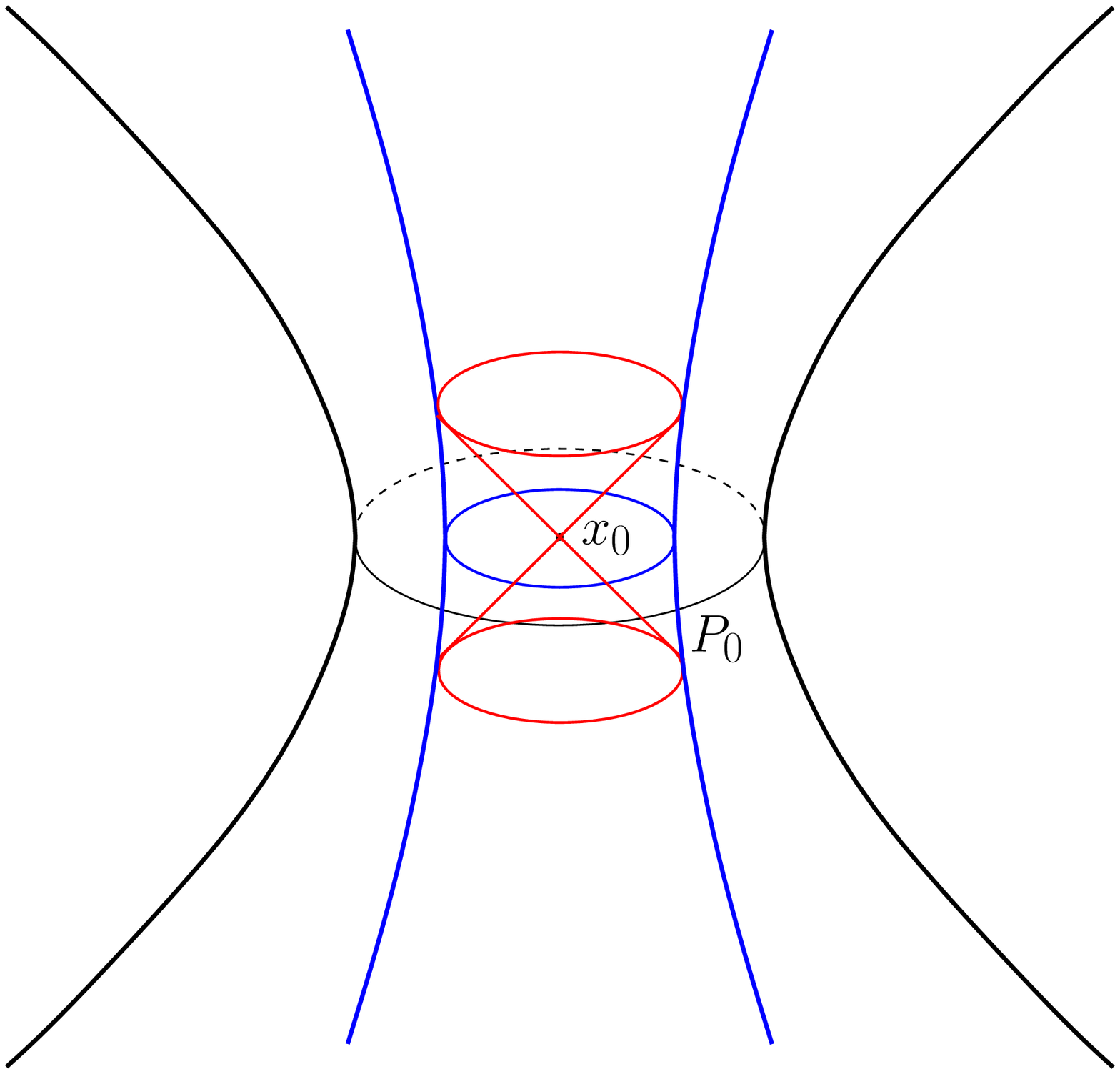}
%\captionsetup{labelformat=empty}
\end{minipage}
\caption{The definition of the solid cylinder $Cl(x_0,P_0,R_0)$. On the right, its intersection with the future and past lightcone over $x_0$. \label{fig:cylinder}}
\end{figure}

The solid cylinder $Cl(x_0,P_0,R_0)$ can also be conveniently described in the following way. Assuming (in the double cover $\wAdS$, for one moment) that $x_0=(0,0,1,0)$ and the normal vector to $P_0$ is $N_0=(0,0,0,1)$, let us consider the following coordinate system:
\begin{equation} \label{cylindric coordinates}
(r,\theta,\zeta)\mapsto [\cos\theta\sinh r,\sin\theta\sinh r,\cos\zeta\cosh r,\sin\zeta\cosh r]\,,
\end{equation}
defined for $r\in\R,\theta\in S^1,\zeta\in S^1$.
This means that the level sets with $\zeta=c$ are totally geodesic planes orthogonal to the timelike line
$$\{[0,0,\cos\zeta,\sin\zeta]:\zeta\in S^1\}\,,$$
which passes through the point $x_0=(0,0,1,0)$ with future-directed normal $N_0=(0,0,0,1)$. It is easy to see that, by this description, the solid cylinder $Cl(x_0,P_0,R_0)$ is determined by the relation $r\leq R_0$.

From the tools in the paper \cite{bon_schl}, the following lemma is proved:

\begin{lemma} \label{lemma bon schl}
Given a spacelike plane $P_0$ and a point $x_0\in P_0$, every sequence $S_n$ of entire maximal surfaces with nonpositive curvature, tangent to $P_0$ at $x_0$, admits a subsequence converging $C^{\infty}$ to an entire maximal surface on the solid cylinders $Cl(x_0,P_0,R)$, for every $R>0$. 
\end{lemma}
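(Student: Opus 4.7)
The plan is to express each $S_n$ as a graph over $P_0$ in the cylindrical coordinates (\ref{cylindric coordinates}) and then extract a limit by elliptic regularity plus Arzel\`a--Ascoli. Write $S_n\cap Cl(x_0,P_0,R)=\{\zeta=u_n(r,\theta):r\leq R\}$ with $u_n(0)=0$ and $du_n(0)=0$, which encodes the tangency to $P_0$ at $x_0$. A direct computation shows that the induced metric on such a graph is $g_0-\cosh^2 r\,du_n\otimes du_n$, where $g_0=dr^2+\sinh^2 r\,d\theta^2$ is the hyperbolic metric on $P_0$; hence the spacelike hypothesis becomes the pointwise bound $\cosh^2 r\,|\nabla_{g_0}u_n|_{g_0}^2<1$, while the maximality condition $\mathrm{tr}\,B_n=0$ translates into a quasilinear elliptic PDE for $u_n$ whose coefficients depend only on $(r,u_n,\nabla_{g_0}u_n)$.

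For the \emph{a priori} bounds I would proceed in three steps. First, a uniform $C^0$-bound on $u_n$ on $\{r\leq R\}$ follows from Corollary \ref{maximal surface contained convex hull} together with Proposition \ref{prop bs width neg curv}: each $S_n$ lies in the convex hull of $\partial_\infty S_n$, whose timelike width is at most $\pi/2$, and since all $S_n$ pass through $x_0\in P_0$ a support-plane argument yields $|u_n|\leq M(R)<\pi/2$ uniformly on $\{r\leq R\}$. Second, the nonpositive-curvature hypothesis is equivalent to $\|B_n\|\leq 1$ on the shape operator; expressing $B_n$ in terms of $u_n$, $\nabla_{g_0}u_n$ and $\mathrm{Hess}_{g_0}\,u_n$, and integrating outward from the vanishing of the gradient at $x_0$, one checks that $\cosh^2 r\,|\nabla_{g_0}u_n|_{g_0}^2$ stays bounded strictly away from $1$ on the whole disc $\{r\leq R\}$. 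Third, the PDE for $u_n$ is then uniformly elliptic with smooth, uniformly controlled coefficients, and Schauder-type estimates analogous to Proposition \ref{schauder estimate ads} upgrade the $C^0$- and $C^1$-bounds to uniform $C^{k,\alpha}$-bounds on $\{r\leq R'\}$ for every $R'<R$ and every $k$.

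A standard Arzel\`a--Ascoli and diagonal extraction then produces a subsequence of $(u_n)$ converging in $C^\infty$ on every compact subset of $P_0$ to a smooth spacelike function $u_\infty$; passing to the limit in the maximal surface PDE shows that the graph of $u_\infty$ is an entire maximal surface of nonpositive curvature tangent to $P_0$ at $x_0$, and on each $Cl(x_0,P_0,R)$ the convergence $S_n\to S_\infty$ takes place in $C^\infty$. The main obstacle is the second step above: ruling out the degeneration $\cosh^2 r\,|\nabla_{g_0}u_n|_{g_0}^2\to 1$ somewhere on $\{r\leq R\}$ along the sequence, which would cause the PDE to lose ellipticity and break the Schauder machinery. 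The quantitative control of $|\nabla_{g_0}u_n|$ starting from the tangency at $x_0$, using only the curvature bound $\|B_n\|\leq 1$, is the delicate point that would need to be made rigorous.
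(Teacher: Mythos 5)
The paper does not actually prove this lemma: it is introduced with the sentence ``From the tools in the paper \cite{bon_schl}, the following lemma is proved,'' i.e.\ it is delegated to Bonsante--Schlenker. So there is no in-text proof to compare against; I will assess your sketch on its own merits and against what the cited tools actually provide.

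Your overall scheme (write $S_n$ as a graph $\zeta=u_n$ over $P_0$ in the coordinates \eqref{cylindric coordinates}, get $C^0$ and $C^1$ a priori bounds, upgrade by interior Schauder estimates for the quasilinear maximal-surface equation, then extract by Arzel\`a--Ascoli and a diagonal argument) is the right shape and is consistent with how such compactness results are proven in \cite{bon_schl}. Two remarks on the details.

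First, the $C^0$ bound is obtainable much more cheaply than through the width of the convex hull. Every point of $S_n\cap Cl(x_0,P_0,R)$ is joined to $x_0$ by a spacelike path inside $S_n$, hence lies outside the lightcone of $x_0$. In the coordinates \eqref{cylindric coordinates} this is exactly the inequality \eqref{relazione altezza raggio}, which gives $|u_n|=|\zeta|<\arccos(1/\cosh R)<\pi/2$ on $\{r\le R\}$, uniformly in $n$ and without invoking Corollary \ref{maximal surface contained convex hull} or Proposition \ref{prop bs width neg curv} at all.

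Second, and more seriously, the gradient bound is the crux and you have flagged it yourself, but your proposed mechanism --- ``integrating outward from $x_0$ using $\|B_n\|\le 1$'' --- is not the argument that makes the cited tools work, and I do not think it closes as written. The bound $\|B_n\|\le 1$ controls the covariant derivative of the normal $N$ along $S_n$, but the tilt $\cosh\alpha=|\langle N,V\rangle|$ with $V=\partial_\zeta/\cosh r$ also involves $\nabla_{\dot\gamma}V$, and the vector field $V$ is not parallel (indeed $\nabla_V V=\tanh r\,\partial_r$ away from the axis); moreover the translation of $\|B_n\|\le 1$ into a Hessian bound for $u_n$ carries a factor $\sqrt{1-\cosh^2 r\,|\nabla_{g_0}u_n|^2}$ that degenerates precisely where you most need the estimate, so the ODE comparison does not automatically give a bound \emph{away} from the critical value uniformly on $\{r\le R\}$ for an arbitrary fixed $R$. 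In \cite{bon_schl} the gradient estimate comes instead from the convex-hull structure: one picks a spacelike support plane $P_-$ of $\mathcal{CH}(\partial_\infty S_n)$ disjoint from $S_n$ with $d_{\AdS^3}(x_0,P_-)\le \pi/4$ and then bounds, uniformly, the hyperbolic angle between $N$ and the geodesic orthogonal to $P_-$. This is exactly the content reproduced in the present paper as Lemmas \ref{lemma gradient bounds} and \ref{boundedness angle}, and, since $N(x_0)=N_0$, it also controls the position of $P_-$ relative to $P_0$, from which the uniform non-degeneracy $\cosh^2 r\,|\nabla_{g_0}u_n|^2\le c<1$ on $\{r\le R\}$ follows. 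So the missing ingredient in your sketch is precisely the support-plane gradient lemma; $\|B_n\|\le 1$ alone is not the source of the tilt bound, though it is of course needed elsewhere (e.g.\ for the limit to have nonpositive curvature and for uniform interior estimates once ellipticity is secured).
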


A somehow similar property of compactness for quasisymmetric homeomorphisms will be used in several occasions. See also \cite{basmajian} for a discussion.

\begin{theorem} \label{Compactness property of quasisymm homeo}
Let $k>0$ and $\phi_n:\RP^1\to \RP^1$ be a family of orientation-preserving quasisymmetric homeomorphisms of the circle, with $||\phi_n||_{cr}\leq k$. Then there exists a subsequence $\phi_{n_k}$ for which one of the following holds:
\begin{itemize}
\item The homeomorphisms $\phi_{n_k}$ converge uniformly to a quasisymmetric homeomorphism $\phi:\RP^1\to \RP^1$, with $||\phi||_{cr}\leq k$;
\item The homeomorphisms $\phi_{n_k}$ converge uniformly on the complement of any open neighborhood of a point of $\RP^1$ to a constant map $c:\RP^1\to \RP^1$.
\end{itemize}
\end{theorem}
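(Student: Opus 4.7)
The plan is to normalize $\phi_n$ by pre- and post-composing with M\"obius transformations---operations that preserve the cross-ratio norm---and then apply an Arzela--Ascoli argument in the non-degenerate case, while directly exploiting monotonicity of orientation-preserving homeomorphisms in the degenerate case.

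Fix three distinct points $a_1, a_2, a_3 \in \RP^1$ and, by compactness of $\RP^1$, extract a subsequence with $\phi_n(a_i) \to b^i$ for $i=1,2,3$. \emph{Non-degenerate case.} Suppose $b^1, b^2, b^3$ are pairwise distinct. Let $m_n \in \PSL(2,\R)$ be the M\"obius transformation sending $\phi_n(a_i)$ to $a_i$; then $m_n$ converges to the M\"obius transformation $m$ sending $b^i$ to $a_i$. The compositions $\psi_n := m_n \circ \phi_n$ satisfy $||\psi_n||_{cr} \le k$ and fix $a_1, a_2, a_3$. The key auxiliary fact is that a family of quasisymmetric homeomorphisms of $\RP^1$ with uniformly bounded cross-ratio norm and all fixing three prescribed distinct points is equicontinuous---and so is the corresponding family of inverses: given $x, y \in \RP^1$, one may choose auxiliary points $z, w$ in terms of the three fixed points so that $(x, y, z, w)$ has cross-ratio $-1$, and the bound $||\psi_n||_{cr} \le k$ then controls $|\psi_n(x) - \psi_n(y)|$ by a modulus depending only on $k$. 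Arzela--Ascoli extracts a further subsequence with $\psi_n \to \psi$ uniformly, where $\psi$ is a homeomorphism (since the inverses converge uniformly too) with $||\psi||_{cr} \le k$ by lower semicontinuity of the cross-ratio under pointwise convergence. Then $\phi_n = m_n^{-1} \circ \psi_n$ converges uniformly to the quasisymmetric homeomorphism $m^{-1} \circ \psi$, establishing the first alternative.

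\emph{Degenerate case.} If instead two of the $b^i$ coincide, say $b^1 = b^2 = b^*$, orientation-preserving monotonicity implies that the arc $J$ between $a_1$ and $a_2$ not containing $a_3$ is mapped by $\phi_n$ onto the arc between $\phi_n(a_1)$ and $\phi_n(a_2)$ not containing $\phi_n(a_3)$; when $b^3 \ne b^*$ this image arc is forced to shrink to $\{b^*\}$, and $\phi_n|_J \to b^*$ uniformly (if $b^3 = b^*$, one replaces $a_3$ by a suitable auxiliary point to reach the same conclusion). To convert this into the stated alternative, choose a countable dense $D \subset \RP^1$ and extract a diagonal subsequence so that $\phi_n(d) \to b(d)$ for every $d \in D$. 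If some triple of points of $D$ produces three distinct limits, the non-degenerate case applied to that triple already returns the first alternative; otherwise the combinatorial observation that pairwise repetition in every triple forces the image set $\{b(d) : d \in D\}$ to have at most two elements applies. Two distinct values $b^*, b^{**}$ with two separated ``transition'' points $p_1 \ne p_2$ are ruled out by a cross-ratio estimate: a cyclically ordered quadruple $d_a, d_b, d_c, d_d \in D$ with $cr=-1$, $d_a, d_d$ mapped near $b^*$ and $d_b, d_c$ mapped near $b^{**}$, would satisfy $cr(\phi_n(d_a), \phi_n(d_b), \phi_n(d_c), \phi_n(d_d)) \to 0$, contradicting $||\phi_n||_{cr} \le k$. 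Consequently the limit takes the single value $b^*$ on $D$ except possibly at one point, and orientation-preserving monotonicity upgrades pointwise convergence on $D$ to uniform convergence to the constant $b^*$ on the complement of any neighborhood of a single point $p \in \RP^1$.

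The main technical difficulty is in the degenerate case: using the cross-ratio bound to ensure that only one exceptional concentration point appears, and using monotonicity to upgrade pointwise to uniform convergence off neighborhoods of that point. The non-degenerate case is essentially a standard Arzela--Ascoli argument once the M\"obius normalization is in place.
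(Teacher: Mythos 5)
The paper itself does not prove this statement: it records the compactness property as known and simply cites \cite{basmajian}, so there is no proof in the paper to compare against. Your reconstruction follows the standard outline (M\"obius normalization, equicontinuity from the cross-ratio bound plus Arzel\`a--Ascoli in the non-degenerate case, and a cross-ratio blow-up argument to exclude two separated concentration points in the degenerate case) and is essentially correct, but two steps should be tightened. First, the equicontinuity justification as written does not quite work: choosing auxiliary points $z,w$ freely so that $cr(x,y,z,w)=-1$ does not by itself control $|\psi_n(x)-\psi_n(y)|$, because $\psi_n(z)$ and $\psi_n(w)$ are themselves unknown. The correct argument normalizes $\psi_n$ to fix $0,1,\infty$ on $\R\cup\{\infty\}$, notes that taking $z_4=\infty$ in a symmetric quadruple reduces the cross-ratio bound to the $M$-condition $e^{-k}\le |\psi_n(x+t)-\psi_n(x)|/|\psi_n(x)-\psi_n(x-t)|\le e^{k}$, and iterates this over dyadic scales, anchored at the fixed points, to get a uniform H\"older modulus; the same works for the inverses (quasisymmetry is quantitatively symmetric), or one can prove injectivity of the limit directly by observing that if $\psi$ collapsed an arc, then symmetric quadruples straddling the arc would have $cr(\psi_n(Q))$ blow up, contradicting the bound. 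Second, when $b\equiv b^*$ on all of $D$ you should say where the exceptional point $p$ comes from: fix $c^{**}\ne b^*$, set $p_n=\phi_n^{-1}(c^{**})$ and extract a subsequence with $p_n\to p$; then for a compact set $K$ avoiding a neighborhood of $p$, choose $d_1,d_2\in D$ so that $K$ lies in the arc between them missing $p$ (hence missing $p_n$ for large $n$), and monotonicity together with $\phi_n(p_n)=c^{**}$ forces $\phi_n(K)$ into the arc from $\phi_n(d_1)$ to $\phi_n(d_2)$ not containing $c^{**}$, which shrinks to $\{b^*\}$. With these two points made precise your argument is complete.
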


\begin{remark} \label{remark compactness}
It is also not difficult to prove that, given a sequence of entire maximal surfaces $S_n$ which converges uniformly on compact cylinders $Cl(x_0,P_0,R)$ to an entire maximal surface $S_\infty$, then the asymptotic boundaries $\partial_\infty S_n$ converge (in the Hausdorff convergence, for instance) to the asymptotic boundary $\partial_\infty S_\infty$ of the limit surface.
\end{remark}

%---------------------------

\section{Cross-ratio norm and the width of the convex hull} \label{sec cross ratio width}

The purpose of this section is to investigate the relation between the width of the convex hull of $gr(\phi)$ and the cross-ratio norm of $\phi$ when $\phi$ is a quasisymmetric homeomorphism. The main result is thus Proposition \ref{estimate width cross ratio norm}. To some extent, Proposition \ref{estimate width cross ratio norm} is a quantitative version of Theorem \ref{thmBS quasi width}.

\begin{repprop}{estimate width cross ratio norm}
Given any quasisymmetric homeomorphism $\phi$ of $\RP^1$, let $w$ be the width of the convex hull of the graph of $\phi$ in $\partial_\infty\AdS^3$. Then
\begin{equation} \label{estimate width quasisymmetric norm}
%\tanh\left(\frac{||\phi||_{cr}}{4}\right)\leq
\tan(w)\leq \sinh{\left(\frac{||\phi||_{cr}}{2}\right)}\,.
\end{equation}
%Moreover, the inequality is sharp.
\end{repprop}
\begin{proof}
To prove the upper bound on the width, suppose the width of the convex hull $\mathcal{C}$ of $gr(\phi)$ is $w\in(0,\pi/2)$. Let $k=||\phi||_{cr}$. We can find a sequence of pairs $(p_n,q_n)$ such that $d_{\AdS^3}(p_n,q_n)\nearrow w$, with $p_n\in\partial_-\mathcal{C}$, $q_n\in\partial_+\mathcal{C}$. We can assume the geodesic connecting $p_n$ and $q_n$ is orthogonal to $\partial_-\mathcal{C}$ at $p_n$; indeed one can replace $p_n$ with a point in $\partial_-\mathcal{C}$ which maximizes the distance from $q_n$, if necessary (see Remark \ref{discussion width}). Let us now apply isometries $T_n$ so that $T_n(p_n)=p=[\hat p]\in\AdS^3$, for $\hat p=(0,0,1,0)\in\wAdS$, and $T_n(q_n)$ lies on the timelike geodesic through $p$ orthogonal to $P_-=(0,0,0,1)^\perp$. 

The curve at infinity $gr(\phi)$ is mapped by $T_n$ to a curve $gr(\phi_n)$, where $\phi_n$ is obtained by pre-composing and post-composing $\phi$ with M\"obius transformations (this is easily seen from the description of $\isom(\AdS^3)$ as $\PSL(2,\R)\times\PSL(2,\R)$). Hence $\phi_n$ is still quasisymmetric with norm $||\phi_n||_{cr}=||\phi||_{cr}=k$.

\begin{figure}[b]
\centering
\begin{minipage}[c]{.45\textwidth}
\centering
\includegraphics[height=5.5cm]{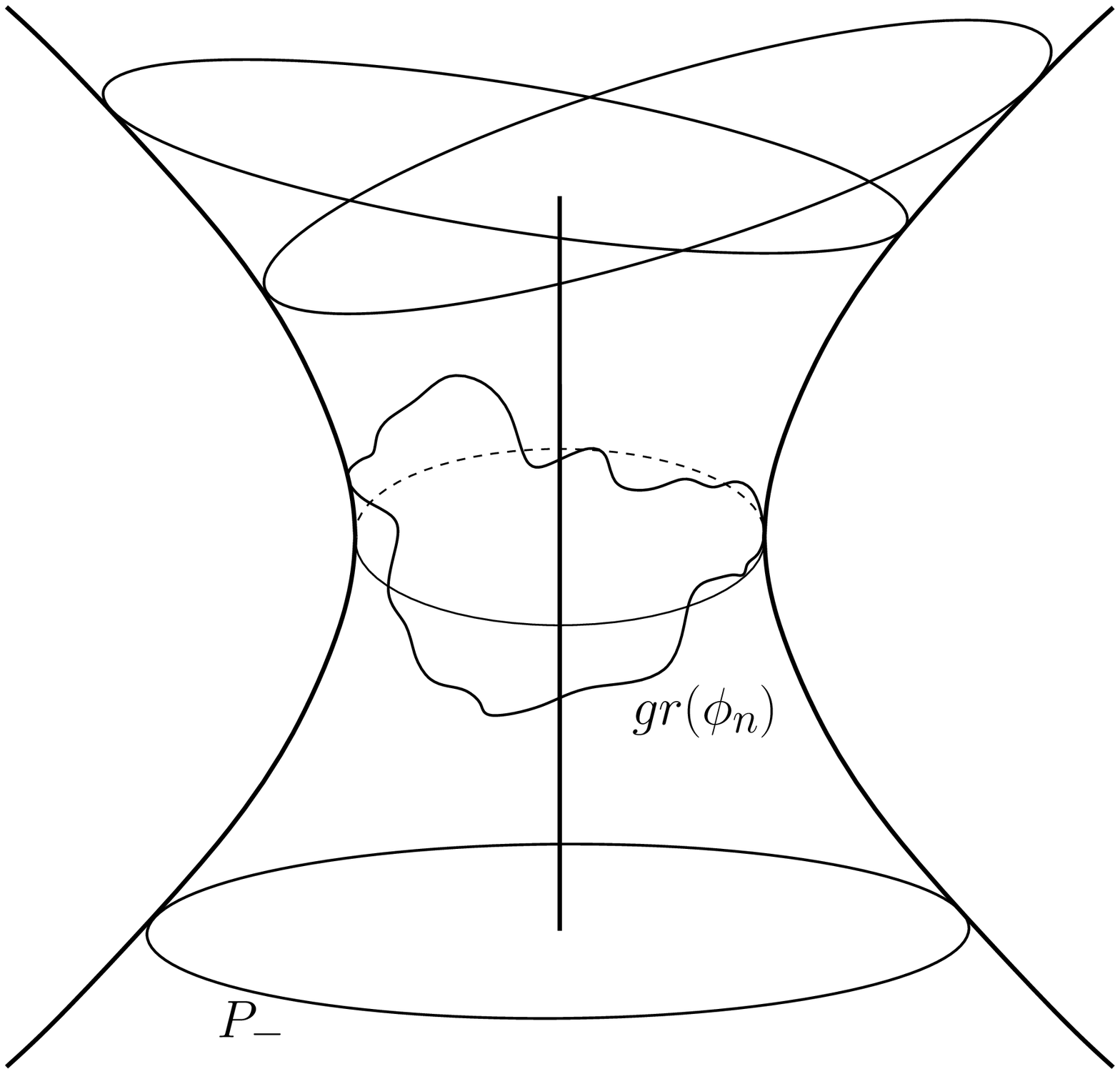} 
%\captionsetup{labelformat=empty}
\end{minipage}%
\hspace{5mm}
\begin{minipage}[c]{.50\textwidth}
\centering
\includegraphics[height=5.5cm]{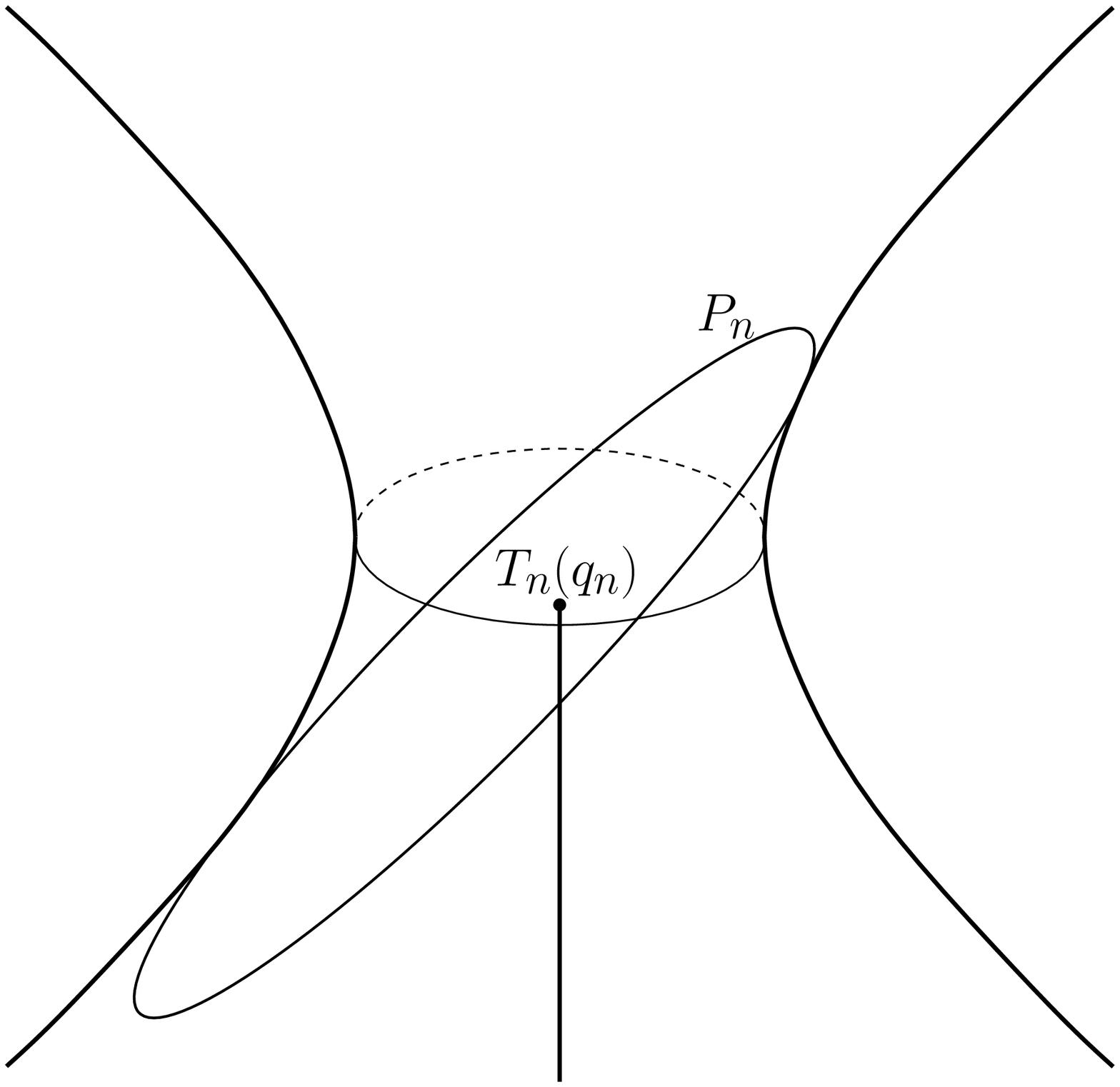}
%\captionsetup{labelformat=empty}
\end{minipage}
\caption{The curves $gr(\phi_n)$ are contained in a bounded region in an affine chart, hence they cannot diverge to a constant map. This is easily seen, for instance, by putting the plane $P_-$ at infinity and observing that the plane $P_n$ has to be spacelike, disjoint from $P_-$, and to intersect the point $T_n(q_n)$ which is in the lower half-plane. \label{fig:boundedregion}}
\end{figure}

It is easy to see that $\phi_n$ cannot converge to a map sending the complement of a point in $\RP^1$ to a single point of $\RP^1$. Indeed, the curves $gr(\phi_n)$ are all contained between $P_-$ and a spacelike plane $P_n$ disjoint from $P_-$, which contains the point $T_n(q_n)$. Moreover the distance of $p$ from $T_n(q_n)\in P_n$ is at most $w$. This shows that the curves $gr(\phi_n)$ all lie in a bounded region in an affine chart of $\AdS^3$; this would not be the case if $\phi_n$ were converging on the complement of one point to a constant map. See Figure \ref{fig:boundedregion}.

 Hence, by the convergence property of $k$-quasisymmetric homeomorphisms (Theorem \ref{Compactness property of quasisymm homeo}), $\phi_n$ converges to a $k$-quasisymmetric homeomorphism $\phi_\infty$, so that $w$ equals the width of the convex hull of $gr(\phi_\infty)$. Let us denote by $\mathcal{C}_\infty$ the convex hull of $gr(\phi_\infty))$.

\begin{figure}[hbtp]
\centering
\begin{minipage}[c]{.45\textwidth}
\centering
\includegraphics[height=5.5cm]{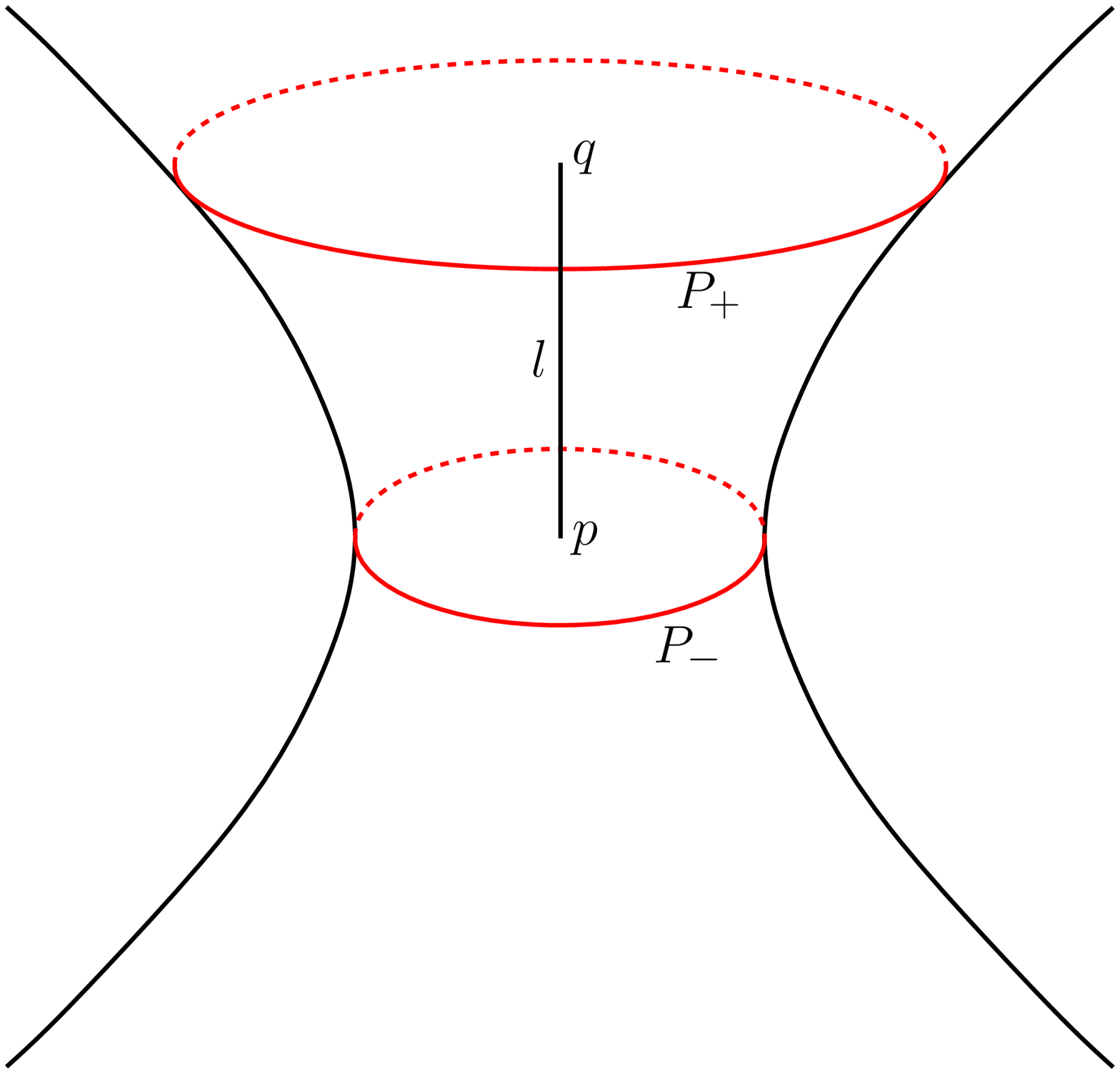} 
%\captionsetup{labelformat=empty}
\caption{The setting of the proof of Proposition ~\ref{estimate width cross ratio norm}.} \label{fig:setting}
\end{minipage}%
\hspace{5mm}
\begin{minipage}[c]{.50\textwidth}
\centering
\includegraphics[height=5.5cm]{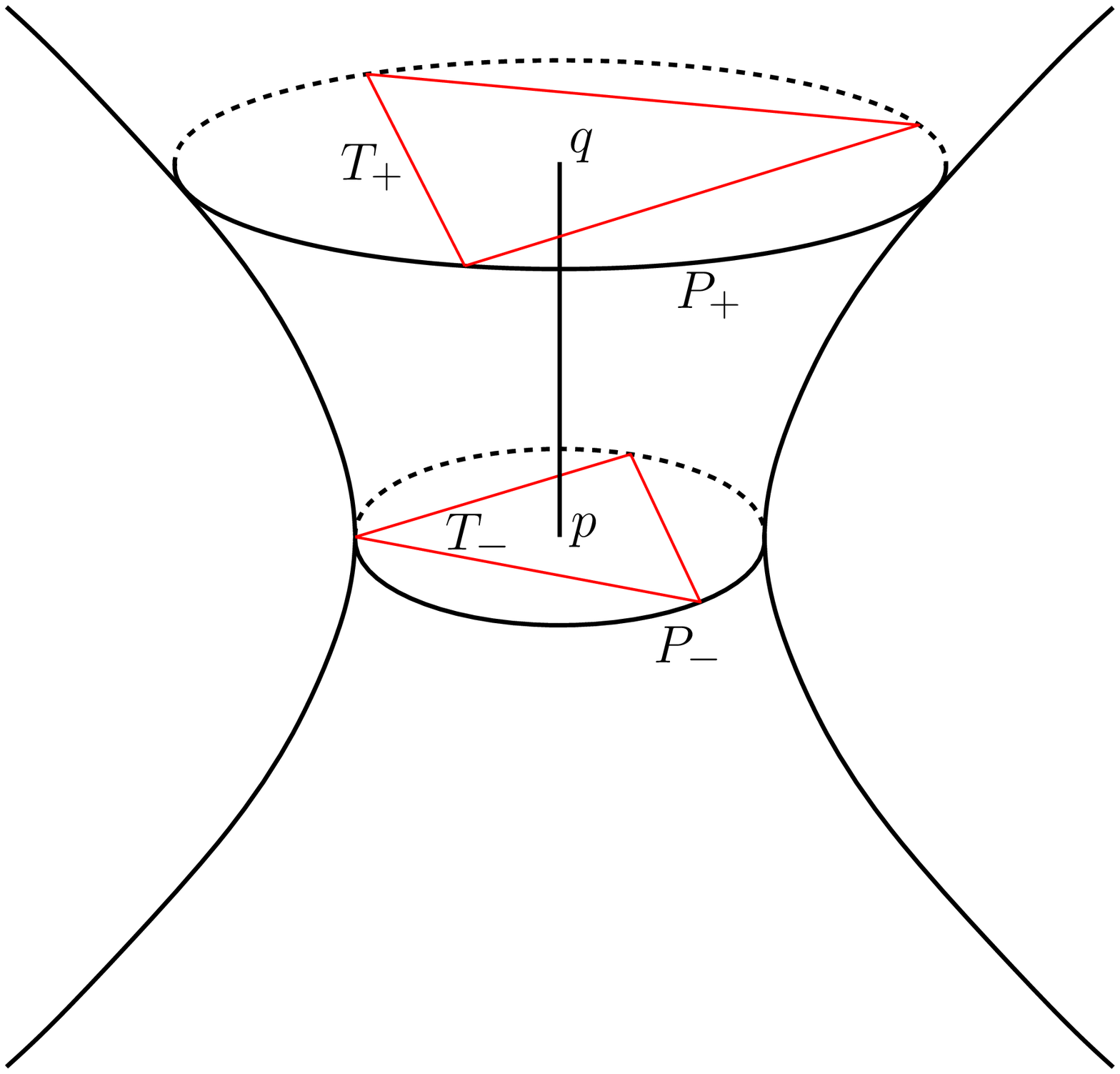}
%\captionsetup{labelformat=empty}
\caption{The point $p$ is contained in the convex envelope of three (or two) points in $\partial_\infty(P_-)$; analogously $q$ in $P_+$.} \label{fig:triangles}
\end{minipage}
\end{figure}

%In the rest of the proof, we derive the estimate (\ref{estimate width quasisymmetric norm}).
We will mostly refer to the coordinates in the affine chart $\{x^3\neq0\}$, namely $(x,y,z)=(x^1/x^3,x^2/x^3,x^4/x^3)$. Our assumption is that the point $p$ has coordinates $(0,0,0)$ and $P_-=\{(x,y,0):x^2+y^2<1\}$ is the totally geodesic plane through $p$ which is a support plane for $\partial_-\mathcal{C}_\infty$. The geodesic line $l$ through $p$ orthogonal to $P_-$ is $\{(0,0,z)\}$. By construction, the width of $\mathcal{C}_\infty$ equals $d_{\AdS^3}(p,q)$, where $q=(0,0,h)=l\cap \partial_+\mathcal{C}_\infty$. It is then an easy computation to show that $h=\tan w$. Hence the plane 
$$P_+=\{(x,y,h):(x,y)\in\R^2,x^2+y^2<1+h^2\}\,,$$
which is the plane orthogonal to $l$ through $q$, is a support plane for $\partial_+\mathcal{C}_\infty$. See Figure \ref{fig:setting}.

Since $\partial_-\mathcal{C}_\infty$ and $\partial_+\mathcal{C}_\infty$ are pleated surfaces, $\partial_-\mathcal{C}_\infty$ contains an ideal triangle $T_-$, such that $p\in T_-$ (possibly $p$ is on the boundary of $T_-$). The ideal triangle might also be degenerate if $p$ is contained in an entire geodesic, but this will not affect the argument. Hence we can find three geodesic half-lines in $P_-$ connecting $p$ to $\partial_\infty\AdS^3$ (or an entire geodesic connecting $p$ to two opposite points in the boundary, if $T_-$ is degenerate). Analogously we have an ideal triangle $T_+$ in $P_+$, compare Figure \ref{fig:triangles}. The following sublemma will provide constraints on the position the half-geodesics in $P_+$ can assume. See Figure \ref{fig:sector} and \ref{fig:sectorabove} for a picture of the ``sector'' described in Lemma \ref{lemma sector}.

\begin{sublemma} \label{lemma sector}
Suppose $\partial_-\mathcal{C}_\infty\cap P_-$ contains a half-geodesic $$g=\left\{t(\cos\theta,\sin\theta,0):t\in[0,1)\right\}$$ from $p$, asymptotic to the point at infinity $\eta=(\cos\theta,\sin\theta,0)$. Then $\partial_+\mathcal{C}_\infty\cap P_+$ must be contained in $P_+\setminus S(\eta)$, where $S(\eta)$ is the sector $\{x\cos\theta+y\sin\theta>1\}$.
\end{sublemma}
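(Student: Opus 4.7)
The plan is to argue by contradiction: suppose there exists a point $q' = (x', y', h) \in \partial_+\mathcal{C}_\infty \cap P_+$ with $s_1 := x'\cos\theta + y'\sin\theta > 1$, i.e.\ $q' \in S(\eta)$, and derive a contradiction by producing a point on the affine segment from $\eta$ to $q'$ which strictly lies outside $\AdS^3 \cup \partial_\infty\AdS^3$, violating the inclusion $\mathcal{C}_\infty\subset\AdS^3\cup\partial_\infty\AdS^3$.

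To set up the contradiction, the affine segment $\overline{\eta q'}$ must be shown to lie in $\mathcal{C}_\infty$. First, $\eta\in\mathcal{C}_\infty$, being the limit as $t\to 1^-$ of the points $g(t)\in\partial_-\mathcal{C}_\infty\subset\mathcal{C}_\infty$ and $\mathcal{C}_\infty$ being closed (as an intersection of closed projective half-spaces). Next, because the two support planes $P_-$ and $P_+$ force $\mathcal{C}_\infty$ into the bounded region $\{0\le z\le h\}\cap\{x^2+y^2\le 1+z^2\}$ of the chart $\{x^3\ne 0\}$, the convex hull sits entirely inside this affine chart, where it is affinely convex as an intersection of affine half-spaces. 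Consequently $\overline{\eta q'}\subset\mathcal{C}_\infty\subset\{x^2+y^2\le 1+z^2\}$.

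For the contradiction I parametrize $\sigma(s)=(1-s)\eta+s q'=\bigl((1-s)\cos\theta+sx',\,(1-s)\sin\theta+sy',\,sh\bigr)$ and expand $\sigma_x^2+\sigma_y^2-1-\sigma_z^2$ as a polynomial in $s$. The constant term vanishes because $\eta\in\partial_\infty\AdS^3$, and a direct calculation shows that the coefficient of $s$ equals $2(s_1-1)$, which is strictly positive by the hypothesis $q'\in S(\eta)$. Hence $\sigma(s)$ satisfies $x^2+y^2>1+z^2$ for all sufficiently small $s>0$, contradicting the previous paragraph and proving the sublemma.

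The geometric content behind this computation, and the only nontrivial observation in the argument, is that $\{x\cos\theta+y\sin\theta=1\}$ is precisely the affine tangent plane to the quadric $\partial_\infty\AdS^3$ at the point $\eta$, and $S(\eta)$ is the open half-space strictly on its outer side; any affine direction from $\eta$ aimed into $S(\eta)$ is therefore forced to pull the segment out of the closed ``solid'' $\overline{\AdS^3}$. Once this is recognized, the rest of the proof is essentially bookkeeping, and the main step I expect to dwell on when writing the full proof is a careful justification that $\mathcal{C}_\infty$ really is confined to the chosen affine chart so that the affine convexity argument applies.
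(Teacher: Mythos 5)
Your proof is correct, but it follows a genuinely different route from the paper's. The paper leans on the Lorentzian structure: since $g(t)\in\mathcal{C}_\infty$ and the width is strictly less than $\pi/2$, every point of $\partial_+\mathcal{C}_\infty\cap P_+$ must lie strictly between $P_-$ and the dual plane $g(t)^\perp$ (the locus of points at timelike distance $\pi/2$ from $g(t)$); computing $P_+\cap g(t)^\perp$ in coordinates gives the constraint $x\geq 1/\tanh(t)$, and letting $t\to-\infty$ (so $g(t)\to\eta$) yields $x\geq -1$, i.e.\ exactly $P_+\setminus S(\eta)$. You instead work purely affinely: $\eta$ lies in the (closed) convex hull, so if some $q'\in\mathcal{C}_\infty$ lay in $S(\eta)$, affine convexity of $\mathcal{C}_\infty$ in the chart would force $\overline{\eta q'}\subset\mathcal{C}_\infty\subset\overline{\AdS^3}$, yet your first-order expansion of the quadric $x^2+y^2=1+z^2$ along the segment shows it exits $\overline{\AdS^3}$ immediately — precisely because $\partial S(\eta)$ is the affine tangent plane to $\partial_\infty\AdS^3$ at $\eta$, as you correctly identify. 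Your argument is more elementary (no duality, no explicit use of $w<\pi/2$) and in fact slightly stronger, showing that $\mathcal{C}_\infty$ itself (not just $\partial_+\mathcal{C}_\infty\cap P_+$) avoids the open region $\{x\cos\theta+y\sin\theta>1\}$. The paper's argument makes the width hypothesis visible in the mechanism, and its family of dual planes $g(t)^\perp$ converges, as $t$ tends to the ideal endpoint, exactly to the tangent plane you use; so the two proofs are two sides of the same picture, yours taking the limit directly. One small caution for a written-up version: you should make explicit (as you already flag) that $\mathcal{C}_\infty$ is a bounded, affinely convex subset of the chosen affine chart — this is stated in the paper's preliminaries but is the load-bearing hypothesis for your segment argument.
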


\begin{figure}[htbp]
\centering
\begin{minipage}[c]{.40\textwidth}
\centering
\includegraphics[height=5.5cm]{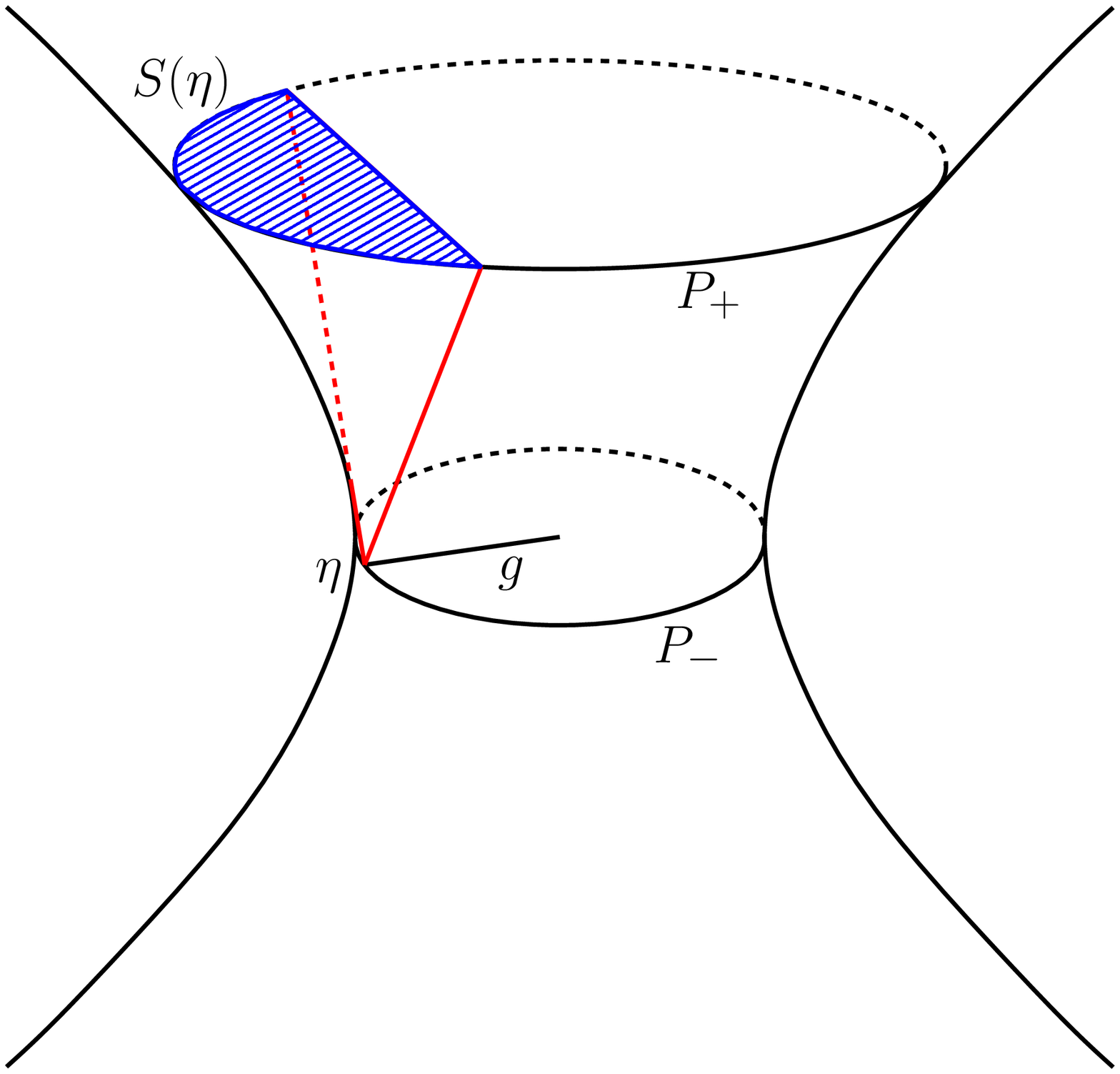}
%\captionsetup{labelformat=empty}
\caption{The sector $S(\eta)$ as in Sublemma \ref{lemma sector}.} \label{fig:sector}
\end{minipage}%
\hspace{5mm}
\begin{minipage}[c]{.50\textwidth}
\centering
\includegraphics[height=4.5cm]{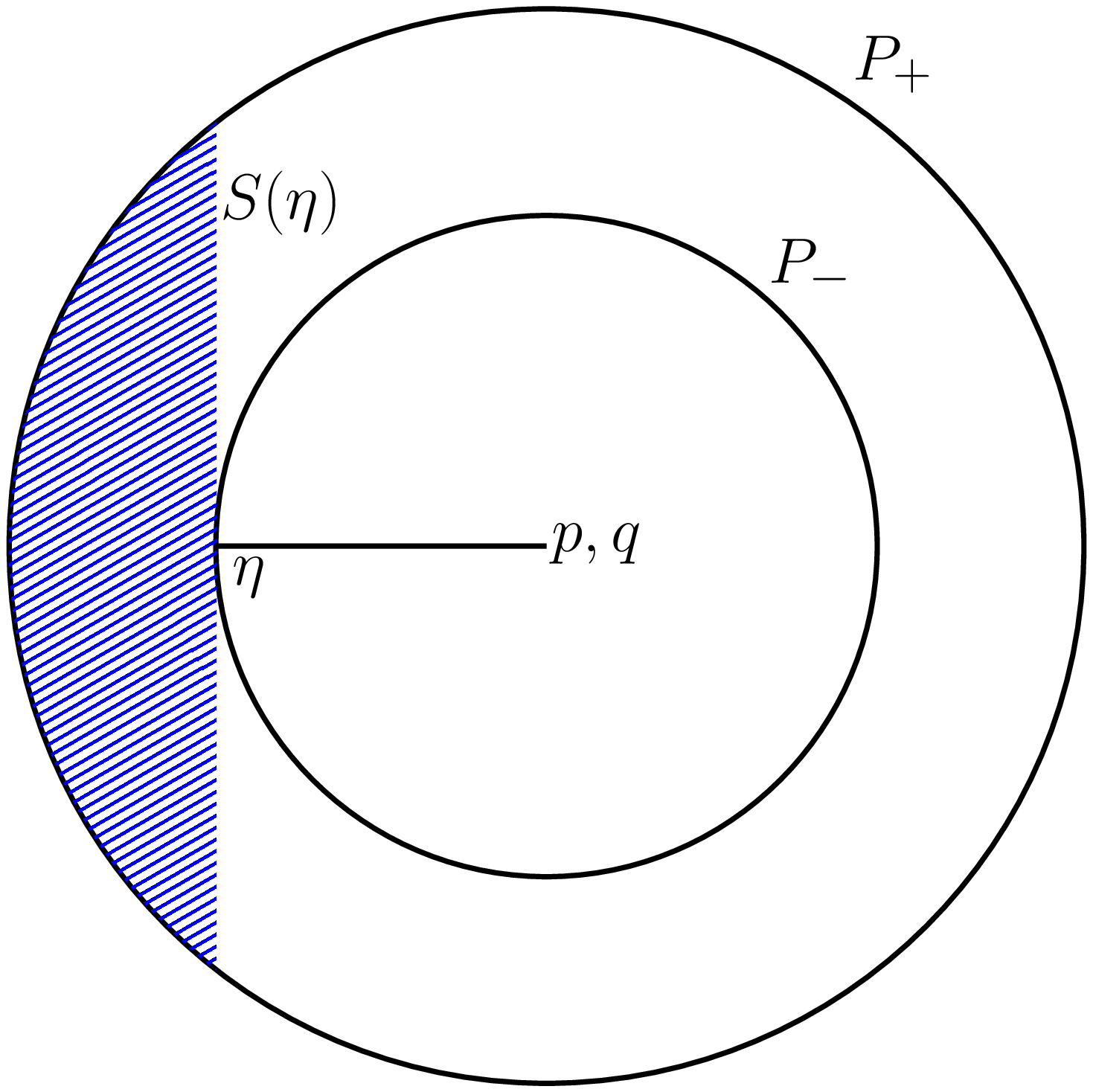}
%\captionsetup{labelformat=empty}
\caption{The $(x,y)$-plane seen from above. The sector $S(\eta)$ is bounded by the chord in $P_+$ tangent to the concentric circle, which projects vertically to $P_-$.} \label{fig:sectorabove}
\end{minipage}
\end{figure}

\begin{proof}
The computation will be carried out using the coordinates of the double cover $\wAdS$ of $\AdS^3$. It suffices to check the assertion when $\theta=\pi$, since in the statement there is a rotational symmetry along the vertical axis. The half-geodesic $g$ is parametrized by $$g(t)=[\sinh(t),0,\cosh(t),0]\,,$$ for $t\in(-\infty,0]$. Since the width is less than $\pi/2$, every point in $\partial_+\mathcal{C}_\infty\cap P_+$ must lie in the region bounded by $P_-$ and the dual plane $g(t)^\perp$. Indeed for every $t$, $g(t)^\perp$ is the locus of points at timelike distance $\pi/2$ from $g(t)$. We have $$P_+=\left\{[\cos(\alpha)\sinh(r),\sin(\alpha)\sinh(r),\cos(w)\cosh(r),\sin(w)\cosh(r)]:r\geq 0,\alpha\in[0,2\pi)\right\}.$$ 
Hence the intersection $P_+\cap g(t)^\perp$ is given by imposing that a point of $P_+$ has zero product with points $g(t)$, which gives the condition $$\sinh(t)\cos(\alpha)\sinh(r)=\cosh(t)\cos(w)\cosh(r)\,.$$
Thus points in the intersection are of the form (in the affine coordinates of $\{x^3\neq0\}$): $$\left(\frac{1}{\tanh(t)},\frac{\tan(\alpha)}{\tanh(t)},\tan(w)\right)\,.$$
 Therefore, points in $\partial_+\mathcal{C}_\infty\cap P_+$ need to have $x\geq 1/\tanh(t)$, and since this holds for every $t\leq0$, we have $x\geq -1$.
\end{proof}

By the Sublemma \ref{lemma sector}, if $p$ is contained in the convex envelope of three points $\eta_1,\eta_2,\eta_3$ in $\partial_\infty(P_-)$, then any point at infinity of $\partial_+\mathcal{C}_\infty\cap P_+$ is necessarily contained in $P_+\setminus(S(\eta_1)\cup S(\eta_2)\cup S(\eta_3))$. We will use this fact to choose two pairs of points, $\eta,\eta'$ in $\partial_\infty(P_-)$ and $\xi,\xi'$ in $\partial_\infty(P_+)$, in a convenient way. This is the content of next sublemma. See Figure \ref{fig:connectedcomponents}.

\begin{sublemma} \label{sublemma eucl}
Suppose $p$ is contained in the convex envelope of three points $\eta_1,\eta_2,\eta_3$ in $\partial_\infty(P_-)$. Then $gr(\phi_\infty)$ must contain (at least) two points $\xi,\xi'$ of $\partial_\infty(P_+)$ which lie in different connected components of $\partial_\infty(P_+)\setminus(S(\eta_1)\cup S(\eta_2)\cup S(\eta_3))$.
\end{sublemma}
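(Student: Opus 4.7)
The strategy is to exploit the pleated structure of $\partial_+\mathcal{C}_\infty$. Because $P_+$ is a support plane of the convex hull at the point $q$, the intersection $\partial_+\mathcal{C}_\infty\cap P_+$ contains a maximal ideal polygon $T_+$---generically an ideal triangle, or, in degenerate cases, merely an ideal geodesic---whose vertices lie in $gr(\phi_\infty)\cap\partial_\infty(P_+)$ and whose closure contains $q$. These vertices form my pool of candidates for the pair $\xi,\xi'$, and it suffices to prove that they cannot all lie in a single connected component of $\partial_\infty(P_+)\setminus\bigcup_i S(\eta_i)$.

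The second step is a direct angular computation of where the sectors cut $\partial_\infty(P_+)$. Parametrizing $\partial_\infty(P_+)$ by $\alpha\in\R/2\pi\Z$ via $(x,y,h)=(\sqrt{1+h^2}\cos\alpha,\sqrt{1+h^2}\sin\alpha,h)$ and writing $\eta_i=(\cos\theta_i,\sin\theta_i,0)$, the sector inequality $x\cos\theta_i+y\sin\theta_i>1$ becomes $\cos(\alpha-\theta_i)>1/\sqrt{1+h^2}=\cos w$, i.e.\ $|\alpha-\theta_i|<w$. Each cap $S(\eta_i)\cap\partial_\infty(P_+)$ is therefore an open arc of angular length $2w$ centered at $\theta_i$, and the complement consists of (at most) three open arcs $A_1,A_2,A_3$.

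Third, I invoke the hypothesis that $p=(0,0,0)$---the Euclidean center of the disc $P_-$---lies in the convex envelope of $\eta_1,\eta_2,\eta_3$. Since geodesics of $\AdS^3$ are projective lines, the edges of $T_-$ are Euclidean chords in $P_-$, and the containment $p\in T_-$ is equivalent to the statement that no open semicircle of $\partial_\infty(P_-)$ contains all three $\eta_i$; in particular each arc of $\partial_\infty(P_-)$ between two consecutive $\eta_j$'s has angular length at most $\pi$. Consequently every $A_i$ has angular length at most $\pi-2w<\pi$, and so is contained in some open semicircle of $\partial_\infty(P_+)$.

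To finish, I apply the analogous planar fact in $P_+$: a convex subset of the disc whose closure contains the Euclidean center $q$ cannot have its ideal vertices all lying in a single open semicircle, because otherwise a diameter would strictly separate $q$ from every vertex. Applied to $T_+$, this rules out the vertices lying in one $A_i$ and thus furnishes the two vertices $\xi,\xi'$ in distinct components, as required. The main technicality to dispatch is the degenerate case where $T_+$ reduces to a single ideal geodesic through $q$: its two endpoints must then be antipodal on $\partial_\infty(P_+)$, and so again cannot both lie in an arc of angular length $<\pi$, so the same conclusion holds.
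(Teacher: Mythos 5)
Your proof is correct and takes essentially the same approach as the paper's: both rest on the elementary planar observation that $p$ lying in the convex envelope of $\eta_1,\eta_2,\eta_3$ forces each component $A_i$ of $\partial_\infty(P_+)\setminus\bigcup_i S(\eta_i)$ to have angular length $\leq\pi-2w<\pi$, while $q$ lying in the closure of the convex set $\partial_+\mathcal{C}_\infty\cap P_+$ prevents its ideal points from all sitting in one such arc. Your write-up merely argues directly (with the explicit $2w$ computation and the degenerate $T_+$ case spelled out) where the paper argues by contradiction, but the underlying two-sided Euclidean argument is the same.
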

\begin{proof}
The proof is simple 2-dimensional Euclidean geometry. Recall that the point $q$, which is the ``center'' of the plane $P_+$, is in the convex hull of $gr(\phi_\infty)$. If the claim were false, then one connected component of $\partial_\infty(P_+)\setminus(S(\eta_1)\cup S(\eta_2)\cup S(\eta_3))$ would contain a sector $S_0$ of angle $\geq \pi$. But then the points $\eta_1,\eta_2,\eta_3$ would all be contained in the complement of $S_0$. This contradicts the fact that $p$ is in the convex hull of $\eta_1,\eta_2,\eta_3$.
\end{proof}

\begin{remark}
If $p$ is in the convex envelope of only two points at infinity, which means that $P_-$ contains an entire geodesic, the previous statement is simplified, see Figure \ref{fig:entireline}.
\end{remark}

\begin{figure}[htbp]
\centering
\begin{minipage}[c]{.50\textwidth}
\centering
\includegraphics[width=4.2cm]{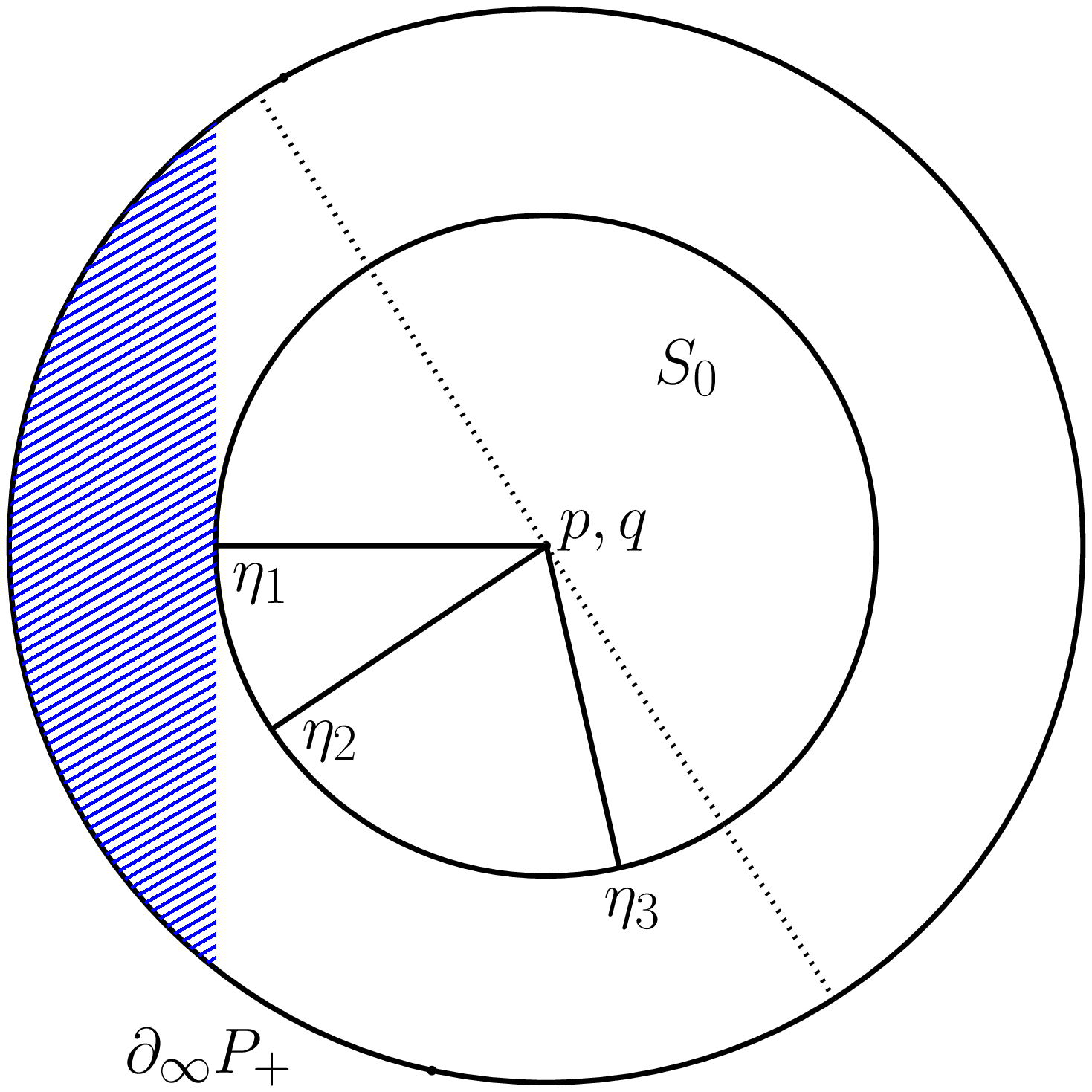}
%\captionsetup{labelformat=empty}
\caption{The proof of Sublemma \ref{sublemma eucl}. Below, the choice of points $\eta,\eta',\xi,\xi'$.} \label{fig:connectedcomponents}
\vspace{2mm}
\includegraphics[width=4.3cm]{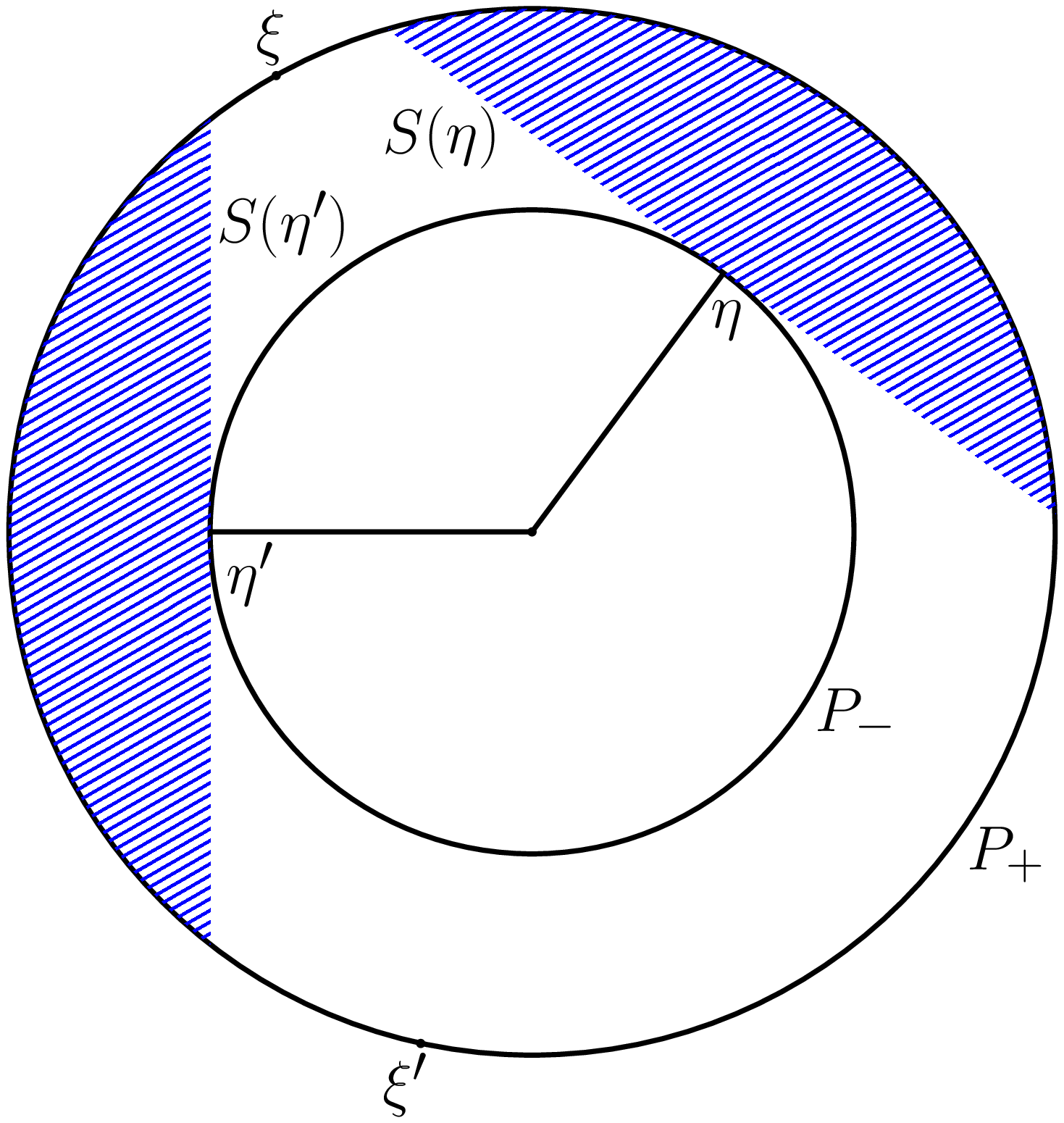}
\end{minipage}%
%\hspace{5mm}
\begin{minipage}[c]{.45\textwidth}
\centering
\includegraphics[height=6.3cm]{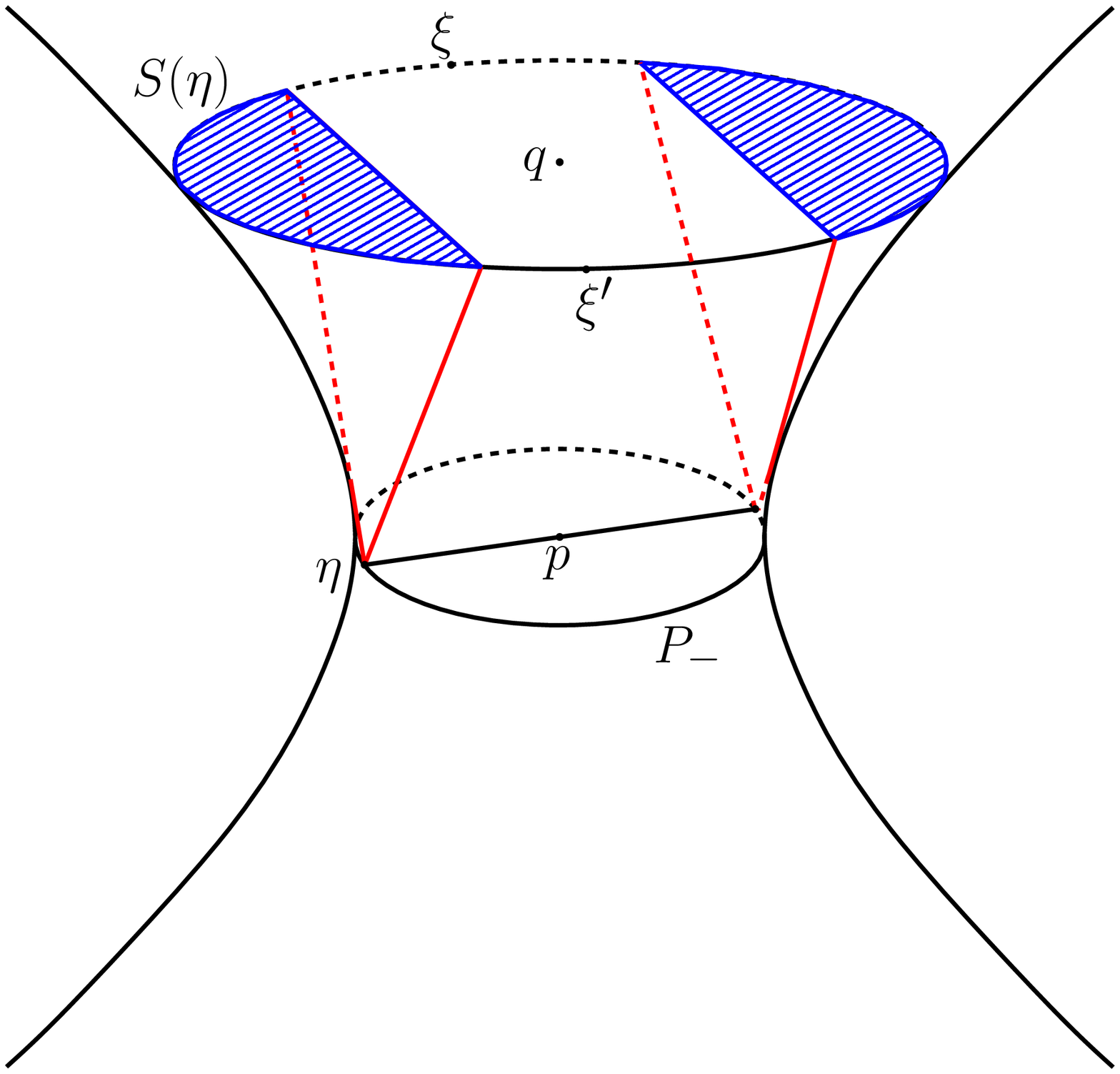}
%\captionsetup{labelformat=empty}
\caption{%If $p$ is contained in an entire geodesic line contained in $P_-$, it is easier to see that $q$ is in the convex hull of at least two point $\xi$ and $\xi'$, in different connected components.
The same statement of Sublemma \ref{sublemma eucl} is simpler if $p$ is contained in an entire geodesic line contained in $P_-$.} \label{fig:entireline}
\end{minipage}
\end{figure}

%if $\partial_-\mathcal{C}_\infty\cap P_-$ contains three half-geodesics whose endpoints at infinity are $\eta_1,\eta_2,\eta_3$, then an ideal triangle contained in $\partial_+\mathcal{C}_\infty\cap P_+$ is necessarily contained in $P_+\setminus(S(\eta_1)\cup S(\eta_2)\cup S(\eta_3))$. It follows that we can choose two half-geodesics from $p$ in $P_-$ (with endpoints at infinity $\eta$ and $\eta'$) and two half-geodesics from $q$ in $P_+$ (with endpoints $\xi$ and $\xi'$) such that $\xi$ and $\xi'$ belong to two different components of $\partial_\infty(P_+)\setminus(S(\eta)\cup S(\eta'))$. 

Let us now choose two points $\eta,\eta'\in \partial_\infty(P_-)$ among $\eta_1,\eta_2,\eta_3$, and $\xi,\xi'\in \partial_\infty(P_+)$ in such a way that $\xi$ and $\xi'$ lie in two different connected components of $\partial_\infty(P_+)\setminus(S(\eta_1)\cup S(\eta_2))$. The strategy will be to use this quadruple to show that the cross-ratio distortion of $\phi_\infty$ is not too small, depending on the width $w$. However, such quadruple is not symmetric in general. Hence $\xi'$ will be replaced later by another point $\xi''$. First we need some tool to compute the left and right projections to $\partial_\infty\Hyp^2$ of the chosen points.

We use the plane $P_-$ to identify $\partial_\infty\AdS^3$ with $\partial_\infty\Hyp^2\times \partial_\infty\Hyp^2$. Let $\pi_l$ and $\pi_r$ denote left and right projection to $\partial_\infty(P_-)$, following the left and right ruling of $\partial_\infty\AdS^3$. In what follows, angles like $\theta_l$, $\theta_r$ and similar symbols will always be considered in $(-\pi,\pi]$.

\begin{sublemma} \label{lemma left right projection}
Suppose $\xi\in\partial_\infty(P_+)$, where the length of the timelike geodesic segment orthogonal to $P_-$ and $P_+$ is $w$. If $\pi_l(\xi)=(\cos(\theta_l),\sin(\theta_l),0)$, then $\pi_r(\xi)=(\cos(\theta_l-2w),\sin(\theta_l-2w),0)$.
\end{sublemma}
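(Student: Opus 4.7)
The plan is to reduce the sublemma to an explicit computation in the double cover $\wAdS$, using a parametrization of the null cone that makes both rulings and the two planes transparent. The boundary at infinity lifts to the null quadric $\{(x^1)^2+(x^2)^2=(x^3)^2+(x^4)^2\}$ in $\R^{2,2}$, which I would parametrize by $(\theta,\psi)\in S^1\times S^1$ via
$$(\theta,\psi)\mapsto [\cos\theta,\sin\theta,\cos\psi,\sin\psi]\,.$$
A direct calculation shows that for every constant $c$, both $\{\psi-\theta=c\}$ and $\{\psi+\theta=c\}$ sweep out projective lines contained in the quadric, and these two families exhaust the left and right rulings of $\partial_\infty\AdS^3$.

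Next I would locate the boundaries at infinity of $P_-$ and $P_+$ in these coordinates. Since $P_-$ is the spacelike plane through $\hat p=(0,0,1,0)$ orthogonal to $N_0=(0,0,0,1)$, one reads off directly that $\partial_\infty P_-=\{\psi=0\}$. Following the timelike geodesic $\gamma(t)=\cos(t)\hat p+\sin(t)N_0$ for time $w$ yields $\hat q=(0,0,\cos w,\sin w)$, and $P_+$ is swept out by spacelike geodesics from $\hat q$ tangent to vectors of the form $(\cos\alpha,\sin\alpha,0,0)$; in the resulting parametrization $[\cos\alpha\sinh r,\sin\alpha\sinh r,\cos w\cosh r,\sin w\cosh r]$, letting the arclength $r\to\infty$ shows that $\partial_\infty P_+$ consists precisely of the points $[\cos\alpha,\sin\alpha,\cos w,\sin w]$, i.e.\ $\{\psi=w\}$.

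The conclusion is then immediate. A point $\xi\in\partial_\infty P_+$ has coordinates $(\alpha,w)$; following the ruling $\theta+\psi=\mathrm{const}$ (respectively $\theta-\psi=\mathrm{const}$) from $\xi$ until it meets $\{\psi=0\}$ lands at $(\alpha+w,0)$ and $(\alpha-w,0)$ on $\partial_\infty P_-$. With the convention of this paper identifying the left ruling with $\theta+\psi=\mathrm{const}$ and the right ruling with $\theta-\psi=\mathrm{const}$, one obtains $\theta_l=\alpha+w$ and $\theta_r=\alpha-w=\theta_l-2w$, which is exactly the claimed formula. The only genuinely delicate point is the matching of which ruling is called ``left'' and which ``right'', so that the sign comes out as $-2w$ rather than $+2w$; this is a one-time convention check against the identification $\isom(\AdS^3)\cong\PSL(2,\R)\times\PSL(2,\R)$ already fixed in Section~\ref{subsection Hyp AdS}, and can be verified on any single concrete example.
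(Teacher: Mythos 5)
Your proof is correct and follows essentially the same strategy as the paper's: both arguments explicitly compute the two ruling lines through $\xi$ and intersect them with $\partial_\infty P_-$. The only difference is the choice of coordinates — you lift to the double cover and parametrize the null quadric by angles $(\theta,\psi)$, making the rulings the level sets of $\theta\pm\psi$, whereas the paper works in the affine chart $\{x^3\neq0\}$ with $h=\tan w$ and writes the rulings as affine lines; both reduce to the same one-line trigonometric identity.
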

\begin{proof}
By the description of the left ruling (see Section \ref{subsection Hyp AdS}), recalling $h=\tan(w)$, it is easy to check that
\begin{align*}
\xi=&(\cos(\theta_l),\sin(\theta_l),0)+h(\sin(\theta_l),-\cos(\theta_l),1)=(\cos(\theta_l)+h\sin(\theta_l),\sin(\theta_l)-h\cos(\theta_l),h) \\
=&(\sqrt{1+h^2}\cos(\theta_l-w),\sqrt{1+h^2}\sin(\theta_l-w),h)\,.
\end{align*}
By applying the same argument to the right projection, the claim follows.
\end{proof}

We can assume $\eta'=(-1,0,0)$. We shall adopt in this part the complex notation, i.e. $\Hyp^2$ is thought of in the disc model as a subset of $\C$, where $\C$ is identified to the plane $\{z=0\}$ in the affine chart. In this way, $\eta'$ corresponds to $(-1,-1)\in\partial_\infty\Hyp^2\times \partial_\infty\Hyp^2$. Let $\eta=(e^{i\theta_0},e^{i\theta_0})$; by symmetry, we can assume $\theta_0\in[0,\pi)$; in this case we need to consider the point $\xi=(e^{i\theta_l},e^{i\theta_r})$ constructed above, with $\theta_r\in[\theta_0,\pi)$. More precisely, Sublemma \ref{lemma left right projection} shows $\theta_r=\theta_l-2w$; by Sublemma \ref{lemma sector} we must have $\theta_l-w\notin(\theta_0-w,\theta_0+w)\cup(\pi-w,\pi)\cup(-\pi,-\pi+w)$ and thus, by choosing $\xi$ in the correct connected component (i.e. switching $\xi$ and $\xi'$ if necessary), necessarily $\theta_l\in [\theta_0+2w,\pi]$ (see Figure \ref{fig:xi}). 

We remark again that the quadruple $Q=\pi_l(\xi',\eta,\xi,\eta')$ will not be symmetric in general, so we need to consider a point $\xi''$ instead of $\xi'$ so as to obtain a symmetric quadruple. However, if $\theta_0\in(-\pi,0)$, then one would consider the point $\xi'$ in the connected component having $\theta_r\in(-\pi,\theta_0)$ - and then a point $\xi''$ in the other connected component so as to have a symmetric quadruple - and obtain the same final estimate.

So let $\xi''=(e^{i\theta''_l},e^{i\theta''_r})$ be a point on $gr(\phi)$ so that the quadruple $Q=\pi_l(\xi'',\eta,\xi,\eta')$ is symmetric; we are going to compute the cross-ratio of $\phi(Q)=\pi_r(\xi'',\eta,\xi,\eta')$. However, in order to avoid dealing with complex numbers, we first map $\partial_\infty\Hyp^2=\partial_\infty(P_-)$ to $\RP^1\cong \R\cup\left\{\infty\right\}$ using the M\"obius transformation
$$z\mapsto\frac{z-1}{i(z+1)}$$
which maps $e^{i\theta}$ to $\tan (\theta/2)\in\R$ if $\theta\neq\pi$, and $-1$ to $\infty$. We need to compute
\begin{equation} \label{stimare cross ratio norm}
\left|\ln\left|cr(\phi(Q))\right|\right|=\left|\ln\left|{\frac{\tan(\theta_r/2)-\tan(\theta_0/2)}{\tan(\theta_0/2)-\tan(\theta''_r/2)}}\right|\right|
\end{equation}
and in particular we want to show this is uniformly away from 1. By construction $\theta_r<\theta_l$ (see also Figure \ref{fig:slopes}), and since $P_-$ does not disconnect $gr(\phi)$, also $\theta''_r<\theta''_l$. Hence we have 
\begin{equation} \label{stimare cross ratio norm 2} \tan(\theta_0/2)-\tan(\theta''_r/2)\geq \tan(\theta_0/2)-\tan(\theta''_l/2)\,.
\end{equation}
\noindent The condition that $(\theta_l'',\theta_0,\theta_l,\infty)$ forms a symmetric quadruple translates on $\R$ to the condition that
\begin{equation} \label{stimare cross ratio norm 3}
\tan(\theta_0/2)-\tan(\theta''_l/2)=\tan(\theta_l/2)-\tan(\theta_0/2)\,.
\end{equation}
\noindent Using (\ref{stimare cross ratio norm 2}) and (\ref{stimare cross ratio norm 3}) in the argument of the logarithm in (\ref{stimare cross ratio norm}), we obtain:
$$\frac{\tan(\theta_r/2)-\tan(\theta_0/2)}{\tan(\theta_0/2)-\tan(\theta''_r/2)}\leq\frac{\tan((\theta_l/2)-w)-\tan(\theta_0/2)}{\tan(\theta_l/2)-\tan(\theta_0/2)}=:S(\theta_l).$$
%By Lemma \ref{lemma sector}, we must have $\theta_l\in [\theta_0+2w,\pi]$. (see picture FIGURA)

\begin{figure}[b]
\centering
\begin{minipage}[c]{.45\textwidth}
\centering
\includegraphics[height=5.8cm]{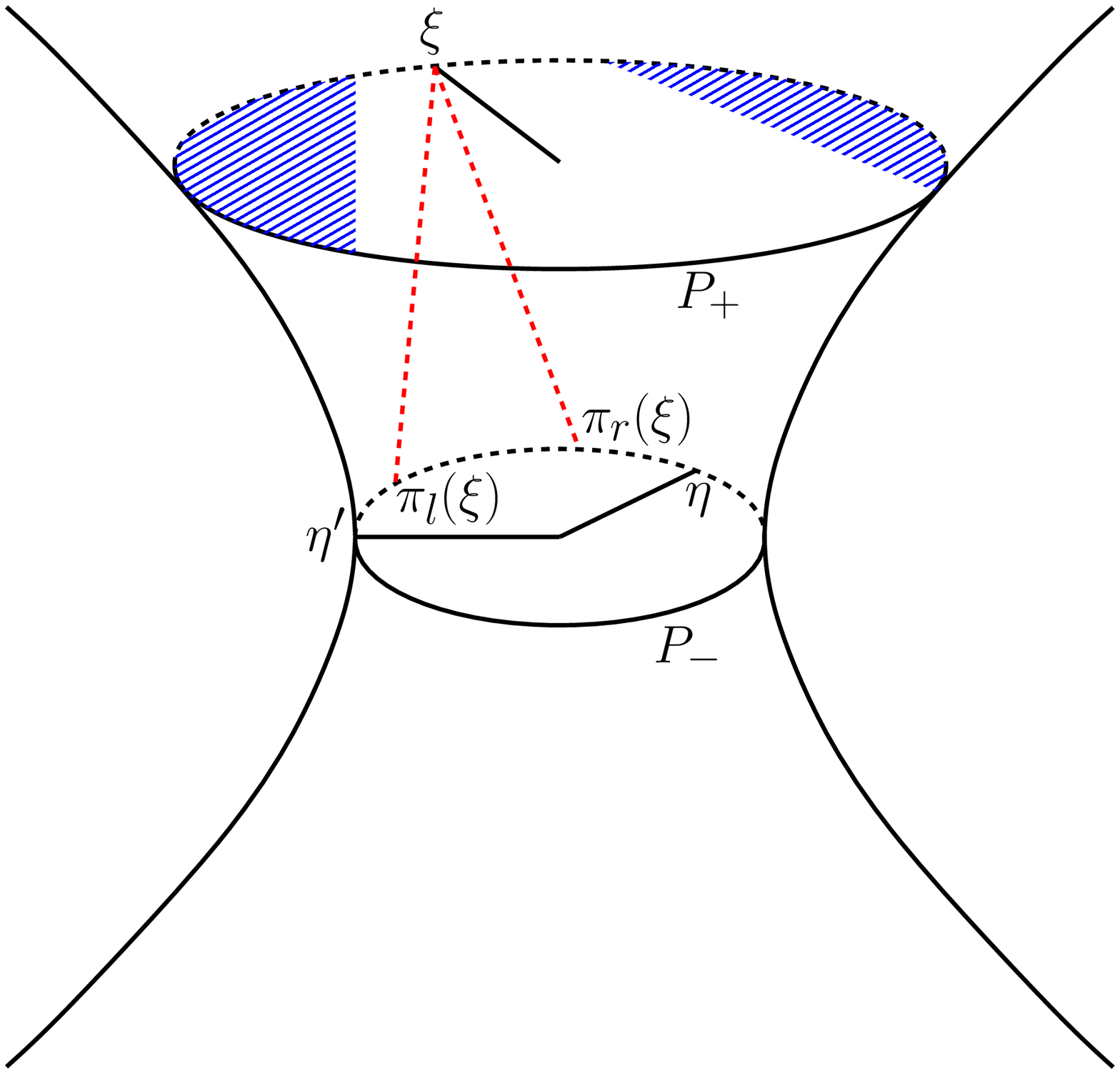} 
%\captionsetup{labelformat=empty}
\caption{The choice of points $\eta,\xi,\eta'$ in $\partial_\infty\AdS^3$, endpoints at infinity of geodesic half-lines in the boundary of the convex hull.} \label{fig:xi}
\end{minipage}%
\hspace{5mm}
\begin{minipage}[c]{.50\textwidth}
\centering
\includegraphics[height=6.2cm]{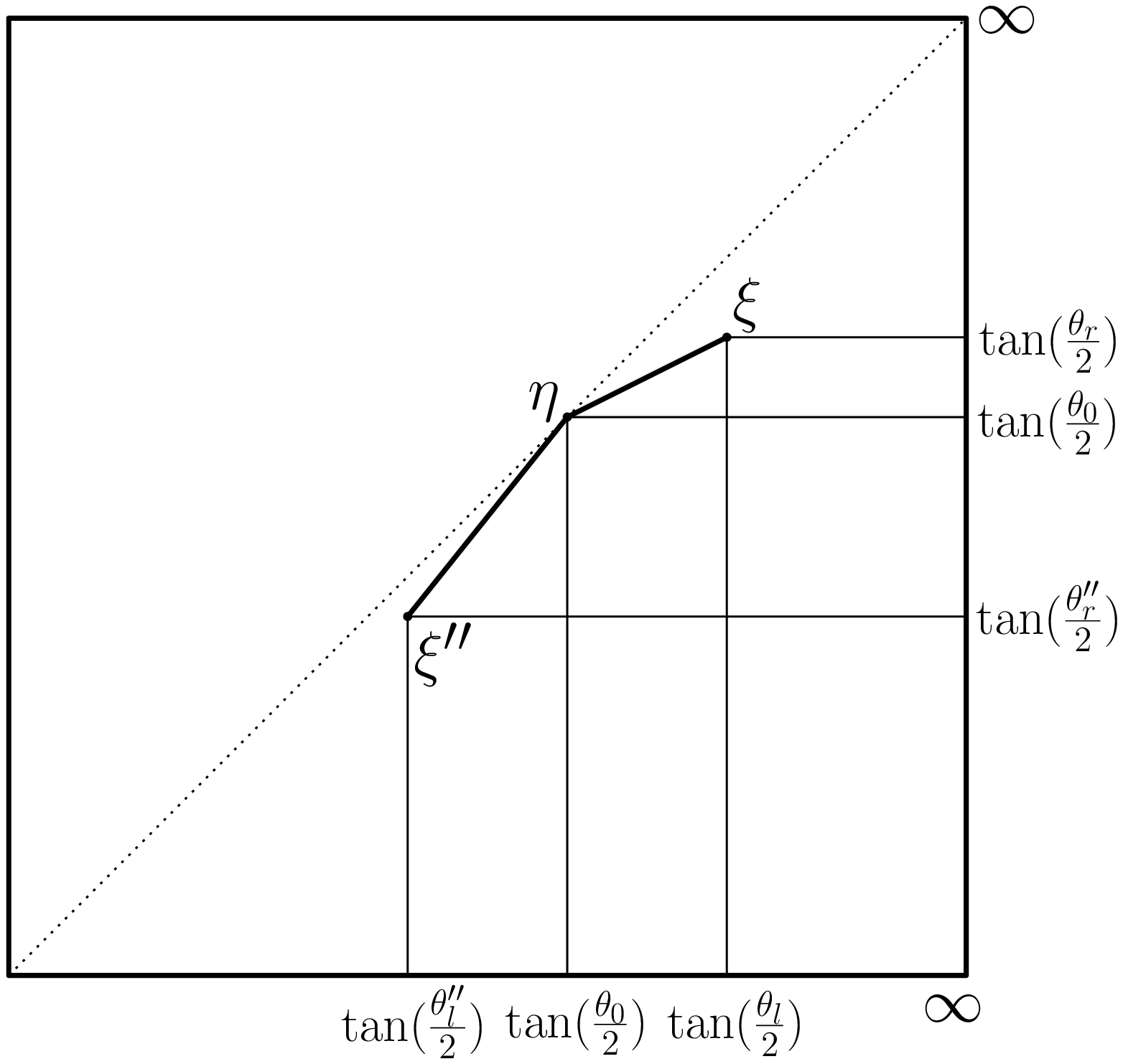}
%\captionsetup{labelformat=empty}
\caption{We give an upper bound on the ratio between the slopes of the two thick lines. The dotted line represents the plane $P_-$.} \label{fig:slopes}
\end{minipage}
\end{figure}

Note that $S(\theta_l)<1$ on $[\theta_0+2w,\pi]$ and $S(\theta_l)\rar 0$ when $\theta_l\rar\theta_0+2w$ or $\theta_l\rar\pi$: this corresponds to the fact that $gr(\phi_\infty)$ tends to contain a lightlike segment. On the other hand $S(\theta_l)$ is positive on $[\theta_0+2w,\pi]$ and the maximum $S_{max}$ is achieved at some interior point of the interval. A computation gives
$$\left| cr(\phi(Q))\right| \leq S_{max}=\left(\frac{\cos(\theta_0/2+w)}{\cos(\theta_0/2)+\sin(w)}\right)^2.$$
The RHS quantity depends on $\theta_0$, but is maximized on $[0,\pi-2w]$ for $\theta_0=0$, where it assumes the value $(1-\sin(w))/(1+\sin(w))$. This gives
$$e^{||\phi_\infty||_{cr}}\geq\left|\frac{1}{cr(\phi(Q))}\right| \geq \frac{1+\sin(w)}{1-\sin(w)}\,.$$
From this we deduce
$$\sin(w)\leq\frac{e^{||\phi_\infty||_{cr}}-1}{e^{||\phi_\infty||_{cr}}+1}=\tanh{\frac{||\phi_\infty||_{cr}}{2}}$$
or equivalently $$\tan(w)\leq \sinh{\frac{||\phi_\infty||_{cr}}{2}}\,.$$
Since $||\phi_\infty||_{cr}\leq ||\phi||_{cr}$, the proof is concluded.
\end{proof}

By using a very similar analysis, though simpler, we can prove an inequality in the converse direction. 

\begin{repprop}{estimate width cross ratio norm below}
Given any quasisymmetric homeomorphism $\phi$ of $\RP^1$, let $w$ be the width of the convex hull of the graph of $\phi$ in $\partial_\infty\AdS^3$. Then
$$\tanh\left(\frac{||\phi||_{cr}}{4}\right)\leq\tan w\,.$$
\end{repprop}

\begin{proof}
Suppose $||\phi||_{cr}>k$. Then we can find a quadruple of symmetric points $Q$ such that $|cr(\phi(Q))|=e^{k}$. Consider the points $\xi',\eta,\xi,\eta'$ on $\partial_\infty\AdS^3$ such that their left and right projection are $Q$ and $\phi(Q)$, respectively. 

%We use the same notation as above, for example $\xi=(e^{i\theta_l},e^{i\theta_r})$. 

Recall that the isometries of $\AdS^3$ act on $\partial_\infty\AdS^3\cong \RP^1\times \RP^1$ as a pair of M\"obius transformations, therefore they preserve the cross-ratio of both $Q$ and $\phi(Q)$. Thus we can suppose $Q=(-1,0,1,\infty)$ and $\phi(Q)=(-e^{k/2},0,e^{-k/2},\infty)$ when the quadruples are regarded as composed of points on $\R\cup\left\{\infty\right\}$. 

Passing to the coordinates in $S^1$ (by the map $\theta\in S^1\mapsto\tan(\theta/2)\in\R$) for this quadruple of points at infinity, it is easy to see that - in the affine chart $\{x^3\neq 0\}$ - the position of the four points has an order 2 symmetry obtained by rotation around the $z$-axis. See Figure \ref{fig:opposite}. This is ensured by the special renormalization chosen for $Q$ and $\phi(Q)$.

Hence the 
 geodesic line $g_1$ with endpoints at infinity $\eta$ and $\eta'$ is contained in the plane $P_-$ as in the first part of the proof. More precisely, in the usual affine chart $\{x^3\neq 0\}$, 
 $$g_1=\{(\tanh(t),0,0):t\in\R\}\,.$$ 
The geodesic line $g_2$ connecting $\xi$ and $\xi'$ has the form
$$g_2(s)=\left\{\left(\frac{\cos(\alpha)\tanh(s)}{\cos(w')},\frac{\sin(\alpha)\tanh(s)}{\cos(w')},\tan(w')\right): s\in\R\right\}\,.$$
 The lines $g_1$ and $g_2$ are in the convex hull of $gr(\phi)$ and have the common orthogonal segment $l$ which lies in the $z$-axis in the usual affine chart (Figure \ref{fig:opposite}), the feet of $l$ being achieved for $t=0$ and $s=0$. 
 
 The distance between $g_1$ and $g_2$ is achieved along this common orthgonal geodesic and its value is $w'$. Recalling Sublemma \ref{lemma left right projection} and the computation in its proof, we find $\alpha=\theta_l-w'=\pi/2-w'$ and $\theta_r=\theta_l-2w'$. Since $\tan(\theta_r/2)=e^{-k/2}$ and $\theta_l=\pi/2$, one can compute
$$w'=\pi/4-\arctan(e^{-k/2})\,.$$
It follows that
$$\tan w\geq\tan w'=\frac{1-e^{-k/2}}{1+e^{-k/2}}=\tanh\left(\frac{k}{4}\right).$$
Since this is true for an arbitrary $k\leq||\phi||_{cr}$, the inequality $$\tan w\geq\tanh\left(\frac{||\phi||_{cr}}{4}\right)$$ holds.
\end{proof}

\begin{figure}[h!]
\centering
\includegraphics[height=6.5cm]{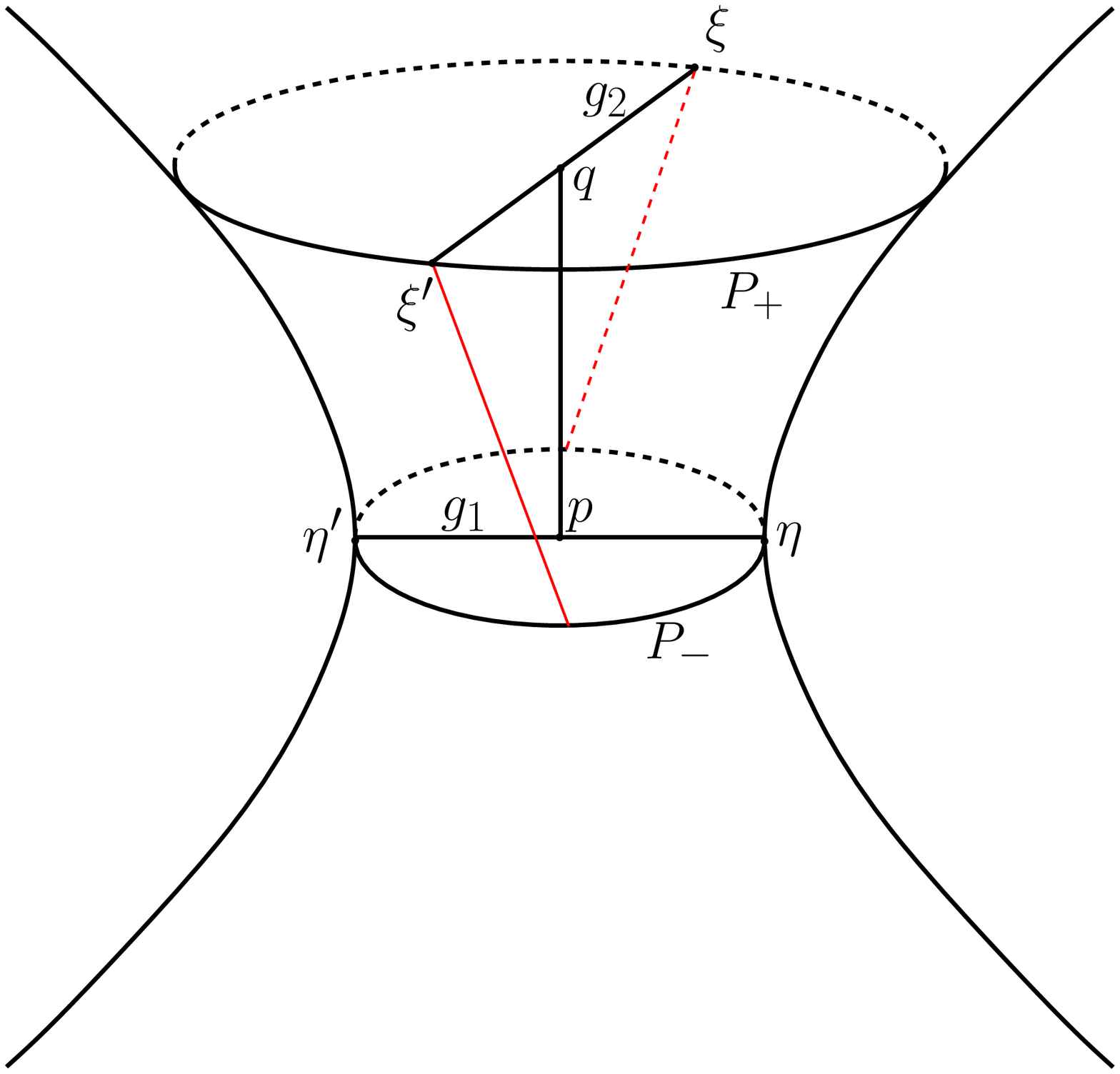}
\caption{The distance between the two lines $g$ and $g'$ is achieved along the common orthogonal geodesic. \label{fig:opposite}}
\end{figure}

%can be computed to be 
%$$S_{max}=\frac{(\tan(w)(1+\tan(\theta_0/2)^2)-\sqrt{(1+\tan(w)^2)(1+\tan(\theta_0/2)^2)})}{(\tan(w)(1+\tan(\theta_0/2)^2)+\sqrt{(1+\tan(w)^2)(1+\tan(\theta_0/2)^2)})}$$

%To conclude, we prove the converse estimate:
%\begin{prop}
%\end{prop}
%\begin{proof}
%Suppose $||\phi||_{cr}> M$. Then we can find a quadruple of points $Q=(z_1,z_2,z_3,z_4)$ such that $\frac{cr(\phi(Q))}{cr(Q)}>e^M$ or $\frac{cr(\phi(Q))}{cr(Q)}<e^{-M}$. By composing with M\"obius transformations, we can assume $Q=(\infty,-1,0,z)$, so that $m=cr(Q)$, and $\phi(Q)=(\infty,-1,0,\phi(m))$, with $\phi(z)=cr(\phi(Q))$. This means that in $\partial_\infty\AdS^3$, $gr(\phi)$ passes through the points $(\infty,\infty),(0,0),(-1,-1)\in\R P^1\times \R P^1$ (we denote $[a:b]=b/a$ in $\R P^1$). The coordinates of the projection of these points to the affine chart $x_3=1$ are respectively:
%\begin{align*}
%\eta_p=(-1,0,1,0) \\
%\eta'_p=(1,0,1,0) \\
%\zeta_p=(0,-1,1,0) 
%\end{align*}
%\noindent where the pair $()$
%\end{proof}

%------------------------------------------------------

%------------------------------------------------------

\section{Maximal surfaces with small principal curvatures} \label{sec estimate small}

Let $S$ be a maximal surface in $\AdS^3$. Let $P_-$ be a spacelike plane which does not intersect the convex hull. We want to use the fact that the function $u(x)=\sin d_{\AdS^3}(x,P_-)$, satisfies the equation  
\begin{equation}
\Delta_S u-2u=0\,. \tag{\ref{lap 2u ads}}
\end{equation}  given in Proposition \ref{formule hess lap ads}. This will enable us to use Equation \eqref{hessian ads} to give estimates on the principal curvatures of $S$.
%Note that, by Gauss equation in the $\AdS^3$ setting, a maximal surface with principal curvatures $\pm\lambda$ has curvature given by $K_S=-1+\lambda^2$. It is proved in \cite{bon_schl} that, if $\partial_\infty(S)$ is the graph of a quasisymmetric homeomorphism and the principal curvatures of $S$ are bounded, then $K_S$ is uniformly negative, which means that $||\lambda||_\infty<1$. This is a substantial difference with the case of hyperbolic minimal surfaces, where the principal curvatures can be larger than 1. 

%From this point, we will always assume that $S$ is a maximal surface spanning the graph of a quasisymmetric homeomorphism, which is a compression disc for $\AdS^3$, with bounded principal curvatures; hence $S$ is complete (recall Remark \ref{remark completeness}) and the curvature is bounded by $-1\leq K_S<0$. However, when $||\lambda||_\infty$ approaches $1$, the curvature becomes close to $0$. We will use uniform estimates on the norm of the gradient of $u$. 

\subsection{Uniform gradient estimates}

We start by obtaining some technical estimates on the gradient of $u$, which will have as a consequence that a maximal surface cannot be very ``tilted'' with respect to a plane outside the convex hull.

\begin{lemma} \label{lemma gradient bounds}
The universal constant $L={2(1+\sqrt{2})}$ is such that, for every point $x$ on a maximal surface of nonpositive curvature in $\AdS^3$, $||\grad u||^2<L$.
\end{lemma}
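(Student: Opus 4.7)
The plan is to apply the maximum principle to a suitable combination of $v=\|\grad u\|^2$ and $u$. Using Bochner's formula on the Riemannian 2-surface $S$, together with the equation $\Delta u=2u$ of \eqref{lap 2u ads}, the Gauss equation $K_S = -1+\lambda^2$ (so that $\Ric_S=(-1+\lambda^2)g_S$), and the Hessian identity of Proposition~\ref{formule hess lap ads} (which, since $B$ is traceless with eigenvalues $\pm\lambda$, yields $|\Hess u|^2 = 2u^2 + 2\lambda^2(1-u^2+v)$), I would first derive the explicit formula
\[
\Delta v = 4u^2 + 4\lambda^2(1-u^2) + (2+6\lambda^2)\,v,
\]
showing in particular that $v$ is subharmonic.

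At any interior critical point of $v$, the identity $\grad v = 2\,\Hess u(\grad u) = 0$ forces $\grad u$ to be an eigenvector of the shape operator $B$, and the Hessian formula then gives the algebraic relation $u^2(1+\lambda^2)=\lambda^2(1+v)$. Substituting this back and combining with $\Delta v\le 0$ and the hypothesis $\lambda^2\le 1$ coming from nonpositive curvature, the goal is to extract a quadratic inequality in $v$ of the form $v^2-4v-4\le 0$, whose positive root is exactly $L = 2(1+\sqrt{2})$; the numerical identity $L^2 = 4L+4$ is precisely what dictates the value of the universal constant. Since $v$ need not attain its supremum on the non-compact surface $S$, I would make the argument rigorous through the Omori--Yau maximum principle: the induced metric on $S$ has $\Ric\ge -g$, so for any bounded-above smooth function on $S$ one has an almost-maximum sequence along which $|\grad F|\to 0$ and $\Delta F$ is almost nonpositive, and the algebraic argument at such almost-critical points yields the desired bound on $\sup v$.

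The hard part will be identifying the correct auxiliary function to extract the precise constant $L=2(1+\sqrt{2})$: straightforward test functions such as $F=v$, $F=v+\alpha u^2$, or $F=v/(1-u^2)$ each reduce at their critical points to the conclusion $v\equiv 0$, which loses all quantitative information. A more delicate combination, designed so that the $\lambda$-dependent terms cancel after using $\lambda^2\le 1$ and leave precisely the quadratic $v^2-4v-4\le 0$, seems necessary.
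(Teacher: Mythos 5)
Your proposal is not a completed proof; you say yourself that the decisive step (finding an auxiliary function whose maximum principle argument yields $v^2-4v-4\le 0$) remains to be found, and the candidates you tried give only the trivial conclusion $v\equiv 0$. This is a genuine gap, and I do not see how to close it along the route you sketch: your own computation $\Delta v = 4u^2+4\lambda^2(1-u^2)+(2+6\lambda^2)v$ (which is correct) shows that $v=\|\grad u\|^2$ is subharmonic with a strictly positive source term, so an interior maximum of $v$ alone forces $u=\lambda=v=0$, and the algebraic relation at a critical point gives nothing more. The deeper issue is that a pointwise Bochner/maximum-principle argument uses only the PDE and the bound $|\lambda|\le 1$, but never exploits the global sign constraint $u\ge 0$ supplied by Corollary \ref{maximal surface contained convex hull}. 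Without that one-sided constraint there is no reason for $\|\grad u\|$ to be bounded at all (a linear function on $\R^2$ has constant gradient but solves a similar model equation), so some global input is unavoidable.

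The paper's proof is an ODE comparison along the negative gradient flow, and it uses $u\ge 0$ as the essential global input. One sets $\gamma'(t)=-\grad u/\|\grad u\|$ and $y(t)=\|\grad u(\gamma(t))\|$, and differentiates $y^2$ along $\gamma$; the Hessian identity \eqref{hessian ads} together with $\|B(v)\|\le\|v\|$ (from $|\lambda|\le 1$) gives
\[
-\,y'(t)\le \sqrt{2}\,\sqrt{1+y(t)^2}\,,
\]
hence $y(t)\ge y(0)\cosh(\sqrt 2\,t)-\sqrt{1+y(0)^2}\sinh(\sqrt 2\,t)$. Integrating $u'=-y$ along $\gamma$ and imposing $u(\gamma(t))\ge 0$ for all $t\ge 0$ yields $y(0)^2\le 2(u(x)^2+\sqrt 2\,u(x))$, and $u\in[-1,1]$ gives $\|\grad u(x)\|^2\le 2(1+\sqrt 2)$. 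So the constant $L=2(1+\sqrt 2)$ does not come from a root of a quadratic of the shape $v^2-4v-4$; it comes from evaluating $2(u^2+\sqrt 2 u)$ at the extreme value $u=1$. The coincidence $L^2=4L+4$ you noticed is an artefact of the final numbers, not the mechanism of the proof.

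If you want to rescue a Bochner/maximum-principle variant, you would need to incorporate the constraint $u\ge 0$ and the reverse inequality $1-u^2\ge 0$ directly into the auxiliary functional (not just pointwise bounds on $\lambda$), and then justify the Omori--Yau argument on the non-compact surface; but the flow-line argument above is shorter and is the proof the paper gives.
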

\begin{proof}
Let $\gamma$ be a path on $S$ obtained by integrating the gradient vector field; more precisely, we impose $\gamma(0)=x$ and $$\gamma'(t)=-\frac{\grad u}{||\grad u||}\,.$$
Observe that $$u(\gamma(t))-u(x)=\int_0^t du(\gamma'(s))ds=\int_0^t -\langle\grad u(s),\frac{\grad u(s)}{||\grad u(s)||}\rangle ds=-\int_0^t ||\grad u(s)|| ds\,.$$
We denote $y(t)=||\grad u(\gamma(t))||$. We will show that $y(0)$ is bounded by a universal constant, using the fact that $u(\gamma(t))$ cannot become negative on $S$ (recall Corollary \ref{maximal surface contained convex hull}). We have
\begin{equation}
\ddt y(t)^2=2\langle\nabla_{\gamma'(t)}\grad u(\gamma(t)),\grad u(\gamma(t)) \rangle=2\langle\Hess u(\gamma'(t)),\grad u(\gamma(t))\rangle\,.
\end{equation}
Since, by equation \eqref{hessian ads}, 
$\Hess u-u E=\sqrt{1-u^2+||\grad u||^2}B$ and $||B(v)||\leq||v||$,
\begin{align*}
-\ddt y(t)^2\leq\left|\ddt y(t)^2\right| \leq &2||\Hess u(\gamma'(t))||||\grad u(\gamma(t))|| \\
\leq &2\left(u(\gamma(t))+\sqrt{1-u(\gamma(t))^2+y(t)^2}\right)y(t)\,.
\end{align*}
Using that
$$\left(u(\gamma(t))+\sqrt{1-u(\gamma(t))^2+y(t)^2}\right)^2\leq 2\left(u(\gamma(t))^2+1-u(\gamma(t))^2+y(t)^2\right)
=2\left(1+y(t)^2\right)
$$
we obtain
\begin{equation}\label{disequazione gradiente}
-\ddt y(t)\leq \sqrt{2}\sqrt{1+y(t)^2}\,.
\end{equation}
It follows that
$$\int_0^t\frac{y'(s)}{\sqrt{1+y(s)^2}}ds=\arcsinh y(t)-\arcsinh y(0)\geq \sqrt{2}t\,,$$
and therefore by a direct computation,
\begin{equation} \label{disequazione gradiente 2}
y(t)\geq y(0)\cosh(\sqrt{2}t)-\sqrt{1+y(0)^2}\sinh(\sqrt{2}t)\,.
\end{equation}
Now $$u(\gamma(t))-u(x)=-\int_0^t y(s)ds\leq\frac{1}{\sqrt{2}}\left(  -y(0)\sinh(\sqrt{2}t)+\sqrt{1+y(0)^2}(\cosh(\sqrt{2}t)-1)  \right)=:F(t)\,.$$
We must have $u(\gamma(t))\geq 0$ for every $t$; so we impose that $F(t)\geq -u(x)$ for every $t$. The minimum of $F$ is achieved for $$\tanh(\sqrt{2}t_{min})=\frac{y(0)}{\sqrt{1+y(0)^2}}\,.$$
Therefore 
$$F(t_{min})=-\frac{1}{\sqrt{2}}\left(1+\sqrt{1+y(0)^2}\right)\geq -u(x)$$ which is equivalent to $y(0)^2\leq2(u(x)^2+\sqrt{2}u(x))$. Recalling $u\in[-1,1]$, $||\grad u(x)||^2\leq 2(1+\sqrt{2})$ independently on the maximal surface $S$ and on the support plane $P_-$.
\end{proof}

We now apply the above uniform gradient estimate to prove a fact which will be of use shortly. Given two unit timelike vectors $v,v'\in T_x\AdS^3$ (both future-directed, or both past-directed), we define the hyperbolic angle between $v$ and $v'$ as the number $\alpha\geq0$ such that $\cosh\alpha=|\langle v,v'\rangle|$. Compare with Figure \ref{fig:angle} below.

\begin{lemma} \label{boundedness angle}
There exists a constant $\bar\alpha$ such that the following holds for every maximal surface $S$ in $\AdS^3$ and every totally geodesic plane $P_-$ in the past of $S$ which does not intersect $S$. Let $l$ be a geodesic line orthogonal to $P_-$ and let $x=l\cap S$. Suppose $x$ is at timelike distance less than $\pi/4$ from $P_-$. Then the hyperbolic angle $\alpha$ at $x$ between $l$ and the normal vector to $S$ is bounded by $\alpha\leq \bar\alpha$.
\end{lemma}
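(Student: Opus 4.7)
The strategy is to translate the angle bound into a bound on $\|\grad u(x)\|$ where $u(y) = \sin d_{\AdS^3}(y, P_-)$, and then invoke the universal gradient bound from Lemma \ref{lemma gradient bounds}. First I would verify that Lemma \ref{lemma gradient bounds} applies in this setting: since $P_-$ is disjoint from $S$, the signed distance $u$ has a constant sign on $S$, and by choosing the sign of the dual vector $p$ (with $P_- = p^\perp$) appropriately we may arrange $u > 0$ on $S$, which is the hypothesis implicit in the proof of Lemma \ref{lemma gradient bounds}.

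The core of the argument is a pointwise identity relating $\|\grad u(x)\|$, the angle $\alpha$, and the distance $d := d_{\AdS^3}(x, P_-)$, namely
\begin{equation*}
\|\grad u(x)\|^2 = \cos^2(d) \sinh^2 \alpha\,.
\end{equation*}
To establish this, I work in the double cover $\wAdS$ and parametrize the timelike geodesic $l$ as $\gamma(t) = \cos(t) y + \sin(t) p$ with $y = l \cap P_-$. At $x = \gamma(d)$, letting $T = \gamma'(d)$ be the unit timelike tangent to $l$, one solves for $p$ in terms of $x$ and $T$ and substitutes into $\nabla U = p + u(x) x$ from Equation (\ref{bombo}); the ambient gradient collapses to the clean expression $\nabla U(x) = \cos(d)\, T$. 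Decomposing $\nabla U = \grad u - \langle \nabla U, N\rangle N$ (the sign coming from $\langle N, N\rangle = -1$) and using the definition $\cosh \alpha = |\langle T, N\rangle|$ of the hyperbolic angle between two future-directed timelike unit vectors, the claimed identity follows immediately.

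With the identity in hand, the lemma is a one-line deduction: combining $\|\grad u(x)\|^2 < L = 2(1+\sqrt{2})$ from Lemma \ref{lemma gradient bounds} with the hypothesis $d < \pi/4$, which ensures $\cos d > 1/\sqrt{2}$, yields
\begin{equation*}
\sinh \alpha < \frac{\sqrt{L}}{\cos d} < \sqrt{2L} = 2\sqrt{1+\sqrt{2}}\,,
\end{equation*}
so we may take $\bar\alpha := \arcsinh\bigl(2\sqrt{1+\sqrt{2}}\bigr)$. No new analytic tools are required beyond Lemma \ref{lemma gradient bounds}; the only points requiring care are the sign conventions when identifying $\nabla U$ with a multiple of $T$, and the verification that the proof of Lemma \ref{lemma gradient bounds} indeed only uses $u > 0$ on $S$ (not the stronger disjointness of $P_-$ from the convex hull of $\partial_\infty S$). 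Once these minor issues are dispatched, the geometric picture is transparent: $\grad u$ measures the tangential component of the ``radial'' direction $T$, which by elementary Lorentzian trigonometry is weighted by a factor of $\sinh \alpha$, while the distance factor $\cos d$ accounts for the rescaling of $\nabla U$ along $l$.
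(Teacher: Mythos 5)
Your proof is correct and follows essentially the same route as the paper: both deduce the angle bound from the universal gradient estimate of Lemma \ref{lemma gradient bounds} via the identity $\nabla U = \grad u - \langle\nabla U,N\rangle N$ with $\nabla U$ tangent to $l$; your identity $\|\grad u\|^2 = \cos^2(d)\sinh^2\alpha$ is algebraically equivalent to the paper's $\cosh^2\alpha = (1-u^2+\|\grad u\|^2)/(1-u^2)$, since $1-u^2=\cos^2 d$ and $\langle\nabla U,\nabla U\rangle = -\cos^2 d$.
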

\begin{proof}
We use the same notation as Proposition \ref{formule hess lap ads}. It is clear that the tangent direction to $l$ is given by the vector $\nabla U$, where $U(x)=\sin d_{\AdS^3}(x,P_-)=\langle x,p\rangle$ is defined on the entire $\AdS^3$ and $p$ is the point dual to $P_-$. Recall $u$ is the restriction of $U$ to $S$.
From Equations \eqref{bombo} and \eqref{ciccio} of Proposition \ref{formule hess lap ads}, we have
$$\langle\nabla U,\nabla U\rangle=-1+u^2=||\grad u||^2-\langle \nabla U,N\rangle^2\,.$$
It follows that the angle $\alpha$ at $x$ between the normal to the maximal surface $S$ and the geodesic $l$ can be computed as
$$(\cosh\alpha)^2=\langle \frac{\nabla U(x)}{||\nabla U(x)||},N\rangle ^2=\frac{1-u(x)^2+||\grad u(x)||^2}{1-u(x)^2}$$
and so $\alpha$ is bounded by Lemma \ref{lemma gradient bounds} and the assumption that $u(x)^2\leq 1/2$.
\end{proof}

\subsection{Schauder estimates} \label{subsection schauder ads}
We move to proving Schauder-type estimates on the derivatives of the function $u=\sin d_{\AdS^3}(\cdot,P_-)$, expressed in suitable coordinates, of the form $$||u||_{C^2(B_0(0,\frac{R}{2}))}\leq C ||u||_{C^0(B_0(0,R))}$$
where the constant does not depend on $S$ and $P_-$. Here $B_0(0,R)$ denotes the Euclidean ball centered at $0$ of radius $R$. %We again prove this estimate by using a compactness argument.

Denote by $w$ the width of the convex hull of $\partial_\infty S=gr(\phi)$; recall that $w\leq\pi/2$. Let $x$ be a point of $S$. By Remark \ref{discussion width}, we have that $d_{\AdS^3}(x,\partial_-\mathcal{C})+d_{\AdS^3}(x,\partial_+\mathcal{C})\leq w$, therefore one among $d_{\AdS^3}(x,\partial_-\mathcal{C})$ and $d_{\AdS^3}(x,\partial_+\mathcal{C})$ must be smaller than $\pi/4$. Composing with an isometry of $\AdS^3$ (which possibly reverses time-orientation), we can assume $d_{\AdS^3}(x,\partial_-\mathcal{C})\leq d_{\AdS^3}(x,\partial_+\mathcal{C})$, which implies that $x$ has distance less than $\pi/4$ from $P_-$. This assumption will be important in the following.

Recall from Subsection \ref{subsection compactness} that, if $l$ is a timelike line orthogonal to the plane $P_0$ at $x_0$, the solid cylinder $Cl(x_0,P_0,R_0)$ is the set of points $x\in\AdS^3$ which lie on a spacelike plane $P_x$ orthogonal to $l$ such that $d_{P_x}(x,l\cap P_x)\leq R_0$. See also Figure \ref{fig:cylinder}.

\begin{repprop}{schauder estimate ads}
There exists a radius $R>0$ and a constant $C>0$ such that for every choice of:
\begin{itemize}
\item A maximal surface $S\subset\AdS^3$ with $\partial_\infty S$ the graph of an orientation-preserving homeomorphism; 
\item A point $x\in S$;
\item A plane $P_-$ disjoint from $S$ with $d_{\AdS^3}(x,P_-)\leq \pi/4$,
\end{itemize}
the function $u(\bullets)=\sin d_{\AdS^3}(\bullets,P_-)$ expressed in terms of normal coordinates centered at $x$,
namely $$u(z)=\sin d_{\AdS^3}(\exp_x(z),P_-)$$
where $\exp_x:\R^2\cong T_{x}S\rar S$ denotes the exponential map, satisfies the Schauder-type inequality
\begin{equation} \label{schauder}
||u||_{C^2(B_0(0,\frac{R}{2}))}\leq C ||u||_{C^0(B_0(0,R))}\,.
\end{equation} 
\end{repprop}

\begin{proof}
Fix a radius $R_0>0$. First, we show that there exists a radius $R>0$ such that the image of the Euclidean ball $B_0(0,R)$ under the exponential map at every point $x\in S$, for every surface $S$, is contained in the solid cylinder $Cl(x,T_x S,R_0)$. Indeed, suppose this does not hold, namely
\begin{equation} \label{infimum radii compactness}
\inf_{x\in S} \sup\left\{R:\exp_x(B_0(0,R))\subset Cl(x,T_x S,R_0) \right\}=0\,.
\end{equation}
Then one can find a sequence $S_n$ of maximal surfaces and points $x_n$ such that, if $R_n$ is the supremum of those radii $R$ for which $\exp_{x_n}(B_0(0,R))$ is contained in the respective cylinder of radius $R_0$, then $R_n$ goes to zero. We can compose with isometries of $\AdS^3$ so that all points $x_n$ are sent to the same point $x_0$ and all surfaces are tangent at $x_0$ to the same plane $P_0$. By Lemma \ref{lemma bon schl}, there exists a subsequence converging inside $Cl(x_0,P_0,R_0)$ to a maximal surface $S_\infty$. Therefore the infimum in the LHS of Equation \eqref{infimum radii compactness} cannot be zero, since for the limiting surface $S_\infty$ there is a radius $R_\infty$ such that $\exp_x(B_0(0,R_\infty))\subset Cl(x,T_x S,R_0)$.

We use a similar argument to prove the main statement. We can consider $P_-$ a fixed plane, and a point $x\in S$ lying on a fixed geodesic $l$ orthogonal to $P_-$. Suppose the claim does not hold, namely there exists a sequence of surfaces $S_n$ in the future of $P_-$ such that for the function $u_n(z)=\sin d_{\AdS^3}(\exp_{x_n}(z),P_n)$,
$$||u_n||_{C^2(B_0(0,\frac{R}{2}))}\geq n ||u||_{C^0(B_0(0,R))}\,.$$
Let us compose each $S_n$ with an isometry $T_n\in\isom(\AdS^3)$ so that $S_n'=T_n(S_n)$ is tangent at a fixed point $x_0$ to a fixed plane $P_0$, whose normal unit vector is $N_0$. 

We claim that the sequence of isometries $T_n$ is bounded in $\isom(\AdS^3)$, since $T_n^{-1}$ maps the element $(x_0,N_0)$ of the tangent bundle $T\AdS^3$ to a bounded region of $T\AdS^3$. Indeed, by our assumptions, $T_n^{-1}(x_0)=x_n$ lies on a geodesic $l$ orthogonal to $P_-$ and has distance less than $\pi/4$ (in the future) from $P_-$; moreover by Lemma \ref{boundedness angle} the vector $(d T_n)^{-1}(N_0)$ forms a bounded angle with $l$.

By Lemma \ref{lemma bon schl}, up to extracting a subsequence, we can assume $S_n'\rar S_\infty'$ on $Cl(x_0,P_0,R_0)$ with all derivatives. Since we can also extract a converging subsequence from $T_n$, we assume $T_{n}\rar T_\infty$, where $T_\infty$ is an isometry of $\AdS^3$. Therefore $T_n(P_-)$ converges to a totally geodesic plane $P_\infty$. 

Using the first part of this proof and Lemma \ref{lemma bon schl}, on the image of the ball $B_0(0,R)$ under the exponential map of $S'_n$,  the coefficients of the Laplace-Beltrami operators $\Delta_{S_n'}$ (in normal coordinates on $B_0(0,R)$) converge to the coefficients of $\Delta_{S_\infty'}$. Hence the operators $\Delta_{S_n'}-2$ are uniformly strictly elliptic with uniformly bounded coefficients. By classical Schauder estimates (see \cite{giltrud}), using the fact that $u_n$ solves the equation $\Delta_{S_n'}(u_n)-2u_n=0$, there exists a constant $c$ such that $$||u_n||_{C^2(B_0(0,\frac{R}{2}))}\leq c ||u_n||_{C^0(B_0(0,R))}\,,$$ for every $n$. This gives a contradiction.
%It follows that the coefficients of the Laplace-Beltrami operators $\Delta_{S_n'}$ on a Euclidean ball $B_0(0,R)$ of the tangent plane at $x_0$, for the coordinates given by the exponential map, converge to the coefficients of $\Delta_{S_\infty'}$, and therefore the operators $\Delta_{S_n'}-2$ are uniformly strictly elliptic with uniformly bounded coefficients. Since the functions $u_n$ satisfy $\Delta_{S_n'}(u_n)-2u_n=0$, by Schauder estimates (see \cite{giltrud}), there exists a constant $c$ such that $$||u_n||_{C^2(B_0(0,\frac{R}{2}))}\leq c ||u_n||_{C^0(B_0(0,R))}$$ for all $n$, and this gives a contradiction.
%the restriction of $u_n$ to the image under the exponential map of $S'_n$ of the ball $B_0(0,R)$ converges to the function $u_\infty=\sin d_{\AdS^3}(P_\infty,\cdot)$ on $S_\infty$ (in normal coordinates on $B_0(0,R)$). $u_\infty$ is solution of $\Delta_{S_\infty}(u_\infty)-2u_\infty=0$ and thus satisfies an inequality $||u_\infty||_{C^2(B_0(0,\frac{R}{2}))}\leq c' ||u_\infty||_{C^0(B_0(0,R))}$  for some constant $c'$, thus giving a contradiction.
\end{proof}

\begin{remark}
The statements of Lemma \ref{boundedness angle} and Proposition \ref{schauder estimate ads} could be improved so as to be stated in terms of the choice of any radius $R>0$, any number $w_0<\pi/2$ (replacing $\pi/4$), where the constant $C$ would depend on such choices. Similarly for Proposition \ref{boundedness projection} below. However, these details would not improve the final statement of Theorem \ref{estimate principal curvatures and width ads} and thus are not pursued here.
\end{remark}

We are therefore in a good point to obtain an estimate of the second derivatives of $u$ (and thus for the principal curvatures of $S$) in terms of the width $w$. However, let us remark that in Anti-de Sitter space the projection from a spacelike curve or surface to a totally geodesic spacelike plane is not distance-contracting. Hence we need to give an additional computation in order to ensure (by substituting the radius $R$ in Proposition \ref{schauder estimate ads} by a smaller one if necessary) that the projection from the geodesic balls $B_S(x,R)$ to $P_-$ has image contained in a uniformly bounded set. This is proved in the next Proposition, see also Figure \ref{fig:projectionsads}.

\begin{prop} \label{boundedness projection}
There exist constant radii $R_0'$ and $R'$ such that for every maximal surface of nonpositive curvature $S$ in $\AdS^3$, every point $x_0\in S$ and every totally geodesic plane $P_-$ which does not intersect $S$, such that the distance of $x_0$ from $P_-$ is at most $\pi/4$, the orthogonal projection $\pi|_S:S\rar P_-$ maps $S\cap Cl(x_0,T_{x_0} S,R_0')$ to $B_{P_-}(\pi(x_0),R')$.
\end{prop}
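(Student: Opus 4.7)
The plan is to prove this by a compactness argument closely paralleling the one used in the proof of Proposition~\ref{schauder estimate ads}. Fix any positive radius $R_0'$ (for example, take $R_0'$ equal to the radius $R$ of the Schauder estimate). I will argue by contradiction: if no finite $R'$ works for this choice of $R_0'$, then there exist a sequence of maximal surfaces $S_n$ of nonpositive curvature, points $x_n \in S_n$, spacelike planes $P_{-,n}$ disjoint from $S_n$ with $d_{\AdS^3}(x_n, P_{-,n}) \leq \pi/4$, and points $y_n \in S_n \cap Cl(x_n, T_{x_n}S_n, R_0')$ such that $d_{P_{-,n}}(\pi_n(y_n), \pi_n(x_n)) \to +\infty$, where $\pi_n$ denotes the orthogonal projection onto $P_{-,n}$.

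The next step is the normalization. Choose isometries $T_n \in \isom(\AdS^3)$ (say, defined up to a rotation in the $\mathrm{SO}(2)$-stabilizer, which is a compact indeterminacy) so that $T_n(x_n) = x^*$ is a fixed point and $dT_n$ sends the future unit normal of $S_n$ at $x_n$ to a fixed unit timelike vector $N^*$, hence $T_n(T_{x_n}S_n) = P^* := (N^*)^\perp$. Two things must now be controlled simultaneously: the surfaces $T_n(S_n)$ and the planes $T_n(P_{-,n})$. For the surfaces, Lemma~\ref{lemma bon schl} provides a subsequence converging smoothly to an entire maximal surface $S_\infty$ on the solid cylinder $Cl(x^*, P^*, R_0')$. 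For the planes, observe that $d_{\AdS^3}(x^*, T_n(P_{-,n})) = d_{\AdS^3}(x_n, P_{-,n}) \leq \pi/4$, and, crucially, Lemma~\ref{boundedness angle} bounds the hyperbolic angle between the geodesic from $x_n$ orthogonal to $P_{-,n}$ and the normal $N_n$ of $S_n$ at $x_n$ by a universal constant $\bar\alpha$. After transport by $T_n$ this says that the geodesic from $x^*$ orthogonal to $T_n(P_{-,n})$ has direction making bounded hyperbolic angle with $N^*$. Thus $T_n(P_{-,n})$ ranges in a compact set of planes, and up to a further subsequence $T_n(P_{-,n}) \to P_\infty$.

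From here the contradiction is immediate. The set $T_n(S_n \cap Cl(x_n, T_{x_n}S_n, R_0')) = T_n(S_n) \cap Cl(x^*, P^*, R_0')$ lies inside the fixed compact cylinder $Cl(x^*, P^*, R_0')$, so the points $T_n(y_n)$ and $T_n(x_n) = x^*$ stay in a compact subset of $\AdS^3$. Since the orthogonal projection onto a spacelike plane depends continuously on the point and the plane (the foot of the timelike geodesic is a smooth function of both arguments in the region where distance is less than $\pi/2$), and since $T_n(P_{-,n}) \to P_\infty$, the images $T_n(\pi_n(y_n))$ and $T_n(\pi_n(x_n))$ — which are just the projections of $T_n(y_n)$ and $x^*$ onto $T_n(P_{-,n})$ — remain in a bounded region of $\AdS^3$. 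Because $T_n$ is an isometry it preserves the intrinsic distance in the planes, giving a uniform bound on $d_{P_{-,n}}(\pi_n(y_n), \pi_n(x_n))$, contradicting the assumption that this distance diverges.

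The only delicate point, and the main obstacle, is the simultaneous compactness of the sequences $T_n$, $T_n(S_n)$ and $T_n(P_{-,n})$; this is exactly where Lemma~\ref{boundedness angle} is indispensable, since without it the planes $T_n(P_{-,n})$ could tilt arbitrarily after normalizing the surface, and no limit $P_\infty$ would exist. Once that boundedness is secured, the remainder is a standard extraction-of-subsequences argument, entirely analogous in spirit to the one carried out in the proof of Proposition~\ref{schauder estimate ads}.
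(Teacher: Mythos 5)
The paper proves this proposition by a direct computation in coordinates, whereas you propose a compactness argument. The approaches differ genuinely, so let me compare them — but your proposal has a gap that must be addressed.

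The gap is in the step where you claim ``the orthogonal projection onto a spacelike plane depends continuously on the point and the plane \ldots\ so the images $T_n(\pi_n(y_n))$ remain in a bounded region.'' Continuity of the projection holds only on the \emph{open} domain of dependence of the plane, equivalently where $\langle x, p\rangle^2 < 1$ for $p$ the dual point; as $\langle x, p\rangle^2 \to 1$ (equivalently, as the timelike distance to the plane approaches $\pi/2$), the foot of the projection escapes to infinity in the plane. Your compactness argument produces a limit point $y^* = \lim T_n(y_n)$ in the compact cylinder and a limit plane $P_\infty$, but \emph{nothing} in your argument rules out $\langle y^*, p_\infty\rangle^2 = 1$. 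If that happens, the sequence of projection distances genuinely diverges and there is no contradiction. The hypothesis only controls the distance from the base point $x_0$ to $P_-$ (at most $\pi/4$); it gives no direct control on the distance from the other points $y_n$ in the cylinder to $P_{-,n}$, and this is exactly what must be bounded away from $\pi/2$.

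Related to this, you begin by fixing $R_0'$ arbitrarily (``for example, take $R_0'$ equal to the radius $R$''), but the proposition does not assert that every $R_0'$ works — it asserts that \emph{some} $R_0'$ works, and the choice matters. The paper's proof makes this explicit: it computes
$$|\langle x,p\rangle|\leq \tfrac{\sqrt{2}}{2}\cosh r+(\sinh r)e^{\alpha}$$
for $x$ in the cylinder of radius $R_0'$ (using the spacelike inequality $|\sin\zeta|\leq\tanh r$, the hypothesis $\sin w_1 \leq \sqrt{2}/2$, and the uniform angle bound from Lemma~\ref{boundedness angle}), and then notes that this stays bounded away from $1$ \emph{provided $R_0'$ is sufficiently small}. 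That smallness condition is the entire point, and it is precisely what supplies the missing ingredient for a compactness argument to close. Your plan would become correct if you first proved, by exactly such a computation, that for $R_0'$ small the quantity $\langle x,p_-\rangle^2$ is uniformly bounded away from $1$ on $S\cap Cl(x_0,T_{x_0}S,R_0')$ — but at that point the direct estimate of the paper already gives the conclusion without needing any compactness at all. So the compactness machinery here does not actually buy anything over the explicit computation, and without the distance-from-$P_-$ bound it does not go through.

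Your use of Lemma~\ref{boundedness angle} to control the sequence of planes $T_n(P_{-,n})$ is the right ingredient, and the overall structure (normalize, extract subsequences of surfaces and planes, take limits) is sound as far as it goes. You simply need one more uniform estimate, and that estimate is the heart of the proposition.
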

\begin{proof}
We can suppose $T_{x_0} S$ is the intersection of the plane $\left\{x_4=0\right\}$ with $\AdS^3$ and $x_0=[0,0,1,0]$. As in the coordinate system \eqref{cylindric coordinates}, the points $x$ in $Cl(x_0,T_{x_0} S,R_0')$ have coordinates $$x=[\cos\theta \sinh r,\sin\theta \sinh r, \cos \zeta \cosh r,\sin \zeta \cosh r]\,,$$ 
for $r\leq R_0'$. Let us denote by $\Ip(p)$ (resp. $\mathrm{I}^-(x_0)$) the cone of points connected to $p$ by a future-directed (resp. past-directed) timelike path in $\AdS^3\setminus Q$, where $Q$ is the plane at infinity in the affine chart. 

Since $S$ is spacelike, $S\cap Cl(x_0,T_{x_0} S,R_0')$ is contained in $Cl(x_0,T_{x_0} S,R_0')\setminus (\Ip(x_0)\cup \mathrm{I}^-(x_0))$. See also Figure \ref{fig:cylinder}. Hence $|\langle x,x_0\rangle|>1$ (recall Equation \eqref{lunghezza1} in Section \ref{subsection Hyp AdS}), which is equivalent to 
\begin{equation} \label{relazione altezza raggio}
|\cos \zeta|>\frac{1}{\cosh r}\,.
\end{equation}
Let $l$ be the geodesic through $x_0$ orthogonal to $P_-$. We will conduct the computation in the double cover $\wAdS\subset \R^{2,2}$. We can assume $l$ has tangent vector at $x_0$ given by $l'(0)=(\sinh\alpha,0,0,\cosh\alpha)$, where of course $\alpha$ is the angle between $l$ and the normal to $S$ at $x_0$. Therefore $$l(t)=(\cos t) x_0+(\sin t) l'(0)=(\sin t\sinh\alpha,0,\cos t,\sin t\cosh\alpha)\,.$$
Let $w_1=d_{\AdS^3}(x_0,P_-)$, so $P_-=p^\perp$, where $p$ is the point 
$$p=l'(-w_1)=(\cos w_1\sinh\alpha,0,\sin w_1,\cos w_1\cosh\alpha)\,.$$
The projection of $x$ to $P_-$ is given by
$$\pi(x)=\frac{x+\langle x,p\rangle p}{\sqrt{1-\langle x,p\rangle^2}}$$
provided $\langle x,p\rangle^2<1$, which is the condition for $x$ to be in the domain of dependence of $P_-$. (We say that $x$ is in the domain of dependence of $P_-$ if the dual plane of $x$ is disjoint from $P_-$.) The distance $d$ between $\pi(x)$ and $\pi(x_0)=l(-w_1)$ is given by the expression
\begin{equation} \label{expression distance projection bound}
\cosh d=\left|\langle \pi(x),l(-w_1)\rangle\right|=\left|\frac{\langle x,l(-w_1)\rangle}{\sqrt{1-\langle x,p\rangle^2}}\right|\,.
\end{equation}
Now, we have 
\begin{align*}
|\langle x,p\rangle|=&|\cos\theta \sinh r \cos w_1\sinh\alpha-\cos \zeta \cosh r \sin w_1-\sin \zeta \cosh r \cos w_1\cosh\alpha| \\
\leq& \sinh r\sinh \alpha+\frac{\sqrt{2}}{2}\cosh r+\sinh r\cosh\alpha=\frac{\sqrt{2}}{2}\cosh r+(\sinh r) e^{\alpha}\,.
\end{align*}
In the last line, we have used that $|\sin \zeta|=\sqrt{1-(\cos \zeta)^2}\leq \tanh r$, by Equation (\ref{relazione altezza raggio}), and that $\sin w_1<\sqrt{2}/2$. Since the hyperbolic angle $\alpha$ is uniformly bounded by Lemma \ref{boundedness angle} (Figure \ref{fig:angle}), it follows that if $r\leq R_0'$ for $R_0'$ sufficiently small, ${\sqrt{1-\langle x,p\rangle^2}}$ is uniformly bounded from below. Moreover, 
\begin{align*}
|\langle x,l(-w_1)\rangle|=&|-\cos\theta \sinh r \sin w_1\sinh\alpha-\cos \zeta \cosh r \cos w_1+\sin \zeta \cosh r \sin w_1\cosh\alpha| \\
\leq& \sinh r\sinh \alpha+\cosh r+\cosh r\cosh\alpha
\end{align*}
is uniformly bounded. This shows, from Equation \eqref{expression distance projection bound}, that $\cosh d\leq \cosh R'$ for some constant radius $R'$ (depending on $R_0'$). This concludes the proof.
\end{proof}

\begin{figure}[htbp]
\centering
\begin{minipage}[c]{.46\textwidth}
\centering
\includegraphics[height=6.5cm]{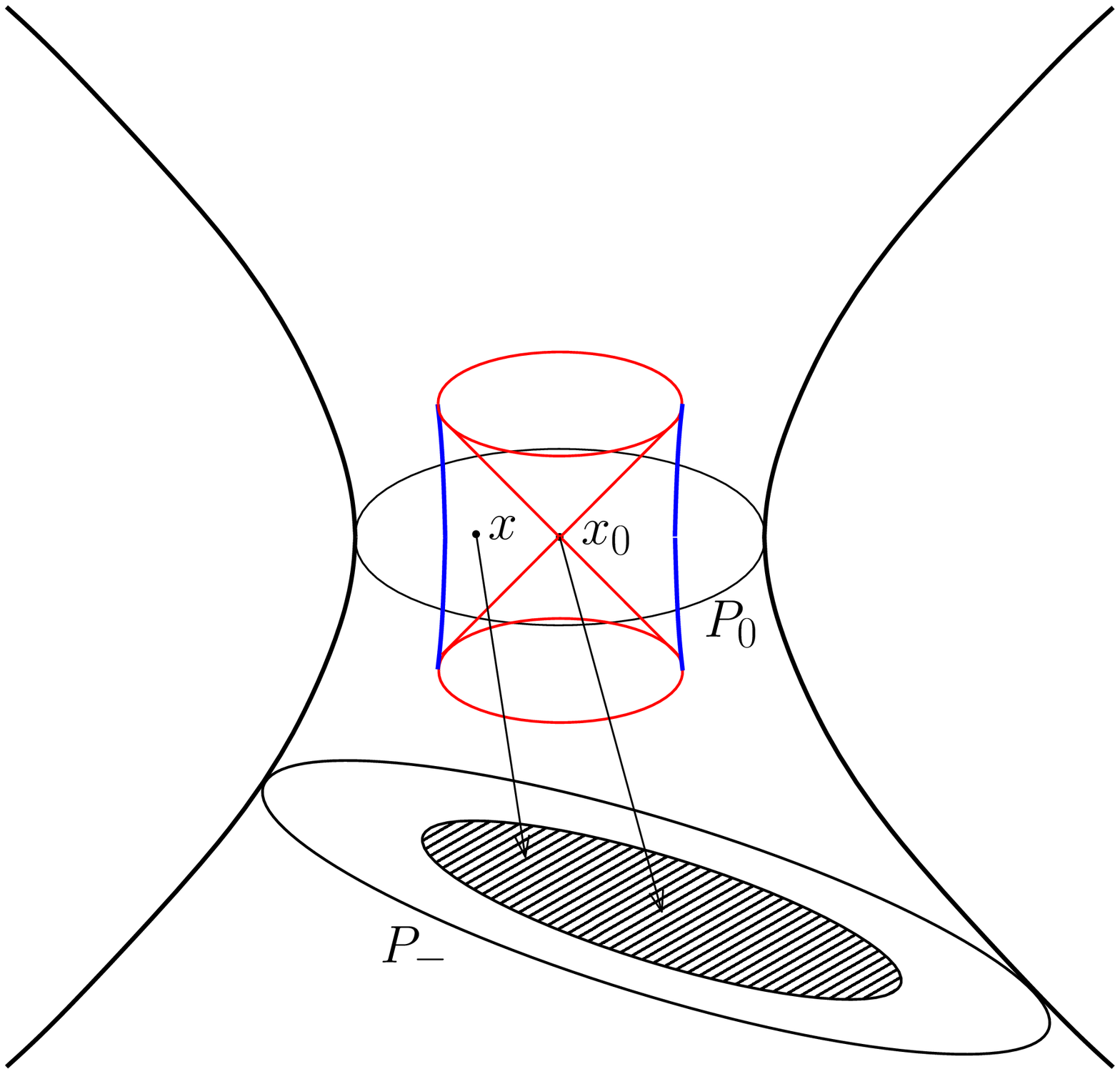} 
%\captionsetup{labelformat=empty}
\caption{Projection from points in $Cl(x_0,T_{x_0} S,R_0')$ which are connected to $x_0$ by a spacelike geodesic have bounded image.} \label{fig:projectionsads}
\end{minipage}%
\hspace{6mm}
\begin{minipage}[c]{.46\textwidth}
\centering
\includegraphics[height=6.5cm]{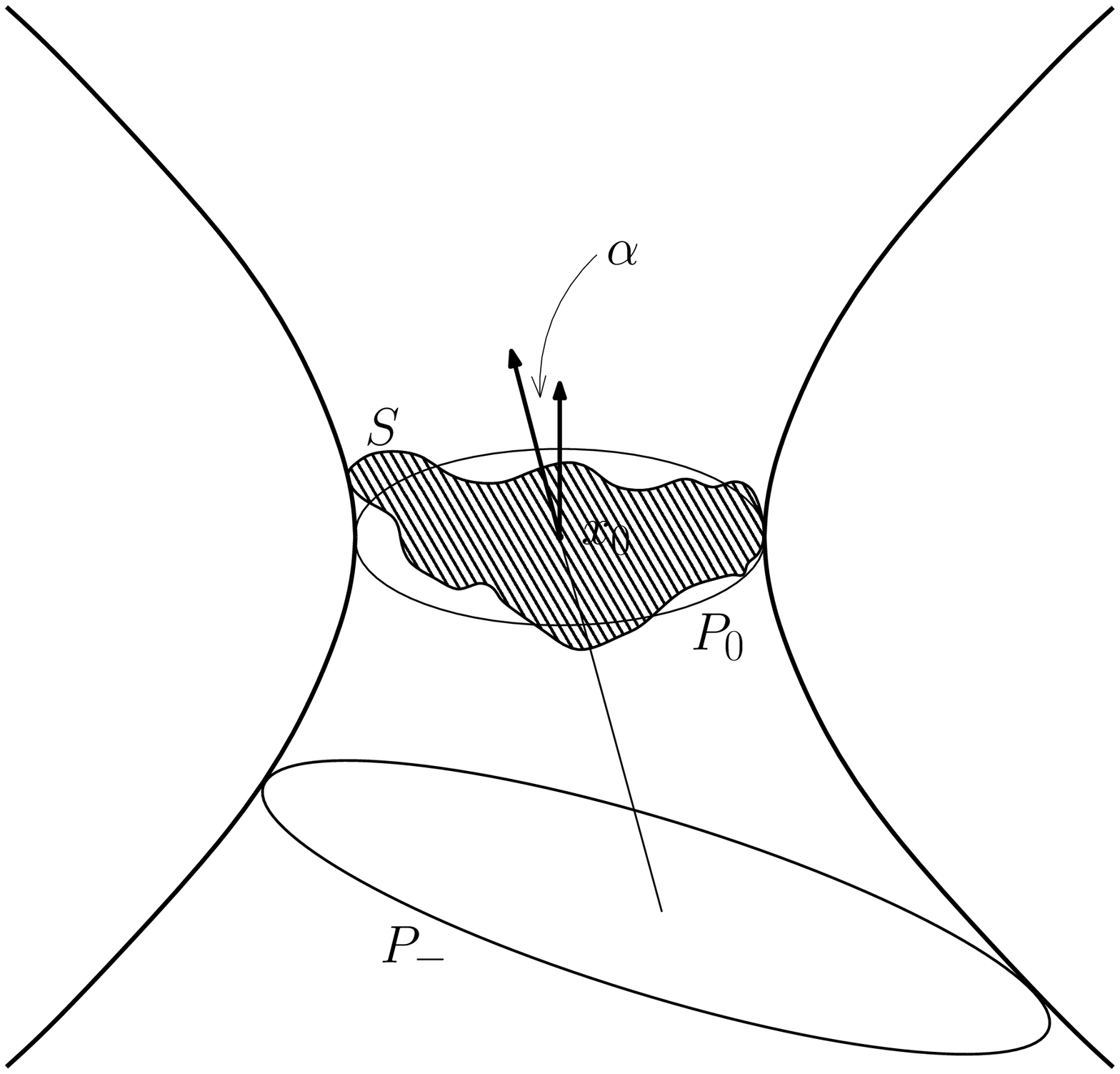}
%\captionsetup{labelformat=empty}
\caption{The key point is that the hyperbolic angle $\alpha$ is uniformly bounded, by Lemma \ref{boundedness angle}.} \label{fig:angle}
\end{minipage}
\end{figure}

Therefore, replacing $R_0$ in Lemma \ref{lemma bon schl} with $\min\left\{R_0,R_0'\right\}$, we have that the geodesic balls of radius $R$ ($R$ as in Proposition \ref{schauder estimate ads}) on $S$ centered at $x$ project to $P_-$ with image contained in $B_{P_-}(\pi(x),R')$. The radii $R$ and $R'$ are fixed, not depending on $S$. 

\subsection{Principal curvatures}

In this subsection we finally prove the estimate on the supremum of the principal curvatures of $S$ in terms of the width. Recall the statement of the main theorem of this section:

\begin{reptheorem}{estimate principal curvatures and width ads}
There exists a constant $C_1$ such that, for every maximal surface $S$ with $||\lambda||_\infty<1$ and width $w$, 
$$||\lambda||_\infty\leq C_1\tan w\,.$$
\end{reptheorem}

We take an arbitrary point $x\in S$. By Remark \ref{discussion width}, we know that there are two disjoint planes $P_-$ and $P_+$ with $d_{\AdS^3}(x,P_-)+d_{\AdS^3}(x,P_+)=w_1+w_2\leq w$ where $w$ is the width. As in the previous subsection, we will assume $P_-$ is a fixed plane in $\AdS^3$, upon composing with an isometry. Figure \ref{fig:lemmadistance} gives a picture of the situation of the following lemma.

\begin{lemma} \label{distance and width ads}
Let $p\in P_-$, $q\in P_+$ be the endpoints of geodesic segments $l_1$ and $l_2$ from $x\in S$ orthogonal to $P_-$ and $P_+$, of length $w_1$ and $w_2$, with $w_1\leq w_2$. Let $p'\in P_-$ a point at distance $R'$ from $p$ and let $d=d_{\AdS^3}((\pi|_{P_+})^{-1}(p'),P_-)$. Then
\begin{equation}
\tan d\leq (1+\sqrt{2})\cosh R' \tan(w_1+w_2).
\end{equation}
\end{lemma}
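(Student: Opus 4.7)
The plan is to perform the computation directly in coordinates on the double cover $\wAdS \subset \R^{2,2}$, normalizing the configuration by an isometry. Set $P_- = \{x_4 = 0\} \cap \wAdS$, $p = (0,0,1,0)$, and take $l_1$ to be the future timelike geodesic $l_1(t) = (0,0,\cos t, \sin t)$, so that $x = l_1(w_1) = (0, 0, \cos w_1, \sin w_1)$ and the dual point of $P_-$ is $\bar p = (0,0,0,1)$. A point $p' \in P_-$ at distance $R'$ from $p$ may be written as $p' = \cosh R' \cdot p + \sinh R' \cdot (\cos \phi, \sin \phi, 0, 0)$. I would parameterize the future unit timelike direction $v$ of $l_2$ at $x$ by $v = \sinh \alpha\,(\cos\theta, \sin\theta, 0, 0) + \cosh\alpha \cdot l_1'(w_1)$, where $\alpha \ge 0$ is the hyperbolic angle between $l_2$ and the straight continuation of $l_1$. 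Then $q = \cos w_2 \cdot x + \sin w_2 \cdot v$, and the future unit normal to $P_+$ at $q$ is $\bar q = l_2'(w_2) = -\sin w_2 \cdot x + \cos w_2 \cdot v$.

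Since $q' = (\pi|_{P_+})^{-1}(p')$ lies on the timelike geodesic $\gamma(t) = \cos t \cdot p' + \sin t \cdot \bar p$ issued from $p'$ perpendicular to $P_-$, the equation $\langle\gamma(d), \bar q\rangle = 0$ determines $d$ via $\tan d = -\langle p', \bar q\rangle / \langle \bar p, \bar q\rangle$. Expanding the two inner products and applying the addition formulas $\sin w_1 \cos w_2 + \cos w_1 \sin w_2 = \sin(w_1+w_2)$ and $\cos w_1 \cos w_2 - \sin w_1 \sin w_2 = \cos(w_1+w_2)$ gives
\begin{equation*}
\tan d = \frac{\sinh R' \cos w_2 \sinh\alpha \cos(\theta-\phi) + \cosh R'\bigl[\sin(w_1+w_2) + (\cosh\alpha-1)\cos w_2 \sin w_1\bigr]}{\cos(w_1+w_2) + (\cosh\alpha-1)\cos w_1 \cos w_2}.
\end{equation*}
In the ``straight'' case $\alpha = 0$, where $l_1 \cup l_2$ is a single geodesic segment of length $w_1 + w_2$, the formula collapses to $\tan d = \cosh R' \tan(w_1+w_2)$, giving the claimed inequality with constant $1$.

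The general case $\alpha > 0$ requires absorbing the ``tilt'' contribution $\sinh R' \cos w_2 \sinh\alpha \cos(\theta-\phi)$ in the numerator. Using $|\cos(\theta - \phi)| \leq 1$ and $\sinh R' \leq \cosh R'$ together with the \emph{a priori} angle bound $\alpha \leq 2\bar\alpha$ from Lemma \ref{boundedness angle}, applied both to $l_1$ and, by time-reversal symmetry, to $l_2$ (the hypothesis $w_i \leq \pi/4$ is delivered by $w_1 \leq w_2$ and $w_1 + w_2 \leq w \leq \pi/2$), one reduces to an inequality involving only universal constants and the trigonometric functions of $w_1, w_2$. The specific constant $1 + \sqrt 2 = L/2$ originates in the universal gradient bound $\|\grad u\|^2 \leq L = 2(1+\sqrt 2)$ of Lemma \ref{lemma gradient bounds}, which controls $\bar\alpha$ through the formula $\cosh^2\bar\alpha = 1 + \|\grad u\|^2 / (1 - u^2)$ used in the proof of Lemma \ref{boundedness angle}. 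The main obstacle I anticipate is the delicate trigonometric estimate in the regime where both $w_1$ and $w_2$ are small while $\alpha$ is close to its maximum allowed value $2\bar\alpha$: here the extra denominator term $(\cosh\alpha -1)\cos w_1 \cos w_2$ is comparable to $\cos(w_1+w_2)$ and must be carefully exploited to compensate for the growth of the $\sinh\alpha$ term in the numerator, so that the final ratio remains bounded by $(1+\sqrt 2) \cosh R' \tan(w_1+w_2)$.
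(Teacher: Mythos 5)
There is a genuine gap, and you have in fact put your finger on it without realizing that it is fatal. You propose to control the tilt angle $\alpha$ using only the \emph{fixed} bound $\alpha \le 2\bar\alpha$ coming from Lemma \ref{boundedness angle} (and ultimately from the gradient bound $\|\grad u\|^2 \le 2(1+\sqrt 2)$). But with only such a fixed, $w$-independent bound on $\alpha$, the inequality cannot hold: if $\alpha$ were allowed to stay near a fixed positive value while $w_1, w_2 \to 0$, your formula for $\tan d$ would have a numerator dominated by $\sinh R'\,\cos w_2\,\sinh\alpha$, which stays bounded away from $0$, while the right-hand side $(1+\sqrt 2)\cosh R' \tan(w_1+w_2)$ tends to $0$. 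The extra denominator term $(\cosh\alpha - 1)\cos w_1 \cos w_2$ does not rescue this --- it only pushes the denominator toward $\cosh\alpha$, which is a bounded constant, so the ratio still blows up relative to $\tan(w_1+w_2)$.

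The missing idea --- and the core of the paper's argument --- is that $\alpha$ is \emph{not} controlled only by a universal constant: the disjointness of the two support planes $P_-$ and $P_+$ forces the quantitative bound $\cosh\alpha < (1 + \sin w_1 \sin w_2)/(\cos w_1 \cos w_2)$, obtained by requiring $|\langle p_-, p_+\rangle| \le 1$ for the dual points $p_\pm$. This drives $\alpha \to 0$ as $w_1+w_2 \to 0$, giving $(\tanh\alpha)^2 \le 2\,(\sin(w_1+w_2))^2/(\cos w_1 \cos w_2)$, which is exactly what is needed to absorb the $\sinh R'\sinh\alpha$ term into $\sqrt 2\,\sinh R'\tan(w_1+w_2)$; the $\cosh R'\tan(w_1+w_2)$ contribution from the other term then gives $1+\sqrt 2$ in total (using $w_1\le w_2$ to bound $\cos w_2/\cos w_1 \le 1$). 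In particular the numerical coincidence $1+\sqrt 2 = L/2$ with the gradient constant is accidental and is not where the constant comes from. Your coordinate setup and the formula for $\tan d$ via $\langle\gamma(d),\bar q\rangle = 0$ are otherwise on the right track; the piece you need to add is the derivation and exploitation of the disjointness constraint on $\alpha$.
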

\begin{proof}
As in the previous proof, we do the computation in $\wAdS$. We assume $x=(0,0,1,0)$ and $l_1$ is the geodesic segment parametrized by $l_1(t)=(\cos t)x-(\sin t)(0,0,0,1)$, so that the plane $P_-$ is dual to $p_-=(0,0,\sin w_1,\cos w_1)$. Points on the plane $P_-$ at distance $R'$ from $\pi(x)=l_1(w_1)=(0,0,\cos w_1,-\sin w_1)$ have coordinates
$$p'=(\cos\theta\sinh R',\sin\theta\sinh R',\cosh R'\cos w_1,-\cosh R'\sin w_1)\,.$$
We also assume $l_2$ has initial tangent vector $l_2'(0)=(\sinh\alpha,0,0,\cosh\alpha)$, where $\alpha$ is the hyperbolic angle between $(0,0,0,1)$ and  $l_2'(0)$, so that $l_2(t)=(\cos t)x+(\sin t)(\sinh\alpha,0,0,\cosh\alpha)$. Note that $l_2'(w_2)=(\cos w_2\sinh\alpha,0,-\sin w_2,\cos w_2\cosh\alpha)=:p_+$ is the unit vector orthogonal to $P_+$, by construction.

We derive a condition which must necessarily be satisfied by $\alpha$, because $P_-$ and $P_+$ are disjoint. Indeed, we must have
$$|\langle p_-,p_+\rangle|=-\sin w_1\sin w_2+\cos w_1\cos w_2\cosh\alpha \leq 1\,,$$
which is equivalent to
\begin{equation} \label{inequality cosh alpha w12}
\cosh\alpha<\frac{1+\sin w_1\sin w_2}{\cos w_1\cos w_2}\,.
\end{equation}
Let us now write
$$(\tanh\alpha)^2=\left(1+\frac{1}{\cosh\alpha}\right)\left(1-\frac{1}{\cosh\alpha}\right)\leq2(\cosh\alpha-1)\,,$$
and therefore, using (\ref{inequality cosh alpha w12}),
\begin{equation} \label{inequality tanh alpha w12}
(\tanh\alpha)^2< 2\left(\frac{1-\cos(w_1+w_2)}{\cos w_1\cos w_2}\right)\leq 2\left(\frac{1-(\cos(w_1+w_2))^2}{\cos w_1\cos w_2}\right) \leq 2\frac{(\sin(w_1+w_2))^2}{\cos w_1\cos w_2}\,.
\end{equation}

To compute $d$, we now write explicitly the geodesic $\gamma$ starting from $p'$ and orthogonal to $P_-$. We find $d$ such that $\gamma(d)\in P_+$ and this will give the expected inequality. We have 
$$\gamma(d)=(\cos d)p'+(\sin d)(0,0,\sin w_1,\cos w_1)$$
and $\gamma(d)\in P_+$ if and only if $\langle \gamma(d),p_+\rangle=0$, which gives the condition
\begin{multline*}
\cos d(\cosh R'(\cos w_1 \sin w_2+\cos w_2 \sin w_1\cosh\alpha)+\sinh R'(\cos\theta \cos w_2\sinh\alpha)) \\
+\sin d(\sin w_1 \sin w_2-\cos w_1 \cos w_2\cosh\alpha)=0\,.
\end{multline*}
We express
\begin{align*}
\tan d=&\cosh R'\frac{\cos w_1 \sin w_2+\cos w_2 \sin w_1\cosh\alpha}{\cos w_1 \cos w_2\cosh\alpha-\sin w_1 \sin w_2} \\
+&\sinh R'\frac{\cos\theta \cos w_2\sinh\alpha}{\cos w_1 \cos w_2\cosh\alpha-\sin w_1 \sin w_2}~.
\end{align*}
The first term in the RHS is easily seen to be less than $\cosh R'\tan(w_1+w_2)$. We turn to the second term. Using (\ref{inequality tanh alpha w12}), it is bounded by
$$\sinh R'\tanh\alpha\frac{\cos w_2}{\cos{(w_1+w_2)}}\leq\sqrt{2}\sinh R'\tan(w_1+w_2)\left(\frac{\cos w_2}{\cos w_1}\right)^{\frac{1}{2}}\,.$$
In conclusion, having assumed $w_1\leq w_2$, we can put $\cos(w_2)/\cos(w_1)\leq 1$, sum the two terms and get
$$\tan d\leq(1+\sqrt{2})\cosh R'\tan(w_1+w_2)\,.$$
\end{proof}

\begin{figure}[htbp]
\centering
\includegraphics[height=6.5cm]{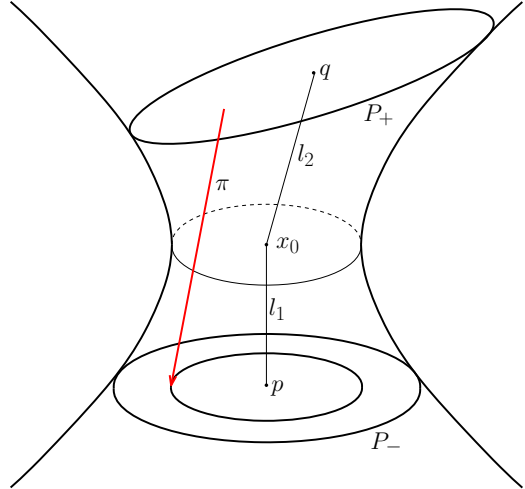}
\caption{The setting of Lemma \ref{distance and width ads}. We assume $w_1=d_{\AdS^3}(x_0,p)<d_{\AdS^3}(x_0,q)=w_2$. \label{fig:lemmadistance}}
\end{figure}

\begin{proof}[Proof of Theorem \ref{estimate principal curvatures and width ads}]
Let $x\in S$ and consider the point $x_-$ of $\partial_-\mathcal{C}$ which minimizes the distance from $x$, where $\mathcal{C}$ is the convex hull of $S$. Let $P_-$ be the plane through $x_-$ orthogonal to the geodesic line containing $x$ and $x_-$ (recall Remark \ref{discussion width}). The plane $P_-$ is then a support plane of $\partial_-\mathcal{C}$. 
We construct analogously the support plane $P_+$ for $\partial_+ \mathcal{C}$. As discussed in Remark \ref{discussion width}, 
$$d_{\AdS^3}(x,P_-)+d_{\AdS^3}(x,P_+)\leq w\,.$$
Moreover, we can assume (upon composing with a time-orientation-reversing isometry, if necessary) that $d_{\AdS^3}(x,P_-)\leq d_{\AdS^3}(x,P_+)$. As a consequence, $d_{\AdS^3}(x,P_-)\leq \pi/4$.

Let us now consider the function $u$, defined in normal coordinates:
$$u=\sinh d_{\AdS^3}(\exp_x(\bullets),P_-)\,.$$
By Equation (\ref{hessian ads}), we have the following expression for the shape operator of $S$:
 $$B=\frac{1}{\sqrt{1-u^2+||\grad u||^2}}(\Hess\, u-u\,E)\,.$$
In normal coordinates at $x$ the Hessian of $u$ is given just by the second derivatives of $u$; in Proposition \ref{schauder estimate ads} we showed the second derivatives of $u$ are bounded, up to a factor, by $||u||_{C^0(B_S(x,R))}$. By Proposition \ref{boundedness projection}, $||u||_{C^0(B_S(x,R))}$ is smaller than the supremum of the hyperbolic sine of the distance $d$ from $P_-$ of points of $S$ which project to $B_{P_-}(\pi(x),R')$. Therefore we have the following estimate for the principal curvatures at $x$:
$$|\lambda|\leq C'\frac{||u||}{\sqrt{1-||u||^2}}\leq C' \tan\left(\sup \{d_{\AdS^3}(p,P_-):p\in (\pi_S)^{-1}(B_{P_-}(\pi(x),R'))\}\right)\,.$$
The constant $C'$ involves the constant which appears in Equation \eqref{schauder} in Proposition \ref{schauder estimate ads}. The quantity in brackets in the RHS is certainly less than 
$$\left(\sup \{d_{\AdS^3}(p,P_-):p\in (\pi_{P_+})^{-1}(B_{P_-}(\pi(x),R'))\}\right)\,.$$
Thus, applying Lemma \ref{distance and width ads} we obtain:
$$||\lambda||_\infty\leq C_1\tan w\,.$$
The constant $C$ then involves $C'$ and $\cosh R'$. Such inequality holds independently on the point $x$ and thus concludes the proof.
\end{proof}

%------------------------------------------------------

\section{Maximal surfaces with large principal curvatures} \label{sec estimate large}

The main goal of this section is to prove Theorem \ref{theorem width lambda big}. Recall that we denote by $||\lambda||_\infty$ the supremum of the principal curvatures of a maximal surface $S$ with $\partial_\infty S=gr(\phi)\subset\partial_\infty\AdS^3$, and by $w$ the width of the convex hull of $\partial_\infty S$.

\begin{reptheorem}{theorem width lambda big}
There exist universal constants $M>0$ and $\delta\in(0,1)$ such that, if $S$ is an entire maximal surface in $\AdS^3$ with $\delta\leq ||\lambda||_\infty<1$ and width $w$, then
$$\tan w\geq \left(\frac{1}{1-||\lambda||_\infty}\right)^{1/M}\,.$$
\end{reptheorem}

Observe that it clearly suffices to prove that for every $\lambda_0\geq\delta$, if there exists a point $x_0\in S$ such that $\lambda(x_0)=\lambda_0$, then
\begin{equation} \label{quiquoqua}
\tan w\geq \left(\frac{1}{1-\lambda_0}\right)^{1/M}\,.
\end{equation}
Indeed, if $||\lambda||_\infty$ is not achieved on $S$, one has that Equation \eqref{quiquoqua} holds for every $\lambda_0<||\lambda||_\infty$, and by continuity \eqref{quiquoqua} holds for $\lambda_0=||\lambda||_\infty$ as well.

\subsection{Uniform gradient estimates}

We define the function $v:S\to\R$ by $$v(x)=\ln\left(\frac{1}{1-\lambda(x)}\right)\,,$$ where $\lambda(x)$ is the positive eigenvalue of the shape operator. Observe that $v(x)\to\infty$ as $\lambda(x)$ approaches $1$, while $v(x)=0$ if $x$ is an umbilical point, and the statement of Theorem \ref{theorem width lambda big} amounts to proving that $\tan w\geq e^{v(x_0)/M}$. The first important step is to show that, if $v$ is large at some point, then it remains large on a geodesic ball of $S$ with large radius.

\begin{prop} \label{prop gradient estimate}
There exists a universal constant $M$ such that $||\grad v||\leq M$, for every maximal surface $S$ in $\AdS^3$ with uniformly negative curvature.
\end{prop}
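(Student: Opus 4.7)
The plan is to argue by contradiction, combining the compactness result Lemma~\ref{lemma bon schl} with the rigidity Lemma~\ref{lemma flat horo}. Suppose a counterexample sequence exists: maximal surfaces $S_n$ of uniformly negative curvature and points $x_n\in S_n$ with $\|\grad v_n(x_n)\|\to\infty$. Since $\isom(\AdS^3)$ acts transitively on pairs (point, spacelike tangent plane), one may compose each $S_n$ with an isometry so that $x_n=x_0$ and $T_{x_n}S_n=T_0$ are fixed. Lemma~\ref{lemma bon schl} then furnishes a subsequence converging in $C^\infty$ on compact cylinders to an entire maximal surface $S_\infty$ of nonpositive curvature.

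I would then split into two cases depending on $\lambda_\infty(x_0)$. If $\lambda_\infty(x_0)<1$, then $\lambda_\infty<1-\epsilon$ on some neighborhood $U\subset S_\infty$ of $x_0$, and by $C^\infty$ convergence the analogous bound (with $\epsilon/2$) holds for $\lambda_n$ on $U$ for $n$ large; hence $v_n=-\ln(1-\lambda_n)$ is smooth with uniformly bounded $C^2$-norm on $U$, and $\grad v_n(x_0)\to \grad v_\infty(x_0)$, a finite vector, contradicting $\|\grad v_n(x_0)\|\to\infty$.

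The remaining case $\lambda_\infty(x_0)=1$ is the main obstacle, since the function $v_n$ itself blows up at $x_0$. Here Lemma~\ref{lemma flat horo} forces $S_\infty$ to be a horospherical surface, so $\lambda_\infty\equiv 1$ and $\chi_n:=-\ln\lambda_n$ converges to $0$ smoothly on a compact neighborhood of $x_0$ (in particular $\chi_n$ is smooth and strictly positive there, away from umbilics). To extract the bound, I would reinterpret the quasi-linear equation from Lemma~\ref{lemma eq quasilin} as a \emph{linear} equation
$$\Delta_{S_n}\chi_n = V_n\chi_n,\qquad V_n(x)=\frac{2(1-e^{-2\chi_n(x)})}{\chi_n(x)}\in(0,4].$$
In normal coordinates around $x_0$ the operators $\Delta_{S_n}$ are uniformly elliptic with coefficients bounded in $C^\infty$ (this is the payoff of $C^\infty$-convergence of the induced metrics), and the potentials $V_n$ are uniformly bounded. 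The classical Harnack inequality for positive solutions of uniformly elliptic linear equations together with interior Schauder estimates then yields a universal constant $C$ with $\|\grad\chi_n(x_0)\|\le C\,\chi_n(x_0)$. A direct computation from $v=-\ln(1-e^{-\chi})$ gives $\|\grad v_n\|=\|\grad\chi_n\|/(e^{\chi_n}-1)$, and since the scalar function $t\mapsto t/(e^t-1)$ is bounded near $0$, this forces $\|\grad v_n(x_0)\|$ to be uniformly bounded, contradicting the assumption.

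The hard part is therefore case two, where $v$ becomes undefined in the limit; the trick is to work with $\chi=-\ln\lambda$ instead, which degenerates to $0$ rather than to $\infty$ and satisfies a nearly-linear equation accessible to classical elliptic machinery. The final conversion from a gradient bound on $\chi$ to one on $v$ is then an elementary matter.
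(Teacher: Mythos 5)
Your strategy is genuinely different from the paper's and the core idea (Case 2) is sound. The paper derives a Bochner/Bernstein-type differential inequality for $\psi=e^{g(\chi)}||\grad\chi||^2$ with the specific choice $g(\chi)=-2(\chi+\ln(1-e^{-\chi}))$ (so that $\psi=||\grad v||^2$), shows $\Delta_S\psi>0$ wherever $\psi$ exceeds an explicit constant, and then realizes the near-supremum of $\psi$ as an interior maximum on a limiting surface via Lemma~\ref{lemma bon schl}. You instead sidestep the lengthy computation of Lemma~\ref{lm:bound}: after the same compactness reduction, you linearize Equation~\eqref{quasi} as $\Delta_{S_n}\chi_n=V_n\chi_n$ with $0<V_n\le 4$ and invoke the De Giorgi--Nash--Moser Harnack inequality (applicable since the potential $c=-V_n\le 0$, and the metric coefficients converge in $C^\infty$) together with interior elliptic estimates. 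One technical remark: to go from the equation to $||\grad\chi_n(x_0)||\le C\,\chi_n(x_0)$ it is cleanest to use $W^{2,p}$ (Calder\'on--Zygmund) estimates plus Sobolev embedding rather than Schauder estimates directly, since a priori you do not control the H\"older seminorm of $V_n$ (you can bootstrap to Schauder afterwards, but the $L^p$ route avoids any appearance of circularity). The payoff is a more conceptual proof that trades the hands-on Bochner computation for classical linear elliptic machinery.

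There is, however, a gap in your Case 1 as written. When $\lambda_\infty(x_0)=0$ (so $x_0$ is umbilic for $S_\infty$) the limit quadratic differential has a zero near $x_0$, hence for large $n$ the surfaces $S_n$ have umbilical points near $x_0$; there $\lambda_n=\sqrt{a_n^2+b_n^2}$ is Lipschitz but not $C^1$, so the claim that ``$v_n$ is smooth with uniformly bounded $C^2$-norm on $U$'' fails, and $\grad v_n(x_0)$ need not even exist classically. The conclusion you want is nonetheless true and can be recovered by replacing the $C^2$ argument with a direct Lipschitz estimate: $||\grad v_n||=||\grad\lambda_n||/(1-\lambda_n)$, $||\grad\lambda_n||\le||\nabla B_n||$ is uniformly bounded by the $C^\infty$-compactness of Lemma~\ref{lemma bon schl} (this is exactly Lemma~\ref{lm:grl} in the paper), and $1-\lambda_n$ is bounded away from $0$ on $U$. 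The paper handles precisely this degenerate sub-case separately in its own compactness argument, via the same bound on $||\grad\lambda||$ at umbilics; your proof should invoke it too.
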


We will prove some \emph{a priori} estimates for the function 
 $\chi=-\ln|\lambda|$, which is defined in the complement of umbilical points
and it takes values in $[0,+\infty)$. Recall that by Lemma \ref{lemma eq quasilin} the function $\chi$ satisfies the quasi-linear equation:
\begin{equation} \label{ueq}
\Delta_S\chi=2(1-e^{-2\chi})\,, \tag{\ref{quasi}}
\end{equation}
while by the Gauss equation the curvature of $S$ is given by: 
\begin{equation}\label{eq:curv}
  K=-1+e^{-2\chi}\,.
\end{equation}

We start by computing the gradient and the Laplacian of the function
$||\grad \chi||^2$.

\begin{lemma}
The following identities hold:
\begin{equation}\label{eq:grad}
\grad(||\grad \chi||^2)=2\Hess\chi(\grad \chi)\,,
\end{equation}
\begin{equation}\label{eq:lap}
\Delta_S(||\grad \chi||^2)=(10 e^{-2\chi}-2)||\grad \chi||^2+2||\Hess\chi||^2\,.
\end{equation}
\end{lemma}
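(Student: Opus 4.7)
The first identity is the standard computation for the gradient of the squared norm of a gradient vector field. For any vector $v\in TS$, using compatibility of $\nabla^S$ with the metric and the definition $\Hess\chi(v)=\nabla^S_v\grad\chi$, one has
\begin{equation*}
v(||\grad\chi||^2)=2\langle \nabla^S_v\grad\chi,\grad\chi\rangle=2\langle \Hess\chi(v),\grad\chi\rangle=2\langle v,\Hess\chi(\grad\chi)\rangle,
\end{equation*}
where in the last step we used that $\Hess\chi$ is symmetric with respect to the induced metric. This gives \eqref{eq:grad} immediately.

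For the second identity the natural tool is the Bochner--Weitzenb\"ock formula, which on a Riemannian surface reads
\begin{equation*}
\tfrac{1}{2}\Delta_S(||\grad\chi||^2)=||\Hess\chi||^2+\langle\grad(\Delta_S\chi),\grad\chi\rangle+K||\grad\chi||^2,
\end{equation*}
since on a surface the Ricci tensor equals $K$ times the induced metric. I would then plug in the two pieces of information specific to our situation: by Equation \eqref{quasi}, $\Delta_S\chi=2(1-e^{-2\chi})$, whose gradient is $4e^{-2\chi}\grad\chi$, giving
\begin{equation*}
\langle\grad(\Delta_S\chi),\grad\chi\rangle=4e^{-2\chi}||\grad\chi||^2;
\end{equation*}
and by the Gauss equation \eqref{eq:curv}, $K=-1+e^{-2\chi}$. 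Substituting and multiplying by $2$ yields
\begin{equation*}
\Delta_S(||\grad\chi||^2)=2||\Hess\chi||^2+8e^{-2\chi}||\grad\chi||^2+2(-1+e^{-2\chi})||\grad\chi||^2,
\end{equation*}
which simplifies to \eqref{eq:lap}.

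There is no real obstacle here: once one recognizes that the content of \eqref{eq:lap} is just the Bochner identity combined with the PDE \eqref{quasi} and the Gauss equation \eqref{eq:curv}, the verification is a short calculation. The only minor care needed is to record correctly that on a $2$-dimensional Riemannian manifold the Ricci term in Bochner's formula reduces to a scalar multiple of the metric by $K$, which is why the ambient curvature of $\AdS^3$ enters only through the expression $e^{-2\chi}$ coming from the Gauss equation rather than through a separate sectional curvature term.
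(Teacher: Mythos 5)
Your proof is correct and follows essentially the same route as the paper's: the paper derives the Bochner--Weitzenb\"ock identity for surfaces from scratch (by differentiating $\grad(||\grad\chi||^2)=2\Hess\chi(\grad\chi)$, commuting covariant derivatives to produce the curvature term $R(\cdot,\grad\chi)\grad\chi$, and taking the trace), whereas you simply invoke it by name before substituting Equation \eqref{quasi} and the Gauss equation \eqref{eq:curv}. The resulting trace identity and final substitutions are identical, so the only difference is that the paper keeps the argument self-contained while you cite the classical formula.
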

\begin{proof}
In order to compute the gradient, notice that if $\xi$ is any vector tangent to $S$, then
$$
\xi(||\grad \chi||^2)=2\langle \nabla_\xi(\grad\chi), \grad\chi\rangle=
2\langle \Hess\chi(\xi), \grad\chi\rangle=2\langle\xi, \Hess\chi(\grad\chi)\rangle\,.
$$
This proves the first identity. By taking the covariant derivative of $\grad||\grad \chi||^2$ we get
\begin{align*}
\Hess(||\grad \chi||^2)&=2\nabla(\Hess\chi)(\grad\chi)+2(\Hess\chi)^2\\
                      &=2\nabla_{\bullet}(\nabla\grad \chi)(\grad \chi)+2(\Hess\chi)^2\\
                      &=2\nabla_{\grad \chi}(\Hess \chi)+2R(\bullet, \grad \chi)\grad \chi+2(\Hess\chi)^2\,.
\end{align*}
Taking the trace we get
$$
\Delta_S(||\grad \chi||^2)=2\grad\chi(\Delta_S \chi)+2K||\grad \chi||^2+2||\Hess\chi||^2\,,
 $$
and using (\ref{ueq}) and (\ref{eq:curv}) we deduce that
\begin{align*}
\Delta_S(||\grad \chi||^2)&=4\langle\grad\chi, \grad(1-e^{-2\chi})\rangle
+2(-1+e^{-2\chi})||\grad \chi||^2+2||\Hess \chi||^2\\
& =4\cdot 2e^{-2\chi}||\grad \chi||^2+2(-1+e^{-2\chi})||\grad \chi||^2+2||\Hess \chi||^2\\
& =(10e^{-2\chi}-2)||\grad \chi||^2+2||\Hess \chi||^2\,.
\end{align*}
This concludes the proof.
\end{proof}

\begin{lemma} \label{lm:bound}
Let $g:(0,+\infty)\rightarrow\mathbb R$ any  smooth function
and consider the function $\psi:S\to\R$ defined by
$$\psi=e^{g(\chi)}||\grad \chi||^2\,.$$
Then we have
\begin{equation}\label{eq:estimate}
\Delta_S \psi\geq a(\chi)\psi^2+b(\chi)\psi+c(\chi)\,,
\end{equation}
where
\begin{equation}
\begin{array}{l}
a(\chi)=g''e^{-g}\,,\\
b(\chi)=6g'(\chi)(1-e^{-2\chi})+(10 e^{-2\chi}-2)\,,\\
c(\chi)=e^g(\Delta_S \chi)^2\,.
\end{array}
\end{equation}
\end{lemma}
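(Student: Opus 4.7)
My plan is to compute $\Delta_S \psi$ directly via the Leibniz rule applied to the product $\psi = e^{g(\chi)}||\grad\chi||^2$, substitute the formulas \eqref{eq:grad} and \eqref{eq:lap} from the preceding lemma together with the PDE \eqref{ueq}, and then reduce the claimed inequality to a pointwise algebraic statement involving only $\Hess\chi$ and $\grad\chi$. The latter I will handle by splitting $\Hess\chi$ into its trace-free and trace parts and completing a square.

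\textbf{Leibniz expansion.} Setting $f = e^{g(\chi)}$ and $h = ||\grad\chi||^2$,
\begin{equation*}
\Delta_S\psi = (\Delta_S f)\,h + 2\langle \grad f, \grad h\rangle + f\,\Delta_S h.
\end{equation*}
The chain rule gives $\grad f = e^g g'\grad\chi$ and $\Delta_S f = e^g[(g')^2 + g'']\,||\grad\chi||^2 + e^g g'\,\Delta_S\chi$. Combining with \eqref{eq:grad}, \eqref{eq:lap}, substituting $\Delta_S\chi = 2(1-e^{-2\chi})$ from \eqref{ueq}, and using $||\grad\chi||^2 = e^{-g}\psi$, I expect to land on
\begin{equation*}
\Delta_S\psi = g''e^{-g}\psi^2 + (g')^2 e^{-g}\psi^2 + \bigl[2g'(1-e^{-2\chi}) + (10e^{-2\chi}-2)\bigr]\psi + 2e^g||\Hess\chi||^2 + 4 e^g g'\langle \Hess\chi(\grad\chi),\grad\chi\rangle.
\end{equation*}

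\textbf{Trace/trace-free decomposition.} Writing $\Hess\chi = H_0 + \tfrac12(\Delta_S\chi)\,I$ with $H_0$ trace-free, two identities specific to dimension two hold:
\begin{equation*}
2||\Hess\chi||^2 = 2||H_0||^2 + (\Delta_S\chi)^2,\qquad \langle \Hess\chi(X),X\rangle = \langle H_0(X),X\rangle + \tfrac12(\Delta_S\chi)\,||X||^2.
\end{equation*}
Substituting these, the piece $e^g(\Delta_S\chi)^2$ inside $2e^g||\Hess\chi||^2$ reproduces $c(\chi)$, while the trace-part of $4e^g g'\langle \Hess\chi(\grad\chi),\grad\chi\rangle$ contributes an additional $4g'(1-e^{-2\chi})\psi$ that upgrades the coefficient of $\psi$ from $2g'(1-e^{-2\chi}) + (10e^{-2\chi}-2)$ to $b(\chi) = 6g'(1-e^{-2\chi}) + (10e^{-2\chi}-2)$. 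What remains is
\begin{equation*}
\Delta_S\psi - a(\chi)\psi^2 - b(\chi)\psi - c(\chi) = (g')^2 e^{-g}\psi^2 + 4 e^g g'\langle H_0(\grad\chi),\grad\chi\rangle + 2 e^g ||H_0||^2,
\end{equation*}
so the lemma reduces to the nonnegativity of this remainder.

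\textbf{Pointwise inequality and main obstacle.} Dividing by $e^g > 0$ and using $\psi = e^g||\grad\chi||^2$, what I must show is
\begin{equation*}
(g')^2 ||\grad\chi||^4 + 4g'\langle H_0(\grad\chi),\grad\chi\rangle + 2||H_0||^2 \geq 0.
\end{equation*}
In an orthonormal frame diagonalizing $H_0 = \mathrm{diag}(\mu,-\mu)$, writing $\grad\chi = (x_1,x_2)$ so that $||H_0||^2 = 2\mu^2$ and $\langle H_0(\grad\chi),\grad\chi\rangle = \mu(x_1^2 - x_2^2)$, the left-hand side equals $4\mu^2 + 4g'\mu(x_1^2 - x_2^2) + (g')^2(x_1^2 + x_2^2)^2$, and completing the square in $\mu$ yields
\begin{equation*}
\bigl(2\mu + g'(x_1^2 - x_2^2)\bigr)^2 + (g')^2\bigl[(x_1^2+x_2^2)^2 - (x_1^2-x_2^2)^2\bigr] = \bigl(2\mu + g'(x_1^2 - x_2^2)\bigr)^2 + 4(g')^2 x_1^2 x_2^2 \geq 0,
\end{equation*}
finishing the proof. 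The argument is essentially a careful bookkeeping exercise, and the only delicate point is the trace/trace-free splitting: this is what makes the $g'\Delta_S\chi$ cross-terms cancel exactly and leaves behind a manifest sum of squares. I do not expect any further conceptual obstruction beyond tracking coefficients through the Leibniz expansion.
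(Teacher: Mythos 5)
Your proof is correct, and its overall structure mirrors the paper's: both expand $\Delta_S\psi$, isolate the part of $\langle \Hess\chi(\grad\chi),\grad\chi\rangle$ proportional to $(\Delta_S\chi)\|\grad\chi\|^2$ (which upgrades $2g'(1-e^{-2\chi})$ to $6g'(1-e^{-2\chi})$ in $b(\chi)$), and then control the remaining off-diagonal piece by a sum of squares while extracting $(\Delta_S\chi)^2$ from $2\|\Hess\chi\|^2$. The differences are purely organizational but worth noting. You compute $\Delta_S\psi$ via the Leibniz rule $\Delta_S(fh)=(\Delta_S f)h+2\langle\grad f,\grad h\rangle+f\Delta_S h$, while the paper computes $\grad\psi$ and $\Hess\psi$ with $\alpha=e^g$ and takes the trace; the two land on the same expression. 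For the decomposition, you split $\Hess\chi = H_0 + \tfrac12(\Delta_S\chi)E$ into trace-free and pure-trace parts (coordinate-free), whereas the paper works in an orthonormal eigenbasis of $\Hess\chi$ with eigenvalues $\mu_1,\mu_2$ and writes the off-diagonal contribution as $\tfrac{\xi}{2}(x_1^2-x_2^2)$ with $\xi=\mu_1-\mu_2$; your $H_0$ has eigenvalues $\pm\mu=\pm\xi/2$, so the two parametrizations coincide. Finally, to establish nonnegativity of the remainder you complete the square exactly and exhibit it as $(2\mu+g'(x_1^2-x_2^2))^2+4(g')^2x_1^2x_2^2\geq 0$, whereas the paper first bounds $|x_1^2-x_2^2|\leq\|\grad\chi\|^2$ and then uses AM--GM, which amounts to discarding your surplus term $4(g')^2x_1^2x_2^2$; both verify the same pointwise inequality, yours just keeps it as an exact sum of squares. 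In short: same key idea and same reduction, with your version being marginally tidier in the final algebraic step.
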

\begin{proof}
Let us put $\alpha=e^g$, then (using \eqref{eq:grad}) we have
\[
\grad\psi=\alpha'||\grad \chi||^2\grad \chi+2\alpha\Hess\chi(\grad \chi)\,.
\]
Differentiating again we get
\[
\begin{split}
\Hess\,\psi& =\alpha''||\grad \chi||^2\grad\chi\otimes\grad \chi +\alpha'||\grad \chi||^2\Hess \chi + 2\alpha' \grad\chi\otimes\left(\Hess\chi(\grad \chi)\right)
\\
&\qquad+
2\alpha'\left(\Hess\chi(\grad \chi)\right)\otimes\grad \chi+\alpha\Hess(||\grad \chi||^2)\,,
\end{split}
\]
so by using \eqref{quasi} and \eqref{eq:lap} we get
\begin{align*}\label{eq:lapgrad}
\Delta_S\psi& =\alpha''||\grad \chi||^4+
2\alpha'(1-e^{-2\chi})||\grad \chi||^2+4 \alpha'\langle\grad \chi, \Hess\chi(\grad \chi)\rangle \\
&\qquad+
\alpha(10 e^{-2\chi}-2)||\grad \chi||^2+2\alpha||\Hess \chi||^2 \\
&=\alpha''||\grad \chi||^4+(2\alpha'(1-e^{-2\chi})+\alpha(10e^{-2\chi}-2))||\grad \chi||^2+\\
&\qquad+4 \alpha'\langle\grad \chi, \Hess\chi(\grad \chi)\rangle+2\alpha||\Hess \chi||^2\,.
\end{align*}
The main term to be estimated  is the scalar product
$\langle\grad \chi, \Hess\chi(\grad \chi)\rangle$.
Let $\mu_1, \mu_2 $ be the eigenvalues of $\Hess\chi$ and
$x_1,x_2$ be the coordinates of $\grad \chi$ with respect to
an orthonormal basis of eigenvectors of $\Hess \chi$.
Then we have
\[
\langle\grad \chi, \Hess\chi(\grad\chi)\rangle=\mu_1 x_1^2+\mu_2x_2^2\,.
\]
Notice that $\Delta_S\chi=\mu_1+\mu_2$, so if we put $\xi=\mu_1-\mu_2$
we deduce that
\[
\langle\grad \chi, \Hess\chi(\grad \chi)\rangle=
\frac{\Delta_S\chi}{2}||\grad \chi||^2+\frac{\xi}{2}(x_1^2-x_2^2)\,.
\]
So we get
\begin{equation}\label{eq:est}
4\alpha'\langle\grad \chi, \Hess\chi(\grad \chi)\rangle=4\alpha'(1-e^{-2\chi})||\grad \chi||^2+2\alpha'\xi(x_1^2-x_2^2)\,.
\end{equation}
Putting \eqref{eq:est} into the equality we obtained previously yields
  \begin{align*}\label{eq:lapgrad2}
\Delta_S\psi& 
=\alpha''||\grad \chi||^4+(2\alpha'(1-e^{-2\chi})+\alpha(10e^{-2\chi}-2))||\grad \chi||^2\\
&\qquad+4\alpha'(1-e^{-2\chi})||\grad \chi||^2+2\alpha'\xi(x_1^2-x_2^2) +2\alpha||\Hess \chi||^2\\
&=\alpha''||\grad \chi||^4+(6\alpha'(1-e^{-2\chi})+\alpha(10e^{-2\chi}-2))||\grad \chi||^2 \\
&\qquad+
2\alpha'\xi(x_1^2-x_2^2) +2\alpha||\Hess \chi||^2\,.
\end{align*}
Now observe that
 $$
 |2\alpha'\xi(x_1^2-x_2^2)|\leq2|\alpha'||\xi|||\grad \chi||^2\leq
 \frac{(\alpha')^2}{\alpha}||\grad \chi||^4+\alpha\xi^2\,,
 $$
hence we obtain
\begin{align*}
\Delta_S \psi& \geq
\alpha''||\grad \chi||^4+(6\alpha'(1-e^{-2\chi})+\alpha(10e^{-2\chi}-2))||\grad \chi||^2\\
&\qquad- \frac{(\alpha')^2}{\alpha}||\grad u||^4-\alpha\xi^2 +2\alpha||\hess u||^2=\\
&=\left(\alpha''- \frac{(\alpha')^2}{\alpha}\right)||\grad \chi||^4+
(6\alpha'(1-e^{-2\chi})+\alpha(10 e^{-2\chi}-2))||\grad \chi||^2 \\
&\qquad +\alpha(2||\Hess \chi||^2-\xi^2)\,.
\end{align*}
But $||\Hess \chi||^2=\mu_1^2+\mu_2^2$, so that $$2||\Hess \chi||^2-\xi^2=2(\mu_1^2+\mu_2^2)-(\mu_1-\mu_2)^2=(\mu_1+\mu_2)^2=(\Delta_S \chi)^2\,.$$
We finally obtain
$$
\Delta_S \psi \geq
\left(\alpha''- \frac{(\alpha')^2}{\alpha}\right)||\grad\chi||^4+
(6\alpha'(1-e^{-2\chi})+\alpha(10 e^{-2\chi}-2))||\grad\chi||^2+
\alpha(\Delta_S\chi)^2\,.
$$
Observing that $\alpha'=g'e^g$ and $\alpha''= (g''+(g')^2)e^g$, we get 
$\alpha''-(\alpha')^2/\alpha=g'' e^g$ and thus
\begin{align*}
\Delta_S \psi& \geq
g'' e^g||\grad \chi||^4+
(6g'(1-e^{-2\chi})+(10 e^{-2\chi}-2))e^g||\grad \chi||^2+
\alpha(\Delta_S\chi)^2 \\
&= g'' e^{-g}\psi^2+(6g'(1-e^{-2\chi})+(10 e^{-2\chi}-2))\psi+
e^g(\Delta_S\chi)^2\,,
\end{align*}
as in the statement.
\end{proof}

\begin{remark} \label{rk:bound}
In the hypothesis of Lemma \ref{lm:bound}, suppose that $g$ is a convex function
so that $a(\chi)>0$.  If $\sup_{\chi>0} (-{b(\chi)}/{a(\chi)})= M_1<+\infty$ we have that 
$\Delta_S \psi>0$ whenever  $\psi>M_1$ . 
\end{remark}

\begin{lemma}\label{lm:grl}
There is a constant $M_2$ such that $||\grad\lambda||<M_2$ for any maximal surface of nonpositive curvature in $\AdS^3$.
\end{lemma}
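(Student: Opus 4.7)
The strategy is to apply Lemma \ref{lm:bound} with a carefully chosen strictly convex function $g$, then conclude via Remark \ref{rk:bound} combined with a maximum-principle/compactness argument. Since $\lambda = e^{-\chi}$ gives $\|\grad\lambda\|^2 = e^{-2\chi}\|\grad\chi\|^2 = e^{-g(\chi)-2\chi}\psi$, where $\psi = e^{g(\chi)}\|\grad\chi\|^2$, I want $g$ to satisfy three requirements: (i) $g$ is strictly convex, so $a(\chi) = g''e^{-g} > 0$ and Remark \ref{rk:bound} applies; (ii) $\sup_{\chi>0}(-b(\chi)/a(\chi)) =: M_1 < +\infty$; and (iii) $e^{-g(\chi)-2\chi}$ is uniformly bounded on $(0,\infty)$, so that a bound on $\psi$ transfers to one on $\|\grad\lambda\|^2$.

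A clean concrete choice, for a small fixed $\epsilon>0$, is
$$g(\chi) = -2\chi + \ln(1+\epsilon e^{-2\chi}).$$
Direct computation gives $g''(\chi) = 4\epsilon e^{-2\chi}/(1+\epsilon e^{-2\chi})^2 > 0$, $a(\chi) = 4\epsilon/(1+\epsilon e^{-2\chi})^3 \in [4\epsilon/(1+\epsilon)^3,\,4\epsilon]$, and $b(\chi)$ is uniformly bounded with $b(0) = 8$ and $b(\chi)\to -14$ as $\chi\to\infty$; hence $\sup(-b/a) = M_1$ is finite. Moreover $e^g = e^{-2\chi}(1+\epsilon e^{-2\chi})$, so $\psi = (1+\epsilon\lambda^2)\|\grad\lambda\|^2$, giving the crucial comparability $\|\grad\lambda\|^2 \leq \psi \leq (1+\epsilon)\|\grad\lambda\|^2$.

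To conclude I would show $\psi \leq M_1$ universally. Define $\Psi := \sup\{\psi_S(x)\}$ over all entire maximal surfaces $S$ of nonpositive curvature (with $|\lambda|<1$, the horospherical case being trivial) and all $x \in S$. Suppose for contradiction $\Psi > M_1$. Pick $(S_n,x_n)$ with $\psi_{S_n}(x_n)\to\Psi$, compose with isometries of $\AdS^3$ to bring each $x_n$ to a fixed base point $x_0$ with fixed tangent plane $P_0$, and extract a $C^\infty$-convergent subsequence $T_nS_n\to S_\infty$ via Lemma \ref{lemma bon schl}. By continuity of $\psi$ under this convergence, $\psi_{S_\infty}(x_0)=\Psi$; and $\psi_{S_\infty}\leq\Psi$ on $S_\infty$ by definition of $\Psi$. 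Thus $x_0$ is a global maximum of $\psi_{S_\infty}$, forcing $\Delta_{S_\infty}\psi_{S_\infty}(x_0)\leq 0$, which contradicts the inequality $\Delta\psi > 0$ at points where $\psi > M_1$ furnished by Remark \ref{rk:bound}. Therefore $\psi \leq M_1$ universally, and $\|\grad\lambda\|\leq \sqrt{M_1} =: M_2$.

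The principal technical obstacle is verifying that $\psi$ passes continuously to the limit at the base point $x_0$, even when the sequence approaches umbilical points where $\chi$ blows up. The specific choice of $g$ is made precisely to handle this: the resulting expression $\psi = (1+\epsilon\lambda^2)\|\grad\lambda\|^2$ is written in terms of $\lambda^2 = \tfrac{1}{2}\tr(B^2)$, which is smooth on $S$, and $\|\grad\lambda\|$, which remains bounded near the isolated finite-order zeros of $\lambda^2$ (these being zeros of the holomorphic Hopf differential associated to the shape operator $B$). Consequently $\psi$ extends continuously through umbilics, and the $C^\infty$-convergence of shape operators provided by Lemma \ref{lemma bon schl} transmits to $C^0$-convergence of $\psi$ at $x_0$, as the limiting argument requires.
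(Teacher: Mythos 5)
Your argument takes a genuinely different route from the paper, and the route you chose has a gap at exactly the point where you waved it away. The paper proves this lemma with a short pointwise linear-algebra observation plus compactness: in normal coordinates at $p$ write $B(\partial_x)=a\,\partial_x+b\,\partial_y$; since $B$ is traceless self-adjoint, $\lambda=\sqrt{a^2+b^2}$, so $\grad\lambda=(a\grad a+b\grad b)/\lambda$ and by Cauchy--Schwarz $\|\grad\lambda\|^2\leq\|\grad a\|^2+\|\grad b\|^2=\|\nabla B\|^2$ at $p$; then Lemma~\ref{lemma bon schl} gives a universal bound on $\|\nabla B\|$ directly. No Bochner formula, no maximum principle, and no delicate analysis at umbilical points are needed. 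In contrast you redeploy the Lemma~\ref{lm:bound}/Remark~\ref{rk:bound} machinery that the paper reserves for the \emph{next} result, Proposition~\ref{prop gradient estimate} --- and that is revealing, because the paper's proof of Proposition~\ref{prop gradient estimate} explicitly \emph{uses} the present lemma to handle the case in which the limit point $x_0$ is umbilical for $S_\infty$. You have removed that crutch but not replaced it.

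The concrete gap: your contradiction at $x_0$ requires $\Delta_{S_\infty}\psi_{S_\infty}(x_0)\leq 0$, which presupposes that $\psi_{S_\infty}$ is $C^2$ at $x_0$. But all your closing paragraph establishes (and all it even claims) is that $\psi$ extends \emph{continuously} through umbilics and that $\psi_{S_n}(x_0)\to\psi_{S_\infty}(x_0)$. Continuity alone does not license taking the Laplacian at the max, nor does it show that the differential inequality of Lemma~\ref{lm:bound} --- derived on the complement of the umbilical set where $\chi$ is defined --- extends to hold at $x_0$. These points can in fact be patched: one can verify, using that $\II=\mathrm{Re}(q)$ for a holomorphic quadratic differential $q$ and writing $\lambda^2=|q|^2 e^{-4\omega}$ in a conformal chart, that $\|\grad\lambda\|^2=e^{-6\omega}|q'|^2+\tfrac{1}{2}e^{-2\omega}\langle\grad|q|^2,\grad e^{-4\omega}\rangle+\tfrac{1}{4}e^{2\omega}|q|^2\|\grad e^{-4\omega}\|^2$ is actually \emph{smooth} across zeros of $q$ (each summand is smooth since $q\bar q$, $q'\overline{q'}$ and $\omega$ are), and that $a(\chi)\to 4\epsilon>0$, $b(\chi)\to-14$, $c(\chi)\to 0$ as $\chi\to\infty$, so the inequality passes to the limit. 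Alternatively one can invoke a removable-singularity argument for subharmonic functions on the punctured disc around the isolated umbilic. But neither patch is in your write-up, and both require real work. Given that, the blind proof is not complete as stated; and since the elementary argument the paper uses is both shorter and gap-free, it is the better route for this particular lemma, with the PDE machinery held in reserve for Proposition~\ref{prop gradient estimate}, where it is genuinely needed.
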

\begin{proof}
Take a point $p\in S$ and consider normal coordinates $x,y$ centered at $p$.
We have that
\[
   B(\partial_x)= a(x,y)\partial_x+b(x,y)\partial_y\,,
\]
where $a,b$ are smooth functions in a neighborood of $p$.
Since $B$ is traceless self-adjoint, it turns out that
\[
  \lambda=\sqrt{a^2+b^2} \,,
\]
so that
\[
\grad\lambda=\frac{1}{\sqrt{a^2+b^2}}(a\grad a+b\grad b)\,.
\]
In particular
\[
||\grad\lambda||^2\leq ||\grad a||^2+||\grad b||^2\,.
\]
On the other hand, at the point $p$ we have
\[
  (\nabla B)(\partial_x)=\nabla (B(\partial_x))=(\grad a)\otimes(\partial_x)+(\grad b)\otimes(\partial_y)\,,
\]
so
we deduce that at the point $p$
\[
  ||\grad\lambda||^2\leq||\nabla B||^2\,.
\]
By applying Lemma \ref{lemma bon schl}, it is not difficult to show that there exists a universal 
 constant $M_2$ such that $||\nabla B||\leq M_2$ for any point of any maximal surface
 with nonpositive curvature.
\end{proof}

\begin{proof}[Proof of Proposition \ref{prop gradient estimate}]
We will derive an a-priori bound on $||\grad v||$ where
$$v=-\ln(1-|\lambda|)=-\ln(1-e^{-\chi})\,.$$
Indeed we have that
\begin{equation} \label{eq gradient v lam}
||\grad v||^2=\frac{1}{(1-e^{-\chi})^2}||\grad\lambda||^2=\frac{e^{-2\chi}}{(1-e^{-\chi})^2}||\grad \chi||^2\,,
\end{equation}
hence in particular $||\grad v||\leq M_2$ at umbilical points, where $M_2$ is the constant given
by Lemma \ref{lm:grl}.
On the other hand, we have $||\grad v||^2=e^g||\grad \chi||^2$ where
$$g(\chi)=-2(\chi+\ln(1-e^{-\chi}))\,.$$
By a direct computation
$$
  g'=-2-2\frac{e^{-\chi}}{1-e^{-\chi}}=-\frac{2}{1-e^{-\chi}}\,,$$
$$  g''=\frac{2e^{-\chi}}{(1-e^{-\chi})^2}\,.
$$
Applying Lemma \ref{lm:bound} we have
$$
\Delta_S(||\grad v||^2)\geq a(\chi)||\grad v||^4+b(\chi)||\grad v||^2+c(\chi)\,,
$$
where $c(\chi)\geq 0$ and
\begin{align*}
a(\chi)&=\left(\frac{2e^{-\chi}}{(1-e^{-\chi})^2}\right)(1-e^{-\chi})^2e^{2\chi}=2e^\chi\,, \\
b(\chi)&=\left(-12\left(\frac{1}{1-e^{-\chi}}\right)(1-e^{-2\chi}) +(10e^{-2\chi}-2)\right)=10e^{-2\chi}-12 e^{-\chi}-14\,.
\end{align*}
Observe that $-b(\chi)/a(\chi)\rightarrow 0$ as $\chi\rightarrow +\infty$ and
$-b(\chi)/a(\chi)\rightarrow 8$ as $\chi\rightarrow 0$.
By Remark \ref{rk:bound}, taking $M=\sup_{\chi>0} (-b(\chi)/a(\chi))$ we have
that $\Delta_S(||\grad v||^2)|>0$ whenever $||\grad v||^2> M$.

Summarizing we have that
\begin{itemize}
\item $||\grad v||^2$ is bounded by a constant $M_2$ at umbilical points;
\item if at some non-umbilical point $||\grad v||^2>M_1$ then $\Delta_S(||\grad v||^2)>0$.
\end{itemize}
We now claim that
\begin{equation}\label{eq:nablav}
\sup||\grad v||^2\leq M^2:= \max(M_1, M_2)\,.
\end{equation}
To prove the claim, suppose to have a sequence of points $x_n$ such that
$||\grad v||^2(x_n)$ converges to $\sup_S||\grad v||^2$. The latter is finite by Equation \eqref{eq gradient v lam}, since $||\grad\lambda||$ is bounded and $e^{-\chi}$ stays uniformly away from $1$ by the hypothesis of uniformly negative curvature of $S$.
Take a sequence of isometries $T_n$ of $\AdS^3$ so that
   $T_n(x_n)$ is a fixed point $x_0$ and the maximal surface $S_n=T_n(S)$ is tangent to a fixed spacelike plane through $x_0$.
   
By Lemma \ref{lemma bon schl}, up to a subsequence the surfaces $S_n$ converge $C^\infty$ on compact sets to a maximal surface $S_\infty$. 

If $x_0$ is an umbilical point for $S_\infty$, then by Equation \eqref{eq gradient v lam} $||\grad v_n(x_0)||^2$ is bounded by $(1+\epsilon)M_2$ for $n$ large. Hence $||\grad v(x_n)||$ is bounded by $(1+\epsilon)M_2$, and thus $\sup_S||\grad v||^2$ is bounded by $(1+\epsilon)M_2$. (It is actually bounded by $M_2$ itself, since in the argument $\epsilon$ is arbitrary.) On the other hand, if $x_0$ is not an umbilical point for $S_\infty$,  by the $C^\infty$ convergence 
$$
  ||\grad v_\infty||^2(x_0)=\sup_S||\grad v||^2=\sup_{S_\infty}||\grad v_\infty||^2\,,
$$
therefore $x_0$ is an interior maximum point for $||\grad v_\infty||^2$.
Hence $\Delta_S||\grad v_\infty||^2(x_0)\leq 0$ and
$||\grad v_\infty||^2\leq M_1$ by the first part of the proof.
\end{proof}

\begin{repprop}{cor estimate francesco}
There exists a universal constant $M$ such that, for every maximal surface $S$ of nonpositive curvature in $\AdS^3$ and every pair of points $p,q\in S$,
$$1-\lambda(q)\leq e^{Md_S(p,q)}(1-\lambda(p))\,.$$
\end{repprop}
\begin{proof}
Using Proposition \ref{prop gradient estimate}, let $\gamma:[0,d_S(p,q)]$ be a unit speed parameterization of the geodesic segment connecting $p$ and $q$, and get
$$|v(p)-v(q)|=\left|\int_{[0,d_S(p,q)]}dv(\dot\gamma(t))dt\right|\leq\int_{[0,d_S(p,q)]}||\grad v||dt\leq Md_S(p,q)\,.$$
Therefore
\begin{equation} \label{eq stime vpq}
v(q)\geq v(p)-Md_S(p,q)\,,
\end{equation}
from which the statements follows, by recalling that $v$ is the function on $S$ defined so that $e^{v(x)}=1/(1-\lambda(x))$.
\end{proof}

In particular, we will apply Proposition \ref{cor estimate francesco} in the following form. 
\begin{cor} \label{cor autovalore grande}
There exists a universal constant $M$ such that, for every maximal surface $S$ in $\AdS^3$ and every pair $x_0\in S$,
$v(x)\geq {v(x_0)/2}$ for every point $x$ in the geodesic ball $B_S(x_0,v(x_0)/2M)$.
\end{cor}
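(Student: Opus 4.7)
The plan is to deduce Corollary \ref{cor autovalore grande} as an immediate consequence of the inequality established in Proposition \ref{cor estimate francesco}, via the substitution $v = -\ln(1-\lambda)$. Indeed, taking the logarithm of both sides of
$$1-\lambda(q)\leq e^{Md_S(p,q)}(1-\lambda(p))$$
and multiplying by $-1$ yields precisely
$$v(q)\geq v(p)-Md_S(p,q)\,,$$
which is Equation \eqref{eq stime vpq} that already appeared in the proof of Proposition \ref{cor estimate francesco}. This is in fact the form in which the estimate will be used.

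The corollary is then obtained by specializing $p=x_0$ and letting $q=x$ be an arbitrary point in the ball $B_S(x_0, v(x_0)/2M)$. In this case $d_S(x_0,x)\leq v(x_0)/(2M)$, so
$$v(x)\geq v(x_0) - M\cdot \frac{v(x_0)}{2M} = \frac{v(x_0)}{2}\,,$$
which is exactly the asserted bound. The universal constant $M$ is the same constant as in Proposition \ref{cor estimate francesco}, i.e.\ the one arising from the uniform gradient bound of Proposition \ref{prop gradient estimate}.

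There is essentially no obstacle here: the corollary is a direct reformulation chosen for convenient use in the subsequent geometric arguments of Section \ref{sec estimate large}, where it will let us conclude that whenever $\lambda$ is very close to $1$ at a point $x_0$, it remains comparably close to $1$ on an intrinsically large geodesic ball around $x_0$, of radius going to infinity as $\lambda(x_0)\to 1$.
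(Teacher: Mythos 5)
Your argument is exactly the paper's: you invoke the inequality $v(q)\geq v(p)-Md_S(p,q)$ (Equation \eqref{eq stime vpq} from the proof of Proposition \ref{cor estimate francesco}) and specialize $p=x_0$, $q=x$ with $d_S(x_0,x)\leq v(x_0)/(2M)$. Correct and identical in approach.
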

\begin{proof}
Follows directly from Equation \eqref{eq stime vpq} in the proof of Proposition \ref{cor estimate francesco}, with the choice $d(x_0,x)\leq v(x_0)/2M $.
\end{proof}

\subsection{Barriers for the lines of curvature}

We need also to deduce that, on a large ball $B_S(x_0,v(x_0)/2M)$ as estimated in Proposition \ref{cor estimate francesco}, the lines of curvature of a maximal surface $S$ are closer and closer to being geodesics, in the sense that their intrinsic acceleration is small, as $\lambda(x_0)$ tends to $1$. 

\begin{cor} \label{cor acceleration}
For every $\delta\in(0,1)$ there exists a constant $M(\delta)$ such that, for every maximal surface $S$ in $\AdS^3$ and every point $x_0\in S$ with $\lambda(x_0)\geq\delta$, the lines of curvature of $S$ have intrinsic acceleration, inside the ball $B_S(x_0,v(x_0)/2M)$ for the intrinsic metric of $S$, bounded by:
$$||\nabla^S_{\dot\gamma_c}\dot\gamma_c(t)||\leq M(\delta)e^{-v(\gamma_c(t))}\,,$$
where $\gamma_c$ is a unit-speed parametrization of any portion of line of curvature of $S$ contained inside the ball $B_S(x_0,v(x_0)/2M)$.
\end{cor}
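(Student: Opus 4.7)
The plan is to compute the intrinsic acceleration of a line of curvature explicitly in terms of $\grad\lambda$ via the Codazzi equation, and then convert the bound to one in terms of $\grad v$ using the identity $\lambda = 1-e^{-v}$ together with Proposition \ref{prop gradient estimate}. Concretely, let $e_1,e_2$ be a local orthonormal frame of principal directions with $B e_1=\lambda e_1$ and $B e_2=-\lambda e_2$. Since $\AdS^3$ has constant sectional curvature, the Codazzi equation reduces to $(\nabla^S_X B)(Y)=(\nabla^S_Y B)(X)$. Writing $\nabla^S_{e_1}e_2 = a\,e_1$ and $\nabla^S_{e_2}e_1 = c\,e_2$ (the tangential components along $e_2$, respectively $e_1$, vanish by orthonormality), expanding Codazzi with $X=e_1$, $Y=e_2$ and matching the $e_1$-- and $e_2$--components yields $a=-e_2(\lambda)/(2\lambda)$ and $c=-e_1(\lambda)/(2\lambda)$. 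Using $\nabla^S_{e_i}e_i\perp e_i$ and metric compatibility one then obtains
$$\nabla^S_{e_1}e_1 = \frac{e_2(\lambda)}{2\lambda}\,e_2, \qquad \nabla^S_{e_2}e_2 = \frac{e_1(\lambda)}{2\lambda}\,e_1,$$
so for any unit-speed parametrization $\gamma_c$ of a line of curvature,
$$||\nabla^S_{\dot\gamma_c}\dot\gamma_c||\leq \frac{||\grad\lambda||}{2\lambda}.$$

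Next, from $\lambda = 1 - e^{-v}$ I get $\grad\lambda = e^{-v}\grad v$, hence $||\grad\lambda|| = e^{-v}||\grad v||$. By Proposition \ref{prop gradient estimate}, $||\grad v||\leq M$ uniformly on every maximal surface of uniformly negative curvature, so
$$||\nabla^S_{\dot\gamma_c}\dot\gamma_c(t)||\leq \frac{M\,e^{-v(\gamma_c(t))}}{2\lambda(\gamma_c(t))}.$$
It therefore remains only to bound $\lambda(\gamma_c(t))$ from below by a positive constant depending solely on $\delta$. Since $\lambda(x_0)\geq\delta$ gives $v(x_0)\geq -\ln(1-\delta)$, and $\gamma_c(t)\in B_S(x_0, v(x_0)/2M)$, Corollary \ref{cor autovalore grande} yields $v(\gamma_c(t))\geq v(x_0)/2 \geq -\tfrac{1}{2}\ln(1-\delta)$, equivalently $\lambda(\gamma_c(t))\geq 1 - \sqrt{1-\delta} > 0$. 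Choosing $M(\delta) := M/(2(1-\sqrt{1-\delta}))$ completes the argument.

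I do not expect any serious obstacle: the Codazzi computation automatically produces the factor $\grad\lambda$, and the exponential decay $e^{-v(\gamma_c(t))}$ then appears directly from the change of variable between $\lambda$ and $v$, with the uniform gradient estimate of Proposition \ref{prop gradient estimate} providing the essential control. The only mild subtlety is ensuring that $\lambda$ stays away from zero on the whole ball, which is handled by the exponential propagation statement of Corollary \ref{cor autovalore grande}.
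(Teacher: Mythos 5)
Your proof is correct. The key computational fact you establish — that $\nabla^S_{e_1}e_1 = \tfrac{e_2(\lambda)}{2\lambda}e_2$ and $\nabla^S_{e_2}e_2 = \tfrac{e_1(\lambda)}{2\lambda}e_1$, hence $||\nabla^S_{\dot\gamma_c}\dot\gamma_c||\leq ||\grad\lambda||/(2\lambda)$ — agrees with what the paper obtains, and the second half of the argument (the substitution $\grad\lambda = e^{-v}\grad v$, the uniform bound $||\grad v||\leq M$ from Proposition \ref{prop gradient estimate}, and the lower bound on $\lambda$ along the ball from Corollary \ref{cor autovalore grande}) is essentially identical to the paper's.

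The route you take to the acceleration formula is, however, genuinely different. The paper invokes the fact that $\II$ is the real part of a holomorphic quadratic differential $q$ (citing \cite{Schlenker-Krasnov}), picks a conformal chart in which $q=dz^2$, shows that the induced metric is $e^{\chi}|dz|^2$ with $\chi=-\ln\lambda$, and reads off the acceleration of the coordinate curvature lines from the Christoffel symbols of a conformal metric, arriving at $||\nabla^S_{e_x}e_x||^2=\tfrac14 e^{-\chi}(\partial_y\chi)^2\le \tfrac14||\grad\chi||^2$. You instead work with an orthonormal eigenframe of $B$ and apply the Codazzi equation $(\nabla^S_X B)(Y)=(\nabla^S_Y B)(X)$ directly; matching components gives the connection coefficients of the frame and hence the acceleration. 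The two are of course equivalent — Codazzi in constant curvature is exactly the condition that $\II$ is holomorphic — but your version is more elementary and self-contained: it requires no reference to the holomorphic structure, no choice of conformal coordinates adapted to $q$, and no Christoffel-symbol computation. The paper's presentation, in exchange, makes the identity $||\nabla^S_{e_x}e_x||\le\tfrac12||\grad\chi||$ visually transparent and sits naturally alongside the conformal coordinates already used elsewhere. Either way the final bound $M(\delta)e^{-v}$ and the role of the lower bound $\lambda\ge 1-\sqrt{1-\delta}$ on the ball coincide.
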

\begin{proof}
It turns out (see for instance \cite{Schlenker-Krasnov}) that the second fundamental form $\II$ of $S$ is the real part of a holomorphic quadratic differential for the complex structure underlying the induced metric on $S$. Let us denote by $q$ this holomorphic quadratic differential, so that $\II=Re(q)$. 
By Corollary \ref{cor autovalore grande}, assuming $\lambda(x_0)\geq\delta>0$, the eigenvalues of $S$ are nonzero on the geodesic ball $B_S(x_0,v(x_0)/2M)$, and thus also $q\neq 0$ on the same ball. Hence one can find a conformal chart for $B_S(x_0,v(x_0)/2M)$ for which $q=dz^2$.

In this coordinate the first fundamental form $I$ of $S$ has the form $e^{\ph}|dz|^2$, for some real function $\ph$. We claim that $\ph$ coincides with the function $\chi=-\ln|\lambda|$. Indeed, observe that the shape operator of $S$ has the form
$$B=
e^{-\ph}\II=e^{-\ph}Re(q)=e^{-\ph}\begin{pmatrix}
1 & 0 \\
0 & -1
\end{pmatrix}
=
\begin{pmatrix}
e^{-\ph} & 0 \\
0 & -e^{-\ph}
\end{pmatrix}\,.
$$
Since the eigenvalues of $B$ are $\pm\lambda$, assuming $\lambda>0$, we get $\lambda=e^{-\ph}$ and therefore $\ph=\chi$ as claimed.

In such coordinates, the lines of curvature of $S$ are the lines with constant coordinates $x=\Re(z)$ or $y=\Im(z)$. Let us denote by $e_x,e_y$ the orthonormal frame given by such lines of curvature. By a direct computation, one checks that
$$\nabla^S_{e_x}e_x=\left(-\frac{1}{2}e^{-\frac{\chi}{2}}\partial_y\chi\right)e_y\,,\quad\quad\nabla^S_{e_y}e_y=\left(-\frac{1}{2}e^{-\frac{\chi}{2}}\partial_x\chi\right)e_x\,.$$
Hence one has
$$||\nabla^S_{e_x}e_x||^2=\frac{1}{4}e^{-\chi}(\partial_y\chi)^2\,,  \quad\quad ||\nabla^S_{e_y}e_y||^2=\frac{1}{4}e^{-\chi}(\partial_x\chi)^2\,.$$
Observe that the gradient of $\chi$, for the induced metric on $S$, has squared norm $$||\grad\chi||^2=e^{-\chi}((\partial_x\chi)^2+(\partial_y\chi)^2)\,,$$
and thus one directly obtains $$||\nabla^S_{e_x}e_x||^2\leq\frac{1}{4}||\grad\chi||^2\,.$$
On the other hand, by Equation \eqref{eq gradient v lam}, we have 
$$||\grad \chi||^2=e^{2\chi}e^{-2v}||\grad v||^2\,.$$
Since by hypothesis $\lambda(x_0)$ is bounded away from zero by $\delta$, by Corollary \ref{cor autovalore grande} $e^\chi$ is uniformly bounded by some constant $C=C(\delta)$ on $B_S(x_0,v(x_0)/2M)$. Hence
$$||\nabla^S_{e_x}e_x||\leq C(\delta)Me^{-v}\,,$$
where $M$ is the constant of Proposition \ref{prop gradient estimate}, and the same holds for $||\nabla^S_{e_y}e_y||$. Upon relabeling the constant $M$, this concludes the proof.
\end{proof}

In the following, we will always fix $\delta\in(0,1)$ and denote by $M$ a larger constant satisfying the statement of both Proposition \ref{prop gradient estimate} and Corollary \ref{cor acceleration}.

Observe that, given a totally geodesic plane $P$, the surface at timelike distance $d$ from $P$ (in the past, say) is a complete convex constant mean curvature umbilical surface with shape operator $(\tan d)E$ at every point. This follows for instance by applying Equation \eqref{shape operator constant distance} of Lemma \ref{lemma formule prima seconda shape distanza costante}. Therefore, given a surface $S$ with future unit normal vector $N_0$ at the point $x_0\in S$, consider the totally geodesic plane $P$ which contains the point $y_0=\exp_{x_0}((\arctan{\bar\lambda})N_0)$ and is orthogonal to the timelike line through $x_0$ and $y_0$. Thus the surface at distance $\bar d=\arctan\bar\lambda$ in the past from $P$, which we denote by $U_{\bar\lambda}(x_0,N_0)$, is an umbilical constant mean curvature surface tangent to $S$ at $x_0$. The shape operator of $U_{\bar\lambda}(x_0,N_0)$ is $\bar\lambda E$.  See Figure \ref{fig:umbilic}.

\begin{figure}[htbp]
\centering
\includegraphics[height=6.5cm]{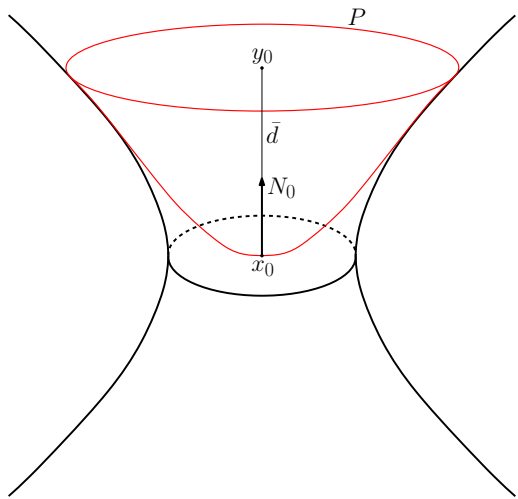}
\caption{The umbilical surface $U_{\bar\lambda}(x_0,N_0)$, for $\bar\lambda=\tan \bar d$, constructed as a parallel surface of the totally geodesic plane $P$. \label{fig:umbilic}}
\end{figure}

We shall denote by $l_c^+(x_0,a)$ (resp. $l_c^-(x_0,a)$) the segment of the line of curvature of $S$ for the positive (resp. negative) eigenvalue, which contains $x_0$ and whose extrema are at distance $a$ from $x_0$ for the induced metric.

The following lemma is a subtle application of a maximum principle argument. See also Figure \ref{fig:linescurvature} (for the statement) and Figure \ref{fig:comparison} (for the proof).

\begin{figure}[b]
\centering
\begin{minipage}[c]{.46\textwidth}
\centering
\includegraphics[height=6.5cm]{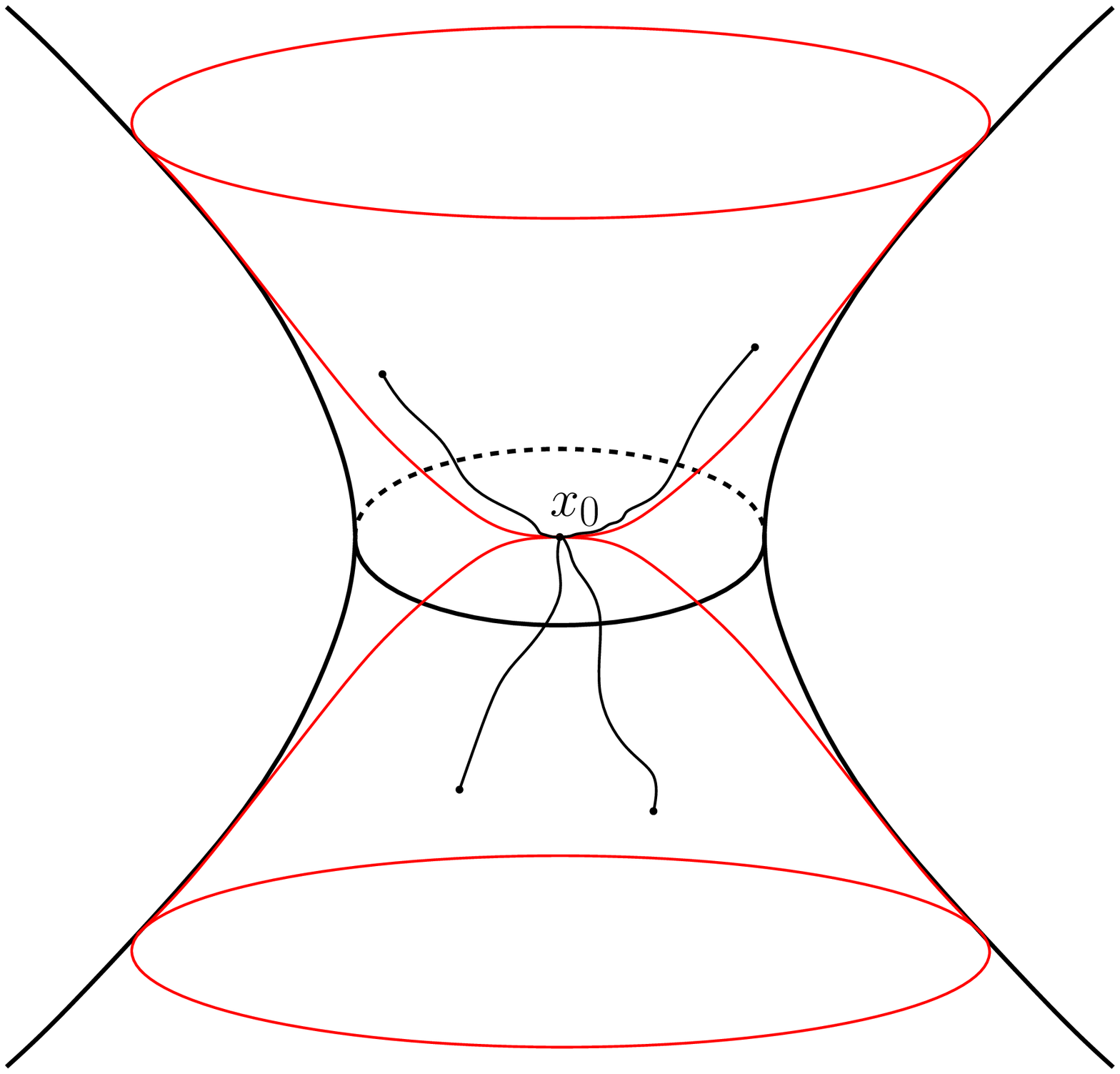} 
%\captionsetup{labelformat=empty}
\caption{Lines of curvature through $x_0$ and the surfaces $U^+_{\lambda_1}(x_0,N_0)$ and $U^-_{\lambda_1}(x_0,N_0)$} \label{fig:linescurvature}
\end{minipage}%
\hspace{6mm}
\begin{minipage}[c]{.46\textwidth}
\centering
\includegraphics[height=6.5cm]{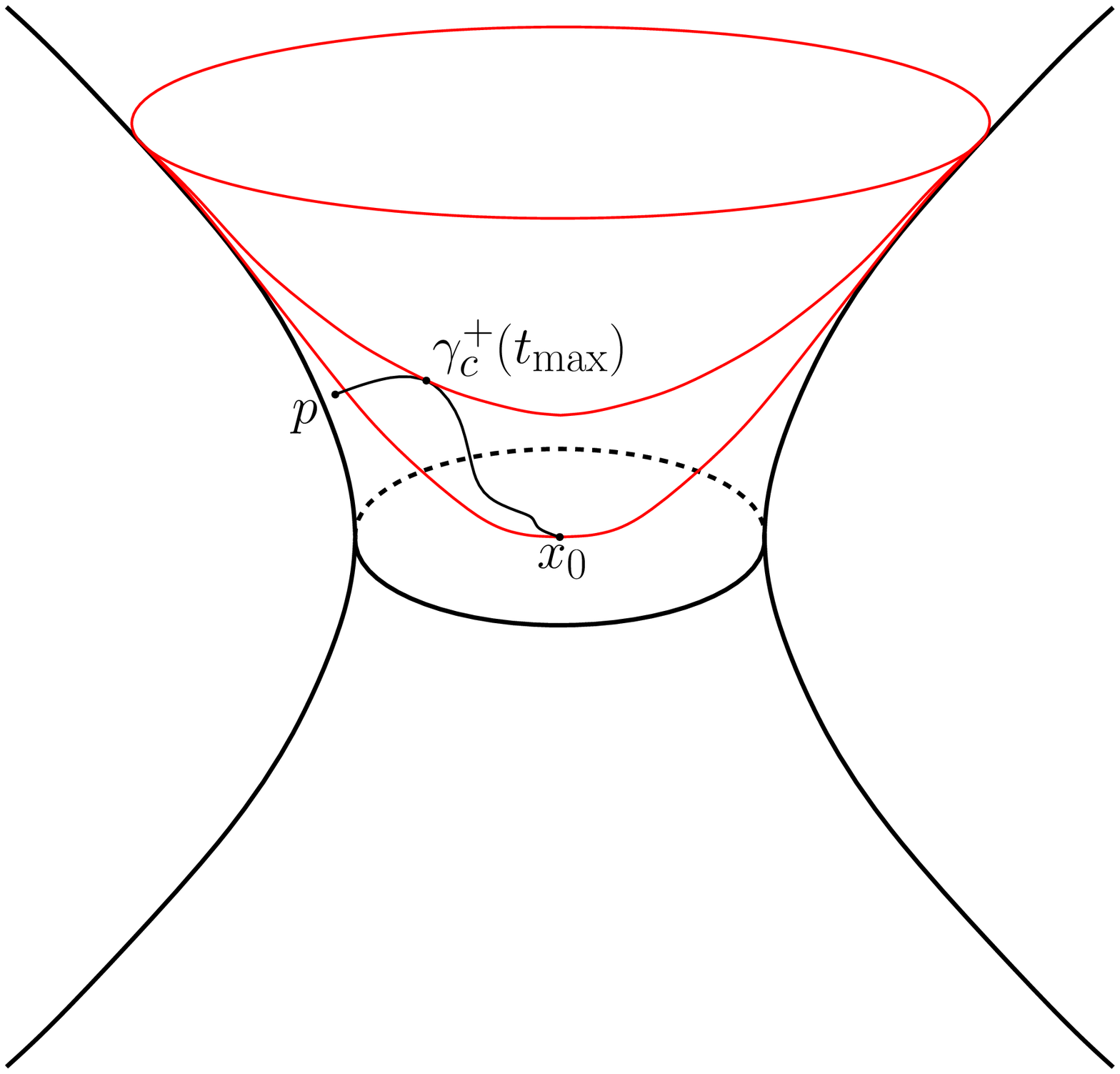}
%\captionsetup{labelformat=empty}
\caption{The proof of Lemma \ref{lemma calotta} is an argument by contradiction.} \label{fig:comparison}
\end{minipage}
\end{figure}

\begin{lemma} \label{lemma calotta}
There exists a constant $\delta\in(0,1)$ as follows. Suppose $S$ is a maximal surface with future unit normal vector $N_0$ at $x_0$ and with $\lambda(x_0)=1-e^{-v(x_0)}\geq \delta$. Then $l_c^+(x_0,a)$ is entirely contained in the convex side of the surface $U_{\lambda_1}(x_0,N_0)$, for $a=v(x_0)/2M$ and $\lambda_1=1-e^{-v(x_0)/4}$.
\end{lemma}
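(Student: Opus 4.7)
The strategy is a bootstrap ODE analysis of the linear distance function $u(x)=\sin d_{\AdS^3}(x,P)$ restricted to the line of curvature, where $P$ is the totally geodesic plane such that $U_{\lambda_1}=U_{\lambda_1}(x_0,N_0)$ is the surface at timelike distance $\bar d_1:=\arctan\lambda_1$ from $P$ on the side opposite to $N_0$. Then $U_{\lambda_1}$ is the level set $\{u=\sin \bar d_1\}$, and with the sign convention in which $u(x_0)=\sin \bar d_1$ and $\alpha(0):=\langle p,N_0\rangle=-\cos \bar d_1$ (where $p$ is the dual of $P$), the convex side of $U_{\lambda_1}$ is the component $\{u<\sin \bar d_1\}$ containing $P$. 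Under $\lambda(x_0)\geq\delta$, the quantity $v(x_0)=-\ln(1-\lambda(x_0))$ is large.

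Parametrize $\gamma=l_c^+(x_0,a)$ by arclength and set $\psi(t)=u(\gamma(t))$. Tangency $T_{x_0}S=T_{x_0}U_{\lambda_1}$ gives $\psi(0)=\sin \bar d_1$ and $\psi'(0)=0$, and Proposition \ref{formule hess lap ads} combined with $B(\dot\gamma(0))=\lambda(x_0)\dot\gamma(0)$ gives
\begin{equation*}
\psi''(0)=\sin \bar d_1-\cos \bar d_1\cdot\lambda(x_0)<0,
\end{equation*}
since $\lambda(x_0)>\lambda_1=\tan \bar d_1$. Along $\gamma$, a routine Gauss--Weingarten computation (using $\nabla^{\R^{2,2}}_{\dot\gamma}N=\lambda\,\dot\gamma$ on a line of curvature) yields the identities
\begin{align*}
\psi'(t) & =\langle p,\dot\gamma(t)\rangle,\qquad \alpha'(t)=\lambda(\gamma(t))\,\psi'(t),\\
\psi''(t) & =\psi(t)+\alpha(t)\,\lambda(\gamma(t))+du(\nabla^S_{\dot\gamma}\dot\gamma),
\end{align*}
where $\alpha(t)=\langle p,N_S(\gamma(t))\rangle$; note that $\alpha$ is monotonically coupled to $\psi$.

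The heart of the argument is a bootstrap. Let $T=\sup\{t\in[0,a]\,:\,\psi'(s)\leq 0\ \forall\,s\in[0,t]\}$. On $[0,T]$, integrating $\alpha'=\lambda\psi'$ with $\psi'\leq 0$ gives $\alpha(t)\leq\alpha(0)=-\cos \bar d_1$, and $\psi(t)\leq\sin \bar d_1$. By Corollary \ref{cor autovalore grande} one has $\lambda(\gamma(t))\geq 1-e^{-v(x_0)/2}$ on $[0,a]$, while Corollary \ref{cor acceleration} together with Lemma \ref{lemma gradient bounds} give $|du(\nabla^S_{\dot\gamma}\dot\gamma)|\leq C'\,e^{-v(x_0)/2}$ for a universal constant. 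Using the identity $\sin \bar d_1-\cos \bar d_1=-\cos \bar d_1\cdot e^{-v(x_0)/4}$ (from $1-\tan \bar d_1=e^{-v(x_0)/4}$), these estimates combine to
\begin{equation*}
\psi''(t)\leq -\cos \bar d_1\cdot e^{-v(x_0)/4}+C''\,e^{-v(x_0)/2}\qquad\text{on }[0,T],
\end{equation*}
which is strictly negative once $v(x_0)$ exceeds a universal threshold (using $\bar d_1<\pi/4$, so $\cos \bar d_1$ is bounded below). Choose $\delta\in(0,1)$ so that this threshold is met.

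Strict concavity $\psi''<0$ on $[0,T]$ forces $\psi'(T)<\psi'(0)=0$; were $T<a$, the definition of $T$ would give $\psi'(T)=0$, a contradiction. Hence $T=a$, $\psi$ is strictly decreasing on $(0,a]$, and $\gamma(t)\in\{u<\sin \bar d_1\}$ for every $t\in(0,a]$, proving the lemma. The main obstacle is the sign bookkeeping in the Gauss--Weingarten identities and the correct identification of the convex side of $U_{\lambda_1}$; once these are in place, the proof reduces to the observation that the driving gap $e^{-v(x_0)/4}$, coming from $\lambda(x_0)>\lambda_1$, dominates the acceleration error $e^{-v(x_0)/2}$ produced by Corollary \ref{cor acceleration}, precisely when $\delta$ is close enough to $1$.
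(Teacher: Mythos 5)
Your approach is genuinely different from the paper's: you run an ODE bootstrap on the scalar $\psi(t)=\sin d_{\AdS^3}(\gamma(t),P)$, whereas the paper compares the ambient curvature $\kappa^2=\lambda^2-\|\nabla^S_{\dot\gamma}\dot\gamma\|^2$ of the line of curvature with the curvature $\kappa_1=\lambda_1$ of a geodesic of $U_{\lambda_1}$, and then locates a maximum of the signed distance to $U_{\lambda_1}$ to derive a contradiction. Your sign bookkeeping is correct (with $\alpha=\langle p,N\rangle$ carrying the sign, $\psi''(0)=\sin\bar d_1-\cos\bar d_1\,\lambda(x_0)=\cos\bar d_1(\lambda_1-\lambda(x_0))<0$ indeed), and the Gauss--Weingarten identities for $(\psi,\alpha,\psi')$ are the analogue of the system \eqref{system} in the paper.

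However, there is a genuine gap: the appeal to Lemma \ref{lemma gradient bounds} is invalid. That lemma bounds $\|\grad u\|$ for the distance function to a plane $P_-$ that is \emph{disjoint from} $S$ (more precisely, its proof is a maximum-principle argument resting on $u\geq 0$ on all of $S$, via Corollary \ref{maximal surface contained convex hull}). Here $P$ is the totally geodesic plane tangent to the osculating umbilical surface $U_{\lambda_1}(x_0,N_0)$ at distance $\arctan\lambda_1$ in the future of $x_0$; it cuts through the domain of dependence and there is no reason for $P$ to be disjoint from $S$ or from the convex hull. Consequently $u$ need not have constant sign on $S$, and $\|\grad u\|$ need not be uniformly bounded. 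In fact, from your own identities one gets $\|\grad u\|^2=\alpha^2+\psi^2-1\leq\alpha^2$ and $|\alpha'|=|\lambda\psi'|\leq\|\grad u\|\leq|\alpha|$, so Gronwall only yields $|\alpha(t)|\leq\cos\bar d_1\,e^{t}$, i.e.\ $\|\grad u\|$ can grow like $e^{t}$ over the interval $[0,a]$ with $a=v(x_0)/2M$. Your claimed estimate $|du(\nabla^S_{\dot\gamma}\dot\gamma)|\leq C'e^{-v(x_0)/2}$ therefore does not follow as stated.

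The argument is salvageable without Lemma \ref{lemma gradient bounds}: since $\nabla^S_{\dot\gamma}\dot\gamma\perp\dot\gamma$, one has $|du(\nabla^S_{\dot\gamma}\dot\gamma)|\leq\|\nabla^S_{\dot\gamma}\dot\gamma\|\,|\alpha|$, and then on $[0,T]$ (where $\alpha\leq-\cos\bar d_1<0$) one may group $\alpha\lambda+|du(\nabla^S_{\dot\gamma}\dot\gamma)|\leq|\alpha|\bigl(-\lambda+\|\nabla^S_{\dot\gamma}\dot\gamma\|\bigr)\leq\cos\bar d_1\bigl(-\lambda+\|\nabla^S_{\dot\gamma}\dot\gamma\|\bigr)$, which restores $\psi''\leq-\cos\bar d_1\,e^{-v(x_0)/4}+\cos\bar d_1(1+M)e^{-v(x_0)/2}$ with no gradient bound needed. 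Alternatively, the crude Gronwall bound $\|\grad u\|\leq e^{t}\leq e^{v(x_0)/2M}$ already suffices provided the constant $M$ is taken larger than $2$, so that $e^{v(x_0)/2M}\cdot e^{-v(x_0)/2}\ll e^{-v(x_0)/4}$. Either way, the fix requires replacing the cited gradient lemma by one of these estimates; as written, the proof has a hole at precisely the step where the acceleration error must be dominated.
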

\begin{proof}
Choose $\delta>0$ as in Corollary \ref{cor acceleration}, and suppose \emph{ab absurdum} that $p\in l_c^+(x_0,a)$ is strictly in the past of $U_{\lambda_1}(x_0,N_0)$.

Recall that, for a spacelike curve $\gamma$ in a Lorentzian manifold, the curvature of $\gamma$ is defined as $\kappa=\sqrt{|\langle \nabla^S_{\dot\gamma}\dot\gamma,\nabla^S_{\dot\gamma}\dot\gamma\rangle|}$. If $\gamma_c^+:[-a,a]\to l_c^+(x_0,a)$ is a unit-speed parameterization of $l_c^+(x_0,a)$, we have
$$\nabla_{\dot\gamma_c^+}\dot\gamma_c^+=\nabla^S_{\dot\gamma_c^+}\dot\gamma_c^+ +\lambda N\,,$$
where $N$ is the unit future-directed normal vector field of the maximal surface $S$. Hence
$$\kappa^2=\lambda^2-||\nabla^S_{\dot\gamma_c^+}\dot\gamma_c^+||^2\,.$$
On the other hand, let $T$ be the timelike plane spanned by $\dot\gamma_c^+$ and $N_0$, and let $\gamma_1$ be a unit-speed parameterization of the spacelike curve $U_{\lambda_1}(x_0,N_0)\cap T$. Such curve is a geodesic of $U_{\lambda_1}(x_0,N_0)$, by a classical argument of symmetry. 

We first claim that $\gamma_c^+(-\epsilon,\epsilon)$ is contained in the future of $U_{\lambda_1}(x_0,N_0)$ for some $\epsilon>0$. Indeed, if this were not the case, the curvature of $\gamma_1$ at $x_0$ should be larger than the curvature of $\gamma_c^+$. Since $\gamma_1$ is geodesic for $U_{\lambda_1}(x_0,N_0)$, the curvature of $\gamma_1$ is
$$\kappa_1=\lambda_1\,.$$
By Corollary \ref{cor autovalore grande}  we have $v(x)\geq v(x_0)/2$ on $B_S(x_0,v(x_0)/2M)$, 
and by Corollary \ref{cor acceleration}, the intrinsic acceleration of $l_c^+(x_0,a)$ is bounded by 
$||\nabla^S_{\dot\gamma_c^+}\dot\gamma_c^+||\leq M e^{-v}\leq M e^{-v(x_0)/2}$.
Hence
\begin{align*}
\kappa^2&=\lambda^2-||\nabla^S_{\dot\gamma_c^+}\dot\gamma_c^+||^2\geq (1-e^{-v(x_0)/2})^2-Me^{-v(x_0)/2} \\
&=1-(2+M)e^{-v(x_0)/2}+e^{-v(x_0)}\\ 
&\geq 1-e^{-v(x_0)/4}
\geq(1-e^{-v(x_0)/4})^2=\lambda_1^2=\kappa_1^2\,.
\end{align*}
We have replaced $\delta$ by a larger number if necessary, and used the assumption $\lambda(x_0)\geq \delta$.
This gives a contradiction and concludes the claim.

Now consider the function $d:[-a,a]\to\R$, where $d(t)$ is the signed distance of $\gamma_c^+(t)$ from the surface $U_{\lambda_1}(x_0,N_0)$. The function $d$ is positive in the interval $(-\epsilon,\epsilon)$ by the previous claim, and negative at the point $t_0$ such that $\gamma_c^+(t_0)=p$. Hence $d$ must achieve a maximum $d_{\text{max}}=d(t_{\text{max}})>0$. At the point $\gamma_c^+(t_{\text{max}})$, the curve $\gamma_c^+$ is therefore tangent to the surface $V$ at distance $d_{\text{max}}$ from $U_{\lambda_1}(x_0,N_0)$. Again by Lemma \ref{lemma formule prima seconda shape distanza costante}, if $d_1$ is such that $\lambda_1=\tan d_1$, then $V$ is an umbilical constant mean curvature convex surface, whose shape operator at every point is $\tan(d_1-d_{\text{max}})E$. Denote $\lambda_2=\tan(d_1-d_{\text{max}})$ and observe that $\lambda_2<\lambda_1$.

By a similar argument as the previous claim, we compare the curve $\gamma_c^+$ to the curve $\gamma_2$, which parameterizes the intersection of $V$ with the timelike plane spanned by $\dot\gamma_c^+(t_{\text{max}})$ and $N(\gamma_c^+(t_{\text{max}}))$. We remark that in this case, $\gamma_2$ is not a geodesic for $V$. Since $t_{\text{max}}$ is a maximum point, the curvature $\kappa$ of $\gamma_c^+$ at $t_{\text{max}}$ needs to be smaller than the curvature $\kappa_2$ of $\gamma_2$.
But by the same computation,
$$\kappa^2\geq \lambda_1^2\geq \lambda_2^2\geq \lambda_2^2-||\nabla^V_{\dot\gamma_2}\dot\gamma_2||^2=\kappa_2^2\,.$$
This gives a contradiction and thus concludes the proof.
%Since at the point $x_0$ the eigenvalue $\lambda(x_0)$ of the shape operator of $S$ is strictly larger than the eigenvalue $\bar\lambda$ of $U_{\lambda_1}(x_0,N_0)$, and they have the same eigenspace by construction,  there exists $\epsilon>0$ such that $l_c^+(x_0,\epsilon)$ is contained in the future of $U_{\bar\lambda}(x_0,N_0)$.
%Consider the region $V$ between $U_{\bar\lambda}(x_0,N_0)$ and $P$, recalling $U_{\bar\lambda}(x_0,N_0)$ is the surface at distance $\bar d=\arctan\bar\lambda$ from $P$. The region $V$ is foliated by the surfaces at distance $d$ from $P$, for $d\in(0,\bar d)$. Therefore the function $d$ achieves a minimum on $l_c^+(x_0,\epsilon)$. Hence 
\end{proof}

\begin{remark}
Clearly a statement analogous to Lemma \ref{lemma calotta} holds for the line of curvature  $l_c^-(x_0,a)$ which, under the same assumptions, is entirely contained below the concave umbilical surface $U^-_{\lambda_1}(x_0,N_0)$, tangent to $S$ at $x_0$, obtained as the surface at distance $\tan\lambda_1$ in the future of a totally geodesic plane. See Figure \ref{fig:linescurvature}.
\end{remark}

\subsection{Estimating the width from below}
Observe that, in the extreme situation $\lambda_1=1$, the umbilical surfaces $U^+_{1}(x_0,N_0)$ and $U^-_{1}(x_0,N_0)$ have the following good property. Take two timelike planes $T_1$ and $T_2$ which intersect orthogonally in the timelike geodesic through $x_0$, directed by the vector $N_0$. Then $U^+_{1}(x_0,N_0)\cap T_1$ is a geodesic of $U^+_{1}(x_0,N_0)$, and $U^-_{1}(x_0,N_0)\cap T_2$ is a geodesic of $U^-_{1}(x_0,N_0)$. The endpoints at infinity of $U^+_{1}(x_0,N_0)\cap T_1$ determine a spacelike entire line of $\AdS^3$, which is dual to the line determined by the endpoints at infinity of $U^-_{1}(x_0,N_0)\cap T_2$. Hence the width of the convex hull of the four points at infinity is $\pi/2$. Indeed, the curves $U^+_{1}(x_0,N_0)\cap T_1$ and $U^-_{1}(x_0,N_0)\cap T_2$ are lines of curvature for a horospherical surface.
See Figures \ref{fig:strategy} and \ref{fig:horospherical}. Roughly speaking, in this subsection we want to quantify ``how close'' we get to this situation when $\lambda(x_0)$ (and thus also $\lambda_1$) approaches $1$. 

\begin{figure}[htbp]
\centering
\begin{minipage}[c]{.46\textwidth}
\centering
\includegraphics[height=7cm]{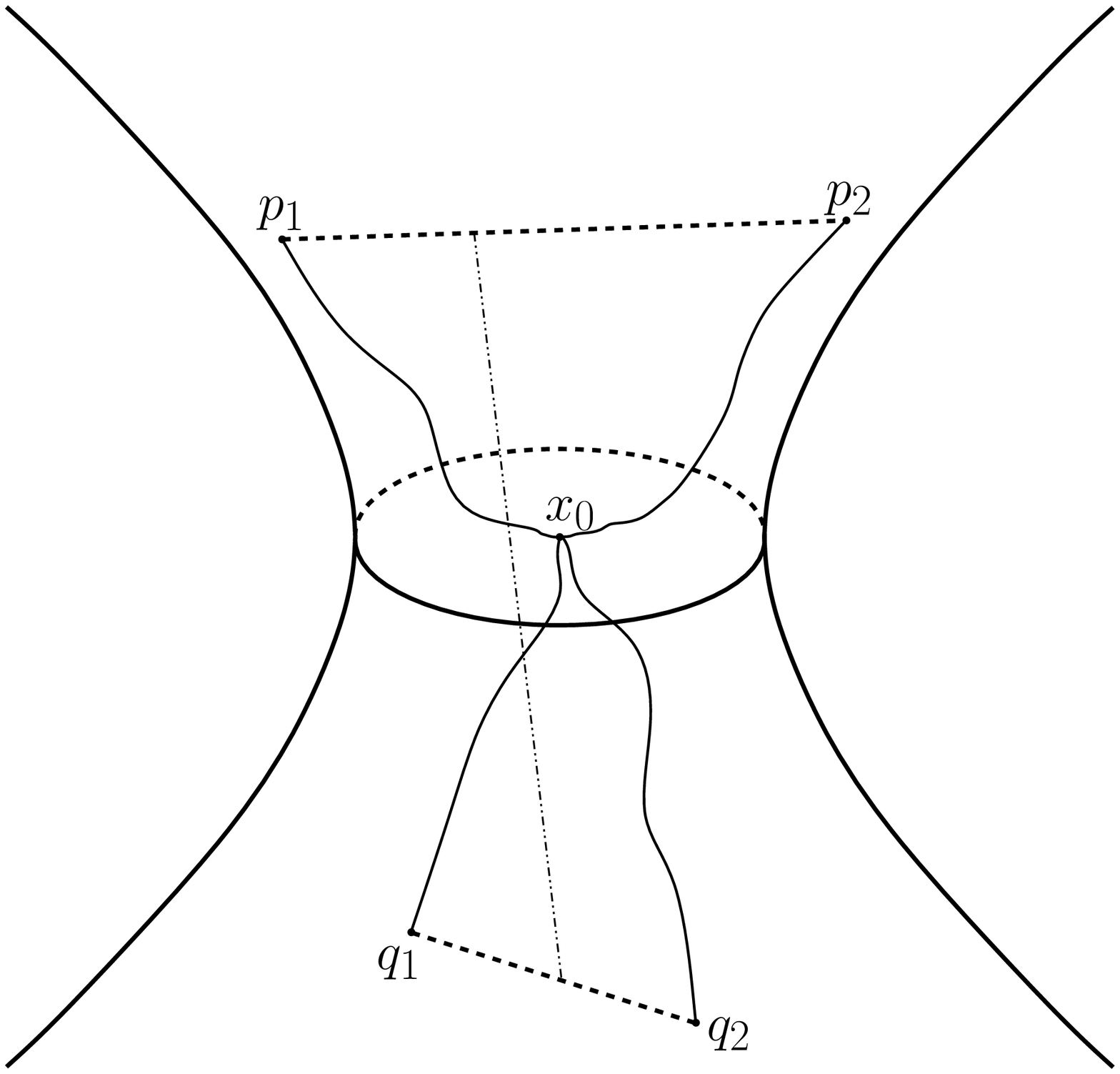} 
%\captionsetup{labelformat=empty}
\caption{We want to estimate the timelike distance between the lines $\overline{p_1p_2}$ and $\overline{q_1q_2}$, where $p_1,p_2,q_1,q_2$ are endpoints of segments on the lines of curvature.} \label{fig:strategy}
\end{minipage}%
\hspace{6mm}
\begin{minipage}[c]{.46\textwidth}
\centering
\includegraphics[height=7cm]{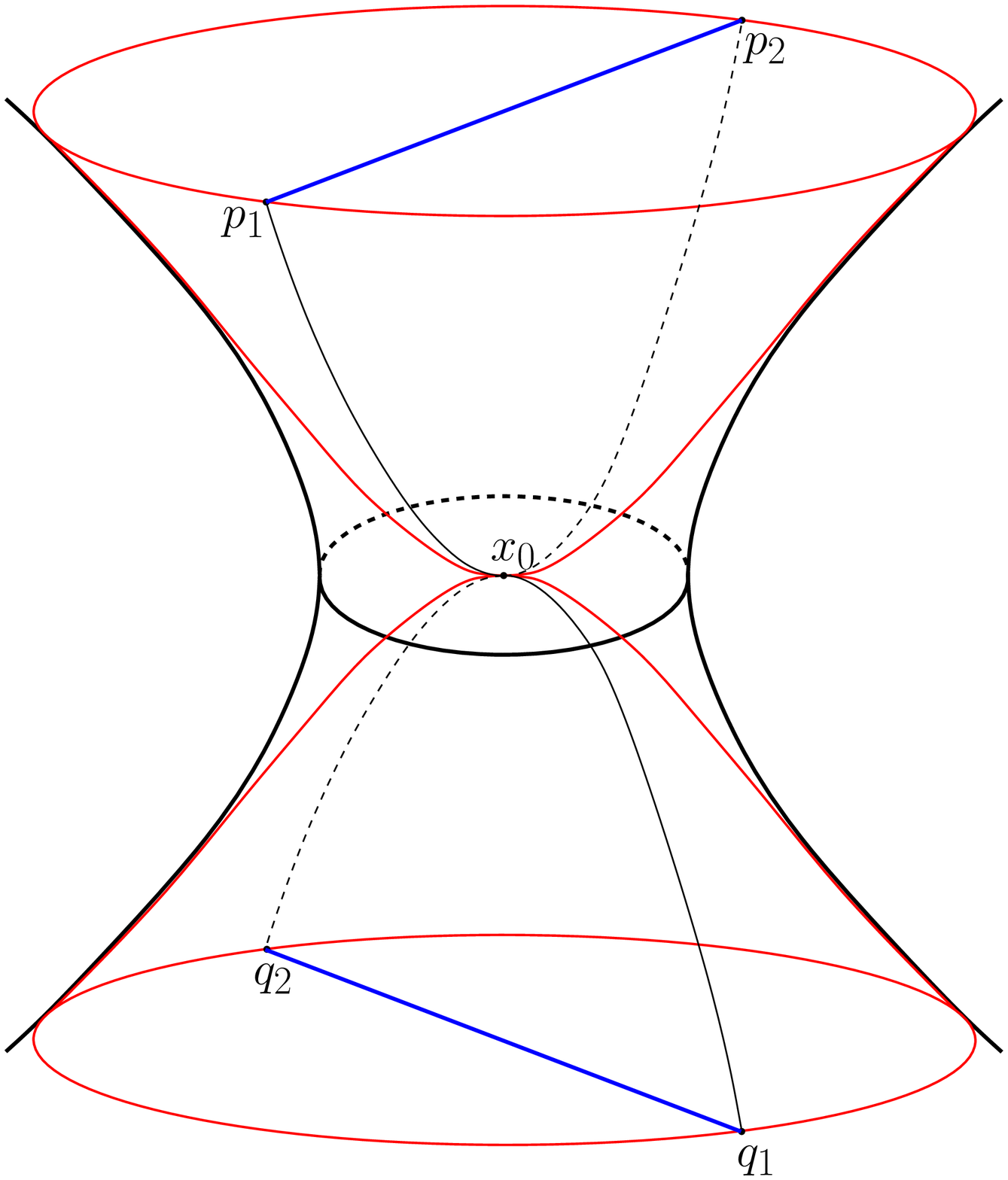}
%\captionsetup{labelformat=empty}
\caption{The configuration is optimal on a horospherical surface, when the lines of curvature are planar curves (and geodesics), and $\overline{p_1p_2}$ and $\overline{q_1q_2}$ are dual lines.} \label{fig:horospherical}
\end{minipage}
\end{figure}

Given a timelike totally geodesic plane $T=w^\perp$ (where $w\in\R^{2,2}$ is a spacelike vector of unit norm) and a point $p\in\AdS^3$, it is easy to see that $|\langle p,w\rangle|$ represents the hyperbolic sine of the length of the spacelike segment $\overline{pp_0}$, such that $p_0\in T$ and the line containing  $\overline{pp_0}$ is orthogonal to $T$ at $p_0$. If $T_r$ denotes the (timelike) surface composed of points for which this (signed) length is $r$, the following lemma gives an estimate on how the lines of curvature of a maximal surface escape from the surfaces $T_r$. Here $T=T_0$ is chosen as the timelike plane orthogonal to the line of curvature at the base point $x_0$.

\begin{lemma} \label{lemma estimate product}
There exist constants $\delta\in(0,1)$ and $t_0\geq 0$ as follows. Let $S$ be any maximal surface with $\lambda(x_0)=1-e^{-v(x_0)}\geq \delta$, and let $\gamma_c:[0,a]\to\AdS^3$ be a unit-speed parameterization of a line of curvature of $S$ with $\gamma_c(0)=x_0$, where $a=v(x_0)/2M$. If $\ph(t)=\langle \gamma_c(t),\dot\gamma_c(0)\rangle$, then
$$\ph(t)\geq e^t$$
for all $t\in[t_0,a]$.
\end{lemma}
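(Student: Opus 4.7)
The plan is to couple $\ph$ to an auxiliary scalar function and derive an ODE system which, in the limit $\lambda\to 1$, becomes a constant-coefficient linear system that can be solved explicitly. I would work with the positive line of curvature (the other case is completely analogous) and introduce $\psi(t) := \langle N(\gamma_c(t)), \dot\gamma_c(0)\rangle$, where $N$ is the future unit normal to $S$ along $\gamma_c$. Using the ambient identity $\ddot\gamma = \gamma + \nabla^{\AdS^3}_{\dot\gamma}\dot\gamma$ (valid for unit-speed spacelike curves on $\AdS^3 \subset \R^{2,2}$), together with the decomposition $\nabla^{\AdS^3}_{\dot\gamma_c}\dot\gamma_c = \nabla^S_{\dot\gamma_c}\dot\gamma_c + \II(\dot\gamma_c,\dot\gamma_c) N$ and the shape-operator identity $\nabla^{\AdS^3}_{\dot\gamma_c} N = B(\dot\gamma_c) = \lambda(t)\dot\gamma_c$ (since $\gamma_c$ is a line of curvature with eigenvalue $\lambda$), one obtains
\begin{equation*}
\ph'' = \ph + \lambda\,\psi + E, \qquad \psi' = \lambda\,\ph',
\end{equation*}
with $\ph(0) = \psi(0) = 0$, $\ph'(0) = 1$, and error term $E(t) := \langle \nabla^S_{\dot\gamma_c}\dot\gamma_c, \dot\gamma_c(0)\rangle$.

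Next, Corollary~\ref{cor autovalore grande} applied on $[0,a]$ gives $v(\gamma_c(t)) \geq v(x_0)/2$, so $1-\lambda(t)\leq \epsilon_0 := e^{-v(x_0)/2}$, while Corollary~\ref{cor acceleration} gives $|E(t)|\leq M\epsilon_0$. In the \emph{model} system obtained by setting $\lambda\equiv 1$ and $E\equiv 0$, the characteristic roots are $0$ and $\pm\sqrt{2}$, and the growing $\sqrt{2}$-eigenmode is extracted by $u := \ph + \sqrt{2}\,\ph' + \psi$, which in the model satisfies $u' = \sqrt{2}\,u$ with $u(0) = \sqrt{2}$. This yields the model solution $\ph(t) = \sinh(\sqrt{2}\,t)/\sqrt{2}$, which strictly exceeds $e^t$ once $t$ is beyond the universal constant $t_0^{(0)} := \ln(2\sqrt{2}+1)/(\sqrt{2}-1)$.

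For the perturbed system a direct computation yields
\begin{equation*}
u' - \sqrt{2}\,u = (\lambda-1)\bigl(\ph' + \sqrt{2}\,\psi\bigr) + \sqrt{2}\,E,
\end{equation*}
so after propagating positivity of $\ph,\ph',\psi$ from the initial data and controlling $|\ph'|$ and $|\psi|$ linearly in $u$ through the eigenmode decomposition, Gr\"onwall's inequality on $[0,a]$ gives $u(t) = \sqrt{2}\,e^{\sqrt{2}\,t}\bigl(1 + O(\epsilon_0\,a)\bigr)$. Inverting the $u\leftrightarrow \ph$ relation then produces $\ph(t) \geq (1-o_\delta(1))\sinh(\sqrt{2}\,t)/\sqrt{2}$ uniformly on $[0,a]$, and taking $\delta$ close enough to $1$ (and $t_0$ slightly larger than $t_0^{(0)}$) yields the desired $\ph(t)\geq e^t$ for $t\in[t_0,a]$.

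The hard part is controlling the perturbation over the interval $[0,a]$, whose length $a = v(x_0)/(2M)$ grows with $v(x_0)$: the Gr\"onwall error $\epsilon_0\,a$ must remain small even as this interval lengthens. The saving point is that $\epsilon_0 = e^{-v(x_0)/2}$ decays \emph{exponentially} in $v(x_0)$ while $a$ grows only linearly, so $\epsilon_0\,a\to 0$ as $\lambda(x_0)\to 1$; this is precisely what makes the choice of a universal $\delta$ (and $t_0$) possible.
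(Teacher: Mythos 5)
Your ODE setup is correct and matches the paper's: you derive the same coupled system for $(\ph,\psi,\rho=\ph')$ with the term $E=\alpha$ coming from the intrinsic acceleration, and the Gr\"onwall/eigenmode idea is a reasonable alternative to the paper's explicit comparison system. However, there is a genuine gap at the step where you claim ``Corollary~\ref{cor acceleration} gives $|E(t)|\leq M\epsilon_0$.''

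Corollary~\ref{cor acceleration} bounds the \emph{norm} $||\nabla^S_{\dot\gamma_c}\dot\gamma_c||$ in the Riemannian metric induced on $S$, but $E(t)=\langle\nabla^S_{\dot\gamma_c}\dot\gamma_c(t),\dot\gamma_c(0)\rangle$ is a bilinear pairing in $\R^{2,2}$ between a small vector tangent to $S$ at $\gamma_c(t)$ and the \emph{fixed} vector $\dot\gamma_c(0)$, which is tangent to $S$ only at $t=0$. Since $\langle\cdot,\cdot\rangle$ is indefinite, there is no Cauchy--Schwarz estimate of the form $|E|\leq||\nabla^S\dot\gamma||\,||\dot\gamma(0)||$, and in fact one cannot bound $|E(t)|$ independently of $t$. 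The correct estimate is obtained by expanding $\dot\gamma_c(0)$ in the orthonormal moving frame $(\gamma,\,N,\,\dot\gamma,\,\nabla^S\dot\gamma/||\nabla^S\dot\gamma||)$, which yields
$$
1 = -\ph^2 - \psi^2 + \rho^2 + \frac{\alpha^2}{||\nabla^S_{\dot\gamma}\dot\gamma||^2}\,,
$$
hence $|\alpha(t)| = ||\nabla^S_{\dot\gamma}\dot\gamma||\,\sqrt{1+\ph^2+\psi^2-\rho^2}$; the error grows roughly like $\epsilon\,(1+|\ph|+|\psi|)$ rather than being uniformly of order $\epsilon$. This is precisely the content of the paper's Sublemma \ref{sublemma acc}, and it is what forces the comparison system in the paper to carry the $\epsilon\ph_1$, $\epsilon\psi_1$, $\epsilon$ perturbation terms rather than just an additive $\epsilon$.

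Your Gr\"onwall scheme can likely be repaired once the correct form of the bound on $E$ is used: with $|E|\lesssim\epsilon\,u$ and $|1-\lambda|\,(|\ph'|+|\psi|)\lesssim\epsilon\, u$, the differential inequality becomes $u'\geq(\sqrt{2}-C\epsilon)u$, which integrates to $u(t)\geq u(0)\,e^{(\sqrt{2}-C\epsilon)t}$ and gives the needed $(1-o_\delta(1))$ factor since $\epsilon a\to 0$. So the architecture survives, but as written the argument skips the essential observation that the error term is \emph{not} uniformly small --- which is exactly the technical heart of the paper's Sublemma \ref{sublemma acc}.
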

\begin{proof}
Let us compute
$$\dot\ph(t)=\langle\dot\gamma(t),\dot\gamma(0)\rangle\,.$$
We now think $\gamma(t)$ as a point of $\wAdS\subset \R^{2,2}$. Denote $\psi(t)=\langle N(t),\dot\gamma(0)\rangle$, where we write $N(t)=N(\gamma(t))$ by a slight abuse of notation. Hence 
$$\ddot\gamma(t)=\gamma(t)+\lambda(\gamma(t)) N(\gamma(t))+\nabla^S_{\dot\gamma(t)}\dot\gamma(t)\,,$$
and therefore
$$\ddot\ph(t)=\ph(t)+\lambda(t)\psi(t)+\langle\nabla^S_{\dot\gamma(t)}\dot\gamma(t),\dot\gamma(0)\rangle\,.$$
If we denote $\rho(t)=\dot\ph(t)=\langle\dot\gamma(t),\dot\gamma(0)\rangle$ and $\alpha(t)=\langle\nabla^S_{\dot\gamma(t)}\dot\gamma(t),\dot\gamma(0)\rangle$, the triple $(\ph,\psi,\rho)$ solves the (non-linear, non-autonomous) system of ODEs
\begin{equation} \label{system}
\begin{cases}
\dot\ph(t)=\rho(t) \\
\dot\psi(t)=\lambda(t)\rho(t) \\
\dot\rho(t)=\ph(t)+\lambda(t)\psi(t)+\alpha(t)
\end{cases}\,,
\end{equation}
with the initial conditions
\begin{equation} \label{incond}
\begin{cases}
\ph(0)=0 \\
\psi(0)=0 \\
\rho(0)=1
\end{cases}\,.
\end{equation}

\begin{sublemma} \label{sublemma acc}
There exists $\delta\in(0,1)$ such that, if $\lambda(x_0)\geq\delta$, then
$$|\alpha(t)|< e^{-v(x_0)/4}\sqrt{1+\ph(t)^2+\psi(t)^2-\rho(t)^2}$$
for every $t\in[0,v(x_0)/2M]$.
In particular, setting $\epsilon=e^{-v(x_0)/4}$,
$$|\alpha(t)|< \epsilon\left(1+|\ph(t)|+|\psi(t)|\right)\,.$$
%provided $\ph(t),\psi(t)>0$.
\end{sublemma}
\begin{proof}
Since $\gamma$ is a unit-speed parameterization, $(\gamma(t),N(t),\dot\gamma(t),\nabla^S_{\dot\gamma(t)}\dot\gamma(t)/||\nabla^S_{\dot\gamma(t)}\dot\gamma(t)||)$ is an orthonormal frame for every $t$, provided $\nabla^S_{\dot\gamma(t)}\dot\gamma(t)\neq 0$. Hence
\begin{align*}
\dot\gamma(0)=&-\langle\dot\gamma(0),\gamma(t)\rangle\gamma(t)-\langle\dot\gamma(0),N(t)\rangle N(t)
+\langle\dot\gamma(0),\dot\gamma(t)\rangle\dot\gamma(t)+\langle\dot\gamma(0),\nabla^S_{\dot\gamma(t)}\dot\gamma(t)\rangle \frac{\nabla^S_{\dot\gamma(t)}\dot\gamma(t)}{||\nabla^S_{\dot\gamma(t)}\dot\gamma(t)||^2} \\
=& -\ph(t)\gamma(t)-\psi(t)N(t)+\rho(t)\dot\gamma(t)+\alpha(t)\frac{\nabla^S_{\dot\gamma(t)}\dot\gamma(t)}{||\nabla^S_{\dot\gamma(t)}\dot\gamma(t)||^2}
\,. \end{align*}
Therefore one gets
$$1=-\ph(t)^2-\psi(t)^2+\rho(t)^2+\frac{\alpha(t)^2}{||\nabla^S_{\dot\gamma(t)}\dot\gamma(t)||^2}\,.$$
Recalling that, from Corollary \ref{cor acceleration}, $||\nabla^S_{\dot\gamma}\dot\gamma||\leq M e^{-v(x_0)/2}< e^{-v(x_0)/4}$ for $\delta$ sufficiently large, one concludes the claim.
\end{proof}
By Corollary \ref{cor autovalore grande}, $\lambda(t)\geq \eta:=1-e^{-v(x_0)/2}$ for $t\in[0,v(x_0)/2M]$. We will compare the solution of the system \eqref{system} with the solution $(\ph_1(t),\psi_1(t),\rho_1(t))$ of the following system:
\begin{equation} \label{system2}
\begin{cases}
\dot\ph_1(t)=\rho_1(t) \\
\dot\psi_1(t)=\eta\rho_1(t) \\
\dot\rho_1(t)=(1-\epsilon)\ph_1(t)+(\eta-\epsilon)\psi_1(t)-\epsilon
\end{cases}\,,
\end{equation}
with the same initial conditions
\begin{equation} \label{incond2}
\begin{cases}
\ph_1(0)=0 \\
\psi_1(0)=0 \\
\rho_1(0)=1
\end{cases}\,.
\end{equation}

\begin{sublemma} \label{sublemma confronto}
Let $(\ph(t),\psi(t),\rho(t))$ be the solution of \eqref{system},\eqref{incond} and $(\ph_1(t),\psi_1(t),\rho_1(t))$ be the solution of \eqref{system2},\eqref{incond2}. For every interval $t\in[0,t_0)$ where $\ph(t),\psi(t)>0$, one has $\ph(t)\geq \ph_1(t)$.
\end{sublemma}
\begin{proof}
The system \eqref{system} can be written in the form of integro-differential equation:
$$\dot\rho(t)=\int_0^t \rho(s)ds+\lambda(t)\int_0^t \lambda(s)\rho(s)ds+\alpha(t)\,,$$
while system \eqref{system2} takes the form
$$\dot\rho_1(t)=(1-\epsilon)\int_0^t \rho_1(s)ds+(\eta-\epsilon)\int_0^t \eta\rho_1(s)ds-\epsilon\,.$$
By Sublemma \ref{sublemma acc}, as soon as $\ph(t),\psi(t)>0$ and $t\in[0,v(x_0)/2M]$,
$$\dot\rho(t)> (1-\epsilon) \int_0^t \rho(s)ds+(\eta-\epsilon)\int_0^t \eta\rho(s)ds-\epsilon\,.$$
Hence one gets
\begin{align*}
\dot\rho(t)-\dot\rho_1(t)>(1-\epsilon)\int_0^t (\rho(s)-\rho_1(s))ds+(\eta-\epsilon)\eta\int_0^t (\rho(s)-\rho_1(s))ds\,.
\end{align*}
This is enough to conclude that $\rho(t)>\rho_1(t)$ for all $t\in[0,t_0)$. Indeed, if $t=t_{\text{max}}$ were a maximal point for which $\rho(t)>\rho_1(t)$, then $\dot\rho(t_{\text{max}})-\dot\rho_1(t_{\text{max}})$ would still be strictly positive, thus giving a contradiction. As a direct consequence, $\ph(t)>\ph_1(t)$ for all $t\in[0,t_0)$.
\end{proof}

To conclude the proof of Lemma \ref{lemma estimate product}, it suffices to check by a direct computation that the solution of \eqref{system2} with initial conditions \eqref{incond2} is:
$$
\begin{pmatrix} \ph_1(t) \\ \psi_1(t) \\ \rho_1(t) \end{pmatrix}=\begin{pmatrix} \frac{\epsilon}{1-\epsilon+\eta(\eta-\epsilon)} \\ \frac{\eta\epsilon}{1-\epsilon+\eta(\eta-\epsilon)} \\ 0 \end{pmatrix}
+c_1 \begin{pmatrix} \frac{1}{\sqrt{1-\epsilon+\eta(\eta-\epsilon)}} \\ \frac{\eta}{\sqrt{1-\epsilon+\eta(\eta-\epsilon)}} \\ -1 \end{pmatrix}e^{-t\sqrt{1-\epsilon+\eta(\eta-\epsilon)}}
+c_2 \begin{pmatrix} \frac{1}{\sqrt{1-\epsilon+\eta(\eta-\epsilon)}} \\ \frac{\eta}{\sqrt{1-\epsilon+\eta(\eta-\epsilon)}} \\ 1 \end{pmatrix}e^{t\sqrt{1-\epsilon+\eta(\eta-\epsilon)}}\,,
$$
where the constants are $$c_1=\frac{1}{2}\left(-1-\frac{\epsilon}{\sqrt{1-\epsilon+\eta(\eta-\epsilon)}}\right)$$
and 
$$c_2=\frac{1}{2}\left(1-\frac{\epsilon}{\sqrt{1-\epsilon+\eta(\eta-\epsilon)}}\right)\,.$$
This shows in particular that $$\rho_1(t)=\cosh(t\sqrt{1-\epsilon+\eta(\eta-\epsilon)})-\frac{\epsilon}{\sqrt{1-\epsilon+\eta(\eta-\epsilon)}}\sinh(t\sqrt{1-\epsilon+\eta(\eta-\epsilon)})\,,$$
and is therefore positive for all $t\geq 0$ (since $\epsilon$ is small). Hence also $\ph_1(t)\geq 0$ and therefore $\ph(t)$ remains positive as well. Therefore the assumptions of Sublemma \ref{sublemma confronto} are satisfied for all $t\in[0,v(x_0)/2M]$. Hence there exists $t_0>0$ such that
$$\ph(t)\geq \ph_1(t)\geq e^t$$
for $t\in[t_0,a]$ as claimed.
\end{proof}

\begin{figure}[htbp]
\centering
\begin{minipage}[c]{.46\textwidth}
\centering
\includegraphics[height=6.8cm]{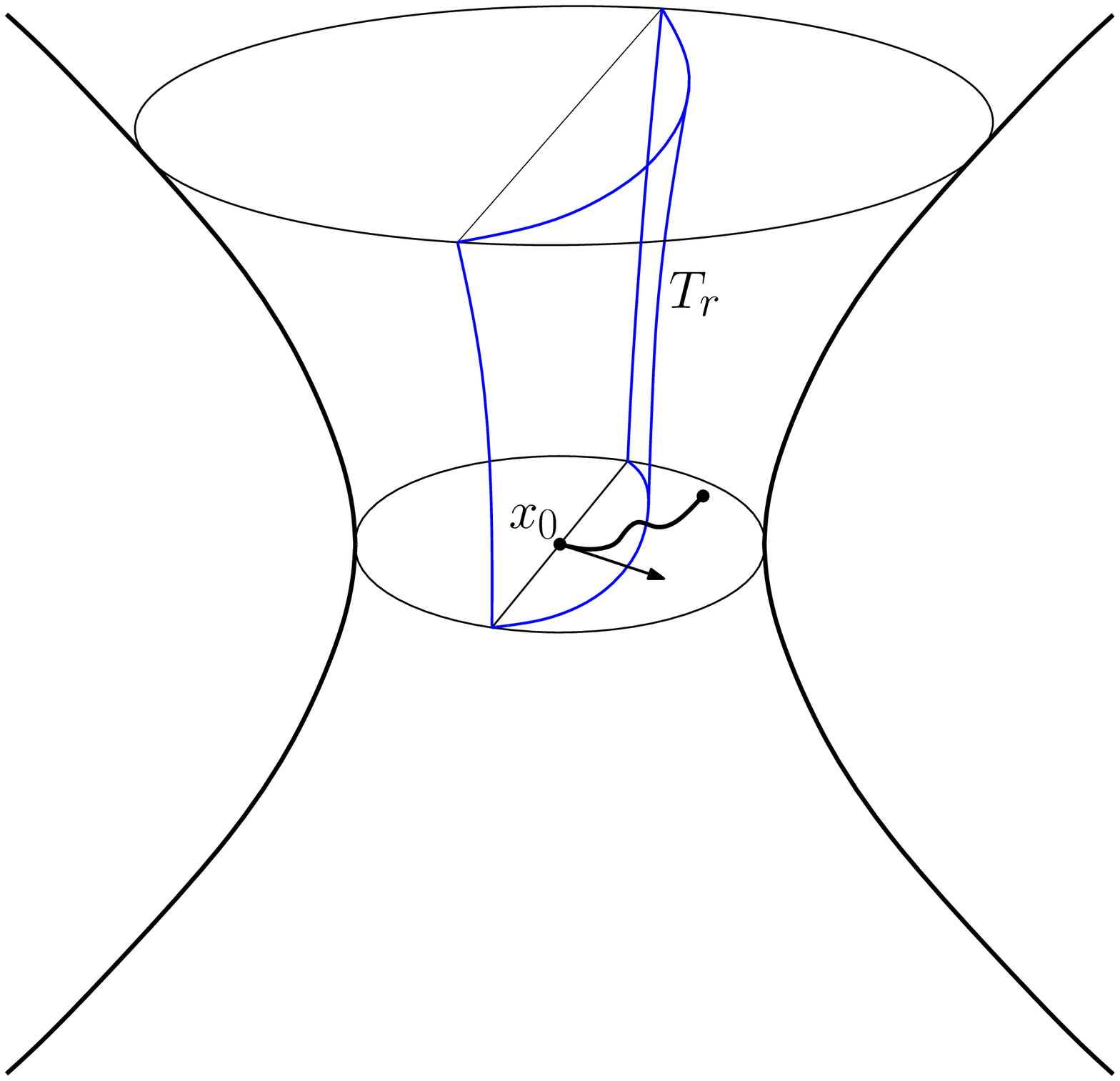} 
%\captionsetup{labelformat=empty}
\caption{Lemma \ref{lemma estimate product} asserts that the lines of curvature escape from the region bounded by the surface $T_r$.} \label{fig:surfacetr}
\end{minipage}%
\hspace{8mm}
\begin{minipage}[c]{.46\textwidth}
\centering
\includegraphics[height=6.8cm]{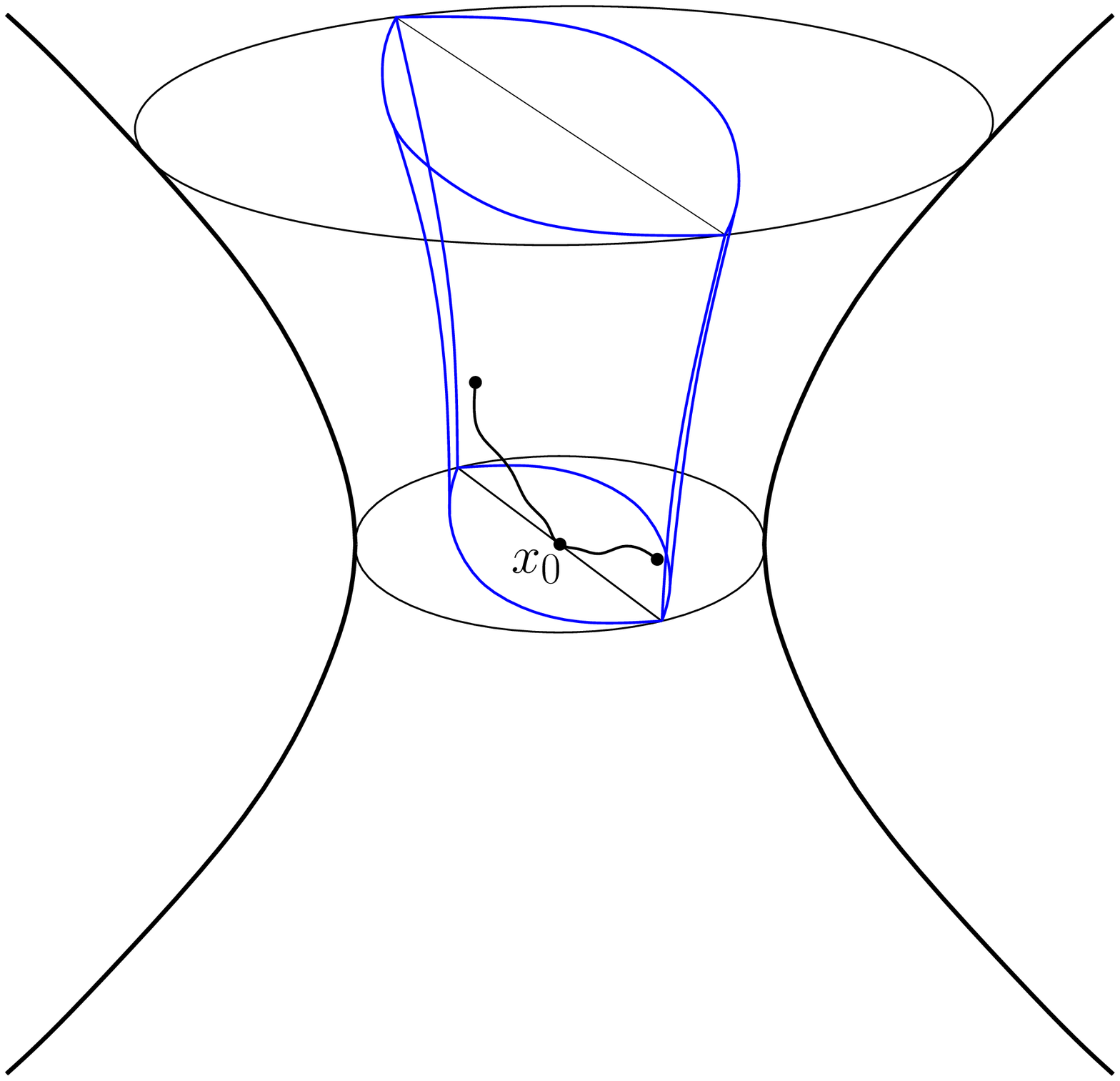}
%\captionsetup{labelformat=empty}
\caption{Lemma \ref{lemma estimate product 2} instead quantifies how the lines of curvature remain inside a thin slice bounded by two timelike surfaces.} \label{fig:fettina}
\end{minipage}
\end{figure}

\begin{remark} \label{rmk other estimate}
Using the same techniques as in Lemma \ref{lemma estimate product}, one can consider the function $\ph(t)=\langle \gamma_c(t),w\rangle$, where $w$ is the unit spacelike vector orthogonal to both $\dot\gamma_c(0)$ and to $N(x_0)$. One then similarly defines $\rho(t)=\dot\ph(t)=\langle\dot\gamma(t),w\rangle$ and 
 $\psi(t)=\langle \gamma(t),N(t)\rangle$. Hence the triple $(\ph,\psi,\rho)$ solves the system \eqref {system}, now with the initial conditions 
\begin{equation} \label{incondbis}
\begin{cases}
\ph(0)=0 \\
\psi(0)=0 \\
\rho(0)=0
\end{cases}\,.
\end{equation}
Since $\lambda\leq 1$, we consider the supersolution $(\ph_2(t),\psi_2(t),\rho_2(t))$ which solves the system:
\begin{equation} \label{system2bis}
\begin{cases}
\dot\ph_2(t)=\rho_2(t) \\
\dot\psi_2(t)=\rho_2(t) \\
\dot\rho_2(t)=(1+\epsilon)\ph_2(t)+(1+\epsilon)\psi_2(t)+\epsilon
\end{cases}\,,
\end{equation}
with the same initial conditions
\begin{equation} \label{incond2bis}
\begin{cases}
\ph_2(0)=0 \\
\psi_2(0)=0 \\
\rho_2(0)=0
\end{cases}\,.
\end{equation}
The latter system is easily solved, since it is equivalent to the equation $\ddot\rho(t)=2(1+\epsilon)\rho(t)$, and therefore one gets
$$\rho_2(t)=\frac{\epsilon}{\sqrt{2(1+\epsilon)}}\sinh(t\sqrt{2(1+\epsilon)})$$
and
$$\ph_2(t)=\psi_2(t)=\frac{\epsilon}{{2(1+\epsilon)}}(\cosh(t\sqrt{2(1+\epsilon)})-1)$$
as a solution to \eqref{system2bis}, \eqref{incond2bis}. Recalling that, under the usual assumptions, we have $\epsilon=e^{-v(x_0)/4}$. Thus for $t=a=v(x_0)/2M$,  $\ph_2(t)$ is dominated by an exponential of exponent $(v(x_0)/4)(-1+(2\sqrt{2}(1+\epsilon))/M)$, which is estimated from above by $e^{-v(x_0)/8}$ provided $M$ is sufficiently large (we can always replace $M$ by a larger constant, as we did several times before). In conclusion, we get $\ph(a)\leq\ph_2(a)\leq e^{-t/8}$. Of course the situation is symmetric, and one can prove the same upper bound for $-\ph(t)$.
\end{remark}

We report here the statement of a lemma, which was discussed in Remark \ref{rmk other estimate}.

\begin{lemma} \label{lemma estimate product 2}
There exists constans $\delta\in(0,1)$ and $t_0\geq 0$ as follows. Let $S$ be any maximal surface with future unit normal vector $N_0$ at $x_0$ and with $\lambda(x_0)=1-e^{-v(x_0)}\geq \delta$, and let $\gamma_c:[0,a]\to\AdS^3$ be a unit-speed parameterization of a line of curvature of $S$ with $\gamma_c(0)=x_0$, with $a=v(x_0)/2M$. If $\ph(t)=\langle \gamma_c(t),w\rangle$, where $w$ is the unit spacelike vector orthogonal to both $\dot\gamma_c(0)$ and to $N(x_0)$, then
$$|\ph(a)|\leq e^{-v(x_0)/8}\,.$$
\end{lemma}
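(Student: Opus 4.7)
The plan is to mirror the strategy of Lemma \ref{lemma estimate product} outlined in Remark \ref{rmk other estimate}, but with the new initial data dictated by the choice of $w$. Setting $\ph(t)=\langle\gamma_c(t),w\rangle$, $\rho(t)=\dot\ph(t)=\langle\dot\gamma_c(t),w\rangle$, and $\psi(t)=\langle\gamma_c(t),N(t)\rangle$, the same computation as in the proof of Lemma \ref{lemma estimate product} shows that $(\ph,\psi,\rho)$ still satisfies the system \eqref{system}, since the derivation used only the structural identities $\ddot\gamma=\gamma+\lambda N+\nabla^S_{\dot\gamma}\dot\gamma$ and $\dot N=\lambda\dot\gamma$ along a line of curvature. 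The crucial change is in the initial conditions: $\ph(0)=0$ because $w\perp x_0$, $\rho(0)=0$ because $w\perp\dot\gamma_c(0)$, and $\psi(0)=0$ because $w\perp N(x_0)$, giving precisely \eqref{incondbis}.

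Next, I would invoke Sublemma \ref{sublemma acc}, which gives $|\alpha(t)|\leq\epsilon(1+|\ph(t)|+|\psi(t)|)$ with $\epsilon=e^{-v(x_0)/4}$, assuming $\lambda(x_0)\geq\delta$ is sufficiently close to $1$. Since here $\lambda(t)\leq 1$, in order to obtain an \emph{upper} bound on $|\ph|$ and $|\psi|$ we compare $(\ph,\psi,\rho)$ with the supersolution $(\ph_2,\psi_2,\rho_2)$ of the system \eqref{system2bis} with initial conditions \eqref{incond2bis}. A maximum-principle argument entirely analogous to Sublemma \ref{sublemma confronto}, applied in turn to $\ph$ and to $-\ph$ (using the sign symmetry of \eqref{system2bis}), yields $|\ph(t)|\leq\ph_2(t)$ and $|\psi(t)|\leq\psi_2(t)$ on $[0,a]$.

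Finally, \eqref{system2bis} collapses to the scalar linear equation $\ddot\rho_2=2(1+\epsilon)\rho_2$, whose solution with the prescribed initial data is the explicit expression written out in Remark \ref{rmk other estimate}:
\begin{equation*}
\ph_2(t)=\psi_2(t)=\frac{\epsilon}{2(1+\epsilon)}\bigl(\cosh(t\sqrt{2(1+\epsilon)})-1\bigr).
\end{equation*}
Evaluating at $t=a=v(x_0)/2M$ and bounding $\cosh$ by an exponential, one obtains
\begin{equation*}
|\ph(a)|\leq\ph_2(a)\leq C\exp\Bigl(\tfrac{v(x_0)}{4}\bigl(-1+\tfrac{2\sqrt{2}(1+\epsilon)}{M}\bigr)\Bigr),
\end{equation*}
and choosing $M$ large enough (possibly enlarging the constant $M$ coming from Proposition \ref{prop gradient estimate} and Corollary \ref{cor acceleration}, which is harmless for the main theorem) the exponent is at most $-v(x_0)/8$, yielding the claimed bound.

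The main obstacle is the comparison step: the non-autonomous and nonlinear nature of \eqref{system} together with the error term $\alpha(t)$, which is controlled only by $\ph,\psi$ themselves via Sublemma \ref{sublemma acc}, forces an integro-differential formulation of the comparison as in Sublemma \ref{sublemma confronto}. One has to check carefully that the inequality $\dot\rho-\dot\rho_2<\cdots$ propagates to $\rho<\rho_2$ on the entire interval $[0,a]$, ruling out the existence of a first crossing by the strict-inequality contradiction argument used for Sublemma \ref{sublemma confronto}; here the sign symmetry of the linear system \eqref{system2bis} is what allows the comparison to bound $|\ph|$, not just $\ph$, from above.
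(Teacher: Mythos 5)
Your proposal follows the same route as the paper (which proves this lemma via Remark \ref{rmk other estimate}): recast the problem as the ODE system \eqref{system} with the new initial data \eqref{incondbis}, bound $\alpha$ by Sublemma \ref{sublemma acc}, compare with the supersolution of \eqref{system2bis}, and read off the exponential decay. So in overall strategy you match the paper.

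However, your definition of $\psi$ is wrong. You set $\psi(t)=\langle\gamma_c(t),N(t)\rangle$, but $N(t)$ is by construction tangent to $\widehat{\AdS^3}$ at $\gamma_c(t)$, so this inner product vanishes identically. With $\psi\equiv0$ the triple $(\ph,\psi,\rho)$ cannot solve \eqref{system}: the second equation forces $\rho\equiv0$ and then the third fails because of $\alpha$. The correct definition, mirroring the proof of Lemma \ref{lemma estimate product} (where $\psi(t)=\langle N(t),\dot\gamma(0)\rangle$ is the projection of $N(t)$ onto the fixed direction), is $\psi(t)=\langle N(t),w\rangle$; with this choice one checks, exactly as in Lemma \ref{lemma estimate product}, that \eqref{system} is satisfied and that $\psi(0)=0$ since $w\perp N(x_0)$. (The paper's Remark \ref{rmk other estimate} contains the same typo, but the assertion that the triple solves \eqref{system} only makes sense with $\psi(t)=\langle N(t),w\rangle$.) The comparison step is also stated a bit loosely: applying Sublemma \ref{sublemma confronto} to ``$\ph$ and $-\ph$'' does not literally work because of the constant $+\epsilon$ term; what one actually proves (by a first-crossing argument or by integrating $\int_0^t|\dot\rho|\,ds$ against $\rho_2$) is $|\rho(t)|\leq\rho_2(t)$, from which $|\ph|\leq\ph_2$, $|\psi|\leq\psi_2$ follow by integration. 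You correctly identify this as the delicate point, so this is more a matter of phrasing than of substance. With the corrected $\psi$, the remaining computation (explicit solution of \eqref{system2bis} and choice of $M$ large) is as in the paper and gives the claimed bound.
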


To give an estimate from below for the width, we will consider a maximal surface with large principal curvatures at the point $x_0$, which we will assume to be the point $x_0=[0,0,1,0]$.
We are going to use again the coordinate system \eqref{cylindric coordinates}, which we write here again:
$$(r,\theta,\zeta)\mapsto [\cos\theta\sinh r,\sin\theta\sinh r,\cos\zeta\cosh r,\sin\zeta\cosh r]\,.$$
We are assuming the maximal surface $S$ is tangent at $x_0$ to the plane $\zeta=0$.
Hence the level sets $\zeta=c$ are totally geodesic planes orthogonal to the timelike like
which starts $x_0$ with initial tangent vector $N(x_0)$.

\begin{lemma} \label{lemma zeta}
Let $a=v(x_0)/2M$ and $\tan d_1=\lambda_1=1-e^{-v(x_0)/4}$. Let $p_1=\gamma_c^+(-a)$ and $p_2=\gamma_c^+(a)$ be the endpoints of the segment $l_c^+(x_0,a)$ of a line of curvature through $x_0$. Then $$|\sin(d_1-\zeta(p_i))|\leq\frac{\sin d_1}{\cosh(r(p_i))}\,.$$
In other words, $\zeta(p_i)\geq \bar\zeta$ where $\bar\zeta\leq d_1$ satisfies
$$\sin(d_1-\bar\zeta)=\frac{\sin d_1}{\cosh(r(p_i))}\,.$$
\end{lemma}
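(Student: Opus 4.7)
The plan is to derive an explicit equation for $U_{\lambda_1}(x_0,N_0)$ in the coordinates \eqref{cylindric coordinates}, and then read off the lemma from the containment established in Lemma \ref{lemma calotta}.

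Since $x_0=[0,0,1,0]$ and $N_0=(0,0,0,1)$, the point $y_0=\exp_{x_0}(d_1 N_0)$ equals $[0,0,\cos d_1,\sin d_1]$, and the totally geodesic plane $P$ through $y_0$ orthogonal to the timelike line $t\mapsto \exp_{x_0}(tN_0)$ is precisely $P=\{\zeta=d_1\}$. Its dual point in $\wAdS$ is $p_P=(0,0,-\sin d_1,\cos d_1)$, as is immediately checked by verifying $\langle p_P,p_P\rangle=-1$ together with orthogonality to the tangent frame $\partial_r,\partial_\theta$ of $P$. Substituting the coordinate expression for a generic point $q$ gives
$$\langle q,p_P\rangle=\cosh r\,\sin(d_1-\zeta)\,.$$
Using the identity $\sin d_{\AdS^3}(q,P)=|\langle q,p_P\rangle|$ and the construction of $U_{\lambda_1}(x_0,N_0)$ as the surface at past timelike distance $d_1$ from $P$, we therefore obtain the defining equation
$$U_{\lambda_1}(x_0,N_0)=\{\cosh r\,\sin(d_1-\zeta)=\sin d_1\}\,.$$

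Next I would identify the correct side. Since at $x_0$ the positive eigenvalue $\lambda(x_0)>\lambda_1$ forces $\gamma_c^+$ to bend toward $N_0=\partial_\zeta$ more strongly than a geodesic of $U_{\lambda_1}(x_0,N_0)$ with the same tangent vector, the maximum-principle argument used to prove Lemma \ref{lemma calotta} places $l_c^+(x_0,a)$ in the $\zeta$-future region
$$\cosh r\,\sin(d_1-\zeta)\leq\sin d_1\,.$$
Evaluating at the endpoints $p_i=\gamma_c^+(\pm a)$ yields the one-sided bound $\sin(d_1-\zeta(p_i))\leq\sin d_1/\cosh(r(p_i))$. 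By strict monotonicity of $\zeta\mapsto\sin(d_1-\zeta)$ on $[0,d_1]$ and the definition of $\bar\zeta\leq d_1$, this is precisely the conclusion $\zeta(p_i)\geq\bar\zeta$. To upgrade the inequality to the two-sided absolute-value form displayed in the statement, one uses that $\zeta(p_i)\in[0,d_1]$: the lower bound follows because $\gamma_c^+$ leaves $x_0$ into the future of the tangent plane $\{\zeta=0\}$ (positive eigenvalue), and the upper bound from the transversal control on $\gamma_c^+$ provided by Lemma \ref{lemma estimate product 2} and Remark \ref{rmk other estimate}, combined with $d_1\leq\pi/4$.

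The only nontrivial ingredient beyond the coordinate computation is the sign check identifying the convex side of $U_{\lambda_1}(x_0,N_0)$ with the $\zeta$-future side; this is already implicit in Lemma \ref{lemma calotta}'s proof under the hypothesis $\lambda(x_0)\geq\delta$, so the main step here is essentially bookkeeping in the explicit coordinates \eqref{cylindric coordinates}.
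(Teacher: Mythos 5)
Your proof follows the same route as the paper: identify the dual point of $\{\zeta=d_1\}$, compute $\langle q,p_P\rangle=\cosh r\,\sin(d_1-\zeta)$ in the cylindrical coordinates, and combine with the containment from Lemma \ref{lemma calotta} (the paper phrases this as $\sin d_1\geq|\langle p_i,p_P\rangle|$ directly, since that absolute value is the sine of the timelike distance to the plane and $p_i$ lies on the convex side of the surface at distance $d_1$). Your additional remarks on signs and the range of $\zeta(p_i)$ are just unpacking what the paper leaves implicit, so the arguments are essentially identical.
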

\begin{proof}
We know from Lemma \ref{lemma calotta} that $l_c^+(x_0,a)$ is entirely contained in the future-directed side of the surface $U_{\lambda_1}(x_0,N_0)$, for $a=v(x_0)/2M$ and $\lambda_1=1-e^{-v(x_0)/4}$. Recall that the surface $U_{\lambda_1}(x_0,N_0)$ is obtained as the surface at distance $d_1$ (past-directed) from the plane $\zeta=d_1$, where $\tan d_1=\lambda_1$. This plane is also defined by
$$\{\zeta=d_1\}=[0,0,\sin d_1,-\cos d_1]^T\,.$$ 
Observe that the product of $p_i$ and $[0,0,\sin d_1,-\cos d_1]$, in absolute value, is the sine of the timelike distance of $p_i$ from the plane $[0,0,\sin d_1,-\cos d_1]^T$. Hence if $(r(p_i),\theta(p_i),\zeta(p_i))$ are the coordinates of $p_i$, then 
\begin{align*}
\sin d_1&\geq |\langle p_i,[0,0,\sin d_1,-\cos d_1] \rangle|
\\ &=|\sin d_1 \cos(\zeta(p_i))\cosh(r(p_i))-\cos d_1 \sin(\zeta(p_i))\cosh(r(p_i))| \\
& =|\sin(d_1-\zeta(p_i))|\cosh(r(p_i))\,.
\end{align*}
from which the statement follows straightforwardly.
\end{proof}

\begin{figure}[htbp]
\centering
\includegraphics[height=6.5cm]{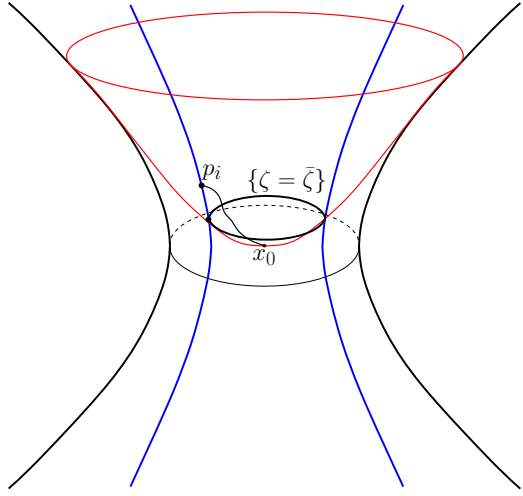}
\caption{In Lemma \ref{lemma zeta}, the intersection of the surface $U_{\lambda_1}(x_0,N_0)$ with the cylinder which contains $p_i$ lies in the plane $\{\zeta=\bar\zeta\}$. \label{fig:horizontal}}
\end{figure}

%\begin{lemma}
%Let $P_1$, $P_2$ be two totally geodesic plane having a common orthogonal geodesic segment $\overline{s_1s_2}$ (where $s_i\in P_i$) of timelike length $d_{\AdS^3(s_1,s_2)}=\bar\zeta$.  
%\end{lemma}

\begin{proof}[Proof of Theorem \ref{theorem width lambda big}]
As observed earlier, we can assume that there exists a point $x_0$ (and we set $x_0=[0,0,1,0]$) with $\lambda(x_0)\geq \delta$. Composing with an isometry, we also assume that (in the double cover $\wAdS$) $N_0=(0,0,0,1)$ and that the tangent vectors to the lines of curvature at $x_0$ are $\dot\gamma_c^+(0)=(1,0,0,0)$ and $\dot\gamma_c^+(0)=(0,1,0,0)$. Let as usual $p_1=\gamma_c^+(-a)$ and $p_2=\gamma_c^+(a)$ be the endpoints of the segment $l_c^+(x_0,a)$, and $q_1=\gamma_c^-(-a)$ and $q_2=\gamma_c^-(a)$ be the endpoints of the segment $l_c^-(x_0,a)$, for $a=v(x_0)/2M$. Recall also Figure \ref{fig:strategy}.

The width $w$ of the convex hull of $S$ is at least the supremum of the length of geodesic timelike segments which connect the spacelike segments $\overline{p_1p_2}$ and $\overline{q_1q_2}$, which we denote by $d_{\AdS^3}(\overline{p_1p_2},\overline{q_1q_2})$. Let $(r(p_i),\theta(p_i),\zeta(p_i))$ the coordinates of $p_i$ and $(r(q_i),\theta(q_i),\zeta(q_i))$ the coordinates of $q_i$. By Lemma \ref{lemma zeta}, $\zeta(p_i)\geq \bar\zeta$ and $\zeta(q_i)\leq -\bar\zeta$, for $i=1,2$.

Hence $d_{\AdS^3}(\overline{p_1p_2},\overline{q_1q_2})$ is certainly larger than 
$d_{\AdS^3}(\overline{p_1'p_2'},\overline{q_1'q_2'})$, where $p_i'$ has coordinates $(r(p_i),\theta(p_i),\bar\zeta)$ and $q_i'$ has coordinates $(r(q_i),\theta(q_i),-\bar\zeta)$. Compare also Figure \ref{fig:decrease}. Indeed, every timelike segment connecting $\overline{p_1p_2}$ and $\overline{q_1q_2}$ can be continued to a timelike segment connecting $\overline{p_1'p_2'}$ and $\overline{q_1'q_2'}$ of larger timelike length.

Now, the segment $\overline{p_1'p_2'}$ is clearly contained in the plane $\zeta=\bar\zeta$, and it contains a point $i_+$ with coordinates $(r(i_+),\theta(i_+)=\pi/2,\bar\zeta)$. See Figure \ref{fig:intersections}.
Therefore
$$i_+=[0,\sinh r(i_+),\cos\bar\zeta\cosh r(i_+),\sin\bar\zeta\cosh r(i_+)]\,.$$
Analogously, the segment $\overline{q_1'q_2'}$ contains the point
$$i_-=[\sinh r(i_-),0,\cos\bar\zeta\cosh r(i_-),-\sin\bar\zeta\cosh r(i_-)]\,.$$ 
Hence one can give the following bound for the width $w$:
$$\cos w\leq |\langle i_+,i_- \rangle|=2\cosh r(i_-)\cosh r(i_+)\cos\bar\zeta\sin\bar\zeta=\cosh r(i_-)\cosh r(i_+)\cos(2\bar\zeta)\,,$$
which is equivalent to
$$(\tan w)^2=\frac{1}{(\cos w)^2}-1\geq \frac{1}{(\cos(2\bar\zeta))^2}\frac{1}{(\cosh r(i_+))^2}\frac{1}{(\cosh r(i_-))^2}-1\,.$$

We now want to estimate the factors $\cos(2\bar\zeta)$ and $\cosh r(i_+)$, $\cosh r(i_-)$. For the former, let us write $\bar\zeta=d_1-(d_1-\bar\zeta)$ and compute
\begin{equation} \label{camomilla}
\cos(2\bar\zeta)=\cos(2d_1)\cos(2(d_1-\bar\zeta))+\sin(2d_1)\sin(2(d_1-\bar\zeta))\,.
\end{equation}
Recall that $\tan d_1=\lambda_1=1-e^{-v(x_0)/4}$, hence $(\cos d_1)^2=1/(1+\lambda_1^2)$ and
\begin{equation} \label{manzanilla}
\cos(2d_1)=(\cos d_1)^2-(\sin d_1)^2=\frac{1}{1+\lambda_1^2}-\frac{\lambda_1^2}{1+\lambda_1^2}\leq 2(1-\lambda_1)=2 e^{-v(x_0)/4}\,.
\end{equation}
For the second term of the RHS of Equation \eqref{camomilla}, using Lemma \ref{lemma zeta}, we have 
\begin{equation} \label{manzanilla2}
\sin(2(d_1-\bar\zeta))=2\sin(d_1-\bar\zeta)\cos(d_1-\bar\zeta)\leq \frac{2\sin d_1}{\cosh r(p_i)}\,.
\end{equation}
Observe that 
$$\cosh r(p_i)\geq |\sinh r(p_i)|\geq |\langle p_i,(1,0,0,0)\rangle|=|\langle p_i,\dot\gamma_c^+(0)\rangle|\geq e^{v(x_0)/2M}\,,$$
where in the last step we have used Lemma \ref{lemma estimate product} (up to changing the orientation of the parameterization $\gamma_c^+$ for one of the two points $p_i$). Therefore using the inequalities of Equations \eqref{manzanilla} and \eqref{manzanilla2} in \eqref{camomilla}, and relabeling $M$ by a yet larger constant, we get
$$\frac{1}{\cos(2\bar\zeta)}\geq e^{v(x_0)/M}\,,$$
provided $\lambda(x_0)$ is larger than the constant $\delta$. 

On the other hand, observe that $r(i_+)$ is the distance in the hyperbolic plane $\zeta=\bar\zeta$ of the point $i_+$ from the geodesic defined by $\theta=0$. By the convexity of the distance function, $r(i_+)$ is less than the maximum between the distance of $p_1'$ and $p_2'$ from the line $\theta=0$, which remains bounded by Lemma \ref{lemma estimate product 2} (see Remark \ref{rmk other estimate}). Actually, $r(i_+)$ tends to zero as $v(x_0)\to\infty$, and the same holds for $r(i_-)$. Hence the factors $\cosh r(i_+)$ and $\cosh r(i_-)$ remain bounded, and this concludes the proof that
$$\tan w\geq e^{v(x_0)/M}=\left(\frac{1}{1-\lambda(x_0)}\right)^{1/M}\,.$$
Hence, by a continuity argument, also the inequality
$$\tan w\geq \left(\frac{1}{1-||\lambda||_\infty}\right)^{1/M}$$
holds.
\end{proof}

\begin{figure}[htbp]
\centering
\begin{minipage}[c]{.46\textwidth}
\centering
\includegraphics[height=6.5cm]{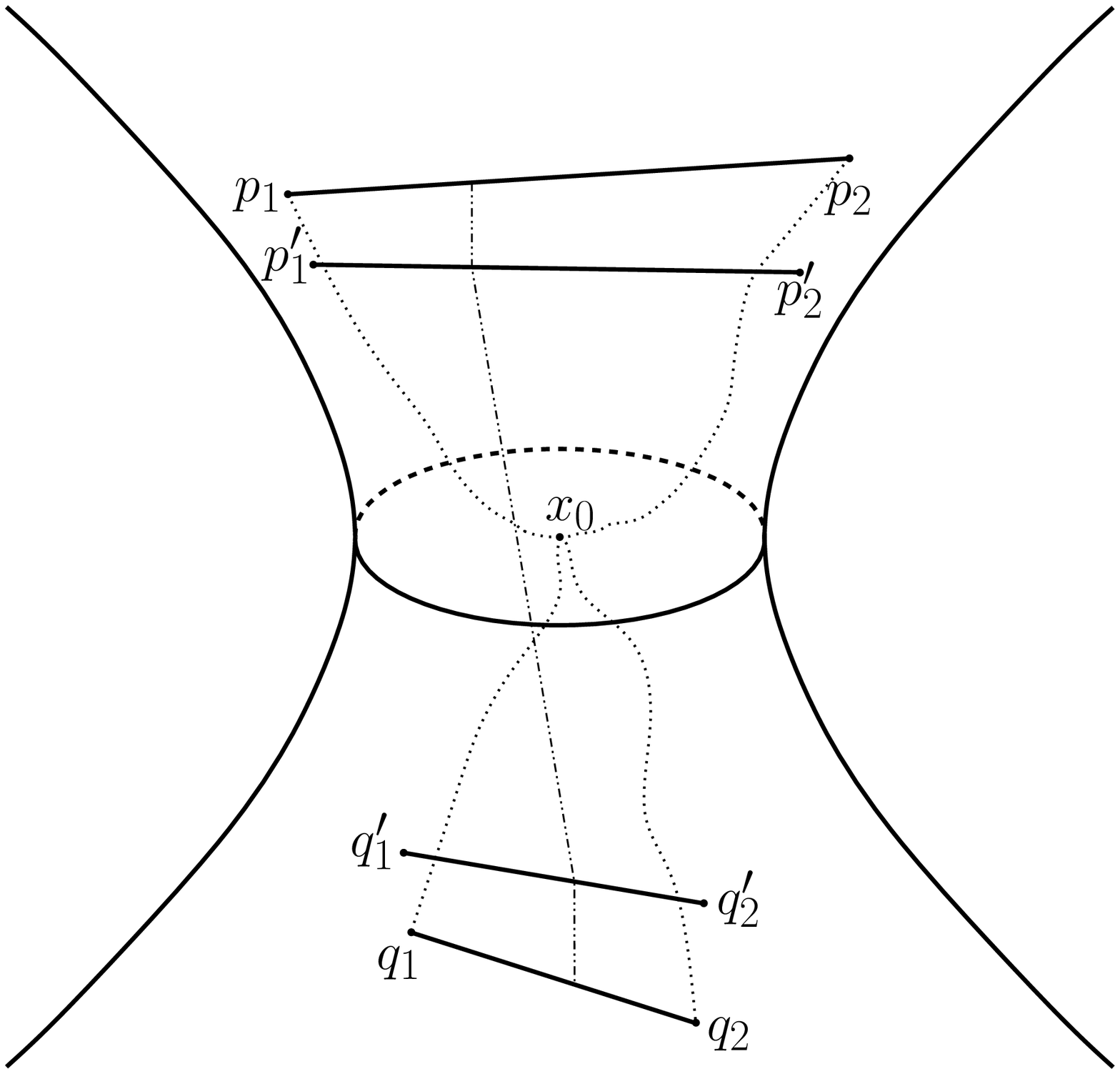} 
%\captionsetup{labelformat=empty}
\caption{The width is not increased when replacing the points $p_1,p_2,q_1,q_2$ by $p_1',p_2',q_1',q_2'$.} \label{fig:decrease}
\end{minipage}%
\hspace{6mm}
\begin{minipage}[c]{.46\textwidth}
\centering
\includegraphics[height=6.5cm]{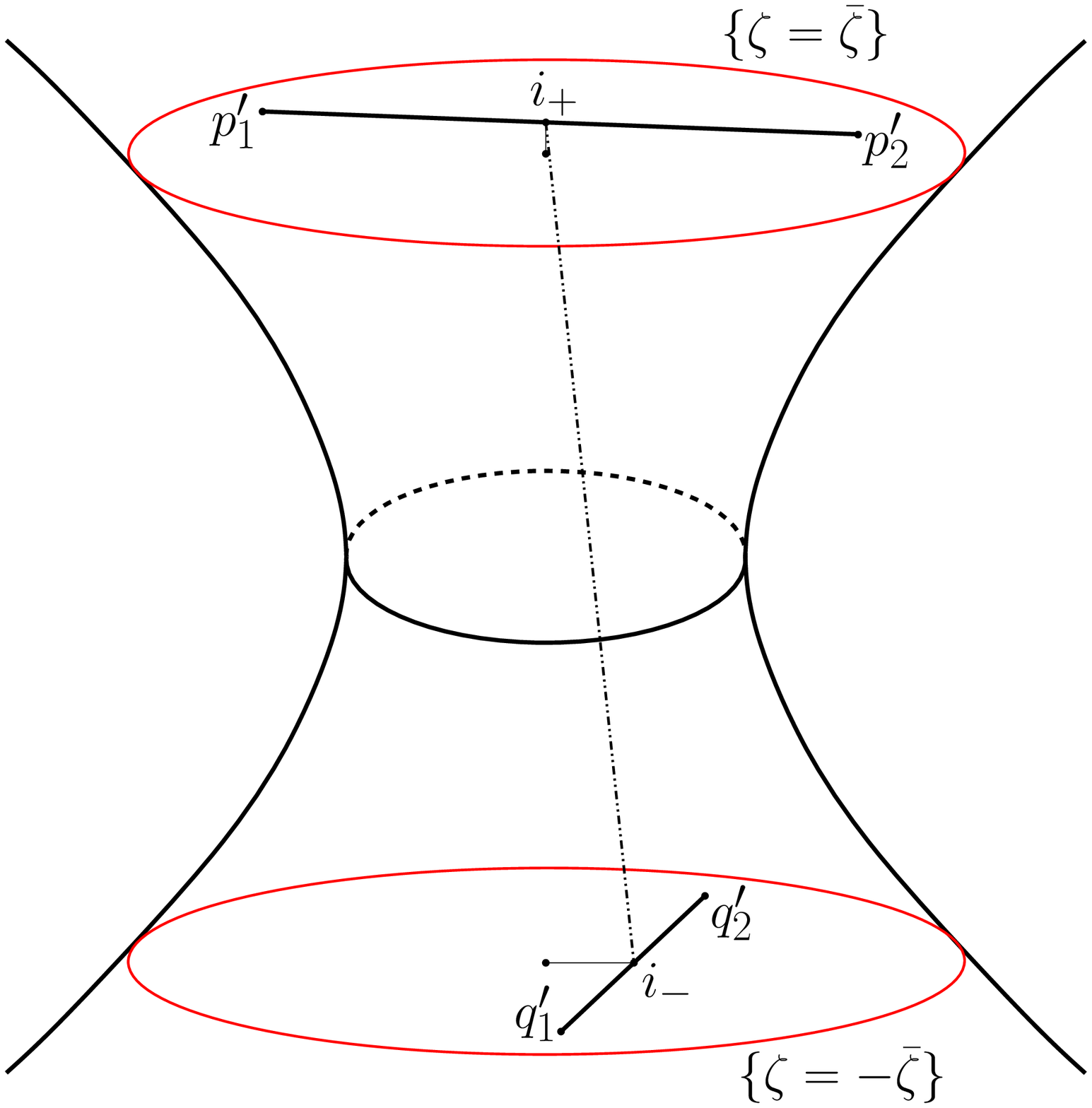}
%\captionsetup{labelformat=empty}
\caption{The position of the points $i_+$ and $i_-$, which are used to derive the lower bound on the width.} \label{fig:intersections}
\end{minipage}
\end{figure}

%------------------------------------------------------

\section{Application: minimal Lagrangian quasiconformal extensions} \label{section universal}

%\begin{remark} \label{remark completeness}
%A consequence of the results proved in \cite{bon_schl} is that the maximal surface $S$ with bounded principal curvatures, spanning the graph of a quasisymmetric homeomorphism, is complete. In fact, there is a bi-Lipschitz diffeomorphism from $S$ to $\Hyp^2$, and $\Hyp^2$ is complete. Such diffeomorphism is described also in Subsection \ref{subs Minimal Lagrangian extension}.
%\end{remark}

We begin this section by briefly introducing the theory of quasiconformal mappings and universal Teichm\"uller space. %We will give a brief account of the very rich and developed theory. 
Useful references are \cite{gardiner, gardiner2, ahlforsbook, fletchermarkoviclibro}.
Next, we discuss the applications of Theorem \ref{estimate principal curvatures and width ads}, Theorem \ref{theorem width lambda big} and Proposition \ref{estimate principal curvatures and width ads reverse}, proving Theorem \ref{minimal lag extension}, Theorem \ref{minimal lag extension large}, Theorem \ref{minimal lag extension below} and Corollary \ref{corollary extension}.

\subsection{Quasiconformal mappings} \label{subsection universal}

We recall the definition of quasiconformal map.

\begin{defi} \label{definitions quasiconformal map}
Given a domain $\Omega\subset\C$, an orientation-preserving homeomorphism $$f:\Omega\to f(\Omega)\subset\C$$ is \emph{quasiconformal} if $f$ is absolutely continuous on lines and there exists a constant $k<1$ such that $$|\partial_{\overline z}f|\leq k |\partial_{z}f|\,.$$
\end{defi}

Let us denote $\mu_f=\partial_{\overline z}f/\partial_{z}f$, which is called \emph{complex dilatation} of $f$. This is well-defined almost everywhere, hence it makes sense to take the $L_\infty$ norm. Thus a homeomorphism $f:\Omega\to f(\Omega)\subset\C$ is quasiconformal if $||\mu_f||_\infty<1$. Moreover, a quasiconformal map as in Definition \ref{definitions quasiconformal map} is called $K$-\emph{quasiconformal}, where
$$K=\frac{1+k}{1-k}\,.$$
It turns out that the best such constant $K\in[1,+\infty)$ represents the \emph{maximal dilatation} of $f$, i.e. the supremum over all $z\in\Omega$ of the ratio between the major axis and the minor axis of the ellipse which is the image of a unit circle under the differential $d_z f$. 

It is known that a $1$-quasiconformal map is conformal, and that the composition of a $K_1$-quasiconformal map and a $K_2$-quasiconformal map is $K_1 K_2$-quasiconformal. Hence composing a quasiconformal map with conformal maps does not change the maximal dilatation. 

Actually, there is an explicit formula for the complex dilatation of the composition of two quasiconformal maps $f,g$ on $\Omega$:
\begin{equation} \label{composition belt differentials}
\mu_{g\circ f^{-1}}=\frac{\partial_z f}{\overline{\partial_{ z} f}}\frac{\mu_g-\mu_f}{1-\overline{\mu_f}\mu_g}\,.
\end{equation}
Using Equation \eqref{composition belt differentials}, one can see that $f$ and $g$ differ by post-composition with a conformal map if and only if $\mu_f=\mu_g$ almost everywhere.

%\noindent {\bf{Measurable Riemann mapping Theorem}}. Given any measurable function $\mu$ on $\C$ there exists a unique quasiconformal map $f:\C\to\C$ such that $f(0)=0$, $f(1)=1$ and $\mu_f=\mu$ almost everywhere in $\C$.

%The uniqueness part of Measurable Riemann mapping Theorem means that every two solutions (which can be thought as maps on the sphere $\widehat\C$) of the equation $$ (\partial_{z}f) \mu=\partial_{\overline z}f$$
%differ by post-composition with a M\"obius transformation of $\widehat \C$. 

%\subsection{Quasiconformal deformations of the disc} \label{subsec: quasiconformal model disc}

%We are now ready to introduce the first model of universal Teichm\"uller space. 

The connection between quasiconformal homeomorphisms of $\Hyp^2$ and quasisymmetric homeomorphisms of the boundary of $\Hyp^2$ is made evident by the following classical theorem (see \cite{ahl_beur}).
\\

\noindent {\bf{Ahlfors-Beuring Theorem}}. Every quasiconformal map $\Phi:\Hyp^2\to\Hyp^2$ extends to a quasisymmetric homeomorphism of $\RP^1=\partial\Hyp^2$. Conversely, any quasisymmetric homeomorphism $\phi:\RP^1\to \RP^1$ admits a quasiconformal extension to $\Hyp^2$.

\subsection{Minimal Lagrangian extension} \label{subs Minimal Lagrangian extension}

Our purpose is to give a quantitative description of \emph{minimal Lagrangian} extensions of a quasisymmetric homeomorphism $\phi$.

\begin{defi}
A diffeomorphism $\Phi:\Hyp^2\to\Hyp^2$ is minimal Lagrangian if $\Phi$ is area-preserving and the graph of $\Phi$ is a minimal surface in $\Hyp^2\times \Hyp^2$. 
\end{defi}

The following characterization of minimal Lagrangian diffeomorphisms is well-known. A proof can be found in \cite[Proposition 1.2.6]{jeremythesis}.

\begin{prop}
A diffeomorphism $\Phi:\Hyp^2\to\Hyp^2$ is minimal Lagrangian if and only if $\Phi^*(g_{\Hyp^2})=g_{\Hyp^2}(b\cdot,b\cdot)$, where $b\in\Gamma(\mathrm{End}(TM))$ is a bundle morphism such that:
\begin{itemize}
\item $b$ is self-adjoint for $g_{\Hyp^2}\,;$
\item $\det b=1\,;$
\item $d^{\nabla}b=0\,$.
\end{itemize}
\end{prop}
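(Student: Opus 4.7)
The plan is to construct the bundle morphism $b$ directly from $\Phi$ and then interpret each of the three conditions in turn. Given any diffeomorphism $\Phi:\Hyp^2\to\Hyp^2$, the pull-back $\Phi^*g_{\Hyp^2}$ is a Riemannian metric on $\Hyp^2$, hence there is a unique field of positive $g_{\Hyp^2}$-self-adjoint endomorphisms $B$ satisfying $g_{\Hyp^2}(Bv,w)=\Phi^*g_{\Hyp^2}(v,w)$; taking $b$ to be the unique positive self-adjoint square root of $B$ gives the desired operator, with self-adjointness built into its definition. The uniqueness of $b$ will also be the key to recovering $\Phi$ from $b$ in the converse direction.

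The identity $\Phi^*\omega_{g_{\Hyp^2}}=(\det b)\,\omega_{g_{\Hyp^2}}$, valid for any orientation-preserving diffeomorphism, immediately identifies the condition $\det b=1$ with the area-preserving property of $\Phi$, which is half of the minimal Lagrangian condition. It remains to show that, assuming $\det b=1$, the Codazzi-type equation $d^{\nabla}b=0$ is equivalent to the graph $\Sigma:=\{(x,\Phi(x)):x\in\Hyp^2\}$ being a minimal surface in $\Hyp^2\times\Hyp^2$. I would parameterize $\Sigma$ by $F=(\mathrm{id},\Phi)$, write the induced metric as $g_\Sigma=g_{\Hyp^2}((\mathrm{id}+b^2)\cdot,\cdot)$, and build a frame for the normal bundle of $\Sigma$ using the almost-complex structure $J$ of $\Hyp^2$ together with $b$; the area-preserving condition $\det b=1$ is precisely what makes a natural anti-diagonal choice of normal frame into a Lagrangian subspace, allowing the second fundamental form to be expressed intrinsically on $\Hyp^2$.

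The heart of the argument will be a direct computation of the mean curvature of $\Sigma$ in the product. Splitting the product Levi-Civita connection along $\Sigma$ into tangential and normal parts, one finds that the trace of the second fundamental form reduces, after using $\det b=1$ and self-adjointness of $b$, to the $g_\Sigma$-trace of the tensor $(X,Y)\mapsto (\nabla_X b)(Y)-(\nabla_Y b)(X)$. Thus minimality of $\Sigma$ translates exactly to $d^{\nabla}b=0$. I expect this computation to be the main obstacle: it requires careful bookkeeping of the tangent-normal decomposition in $\Hyp^2\times\Hyp^2$ and the interaction between $b$, $J$, and the two hyperbolic connections. For the converse, given $b$ with the three stated properties, the Codazzi equation $d^{\nabla}b=0$ together with the obvious Gauss equation (the ambient curvature of $\Hyp^2$ is constant $-1$) furnishes the integrability conditions needed to develop the metric $g_{\Hyp^2}(b\cdot,b\cdot)$ as the pull-back of $g_{\Hyp^2}$ under a diffeomorphism $\Phi$, which by the previous steps is automatically minimal Lagrangian.
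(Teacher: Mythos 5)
The paper itself does not prove this proposition; it cites the reference (Proposition~1.2.6 of Toulisse's thesis), so there is no ``paper's own proof'' to compare against. Your outline is, at the level of strategy, the standard one used there and in Labourie/Schoen: define $b$ as the positive self-adjoint square root of the symmetric endomorphism $B$ representing $\Phi^*g_{\Hyp^2}$, identify $\det b=1$ with area-preservation via $\Phi^*\omega=(\det b)\,\omega$, and then reduce minimality of the graph to a Codazzi equation by a direct mean-curvature computation. So the skeleton is right.

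However, there is a concrete error in the key step as you have stated it. You claim the mean curvature of the graph equals ``the $g_\Sigma$-trace of the tensor $(X,Y)\mapsto(\nabla_X b)(Y)-(\nabla_Y b)(X)$.'' That tensor is $d^{\nabla}b$, which is skew-symmetric in $(X,Y)$; the $g_\Sigma$-trace of any skew-symmetric bilinear map is identically zero. Taken literally, your formula would force the mean curvature to vanish for \emph{every} $b$, which is false. What the computation actually produces is the normal vector field obtained by contracting $d^{\nabla}b$ (a $TM$-valued $2$-form on a surface) against the area form --- equivalently the Hodge dual $*\,d^{\nabla}b$, or a divergence-type expression in $b$ and $J$ --- not a trace. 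This is exactly the bookkeeping you acknowledge is the ``main obstacle,'' so it is worth fixing the target identity before starting the calculation, or the whole reduction collapses.

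A secondary point: your treatment of the converse is over-engineered and answers a different question. In the proposition, $\Phi$ is already given in both directions; the ``if'' direction asks you to show that the existence of such a $b$ with $\Phi^*g_{\Hyp^2}=g_{\Hyp^2}(b\cdot,b\cdot)$ forces $\Phi$ to be minimal Lagrangian. That is just the same mean-curvature and area-form computation read in reverse, using that any such $b$ is (up to sign) the positive square root of $B$. There is no need to invoke the Gauss equation and develop an abstract metric to \emph{construct} $\Phi$; that integrability argument is the one you would need to prove existence of minimal Lagrangian extensions, which is a different (and in this paper, separately cited) theorem.
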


\noindent Here $d^{\nabla}$ is the exterior derivative, hence $d^{\nabla}b$ is the two-form defined by:
$$d^{\nabla}b(v,w)=\nabla_{\tilde v} (b(\tilde w))-\nabla_{\tilde w} (b(\tilde v))-b[\tilde v,\tilde w]\,,$$
where $\tilde v,\tilde w$ are vector fields which extend the vectors $v,w$ in a neighborhood of the base point.
The vanishing of $d^{\nabla}b$ is the so-called \emph{Codazzi condition}.

In \cite{bon_schl}, entire maximal surfaces of uniformly negative curvature were used to prove the following theorem:

\begin{theorem}[{\cite[Theorem 1.4]{bon_schl}}]
For every quasisymmetric homeomorphism $\phi:\RP^1\to \RP^1$, there exists a unique quasiconformal minimal Lagrangian extension $\Pml:\Hyp^2\to\Hyp^2$.
\end{theorem}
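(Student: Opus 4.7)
The plan is to use the unique maximal surface $S \subset \AdS^3$ with $\partial_\infty S = gr(\phi)$ as a bridge between $\phi$ and $\Pml$: from $S$ I will extract two hyperbolic metrics whose uniformizations to $\Hyp^2$ compose to give the desired extension.

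First, I would invoke Theorem \ref{teorema superfici massime quasisimmetrico} to produce the unique entire maximal surface $S \subset \AdS^3$ of uniformly negative curvature with $\partial_\infty S = gr(\phi)$. Let $I$ be its induced metric, $B$ its shape operator (with eigenvalues $\pm\lambda$, $\|\lambda\|_\infty < 1$), and $J$ the almost complex structure on $(S,I)$ associated to the orientation. The maximality condition $\tr B = 0$ implies that $JB$ is $I$-self-adjoint, and the Codazzi equation $d^\nabla B = 0$ together with $\nabla J = 0$ yields $d^\nabla (JB) = 0$. Hence $b_\pm := E \pm JB$ are positive definite, self-adjoint, Codazzi operators on $(S,I)$, with $\det b_\pm = 1-\lambda^2$. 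A standard computation (central to \cite{bon_schl}) shows that the metrics $h_\pm := I(b_\pm \cdot, b_\pm \cdot)$ are complete Riemannian metrics on the topological disc $S$ of constant curvature $-1$.

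Next, choose isometries $\Psi_\pm : (S, h_\pm) \to \Hyp^2$ provided by the uniformization theorem, and set
\[
\Pml := \Psi_- \circ \Psi_+^{-1} : \Hyp^2 \to \Hyp^2.
\]
A direct algebraic verification yields $\Pml^* g_{\Hyp^2} = g_{\Hyp^2}(b\cdot, b\cdot)$ for a bundle morphism $b$ which is self-adjoint, Codazzi and satisfies $\det b = 1$; by the characterization of minimal Lagrangian diffeomorphisms recalled just before the statement, $\Pml$ is minimal Lagrangian. Its complex dilatation turns out to be an explicit algebraic expression in $\lambda$ arising from the Krasnov--Schlenker formula \cite{Schlenker-Krasnov}, and the uniform bound $\|\lambda\|_\infty < 1$ immediately gives quasiconformality. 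For uniqueness, any quasiconformal minimal Lagrangian extension $\Phi$ of $\phi$ yields a self-adjoint Codazzi tensor $b$ with $\det b = 1$; from this one reconstructs a shape operator and thereby a maximal surface of uniformly negative curvature with asymptotic boundary $gr(\phi)$. The uniqueness statement in Theorem \ref{teorema superfici massime quasisimmetrico} forces this surface to coincide with $S$, whence $\Phi = \Pml$.

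The main obstacle is showing that $\Pml$ extends continuously to $\phi$ on $\RP^1 = \partial_\infty \Hyp^2$. Fixing a reference spacelike plane $P \subset \AdS^3$, the left and right projections $\pi_l, \pi_r : \partial_\infty \AdS^3 \to \partial_\infty P$ identify $gr(\phi)$ with pairs $(x,\phi(x))$. I would normalize the isometries $\Psi_\pm$ so that each extends continuously on $\partial_\infty S = gr(\phi)$ to $\pi_l$ and $\pi_r$ respectively; then $\Pml$ extends to $\pi_r \circ \pi_l^{-1} = \phi$. Making this precise is delicate and requires uniform control of how $(S, h_\pm)$ approaches $\partial_\infty \AdS^3$. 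Crucially, quasisymmetry of $\phi$ enters exactly at this step: by Proposition \ref{prop bs width neg curv} it is equivalent to the width of the convex hull of $gr(\phi)$ being strictly less than $\pi/2$, which is precisely the condition needed for the boundary values of $\Psi_\pm$ to coincide with the left and right projections.
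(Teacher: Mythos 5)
Your overall strategy matches the one sketched in the paper (and used by Bonsante--Schlenker): produce the unique maximal surface $S$ with $\partial_\infty S = gr(\phi)$, build from its first fundamental form $I$ and shape operator $B$ the two Codazzi tensors $b_\pm = E\pm JB$, observe that $h_\pm = I(b_\pm\cdot,b_\pm\cdot)$ are hyperbolic metrics, compose the resulting isometries to $\Hyp^2$, and read off minimal Lagrangianity ($b=b_+^{-1}b_-$ is self-adjoint, Codazzi, $\det b=1$) and quasiconformality ($\|\lambda\|_\infty<1$, via the formula in Proposition~\ref{estimate principal curvatures and qc coefficient}). The algebraic checks you indicate for $b_\pm$ being self-adjoint Codazzi and for $\det b_\pm = 1-\lambda^2$ are correct.

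Where you diverge from the paper is in how $\Psi_\pm$ are obtained, and this is where a real gap appears. You invoke the uniformization theorem to get abstract isometries $(S,h_\pm)\to\Hyp^2$, and then you say you ``would normalize the isometries $\Psi_\pm$ so that each extends continuously on $\partial_\infty S$ to $\pi_l$ and $\pi_r$.'' That is not a normalization one can simply impose: uniformization gives $\Psi_\pm$ only up to post-composition by $\PSL(2,\R)$, and it is a genuine theorem --- not a choice --- that the abstract uniformizing maps extend continuously to $\partial_\infty S$ at all, and that after a suitable $\PSL(2,\R)$ choice the extension agrees with the left (resp.\ right) projection. Nothing in your sketch establishes this, and the appeal to $w<\pi/2$ from Proposition~\ref{prop bs width neg curv} is a necessary ingredient but not by itself a proof. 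The paper avoids this entire issue by \emph{defining} $\Phi_l,\Phi_r:S\to P$ geometrically: $\Phi_l(x)=\Phi_l^x(x)$, where $\Phi_l^x$ is the isometry carrying $T_xS$ to $P$ via the left ruling of $\partial_\infty\AdS^3$. With that definition, the identification of the boundary map with $\pi_l$ (resp.\ $\pi_r$), hence with $\phi$ after composition, is structural; what then has to be proved is that $\Phi_l^* h = I(b_+\cdot,b_+\cdot)$ and $\Phi_r^* h = I(b_-\cdot,b_-\cdot)$, which is exactly Krasnov--Schlenker's Lemma 3.16 and is the content of Equations~\eqref{pullback left} and~\eqref{pullback right}. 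In other words, you need the Krasnov--Schlenker computation either way, and once you have it, nothing is gained by passing through abstract uniformization --- that detour just reintroduces the boundary-identification problem you were trying to solve.

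So: the construction you give is the right one, the uniqueness argument via reconstructing the maximal surface is sound in outline, but the step ``$\Pml$ extends to $\phi$'' is left as a gap, and the abstract-uniformization route you chose makes that gap harder rather than easier to close. Replacing the abstract $\Psi_\pm$ by the geometrically defined $\Phi_l,\Phi_r$ and invoking the Krasnov--Schlenker pullback formula removes the gap.
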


The key observation is that the maximal surface with $\partial_\infty S=gr(\phi)$ corresponds to the minimal Lagrangian extension $\Pml$ of $\phi$. The extension is given geometrically in the following way. Fix a totally geodesic plane $P$ in $\AdS^3$, which is a copy of hyperbolic plane. Given a point $x\in S$, we define two isometries $\Phi^x_l,\Phi^x_r\in\isom(\AdS^3)$ which map the tangent plane $T_x S$ to $P$. The first isometry $\Phi^x_l$ is obtained by following the left ruling of $\partial_\infty \AdS^3$. Analogously $\Phi^x_r$ for the right ruling. This gives two diffeomorphisms $\Phi_l$ and $\Phi_r$ from $S$ to $P$, by 
$$\Phi_l(x)=\Phi^x_l(x)\,,\qquad\qquad\Phi_r(x)=\Phi^x_r(x)\,.$$ 
The diffeomorphism $\Phi$ is then defined as 
$$\Pml=(\Phi_l)^{-1}\circ \Phi_r\,.$$ 
In \cite[Lemma 3.16]{Schlenker-Krasnov} it is shown that the pull-back of the hyperbolic metric $h$ of $P$ on $S$ by means of $\Phi_r$ and $\Phi_l$ is given by 
\begin{equation} \label{pullback left}
\Phi_l^*h=I((E+JB)\cdot,(E+JB)\cdot)\,,
\end{equation}
 and 
\begin{equation} \label{pullback right}
 \Phi_r^*h=I((E-JB)\cdot,(E-JB)\cdot)\,,
 \end{equation}
  where $I$ is the first fundamental form of $S$, $J$ is the almost-complex structure of $S$, $B$ the shape operator and $E$ the identity. We are now ready to give a relation between the principal curvatures of $S$ and the quasiconformal distortion of $\Phi$:

\begin{prop} \label{estimate principal curvatures and qc coefficient}
Given a maximal surface $S$ in $\AdS^3$, the quasiconformal distortion of the minimal Lagrangian map $\Phi:\Hyp^2\rar\Hyp^2$ at a point $y\in\Hyp^2$ is given by $$K(y)=\left(\frac{1+|\lambda(x)|}{1-|\lambda(x)|}\right)^2\,,$$
where $y=(\Phi_l)(x)$.
Therefore, by taking $K(\Pml)=\sup_y K(y)$, namely $K(\Pml)$ is the maximal dilatation of $\Phi$, the following holds:
$$K(\Pml)=\left(\frac{1+||\lambda||_\infty}{1- ||\lambda||_\infty}\right)^2\,.$$
\end{prop}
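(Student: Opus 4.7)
The proof reduces to diagonalizing the two pull-back metrics given in \eqref{pullback left} and \eqref{pullback right}. Fix $x\in S$ and work entirely in $T_xS$.

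Let $(e_1,e_2)$ be an $I$-orthonormal basis of $T_xS$ consisting of eigenvectors of the shape operator, so that $Be_1=\lambda e_1$ and $Be_2=-\lambda e_2$, where $\lambda=\lambda(x)\geq 0$ (if negative, swap the basis vectors). In this basis $J$ is the standard rotation (up to orientation), so $JB$ has matrix $\begin{pmatrix}0 & \lambda \\ \lambda & 0\end{pmatrix}$; the key point is that $B$ is self-adjoint and traceless, which forces $JB=-BJ$ and gives $JB$ this antidiagonal form. It follows that $E+JB$ and $E-JB$ are simultaneously diagonalized by $f_1=(e_1+e_2)/\sqrt{2}$ and $f_2=(e_1-e_2)/\sqrt{2}$: a one-line check gives $(E+JB)f_1=(1+\lambda)f_1$, $(E+JB)f_2=(1-\lambda)f_2$, while the eigenvalues are swapped for $E-JB$.

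Since $(f_1,f_2)$ is $I$-orthonormal, the matrices of $\Phi_l^*h$ and $\Phi_r^*h$ in this basis are $\mathrm{diag}((1+\lambda)^2,(1-\lambda)^2)$ and $\mathrm{diag}((1-\lambda)^2,(1+\lambda)^2)$ respectively. Regarding $\Pml$, via the diffeomorphism $\Phi_l$ on the source side, as transported from the identity $(T_xS,\Phi_l^*h)\to(T_xS,\Phi_r^*h)$, the stretch factor along $u=af_1+bf_2$ equals
\[
\frac{|u|_{\Phi_r^*h}}{|u|_{\Phi_l^*h}}=\sqrt{\frac{a^2(1-\lambda)^2+b^2(1+\lambda)^2}{a^2(1+\lambda)^2+b^2(1-\lambda)^2}},
\]
whose range is the interval $[(1-\lambda)/(1+\lambda),\,(1+\lambda)/(1-\lambda)]$. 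Hence the pointwise quasiconformal dilatation of $\Pml$ at $y=\Phi_l(x)$ is the ratio of its extremes,
\[
K(y)=\left(\frac{1+\lambda}{1-\lambda}\right)^2=\left(\frac{1+|\lambda(x)|}{1-|\lambda(x)|}\right)^2,
\]
which is the pointwise identity claimed in the proposition. Since $t\mapsto((1+t)/(1-t))^2$ is monotone on $[0,1)$, taking the supremum over $x\in S$ gives $K(\Pml)=((1+||\lambda||_\infty)/(1-||\lambda||_\infty))^2$.

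I do not foresee any serious obstacle: the argument is pure linear algebra, with the single conceptual ingredient being the observation that tracelessness of $B$ produces the anticommutation $JB+BJ=0$, which is precisely what allows $E+JB$ and $E-JB$ to share the eigenbasis $(f_1,f_2)$. Once that is in hand, the identification of the extreme stretch factors, and hence of $K$, is immediate.
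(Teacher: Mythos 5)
Your proof is correct and follows essentially the same route as the paper's: both reduce to the observation that, because $B$ is traceless and self-adjoint, the operators $E+JB$ and $E-JB$ share an $I$-orthonormal eigenbasis with eigenvalues $1\pm\lambda$, so $(E+JB)^{-1}(E-JB)$ has eigenvalues $(1\mp\lambda)/(1\pm\lambda)$ and the dilatation is their ratio. The one thing you add over the paper's terse ``can be diagonalized'' is the explicit anticommutation $JB=-BJ$ and the eigenvectors $f_i=(e_1\pm e_2)/\sqrt 2$, which is a useful filled-in detail but not a different argument.
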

\begin{proof}
%We want to compute the Beltrami differential $\mu$ of $\Phi=(A_l)^{-1}\circ A_r$. We take isothermal coordinates for the induced metric on $S$, so that the complex structure is given by $$J=\begin{pmatrix} 0 & -1 \\ 1 & 0 \end{pmatrix}$$
%and we can also assume $B$ is diagonal at a fixed point $x\in S$, i.e.
%$$B=\begin{pmatrix} \lambda & 0 \\ 0 & -\lambda \end{pmatrix}$$
%hence 
%$$dA_l=E+JB=\begin{pmatrix} 1 & \lambda \\ \lambda & 1 \end{pmatrix}.$$
%We can easily decompose $dA_l$ in its holomorphic and antiholomorphic part: in complex notation,
%$dA_l(z)=z+\lambda i\bar z$ and so the Beltrami coefficient of $A_l$ is $\mu_l=\lambda i$. Analogously, the Beltrami coefficient at $x$ of $A_r$ is $\mu_r=-\lambda i$. By the formula for the composition of Beltrami coefficients (see \cite{gardiner2}), 
%$$|\mu|=\left|\frac{\mu_r-\mu_l}{1-\bar\mu_l\mu_r}\right|=\frac{2\lambda}{1+\lambda^2}.$$

Let $h$ be the hyperbolic metric of $P$; it follows from Equations \eqref{pullback left} and \eqref{pullback right} that $$\Phi^*h=h((E+JB)^{-1}(E-JB)\cdot,(E+JB)^{-1}(E-JB)\cdot).$$
The quasiconformal distortion of $\Phi$ at a fixed point $x$ can be computed as the ratio between ${\sup||\Phi_*(v)||}$ and ${\inf ||\Phi_*(v)||}$
where the supremum and the infimum are taken over all tangent vectors $v\in T_x P$ with $||v||=1$.
Since $B$ is diagonalizable with eigenvalues $\pm\lambda$, $(E+JB)^{-1}(E-JB)$ can be diagonalized to the form
$$\begin{pmatrix} \frac{1-\lambda}{1+\lambda} & 0 \\ 0 & \frac{1+\lambda}{1-\lambda} \end{pmatrix}\,.$$
Hence, assuming $0<\lambda(x)<1$, the quasiconformal distortion is given by
$$K(\Phi_l(x))=\left(\frac{1+\lambda(x)}{1-\lambda(x)}\right)^2\,,$$
as claimed.
\end{proof}
%Note that this formula can be easily inverted, to obtain the supremum $\Lambda$ of principal curvatures in terms of $K$: we have
%$$\Lambda=\frac{\sqrt{K}-1}{\sqrt{K}+1}.$$

%\begin{remark} \label{estimate principal curvatures and qc coefficient hyp}
%The same relation holds in $\Hyp^3$ for $S$ a minimal surface and $\Phi$ is obtained by composing the hyperbolic Gauss maps from the surface to the two connected components of $\partial_\infty \Hyp^3\setminus \partial_\infty S$. Indeed, we have analogue formulae for the pull-back by $\Phi$, where $E\pm JB$ is replaced by $E\pm B$, recall the definition of first fundamental form at infinity in Subsection \ref{infinity}. This gives a quantitative proof of the fact that a minimal surface $S$ with principal curvatures in $[-1+\epsilon,1-\epsilon]$ has boundary at infinity a quasicircle.
%\end{remark}

We are now ready to prove Theorem \ref{minimal lag extension}.

\begin{reptheorem}{minimal lag extension}
There exist universal constants $\delta$ and $C_1$ such that, for any quasisymmetric homeomorphism $\phi$ of $\RP^1$ with cross ratio norm $||\phi||_{cr}<\delta$, the minimal Lagrangian extension $\Phi_{M\!L}:\Hyp^2\rar\Hyp^2$ has maximal dilatation bounded by: $$\ln K(\Phi_{M\!L})\leq C_1||\phi||_{cr}\,.$$
\end{reptheorem}
\begin{proof}
Putting together the inequalities in Proposition \ref{estimate width cross ratio norm}, Theorem \ref{estimate principal curvatures and width ads} and Proposition \ref{estimate principal curvatures and qc coefficient}, we obtain the following inequality:
\begin{equation} \label{equation k phi paperoga}
K(\Phi)\leq\left(\frac{1+C_1\tan w}{1-C_1\tan w}\right)^2
\leq\left(\frac{1+C_1\sinh(\frac{||\phi||_{cr}}{2})}{1-C_1\sinh(\frac{||\phi||_{cr}}{2})}\right)^2\,.
\end{equation}
Clearly, the inequality \eqref{equation k phi paperoga} holds provided $||\phi||_{cr}$ is sufficiently small so that $1-C\sinh(\frac{||\phi||_{cr}}{2})>0$. Since
$$\left.\frac{d}{dx}\right|_{x=0}\ln\left(\frac{1+C_1\sinh(\frac{x}{2})}{1-C_1\sinh(\frac{x}{2})}\right)^2=2C_1\,,$$
the claim is proved, by appropiatly choosing the constant (which depends on the choice of $\delta$, and is still called $C_1$) .
\end{proof}

We now move to the proof of Theorem \ref{minimal lag extension large}.

 \begin{reptheorem}{minimal lag extension large}
There exist universal constants $\Delta$ and $C_2$ such that, for any quasisymmetric homeomorphism $\phi$ of $\RP^1$ with cross ratio norm $||\phi||_{cr}>\Delta$, the minimal Lagrangian extension $\Phi_{M\!L}:\Hyp^2\rar\Hyp^2$ has maximal dilatation bounded by: $$\ln K(\Phi_{M\!L})\leq C_2||\phi||_{cr}\,.$$
\end{reptheorem}
\begin{proof}
By Theorem \ref{theorem width lambda big}, we have
$$\frac{1}{1-||\lambda||_\infty}\leq (\tan w)^{M}$$
and therefore
$$||\lambda||_\infty\leq 1-\frac{1}{(\tan w)^{M}}\,.$$
By Proposition \ref{estimate principal curvatures and qc coefficient}, if $K$ is the maximal dilatation of $\Phi_{M\!L}$, then
\begin{equation} \label{equation k phi gastone}
K^{1/2}=\frac{1+||\lambda||_\infty}{1- ||\lambda||_\infty}\leq \frac{2-\frac{1}{(\tan w)^{M}}}{\frac{1}{(\tan w)^{M}}}=2(\tan w)^{M}-1\,.
\end{equation}
Finally, using Proposition \ref{estimate width cross ratio norm},
$$K^{1/2}\leq 2\left(\sinh{\left(\frac{||\phi||_{cr}}{2}\right)}\right)^{M}-1\,.$$
Therefore one gets
$$\ln K\leq 2\left( (M/2)||\phi||_{cr}+\ln 2\right)\leq C_2||\phi||_{cr}\,,$$
choosing the constant $C_2$ sufficiently large, under the (repeatedly used) assumption that $||\lambda||_\infty$ is larger than a constant $\delta$ (and thus $K$ is larger than some universal constant $K_0>1$).
\end{proof}

Finally, by using the inequalities in Proposition \ref{estimate width cross ratio norm}, Proposition \ref{estimate principal curvatures and width ads reverse} and Proposition \ref{estimate principal curvatures and qc coefficient}, we obtain the following estimate:
$$||\phi||_{cr}\leq 2\ln\left(\frac{(\sqrt{K}+1-\sqrt{2})(\sqrt{K}+1+\sqrt{2})}{(\sqrt{K}-1+\sqrt{2})(1+\sqrt{2}-\sqrt{K})}\right)\,,$$
which clearly holds if the quasiconformal coefficient $K=K(\Pml)$ of the minimal Lagrangian extension $\Pml:\D\rar\D$ is in $[1,(1+\sqrt{2})^2)$.
Let us observe that 
the function 
$$K\mapsto 2 \ln\left( \frac{(\sqrt{K}+1-\sqrt{2})(\sqrt{K}+1+\sqrt{2})}{(\sqrt{K}-1+\sqrt{2})(1+\sqrt{2}-\sqrt{K})}\right)$$
is differentiable with derivative at 0 equal to 2. Hence the following holds:

\begin{reptheorem}{minimal lag extension below}
There exist universal constants $\delta$ and $C_0$ such that, for any quasisymmetric homeomorphism $\phi$ of $\RP^1$ with cross ratio norm $||\phi||_{cr}<\delta$, the minimal Lagrangian extension $\Phi:\Hyp^2\rar\Hyp^2$ has maximal dilatation  bounded by: $$C_0||\phi||_{cr}\leq \ln K(\Phi_{M\!L})\,.$$
The constant $C_0$ can be taken arbitrarily close to $1/2$.
\end{reptheorem}

In particular, any constant $C$ satisfying the statement of Theorem \ref{minimal lag extension} cannot be smaller than $1/2$.

\begin{repcor}{corollary extension}
There exists a universal constant $C$ such that, for any quasisymmetric homeomorphism $\phi$ of $\RP^1$, the minimal Lagrangian extension $\Phi_{M\!L}:\Hyp^2\rar\Hyp^2$ has maximal dilatation $K(\Phi_{M\!L})$ bounded by: $$\ln K(\Phi_{M\!L})\leq C||\phi||_{cr}\,.$$
\end{repcor}
\begin{proof}
In light of Theorem \ref{minimal lag extension} and Theorem \ref{minimal lag extension large}, it will be sufficient to prove that there exists a constant $K_0$ such that $\ln K(\Phi_{M\!L})\leq K_0$ for all quasisymmetric homeomorphisms $\phi$ with $\delta\leq||\phi||_{cr}\leq \Delta$. To prove this, suppose by contradiction that there exists a sequence $\phi_n$ with $\delta\leq||\phi_n||_{cr}\leq \Delta$ such that the corresponding minimal Lagrangian extensions have maximal dilatation $K_n\to\infty$. Therefore one can pick a sequence of points $x_n$ on the maximal surface $S_n$ (where of course $\partial_\infty S_n=gr(\phi_n)$) such that the (positive) principal curvature $\lambda_n(x_n)$ tends to $1$ as $n\to\infty$. 

By composing with isometries of $\AdS^3$, we can assume $x_n$ is a fixed point $x_0$ and all surfaces $S_n$ are tangent to the same plane through $x_0$. Indeed, composing with elements of $\PSL(2,\R)$ does not change the cross-ratio norm of $\phi_n$. By Lemma \ref{lemma bon schl} there exists a subsequence ${n_k}$ such that the maximal surfaces $S_{n_{k}}$ converge $C^\infty$ on compact sets to an entire maximal surface $S_\infty$ of nonpositive curvature. Using Theorem \ref{Compactness property of quasisymm homeo} there exists a further subsequence $n_{k_j}$ such that $\phi_{n_{k_j}}$ converges to a quasisymmetric homeomorphism $\phi_\infty$. Moreover (see also Remark \ref{remark compactness}), $S_\infty$ has asymptotic boundary $\partial_\infty S_\infty=gr(\phi_\infty)$. But by the $C^\infty$ convergence, the principal curvatures of $S_\infty$ at $x_0$ are $1$ and $-1$. By Lemma \ref{lemma flat horo}, $S_\infty$ is a horospherical surfaces and this gives a contradiction.
\end{proof}

%\begin{remark}
%In a completely analogous way, using the inequalities \eqref{equation k phi paperoga} and \eqref{equation k phi gastone}, one can prove that 
%$$\ln K(\Phi_{M\!L})\leq B\tan w$$
%for some universal constant $B$.
%\end{remark}

\clearpage
\bibliographystyle{alpha}
\bibliographystyle{ieeetr}
\bibliography{../bs-bibliography}

\end{document}